\DeclareRobustCommand*{\bfseries}{%
	\not@math@alphabet\bfseries\mathbf
	\fontseries\bfdefault\selectfont
	\boldmath
}
\theoremstyle{plain}
\newtheorem{theorem}{Theorem}[section]
\newtheorem{proposition}[theorem]{Proposition}
\newtheorem{lemma}[theorem]{Lemma}
\newtheorem{corollary}[theorem]{Corollary}
\newtheorem{conjecture}[theorem]{Conjecture}
\newtheorem{problem}[theorem]{Problem}
\theoremstyle{definition}
\newtheorem{definition}[theorem]{Definition}
\newtheorem{defn}[theorem]{Definition}
\newtheorem{example}[theorem]{Example}
\newtheorem{remark}[theorem]{Remark}
\newtheorem{observation}[theorem]{Observation}
\newtheorem{obs}[theorem]{Observation}
\numberwithin{equation}{section}
\newcommand{\todo}[1]{\vspace{5 mm}\par \noindent
	\marginpar{\textsc{ToDo}} \framebox{\begin{minipage}[c]{0.95
				\textwidth}
			#1 \end{minipage}}\vspace{5 mm}\par}
\newcommand{\PP}{{\mathbb {P}}}
\newcommand{\QQ}{{\mathbb {Q}}}
\newcommand{\ZZ}{{\mathbb {Z}}}
\newcommand{\then}{\Rightarrow}
\newcommand{\spn}{{\operatorname{span}}}
\newcommand{\Des}{{\operatorname{Des}}}
\newcommand{\cDes}{{\operatorname{cDes}}}
\newcommand{\cyc}{{\operatorname{cyc}}}
\newcommand{\cdes}{{\operatorname{cdes}}}
\newcommand{\des}{{\operatorname{des}}}
\newcommand{\ch}{{\operatorname{ch}}}
\newcommand{\SYT}{{\operatorname{SYT}}}
\newcommand{\bded}{{\operatorname{bd}}}
\newcommand{\co}{{\operatorname{co}_n}}
\newcommand{\cc}{{\operatorname{cc}_n}}
\newcommand{\Comp}{{\operatorname{Comp}}}
\newcommand{\cComp}{{\operatorname{cComp}}}
\newcommand{\cnes}{{c2_0^{[n]}}}
\DeclareMathOperator{\QSym}{QSym}
\DeclareMathOperator{\Sym}{Sym}
\DeclareMathOperator{\cQSym}{cQSym}
\newcommand{\cs}{{\tilde{s}}}
\newcommand{\hM}{{\widehat{M}}}
\newcommand{\hF}{{\widehat{F}}}
\newcommand{\xx}{{\mathbf{x}}}
\newcommand{\ttt}{{\mathbf{t}}}
\newcommand{\symm}{{\mathfrak{S}}}
\newcommand{\AAA}{{\mathcal{A}}}
\newcommand{\BBB}{{\mathcal{B}}}
\newcommand{\LLL}{{\mathcal{L}}}
\newcommand{\TTT}{{\mathcal{T}}}
\newcommand{\Q}{{\mathcal Q}}
\def\tor{{\operatorname{tor}}}
\newcommand{\DAG}[1]{\accentset{\rightharpoonup}{#1}}
\begin{document}
	
	\title[Cyclic quasi-symmetric functions]
	{Cyclic quasi-symmetric functions}
	\author{Ron M.\ Adin}
	\address{Department of Mathematics, Bar-Ilan University, 
		Ramat-Gan 52900, Israel}
	\email{radin@math.biu.ac.il}
	\author{Ira M.\ Gessel}
	\address{Department of Mathematics, Brandeis University,
		Waltham, MA 02453, USA}
	\email{gessel@brandeis.edu}
	\author{Victor Reiner}
	\address{School of Mathematics, University of Minnesota,
		Minneapolis, MN 55455, USA}
	\email{reiner@math.umn.edu}
	\author{Yuval Roichman}
	\address{Department of Mathematics, Bar-Ilan University, 
		Ramat-Gan 52900, Israel}
	\email{yuvalr@math.biu.ac.il}
	
	\date{May 25, 2020}
	
	\thanks{RMA and YR were partially supported by an MIT-Israel MISTI grant and by the Israel Science Foundation, grant no.\ 1970/18. 
		IMG was partially supported by grant no.\ 427060 from the Simons Foundation.
		VR was partially supported by NSF grant DMS-1601961.}
	
	\begin{abstract}
		The ring of cyclic quasi-symmetric functions 
		and its non-Escher subring 
		are introduced in this paper. 
		A natural basis consists of
		fundamental cyclic quasi-symmetric functions;
		for the non-Escher subring they arise as toric $P$-partition enumerators, for toric posets $P$ with a total cyclic order.
		The associated structure constants are determined by cyclic shuffles of permutations. 
		We then prove the following positivity phenomenon: 
		for every non-hook shape $\lambda$, the coefficients in the expansion of the Schur function $s_\lambda$ in terms of fundamental cyclic quasi-symmetric functions are nonnegative. 
		The proof relies on the existence of a cyclic descent map on the standard Young tableaux (SYT) of shape $\lambda$. 
		The theory has applications to  
		the enumeration of cyclic shuffles and SYT
		by cyclic descents. 
	\end{abstract}
	
	\subjclass[2010]{05E05 (primary), 05A05, 06A11 (secondary)}
	
	\keywords{Quasi-symmetric function, toric poset, cyclic shuffle, non-Escher, cyclic descent}
	
	\maketitle
	
	\tableofcontents
	
	\section{Introduction}
	
	The graded rings $\Sym$ (sometimes denoted $\Lambda$)
	and $\QSym$,
	of symmetric and quasi-symmetric functions, respectively,
	have many applications to enumerative combinatorics, as well as to other branches of mathematics; see, e.g., 
	\cite[Ch.\ 7]{EC2}.
	This paper introduces two intermediate objects: the graded ring $\cQSym$ of cyclic quasi-symmetric functions,
	and its non-Escher subring $\cQSym^-$.
	
	\smallskip
	
	The rings $\Sym$, $\QSym$ and $\cQSym$ may be defined via invariance properties. 
	A formal power series $f\in \ZZ[[x_1,x_2,\ldots]]$ of bounded degree is {\em symmetric} if 
	for any $t \ge 1$, any two sequences $(i_1, \ldots, i_t)$
	and $(j_1, \dots, j_t)$ of distinct positive integers (indices), 
	and any sequence $(m_1, \ldots, m_t)$ of positive integers (exponents), 
	the coefficients of $x_{i_1}^{m_1} \cdots x_{i_t}^{m_t}$ and 
	$x_{j_1}^{m_1} \cdots x_{j_t}^{m_t}$ in $f$ are equal. 
	We call $f$ {\em quasi-symmetric} if 
	for any $t \ge 1$, any two {\em increasing} sequences $i_1 < \dots < i_t$ and $j_1 < \dots < j_t$ of positive integers, 
	and any sequence $(m_1, \ldots, m_t)$ of positive integers, 
	the coefficients of $x_{i_1}^{m_1} \cdots x_{i_t}^{m_t}$ and 
	$x_{j_1}^{m_1} \cdots x_{j_t}^{m_t}$ in $f$ are equal. 
	%
	%
	We define $f$ to be
	{\em cyclic quasi-symmetric} if 
	for any $t \ge 1$, any two increasing sequences $i_1 < \dots < i_t$ and $j_1 < \dots < j_t$ of positive integers, any sequence $m = (m_1, \ldots, m_t)$ of positive integers, and any {\em cyclic shift} $m' = (m'_1, \ldots, m'_t)$ of $m$, 
	the coefficients of
	$x_{i_1}^{m_1} \cdots x_{i_t}^{m_t}$ and 
	$x_{j_1}^{m'_1} \cdots x_{j_t}^{m'_t}$ in $f$ are equal. 
	In fact, the set $\cQSym$ of cyclic quasi-symmetric functions  is a graded ring satisfying
	\[
	\Sym \subseteq \cQSym \subseteq \QSym.
	\]
	We remark that all algebras (and coalgebras) in this paper are defined over $\ZZ$, so that all structure constants are (mostly nonnegative) integers.
	Consequently, they may also be defined over an arbitrary field.
	
	\medskip
	
	Toric posets were recently introduced by Develin, Macauley and Reiner~\cite{DMR}. 
	A toric analogue of $P$-partitions is presented in Section~\ref{sec:toric}.
	Toric $P$-partition enumerators, in the special case of total cyclic orders, 
	form a convenient $\QQ$-basis for a ring $\cQSym^-$, which is a subring of $\cQSym$.
	A slightly extended set actually forms a $\QQ$-basis for $\cQSym$ itself.
	The elements of this basis are called 
	{\em fundamental cyclic quasi-symmetric functions},
	are indexed by cyclic compositions of a positive integer $n$
	(equivalently, by cyclic equivalence classes of nonempty subsets $J \subseteq [n]:=\{1,2,\ldots,n\}$), 
	and are denoted $F^\cyc_{n,[J]}$.
	Normalized versions of them form $\ZZ$-bases for $\cQSym$ and $\cQSym^-$; 
	see Lemma~\ref{t:Fcyc_basis} and Observation~\ref{t:proper_basis} below.
	The full ring $\cQSym$ contains, 
	in particular, all symmetric functions.
	Its subring $\cQSym^-$ 
	forms a natural framework for the study of cyclic descent sets of combinatorial objects,
	possessing the non-Escher property;
	see
	Definition~\ref{def:cDes} and Remark~\ref{rem:non-Escher}.
	
	\medskip
	
	A toric analogue of Stanley's fundamental decomposition lemma for $P$-partitions~\cite[Lemma 3.15.3]{EC1}, given in Lemma~\ref{cyclic-fundamental-lemma} below, 
	is applied to provide a combinatorial interpretation of the resulting structure constants in terms of shuffles of cyclic permutations (more accurately: cyclic words), as follows.
	
	
	
	For a finite set $A$ of size $a$, let $\symm_A$ be 
	the set of all bijections $u : [a] \to A$,
	viewed as words $u = (u_1, \ldots, u_a)$. 
	Elements of $\symm_A$ will be called {\em bijective words}, or simply {\em words}.  Call $A$ the {\it support} of $u$.
	If $A = [a]$ then $\symm_A$ is the symmetric group $\symm_a$, whose elements are genuine permutations.
	
	\medskip
	

	If $A$ is a finite set of integers, or any finite totally ordered set, define the {\em cyclic descent set} of $u \in \symm_A$  by
	\begin{equation}
		\cDes(u) := \{1 \leq i \leq a \,:\, u_i > u_{i+1} \}
		\quad \subseteq [a],
	\end{equation}
	with the convention $u_{a+1} := u_1$.
	The {\em cyclic descent number} of $u$ is $\cdes(u) := |\cDes(u)|$.
	
	The cyclic descent set was introduced by Cellini
	in the study of Solomon's descent algebra~\cite{Cellini95, Cellini}.
	For enumerative and other
	aspects of this statistic 
	see, e.g.,~\cite{Fulman,DPS, Petersen, AP, BER}.
	Generalizations of cyclic descents to 
	standard Young tableaux and other combinatorial objects
	involve surprising algebraic, combinatorial and topological connections~\cite{Rhoades, ARR_CDes,
		Huang, HR}.
	
	\medskip
	
	A {\em cyclic 
		word} $[\DAG u]\in \symm_A/\ZZ_a$ is an equivalence class of elements of $\symm_A$ under the cyclic equivalence relation 
	$(u_1,\ldots,u_a) \sim (u_{i+1}\ldots,u_a,u_1,\ldots,u_i)$ for all $i$.
	A {\em cyclic shuffle}  of two cyclic 
	words $[\DAG u]$ and $[\DAG v]$ with disjoint supports
	is any cyclic equivalence class $[\DAG w]$ represented by a shuffle $w$ of a representative of $[\DAG u]$ and a representative of $[\DAG v]$.
	The set of all cyclic shuffles of $[\DAG u]$ and $[\DAG v]$ is denoted $[\DAG u] \shuffle_{\cyc} [\DAG v]$,
	and is a collection of cyclic equivalence classes.
	
	
	The following cyclic analogue of 
	the product formula for (ordinary) fundamental quasi-symmetric functions~\cite[Ex.\ 7.93]{EC2} provides a combinatorial interpretation for the structure constants of $\cQSym^-$.
	
	\begin{theorem}\label{thm:main1}
		Let $C = A \sqcup B$ be a disjoint union of finite sets of integers.
		For each $u \in \symm_A$ and $v \in \symm_B$, one has 
		the following expansion:
		\[
		F^\cyc_{|A|,\cDes(u)} \cdot F^\cyc_{|B|,\cDes(v)}
		= \sum_{[\DAG{w}] \in [\DAG{u}] \shuffle_{\cyc} [\DAG{v}] } F^\cyc_{|C|,\cDes(w)}.
		\]
	\end{theorem}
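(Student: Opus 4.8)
The plan is to run the toric analogue of Stanley's classical argument for the product rule $F_{\Des(u)}\cdot F_{\Des(v)}=\sum_{w\in u\shuffle v}F_{\Des(w)}$, using the toric $P$-partitions of Section~\ref{sec:toric} in place of ordinary $P$-partitions, and Lemma~\ref{cyclic-fundamental-lemma} in place of Stanley's fundamental decomposition lemma. First I would encode each word as a toric poset: to $u\in\symm_A$ attach the toric poset $\OOO_u$ with ground set $A$ whose underlying total cyclic order is the cyclic word $[\DAG{u}]$ and whose labeling is read off from $u$, and similarly $\OOO_v$ on $B$. By the construction of the fundamental cyclic quasi-symmetric functions in Section~\ref{sec:toric}, the toric $\OOO_u$-partition enumerator is exactly $F^\cyc_{|A|,\cDes(u)}$, and the toric $\OOO_v$-partition enumerator is $F^\cyc_{|B|,\cDes(v)}$.

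Next I would form the toric poset $\OOO_u\sqcup\OOO_v$ on $C=A\sqcup B$, the two total cyclic orders placed side by side with no order relations between $A$ and $B$. A monomial contributing to the product $F^\cyc_{|A|,\cDes(u)}\cdot F^\cyc_{|B|,\cDes(v)}$ is precisely a pair consisting of a toric $\OOO_u$-partition on $A$ and a toric $\OOO_v$-partition on $B$; since the supports are disjoint, this is the same data as a single toric $(\OOO_u\sqcup\OOO_v)$-partition on $C$, and the associated $x$-weights multiply, so the product equals the toric $(\OOO_u\sqcup\OOO_v)$-partition enumerator. Applying Lemma~\ref{cyclic-fundamental-lemma} to $\OOO_u\sqcup\OOO_v$ then expresses this enumerator as $\sum F^\cyc_{|C|,\cDes(w)}$, the sum running over the total cyclic orders $[\DAG{w}]$ on $C$ that extend $\OOO_u\sqcup\OOO_v$ (its toric linear extensions), each occurring once. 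To finish, I would observe that a total cyclic order on $C$ extends $\OOO_u\sqcup\OOO_v$ --- i.e.\ restricts to $[\DAG{u}]$ on $A$ and to $[\DAG{v}]$ on $B$ --- if and only if it lies in $[\DAG{u}]\shuffle_{\cyc}[\DAG{v}]$: deleting the $B$-entries from any linear representative of such a $[\DAG{w}]$ returns a representative of $[\DAG{u}]$ (and of $[\DAG{v}]$ after deleting $A$-entries), and conversely a shuffle of representatives of $[\DAG{u}]$ and $[\DAG{v}]$ visibly restricts correctly. Combining these statements yields the identity.

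The encoding of words as toric posets and the toric decomposition are essentially the content of Section~\ref{sec:toric} and Lemma~\ref{cyclic-fundamental-lemma} and can be quoted; the place where cyclicity genuinely bites is the invocation of Lemma~\ref{cyclic-fundamental-lemma}, where one must know that the toric $(\OOO_u\sqcup\OOO_v)$-partitions break up as a \emph{disjoint} union indexed by the toric linear extensions, with no overcounting. In the classical case this disjointness comes from resolving ties among incomparable elements of equal value via the global labeling; the toric version has to perform that tie-breaking consistently ``around the circle,'' which is exactly what Lemma~\ref{cyclic-fundamental-lemma} is designed to do. So the real work of this proof is (i) verifying that the hypotheses of Lemma~\ref{cyclic-fundamental-lemma} do apply to the \emph{disconnected} toric poset $\OOO_u\sqcup\OOO_v$, including the degenerate cases $|A|\le 1$ or $|B|\le 1$, and (ii) the elementary check that ``toric linear extension of $\OOO_u\sqcup\OOO_v$'' unwinds to ``cyclic shuffle of $[\DAG{u}]$ and $[\DAG{v}]$,'' with each such shuffle class appearing exactly once.
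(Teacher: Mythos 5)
Your proposal is correct and follows essentially the same route as the paper: the paper's proof of Theorem~\ref{product-of-F-corollary} likewise forms the disjoint union $\DAG{D}=\DAG{u}\sqcup\DAG{v}$, identifies the product with the toric $[\DAG{D}]$-partition enumerator, and applies Lemma~\ref{cyclic-fundamental-lemma} (via Proposition~\ref{cyclic-F-expansion}) together with the identification $\LLL^\tor([\DAG{D}])=[\DAG{u}]\shuffle_\cyc[\DAG{v}]$. The points you flag as needing care (disjointness in the decomposition, and the degenerate cases, which the paper disposes of by treating $A=\varnothing$ or $B=\varnothing$ separately) are exactly the ones the paper's argument addresses.
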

	
	See Theorem~\ref{product-of-F-corollary} below.
	
	
	
	\bigskip
	
	
	Recall that a skew shape is called a {\em ribbon} if it does not contain a $2\times  2$ square.
	The following positivity phenomenon is proved in Section~\ref{sec:expansion}.
	
	\begin{theorem}\label{thm:main2}
		For every skew shape $\lambda/\mu$ which is not a connected ribbon, 
		all the coefficients in the expansion of the skew Schur function $s_{\lambda/\mu}$
		in terms of normalized fundamental cyclic quasi-symmetric functions are nonnegative integers. 
	\end{theorem}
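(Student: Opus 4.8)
The plan is to combine the classical expansion of a skew Schur function in Gessel's fundamental quasi-symmetric functions with the existence of a cyclic descent map on $\SYT(\lambda/\mu)$. Write $n:=|\lambda/\mu|$. The two ingredients to recall first are: (i) the classical expansion $s_{\lambda/\mu}=\sum_{T\in\SYT(\lambda/\mu)}F_{n,\Des(T)}$, where $F_{n,S}$ denotes the ordinary Gessel fundamental indexed by $S\subseteq[n-1]$; and (ii) the description of the normalized fundamental cyclic quasi-symmetric functions inside $\QSym$ — namely that $\hF^\cyc_{n,[J]}$ is (a positive integer multiple of) the orbit-sum $\sum_{K\in[J]}F_{n,\,K\cap[n-1]}$, the sum running over the distinct sets $K$ in the cyclic orbit $[J]$ of a nonempty proper $J\subseteq[n]$, with $F_{n,\,K\cap[n-1]}$ read as the fundamental indexed by $K\setminus\{n\}\subseteq[n-1]$. (That these orbit-sums span $\cQSym$ over $\ZZ$ is the content of Lemma~\ref{t:Fcyc_basis}.)

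Since $\lambda/\mu$ is not a connected ribbon, I would then invoke the existence of a non-Escher cyclic extension $(\cDes,p)$ of the descent map on $\SYT(\lambda/\mu)$ — the key input announced in the abstract, supplied by the theory of cyclic descents on tableaux (\cite{ARR_CDes, Huang, HR}): the map $\cDes\colon\SYT(\lambda/\mu)\to 2^{[n]}$ satisfies $\cDes(T)\cap[n-1]=\Des(T)$ and $\emptyset\subsetneq\cDes(T)\subsetneq[n]$ for every $T$, while $p$ is a bijection of $\SYT(\lambda/\mu)$ with $\cDes(p(T))=\cDes(T)+1$ modulo $n$. This is the one substantial external input, and the step I expect to be the main obstacle: for general skew shapes the construction of such a map requires real work (promotion-type operators and the like), and connected ribbons are exactly the shapes for which it fails — so the dichotomy in the theorem mirrors the dichotomy in the cyclic-descent existence theorem.

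Granting this, the conclusion is bookkeeping. Since $p$ is a bijection with $\cDes\circ p=\cDes+1$, the multiset $\MMM:=\{\!\{\,\cDes(T):T\in\SYT(\lambda/\mu)\,\}\!\}$ is invariant under the cyclic shift on $2^{[n]}$; hence it is a disjoint union of full cyclic orbits, all consisting of nonempty proper subsets of $[n]$, each occurring with some multiplicity $m_{[J]}\in\ZZ_{\ge 0}$. Therefore
\[
s_{\lambda/\mu}=\sum_{T}F_{n,\Des(T)}=\sum_{T}F_{n,\,\cDes(T)\cap[n-1]}=\sum_{S\in\MMM}F_{n,\,S\cap[n-1]}=\sum_{[J]}m_{[J]}\sum_{K\in[J]}F_{n,\,K\cap[n-1]},
\]
which by ingredient (ii) is a nonnegative combination of the $\hF^\cyc_{n,[J]}$; and since the $\hF^\cyc_{n,[J]}$ form a $\ZZ$-basis while $s_{\lambda/\mu}\in\Sym\subseteq\cQSym$, the coefficients are automatically nonnegative integers. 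The remaining points are routine: interpreting $F_{n,\,S\cap[n-1]}$ when $n\in S$, checking that the non-Escher condition is non-vacuous (which holds whenever $\SYT(\lambda/\mu)\neq\emptyset$ and $n\ge 2$), and matching the normalization in ingredient (ii) — the last being forced by the $\ZZ$-basis property together with $s_{\lambda/\mu}\in\cQSym$.
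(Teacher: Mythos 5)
Your proposal is correct and follows essentially the same route as the paper: the paper likewise combines Gessel's expansion $s_{\lambda/\mu}=\sum_T F_{n,\Des(T)}$ with the cyclic extension $(\cDes,p)$ of \cite{ARR_CDes} and the identity $\hF^{\cyc}_{n,[J]}=\sum_{K\in[J]}F_{n,K\cap[n-1]}$ (Proposition~\ref{t:Fcyc_from_F} after normalization), obtaining the coefficients as the fiber sizes $m^{\cyc}(A)$ in Theorem~\ref{conj:QS1} and Corollary~\ref{t:Schur_in_hFcyc}. Your orbit-grouping of the multiset $\{\!\{\cDes(T)\}\!\}$ is just the paper's computation read in the opposite direction, and your identification of ingredient (ii) is exact (the multiple is $1$, since $F^{\cyc}_{n,J}=d_J\sum_{K\in[J]}F_{n,K\cap[n-1]}$).
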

	
	A more precise statement, which provides a combinatorial interpretation for the coefficients, is given in  
	Corollary~\ref{t:Schur_in_hFcyc} below.
	The proof relies on the existence of a cyclic extension of the descent map on standard Young tableaux (SYT) of shape $\lambda/\mu$, which was proved in~\cite{ARR_CDes}. 
	Using 
	Postnikov's result regarding toric Schur functions, 
	one deduces that the coefficients  in the expansion of a non-hook Schur function $s_\lambda$
	in terms of fundamental cyclic quasi-symmetric functions
	are equal to certain Gromov-Witten invariants, see Remark~\ref{rem_Postnikov} below. 
	
	\bigskip
	
	
	Applications to  
	the enumeration of SYT and cyclic shuffles of permutations with prescribed cyclic descent set or prescribed number of cyclic descents follow from this theory. Using a ring homomorphism from $\cQSym$ to the ring of formal power series $\ZZ[[q]]_{\odot}$, with product defined by $q^i \odot q^j := q^{\max(i,j)}$,
	Theorem~\ref{thm:main1} implies the following cyclic analogue 
	of a classical result of Stanley~\cite[Prop.\ 12.6]{Stanley_memoir}.
	
	\begin{theorem}
		Let $A$ and $B$ be two disjoint sets of integers,
		with $|A|=m$ and $|B|=n$.
		For each  $u \in \symm_A$ and $v \in \symm_B$,
		if $\cdes(u) = i$ and $\cdes(v) = j$ then the number of cyclic shuffles $[\DAG w]$ in $[\DAG{u}] \shuffle_{\cyc} [\DAG{v}]$ with $\cdes(w)=k$ is equal to
		\[
		\frac{k(m-i)(n-j) + (m+n-k)ij}{(m+j-i)(n+i-j)} \binom{m+j-i}{k-i} \binom{n+i-j}{k-j}. 
		\]
	\end{theorem}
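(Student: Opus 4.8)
The plan is to apply to Theorem~\ref{thm:main1} the ring homomorphism $\psi\colon\cQSym\to\ZZ[[q]]_\odot$ given by the substitution $x_j\mapsto q^j$, where products in the target are formed with $q^i\odot q^j=q^{\max(i,j)}$. Under the identification of $\ZZ[[q]]_\odot$ with the ring of functions $\NN\to\ZZ$ (pointwise product) via $q^j\mapsto\bigl(N\mapsto[\,N\ge j\,]\bigr)$, the map $\psi$ is the package of all-ones specializations $f\mapsto f(1^N):=f(\underbrace{1,\dots,1}_N,0,0,\dots)$, so the coefficient of $q^N$ in $\psi(f)$ is the discrete derivative $f(1^N)-f(1^{N-1})$; that $\psi$ is a ring homomorphism follows from commutativity, associativity and distributivity of $\odot$ together with finiteness of each specialization, the point being that $\odot$ is designed exactly so that the discrete derivative is multiplicative. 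Applying $\psi$ to Theorem~\ref{thm:main1} gives
\[
F^\cyc_{m,\cDes(u)}(1^N)\cdot F^\cyc_{n,\cDes(v)}(1^N)=\sum_{[\DAG w]\in[\DAG u]\shuffle_{\cyc}[\DAG v]}F^\cyc_{m+n,\cDes(w)}(1^N)\qquad(N\ge0).
\]
By the cyclic symmetry of $\cQSym$, the quantity $\omega_{a,d}(N):=F^\cyc_{a,T}(1^N)$ depends only on $a$ and $d=|T|$; writing $i:=\cdes(u)$, $j:=\cdes(v)$ and grouping the cyclic shuffles on the right by $k:=\cdes(w)$ turns the identity into $\omega_{m,i}(N)\,\omega_{n,j}(N)=\sum_k N_k\,\omega_{m+n,k}(N)$, where $N_k$ is the number to be computed.

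Next I would evaluate this cyclic order polynomial. From the toric $P$-partition description of $F^\cyc$ one gets $\omega_{a,d}(N)$ by summing the ordinary order polynomials $N\mapsto\binom{N+a-1-|S|}{a}$ over the $a$ (distinct) cyclic rotations of a representative bijective word of cyclic descent number $d$: exactly $d$ of these rotations have an honest descent in position $a$, so the sum is $d\binom{N+a-d}{a}+(a-d)\binom{N+a-1-d}{a}$, which collapses by Pascal's rule to the closed form $\omega_{a,d}(N)=N\binom{N+a-1-d}{a-1}$ (consistency check: $\omega_{2,1}(N)=N^2$, matching $F^\cyc_{2,[\{1\}]}=p_1^2$ forced by Theorem~\ref{thm:main1} applied to two singletons). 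Substituting and cancelling one factor of $N$ turns the relation above into the polynomial identity, valid for all $N\ge0$,
\[
N\binom{N+m-1-i}{m-1}\binom{N+n-1-j}{n-1}=\sum_k N_k\binom{N+m+n-1-k}{m+n-1}.
\]
Since $\bigl\{\binom{N+m+n-1-k}{m+n-1}\bigr\}_{k=0}^{m+n-1}$ is a $\ZZ$-basis of the polynomials of degree at most $m+n-1$, the integers $N_k$ are uniquely determined, and I would extract them via the generating-function identity $\sum_{N\ge0}\binom{N+M-k}{M}q^N=q^k/(1-q)^{M+1}$ with $M=m+n-1$, which yields
\[
N_k=[q^k]\Bigl((1-q)^{m+n}\sum_{N\ge0}N\binom{N+m-1-i}{m-1}\binom{N+n-1-j}{n-1}q^N\Bigr),
\]
a finite computation once the inner series is written as an explicit rational function with denominator $(1-q)^{m+n}$.

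The crux — and the step I expect to be hardest — is identifying this coefficient with the stated closed form. The rational prefactor $\bigl(k(m-i)(n-j)+(m+n-k)ij\bigr)/\bigl((m+j-i)(n+i-j)\bigr)$, whose numerator is genuinely a sum of two products, signals that $N_k$ is not a single Chu--Vandermonde convolution but a combination of two contributions, traceable to the extra factor $N$ (the ``derivative'' by which the cyclic order polynomial differs from the non-cyclic one), so the manipulation does not collapse to one named identity. I would carry it out by expanding $N\binom{N+m-1-i}{m-1}\binom{N+n-1-j}{n-1}$ as a short $\ZZ$-combination of products of shifted binomial coefficients, applying the Vandermonde convolution to each summand, and recombining into a single product of two binomials times the common rational factor. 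As context and sanity checks I would note that deleting the factor $N$ and raising both lower binomial indices by one recovers the non-cyclic connection numbers, i.e.\ Stanley's \cite[Prop.~12.6]{Stanley_memoir} of which this is the announced cyclic analogue, and that the case $m=n=1$ (so $i=j=0$) gives $N_1=1$, matching the unique cyclic shuffle of two singletons.
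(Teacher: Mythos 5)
Your proposal follows the paper's own route essentially step for step: your $\psi$ is the paper's homomorphism $\Psi$ into $\ZZ[[q]]_{\odot}$, your formula $\omega_{a,d}(N)=N\binom{N+a-1-d}{a-1}$ is exactly the content of Lemma~\ref{t:Psi_cyc_M_and_F} (the paper writes $\Psi(F^{\cyc}_{a,T})=(1-q)\sum_r\binom{r+a-d-1}{a-1}\,r\,q^r$, whose coefficient of $q^N$ is the discrete derivative of your $\omega_{a,d}$), and your reduction to
\[
N\binom{N+m-1-i}{m-1}\binom{N+n-1-j}{n-1}=\sum_k N_k\binom{N+m+n-1-k}{m+n-1}
\]
is precisely the identity displayed in the proof of Theorem~\ref{t:cdes_of_cyclic_shuffle}. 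Everything you actually carry out is correct, including the uniqueness of the $N_k$ in that shifted-binomial basis and your sanity checks.

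The one place you stop short is the step you yourself flag as the crux, and the tool you name for it is not adequate: Chu--Vandermonde alone will not expand the product $\binom{N+m-1-i}{m-1}\binom{N+n-1-j}{n-1}$ in the basis $\bigl\{\binom{N+m+n-2-t}{m+n-2}\bigr\}_t$. The coefficients in that expansion are $\binom{m+j-i-1}{t-i}\binom{n+i-j-1}{t-j}$, and establishing this is a balanced ${}_3F_2$ evaluation --- the Pfaff--Saalsch\"utz triple-binomial identity~\eqref{e.pfaff}, which is what the paper invokes (in the form~\eqref{e.pfaff1} with $m-1,n-1$ in place of $m,n$), not a single Vandermonde convolution. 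After that one still needs the elementary splitting $r\binom{r-t+m+n-2}{m+n-2}=t\binom{r-t+m+n-1}{m+n-1}+(m+n-t-1)\binom{r-t+m+n-2}{m+n-1}$ to absorb the extra factor of $N$ and produce the two-term expression $a(m,n,i,j,k)$, followed by an algebraic simplification to reach the single-fraction form in the statement. So the architecture is identical to the paper's; what is missing is the execution of the coefficient extraction, and the identity it rests on is Pfaff--Saalsch\"utz rather than Vandermonde.
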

	
	See Corollary~\ref{t:num_shuffles_given_cdes} below.
	More enumerative applications are given in Section~\ref{sec:enumeration}.
	
	\bigskip
	
	
	The group ring $\ZZ[\symm_n]$ has a distinguished subring, {\em Solomon's descent algebra} $\mathfrak{D}_n$~\cite{Solomon}, with basis elements
	\[
	D_I 
	:= \sum_{\substack{\pi \in \symm_n \\ \Des(\pi) = I}} \pi
	\qquad (I \subseteq [n-1]).
	\]
	An operation dual to the product in $\mathfrak{D}_n$ is the {\em internal coproduct} $\Delta_n$ on $\QSym_n$; 
	see Section~\ref{sec:co} for its definition.
	
	\begin{theorem}
		$\cQSym_n$ and $\cQSym_n^-$ are right comodules of $\QSym_n$ with respect to the internal coproduct:
		\[
		\Delta_n(\cQSym_n) \subseteq \cQSym_n \otimes \QSym_n 
		\]
		and
		\[
		\Delta_n(\cQSym_n^-) \subseteq \cQSym_n^- \otimes \QSym_n. 
		\]
		The structure constants for $\cQSym_n^-$ are nonnegative integers.
	\end{theorem}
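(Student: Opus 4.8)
Before any computation, note that the comodule axioms come for free: $(\QSym_n,\Delta_n)$ is a coassociative, counital coalgebra, being the graded dual of the associative, unital algebra $\mathfrak{D}_n$, so any subspace of $\QSym_n$ on which $\Delta_n$ closes up is automatically a subcomodule of $\QSym_n$ over itself. Thus the content is the two inclusions together with the nonnegativity of the $\cQSym_n^-$ structure constants. I would run everything through the transport–matrix description of the internal coproduct on the monomial basis (equivalently, the product formula in $\mathfrak{D}_n$; see Section~\ref{sec:co}): $\Delta_n(M_\alpha)=\sum_A M_{\mathrm{r}(A)}\otimes M_{\mathrm{c}(A)}$, summed over nonnegative integer matrices $A$ with no zero row and no zero column whose nonzero entries, read left-to-right along successive rows, spell out the composition $\alpha$; here $\mathrm{r}(A)$ and $\mathrm{c}(A)$ are the row-sum and column-sum compositions. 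I will also use three facts from earlier: $\cQSym_n$ is the fixed space of the operator $\rho$ cyclically rotating the $M$-index, and the cyclic monomial functions $g_{[\alpha]}:=\sum_{\alpha'\in[\alpha]}M_{\alpha'}$ are a $\ZZ$-basis of it; the normalized fundamental cyclic functions $\hF^{\cyc}_{n,[J]}$ are $\ZZ$-bases of $\cQSym_n$ and of $\cQSym_n^-$, the latter omitting exactly the Escher index $[[n]]$ (Lemma~\ref{t:Fcyc_basis}, Observation~\ref{t:proper_basis}); and for $[J]\neq[[n]]$ the function $\hF^{\cyc}_{n,[J]}$ is the toric $P$-partition enumerator of a genuine total cyclic order, hence has nonnegative coefficients in the monomial basis and, being cyclic quasi-symmetric, in the $g_{[\gamma]}$ basis.

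For the first inclusion I apply $\Delta_n$ to $g_{[\alpha]}$, obtaining $\Delta_n(g_{[\alpha]})=\sum_A M_{\mathrm{r}(A)}\otimes M_{\mathrm{c}(A)}$, now over all admissible $A$ whose reading word lies in the cyclic class $[\alpha]$. The decisive point is that cyclically shifting the rows of such an $A$ (top row to the bottom) is a bijection of this index set that fixes $\mathrm{c}(A)$, cyclically rotates $\mathrm{r}(A)$, and cyclically rotates the reading word — so it stays in $[\alpha]$. Re-indexing the sum by this bijection shows $\Delta_n(g_{[\alpha]})$ is fixed by $\rho\otimes\mathrm{id}$, and the $\rho\otimes\mathrm{id}$-fixed submodule of $\QSym_n\otimes\QSym_n$ is exactly $\cQSym_n\otimes\QSym_n$; hence $\Delta_n(\cQSym_n)\subseteq\cQSym_n\otimes\QSym_n$. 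Grouping the terms by the cyclic class of $\mathrm{r}(A)$ and by $\mathrm{c}(A)$, the same bijection gives $\Delta_n(g_{[\alpha]})=\sum_{[\gamma],\beta}c^{[\alpha]}_{[\gamma],\beta}\,g_{[\gamma]}\otimes M_\beta$ with each $c^{[\alpha]}_{[\gamma],\beta}$ a cardinality of transport matrices, so in these monomial-type bases even $\cQSym_n$ already has nonnegative integer comodule structure constants.

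For the second inclusion it suffices, by linearity, to show that $(\epsilon\otimes\mathrm{id})\circ\Delta_n$ annihilates $\hF^{\cyc}_{n,[J]}$ for every $[J]\neq[[n]]$, where $\epsilon\colon\cQSym_n\to\ZZ$ picks out the coefficient of the Escher basis vector, so that $\cQSym_n^-=\ker\epsilon$. The leverage is that such an $\hF^{\cyc}_{n,[J]}$ is the toric enumerator of a genuine cyclic word $[\DAG w]$ with $\cDes(w)\in[J]$, and genuine cyclic words enjoy the non-Escher property: no rotation is all-ascent or all-descent (cf.\ Remark~\ref{rem:non-Escher}). The mechanism I would aim for is a toric analogue, for the internal coproduct, of Stanley's fundamental lemma for $P$-partitions — parallel to the role Lemma~\ref{cyclic-fundamental-lemma} plays for the external product in Theorem~\ref{thm:main1} — expressing $\Delta_n\hF^{\cyc}_{n,[J]}$ as a sum over superposed cyclic data in which the first component is again a genuine cyclic word (equivalently, every transport configuration that occurs has non-Escher row behavior), so the Escher term never occupies the first tensor slot. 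Making this transport of the non-Escher property precise is the step I expect to be the crux of the whole theorem.

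The nonnegativity of the $\cQSym_n^-$ structure constants (in the $\hF^{\cyc}$ basis of $\cQSym_n^-$ and the fundamental basis of $\QSym_n$) should then be read directly off that combinatorial model, where they appear as cardinalities of the superposed cyclic configurations. One can also try to deduce it from Solomon's theorem — dualized, $\Delta_n(F_{n,S})=\sum_{T,U}c^S_{TU}\,F_{n,T}\otimes F_{n,U}$ with $c^S_{TU}\in\ZZ_{\geq0}$, so that $\Delta_n\hF^{\cyc}_{n,[J]}$ is a nonnegative integer combination of the $F_{n,T}\otimes F_{n,U}$ which, by the second inclusion, lies in $\cQSym_n^-\otimes\QSym_n$ — but the final move of re-expanding the first tensor factor in the normalized cyclic fundamental basis does not preserve nonnegativity for an arbitrary element of $\QSym_n$ that happens to lie in $\cQSym_n^-$, so this route still needs the special structure of the elements actually produced, which is exactly what the combinatorial model is meant to supply.
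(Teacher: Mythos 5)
Your argument for the first inclusion $\Delta_n(\cQSym_n)\subseteq \cQSym_n\otimes\QSym_n$ is sound and genuinely different from the paper's: you work in the monomial basis and observe that cyclically shifting the rows of a transport matrix fixes the column-sum composition, rotates the row-sum composition, and rotates the reading word, so $\Delta_n$ of an orbit sum is fixed by $\rho\otimes\mathrm{id}$. The paper instead computes $\Delta_n$ directly on the fundamental cyclic functions; your route is arguably cleaner for this half and gives nonnegative integer constants in the monomial-type bases for free. (For completeness one still has to note that the two exceptional basis elements $\hF^{\cyc}_{n,\varnothing}=h_n$ and $\hF^{\cyc}_{n,[n]}=e_n$ are covered; your monomial argument does cover all of $\cQSym_n$ uniformly, whereas the paper treats them separately via the Cauchy-type identities $\Delta_n(h_n)=\sum_\lambda s_\lambda\otimes s_\lambda$ and $\Delta_n(e_n)=\sum_\lambda s_\lambda\otimes s_{\lambda'}$.)

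However, for the second inclusion and the nonnegativity of the $\cQSym_n^-$ structure constants you only describe what a proof \emph{would} need -- a ``toric analogue of the fundamental lemma for the internal coproduct'' -- and you explicitly flag this as the unproved crux. That is exactly the content of the theorem, so the proposal has a genuine gap there. The paper's mechanism is short and worth knowing: by Proposition~\ref{t:Fcyc_from_F}, $F^{\cyc}_{n,\cDes(\pi)}=\sum_{i\in[n]}F_{n,\Des(\pi c^i)}$ where $c$ is the $n$-cycle $i\mapsto i+1$. Applying Gessel's formula \eqref{eq:F_to_F_times_F} to each $F_{n,\Des(\pi c^i)}$ and using the bijection $(\sigma_1,\sigma_2)\mapsto(\sigma_1 c^i,\sigma_2)$ between factorizations $\sigma_2\sigma_1=\pi$ and $\sigma_2\sigma_1'=\pi c^i$, the sum over $i$ reassembles in the \emph{first} tensor slot to give
\[
\Delta_n\bigl(F^{\cyc}_{n,\cDes(\pi)}\bigr)
=\sum_{\substack{\sigma_1,\sigma_2\in\symm_n\\ \sigma_2\sigma_1=\pi}}
F^{\cyc}_{n,\cDes(\sigma_1)}\otimes F_{n,\Des(\sigma_2)}.
\]
Since each $\sigma_1$ is a genuine permutation, Cellini's $\cDes(\sigma_1)$ is automatically non-Escher, which is precisely the ``transport of the non-Escher property'' you were looking for; and the structure constants appear as the cardinalities $\tilde a_B^{AJ}$ of sets of factorizations, giving nonnegativity. (Integrality of the normalized constants $\frac{d_A}{d_B}\tilde a_B^{AJ}$ still requires a separate argument, which the paper supplies indirectly by expanding in monomials.) Without this computation, or an equivalent one, your proposal does not establish the main assertions.
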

	For a more detailed description, including a combinatorial interpretation of the coefficients, see Theorem~\ref{t:cQSym_right_comodule}.
	This reproves the following result, obtained by Moszkowski~\cite{Mo} and by Aguiar and Petersen~\cite{AP} using different methods.
	
	\begin{corollary}
		For $n > 1$
		let $c2_{0,n}^{[n]}$ be the set of equivalence classes, under cyclic rotations, of subsets $\varnothing \subsetneq J \subsetneq [n]$.
		Defining
		\[
		cD_A
		:= \sum_{\substack{\pi \in \symm_n \\ \cDes(\pi) \in A}} \pi
		\qquad (A \in c2_{0,n}^{[n]}),
		\]
		the additive free abelian group
		\[
		\mathfrak{cD}_n := \spn_{\,\ZZ} \{cD_A \,:\, A \in c2_{0,n}^{[n]}\}
		\]
		is a left module for Solomon's descent algebra $\mathfrak{D}_n$.
	\end{corollary}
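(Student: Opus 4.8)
The plan is to deduce the statement from the comodule structure of Theorem~\ref{t:cQSym_right_comodule} by passing to duals. Recall that the internal coproduct $\Delta_n$ on $\QSym_n$ is, by construction, adjoint to the (group-algebra) product of $\mathfrak{D}_n$ with respect to the perfect pairing $\langle F_{n,I},D_J\rangle:=\delta_{I,J}$ that identifies $\mathfrak{D}_n$ with the graded dual of $\QSym_n$; hence a right $\QSym_n$-comodule is the same datum as a left $\mathfrak{D}_n$-module. Since, by Theorem~\ref{t:cQSym_right_comodule}, $\cQSym_n^-$ is a right $\QSym_n$-comodule whose coproduct has explicit nonnegative integer coefficients, it carries a left $\mathfrak{D}_n$-action, and so does its graded dual.

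Next I would transport this action into the group algebra. Because the cyclic descent set of any $\pi\in\symm_n$ is a proper nonempty subset of $[n]$, the assignment $\pi\mapsto\hF^{\cyc}_{n,[\cDes(\pi)]}$ (using the normalized $\ZZ$-basis of $\cQSym_n^-$; see Observation~\ref{t:proper_basis}) defines a surjection $\Theta\colon\ZZ[\symm_n]\twoheadrightarrow\cQSym_n^-$, and a direct count gives $\Theta(cD_A)=\bigl|\{\pi:[\cDes(\pi)]=A\}\bigr|\cdot\hF^{\cyc}_{n,A}$, a nonzero multiple of the $A$-th basis vector. Dually, the transpose of $\Theta$ sends the dual basis $\{(\hF^{\cyc}_{n,[J]})^{*}\}$ precisely to $\{cD_A:A\in c2_{0,n}^{[n]}\}$, identifying $\mathfrak{cD}_n$ (as a free abelian group with distinguished basis $\{cD_A\}$) with the dual of $\cQSym_n^-$ sitting inside $\ZZ[\symm_n]^{*}\cong\ZZ[\symm_n]$. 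The remaining point is that, under this identification, the $\mathfrak{D}_n$-action coming from $\Delta_n$ is genuinely left multiplication inside $\ZZ[\symm_n]$; unwinding the pairing, this is equivalent to the assertion that, for fixed $I\subseteq[n-1]$ and a fixed class $A$, the number of factorizations
\[
m_\sigma \ :=\ \#\bigl\{(\alpha,\beta)\in\symm_n\times\symm_n \ :\ \alpha\beta=\sigma,\ \Des(\alpha)=I,\ [\cDes(\beta)]=A\bigr\}
\]
depends only on the cyclic descent class $[\cDes(\sigma)]$, for then $D_I\cdot cD_A=\sum_{A'}m_{A'}\,cD_{A'}\in\mathfrak{cD}_n$ with $m_{A'}$ the common value of $m_\sigma$ over $[\cDes(\sigma)]=A'$, and these $m_{A'}$ are exactly the nonnegative coproduct coefficients of Theorem~\ref{t:cQSym_right_comodule}.

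The hard part will be exactly this invariance, which is the cyclic counterpart of Stanley's derivation of Solomon's theorem from $P$-partition theory. In the linear case one uses the identity $\Delta_n\bigl(F_{n,\Des(\pi)}\bigr)=\sum_{\alpha\beta=\pi}F_{n,\Des(\alpha)}\otimes F_{n,\Des(\beta)}$, extracted from Stanley's fundamental decomposition lemma; here the corresponding statement is
\[
\Delta_n\bigl(\hF^{\cyc}_{n,[\cDes(\pi)]}\bigr)\ =\ \sum_{\alpha\beta=\pi}\hF^{\cyc}_{n,[\cDes(\alpha)]}\otimes F_{n,\Des(\beta)},
\]
which I would obtain from the toric fundamental lemma, Lemma~\ref{cyclic-fundamental-lemma}, applied to a total cyclic order, combined with the multivariate ($P$-partition) description of the internal coproduct. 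The delicate point is that the right-hand side must depend on $\pi$ only through $[\cDes(\pi)]$: unlike in the linear situation, two permutations can share a cyclic descent class without one being a cyclic rotation of the other's one-line word, so the identity cannot be reduced to the classical case by rotating, and the toric $P$-partition machinery is doing the essential work. Once this identity is established, the Corollary follows, with the nonnegativity of the structure constants inherited directly from Theorem~\ref{t:cQSym_right_comodule}.
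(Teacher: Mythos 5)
Your overall strategy --- dualizing the right-$\QSym_n$-comodule structure of $\cQSym_n^-$ from Theorem~\ref{t:cQSym_right_comodule} to obtain a left $\mathfrak{D}_n$-action on $\mathfrak{cD}_n$ --- is exactly the paper's route; the Corollary is derived there as an immediate consequence of the invariance of the coefficients $\tilde{a}_{B}^{AJ}$ established inside the proof of that theorem. However, your pivotal displayed identity has the two factors interchanged, and here the interchange is not an innocent convention. With the paper's coproduct, whose first tensor slot corresponds to the \emph{right} factor $\sigma_1$ of the group product $\sigma_2\sigma_1=\pi$ in \eqref{eq:F_to_F_times_F}, the correct statement is
\[
\Delta_n\bigl(F^{\cyc}_{n,\cDes(\pi)}\bigr)\ =\ \sum_{\alpha\beta=\pi}F^{\cyc}_{n,\cDes(\beta)}\otimes F_{n,\Des(\alpha)},
\]
i.e.\ the cyclic descent must sit on the factor that absorbs right multiplication by the $n$-cycle $c$. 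The version you display, with $\cDes(\alpha)$ on the \emph{left} factor, amounts to asserting that $\#\{(\alpha,\beta):\alpha\beta=\pi,\ \cDes(\alpha)\in A,\ \Des(\beta)=J\}$ depends only on $[\cDes(\pi)]$ --- which is precisely the statement that $\mathfrak{cD}_n$ is a \emph{right} $\mathfrak{D}_n$-module, something the paper explicitly notes is false in general. Your count $m_\sigma$ places the conditions correctly (linear descent on the left factor, cyclic on the right), so the two halves of your argument are inconsistent with each other, and only one of them is provable.

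The second gap is that you defer the key invariance to toric $P$-partition machinery, which is neither carried out nor needed: the paper's proof is a short computation. One writes $F^{\cyc}_{n,\cDes(\pi)}=\sum_{i\in[n]}F_{n,\Des(\pi c^i)}$ by Proposition~\ref{t:Fcyc_from_F}, applies the classical identity \eqref{eq:F_to_F_times_F} to each summand, and reparametrizes the right factor $\sigma_1\mapsto\sigma_1 c^i$ (legitimate since $\sigma_2(\sigma_1c^i)=\pi c^i$); summing over $i$ reassembles $F^{\cyc}_{n,\cDes(\sigma_1)}$ in the first slot. Your worry that two permutations can share a cyclic descent class without being rotations of one another is then dissolved by linear algebra rather than by any toric argument: the left-hand side depends only on $[\cDes(\pi)]$, and since the elements $F^{\cyc}_{n,A}\otimes F_{n,J}$ with $A\in c2_{0,n}^{[n]}$ are linearly independent (Observation~\ref{t:proper_basis}), the factorization counts are forced to coincide for all $\pi$ in the same class. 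With the factors put in the right order and this computation supplied, your dualization does go through.
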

	In general, $\mathfrak{cD_n}$ 
	is not an algebra, namely, is not closed under the multiplication in $\ZZ[\symm_n]$.
	
	
	
	\bigskip
	
	We conclude the paper with final remarks and open problems.

	\section{The ring of cyclic quasi-symmetric functions}
	
	Recall from \cite{Gessel} the following basic definitions:
	
	A {\em quasi-symmetric function} is
	a formal power series $f \in \ZZ[[x_1, x_2, \ldots]]$  of bounded degree such that,
	for any $t \ge 1$, any two increasing sequences $i_1 < \dots < i_t$
	and $i'_1 < \dots < i'_t$ of positive integers, and any sequence
	$(m_1, \ldots, m_t)$ of positive integers, the coefficients of
	$x_{i_1}^{m_1} \cdots x_{i_t}^{m_t}$ and 
	$x_{i'_1}^{m_1} \cdots x_{i'_t}^{m_t}$ in $f$ are equal.
	Denote by $\QSym$ the set of all quasi-symmetric functions, 
	and by $\QSym_n$ the set of all quasi-symmetric functions
	which are homogeneous of degree $n$.
	
	For any positive integer $n$, 
	there is a natural bijection $\co: 2^{[n-1]} \to \Comp_n$, 
	from the power set of $[n-1] = \{1, \ldots, n-1\}$ 
	to the set of all compositions of $n$,
	defined by 
	\[
	J = \{j_1 < \dots < j_t\} \subseteq [n-1] \,\then\, 
	\co(J) := (j_1, j_2-j_1, \ldots, j_t-j_{t-1}, n-j_t). 
	\]
	In particular, $\co(\varnothing) = (n)$.
	
	
	The {\em monomial quasi-symmetric function} 
	corresponding to a subset $J \subseteq [n-1]$ is defined by
	\[
	M_{n,J} := \sum x_{i_1} \cdots x_{i_n},
	\]
	where the sum extends over all sequences $(i_1, \ldots, i_n)$
	of positive integers such that $j \in J \then i_j < i_{j+1}$ and
	$j \not\in J \then i_j = i_{j+1}$.
	Equivalently, letting $\co(J) = (m_0, \ldots, m_t)$ 
	where $t=\#J$, 
	\[
	M_{n,J} = M_{(m_0,\ldots,m_t)}
	:= \sum_{i_0 < \dots < i_t} x_{i_0}^{m_0} \cdots x_{i_t}^{m_t}. 
	\]
	The set $\{M_{n,J}:\ J\subseteq [n-1]\}$ forms a basis for the additive abelian group $\QSym_n$.
	
	Similarly, the {\em fundamental quasi-symmetric function}
	corresponding to a subset $J \subseteq [n-1]$ is defined by
	\[
	F_{n,J} := \sum x_{i_1} \cdots x_{i_n},
	\]
	where the sum extends over all sequences $(i_1, \ldots, i_n)$
	of positive integers such that $j \in J \then i_j < i_{j+1}$ and
	$j \not\in J \then i_j \le i_{j+1}$.
	For all $J \subseteq [n-1]$ we have
	\[
	F_{n,J} = \sum_{K \supseteq J} M_{n,K} 
	\]
	and consequently, by inclusion-exclusion,
	\[
	M_{n,J} = \sum_{K \supseteq J} (-1)^{\#(K \setminus J)} F_{n,K}.
	\]
	Thus, the set $\{F_{n,J}:\ J\subseteq [n-1]\}$ forms a basis for the additive abelian group $\QSym_n$.
	
	\bigskip
	
	We shall now define cyclic analogues of these concepts.
	
	\begin{defn}\label{def:cQSym}
		A {\em cyclic quasi-symmetric function} is
		a formal power series $f \in \ZZ[[x_1, x_2, \ldots]]$ of bounded degree such that,
		for any $t \ge 1$, any two increasing sequences $i_1 < \dots < i_t$
		and $i'_1 < \dots < i'_t$ of positive integers, any sequence
		$m = (m_1, \ldots, m_t)$ of positive integers, and any {\em cyclic shift}
		$m' = (m'_1, \ldots, m'_t)$ of $m$, the coefficients of
		$x_{i_1}^{m_1} \cdots x_{i_t}^{m_t}$ and 
		$x_{i'_1}^{m'_1} \cdots x_{i'_t}^{m'_t}$ in $f$ are equal. 
	\end{defn}
	
	Denote by $\cQSym$ the set of all cyclic quasi-symmetric functions, 
	and by $\cQSym_n$ the set of all cyclic quasi-symmetric functions
	which are homogeneous of degree $n$.
	
	\begin{obs}\label{t:QSym_subring}
		$\QSym$, $\cQSym$ and the set $\Sym$ of symmetric functions 
		(sometimes denoted $\Lambda$) are graded abelian groups satisfying
		\begin{equation}
			\label{ring-inclusions1}
			\Sym \subseteq \cQSym \subseteq \QSym.
		\end{equation}
		It is not too difficult to check that they are also rings, that is, closed under multiplication of formal power series.  
		For $\Sym$ and $\QSym$ this is well-known.
		The proof for $\cQSym$,
		with a combinatorial interpretation of the structure constants,
		is deferred to the end of Section~\ref{sec:toric_posets};
		see Proposition~\ref{t:cQSym_is_a_ring}.
	\end{obs}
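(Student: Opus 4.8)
The plan is to check, in order, (i) that $\Sym$, $\cQSym$ and $\QSym$ are graded abelian groups, (ii) that the two inclusions \eqref{ring-inclusions1} hold, and (iii) that each is closed under multiplication. Only (iii), for $\cQSym$, is genuinely nontrivial; everything else is bookkeeping with the definitions.

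For (i), the key observation is that in every one of the three defining conditions the two monomials being compared have equal total degree, since $m'$ (when it appears) is a cyclic shift, hence a permutation, of $m$. Thus, writing a bounded-degree series as $f = \sum_n f_n$ with $f_n$ its homogeneous component of degree $n$, the condition holds for $f$ if and only if it holds for each $f_n$ separately; this gives the grading, $\Sym = \bigoplus_n \Sym_n$ and likewise for the others. Since each condition is a system of homogeneous linear equations in the coefficients of $f$, each solution set is closed under $\ZZ$-linear combinations, so we obtain abelian groups.

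For (ii), the inclusion $\cQSym \subseteq \QSym$ is immediate: specializing Definition~\ref{def:cQSym} to the trivial cyclic shift $m' = m$ recovers the defining condition for $\QSym$. For $\Sym \subseteq \cQSym$, I would first note that the symmetry condition, applied with the index sequence altered by an adjacent transposition and the resulting monomial rewritten in increasing index order, shows that swapping two adjacent exponents (with indices kept in increasing order) does not change a coefficient; composing such swaps, any permutation of the exponents — in particular a cyclic shift — preserves coefficients. Combining this with the index-permutation part of symmetry yields exactly the condition in Definition~\ref{def:cQSym}, so $\Sym \subseteq \cQSym$, establishing \eqref{ring-inclusions1}.

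For (iii): a product of two bounded-degree power series is again of bounded degree, so the only point is closure of each set under this product. For $\Sym$ and $\QSym$ this is classical (see, e.g., \cite{Gessel} and \cite[Ch.~7]{EC2}), and I would simply cite it. The hard part — indeed the only real obstacle in the statement — is closure of $\cQSym$ under multiplication: cyclic quasi-symmetry is not visibly preserved when one multiplies out monomial or fundamental expansions, so the clean route is through toric $P$-partition enumerators together with the cyclic-shuffle description of the structure constants. I would therefore defer this point to Proposition~\ref{t:cQSym_is_a_ring} in Section~\ref{sec:toric_posets}, exactly as the authors do.
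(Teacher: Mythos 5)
Your proposal is correct and follows essentially the same route as the paper: the graded abelian group structure and the inclusions are routine consequences of the definitions (the paper leaves them implicit), closure of $\Sym$ and $\QSym$ under multiplication is cited as classical, and the genuinely nontrivial point — closure of $\cQSym$ — is deferred to Proposition~\ref{t:cQSym_is_a_ring}, exactly as the authors do. The details you supply for the grading and for $\Sym \subseteq \cQSym \subseteq \QSym$ are accurate.
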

	

	\subsection{Monomial cyclic quasi-symmetric functions}
	
	For any positive integer $n$, 
	let $2^{[n]}$ be the set of all subsets of $[n]$,
	and let $2_0^{[n]}$ be the set of all {\em nonempty} subsets of $[n]$.
	There is a natural map $\cc: 2_0^{[n]} \to \Comp_n$, 
	defined by 
	\[
	J = \{j_1 < \dots < j_t\} \subseteq [n] \,\mapsto\, 
	\cc(J) := (j_2-j_1, \ldots, j_t-j_{t-1}, j_1-j_t+n). 
	\]
	In particular, $\cc(\{j\}) = (n)$ for any $j \in [n]$,
	while $\cc(\varnothing)$ is undefined.
	
	A {\em cyclic shift} of a subset $J \subseteq [n]$ is a set of the form
	$k+J := \{k+j \pmod n \,:\, j \in J\}$ for some $k \in [n]$.
	Clearly, if $J'$ is a cyclic shift of $J$ then 
	$\cc(J')$ is a cyclic shift of $\cc(J)$.
	The converse is also true---in fact, if $\cnes$ 
	(respectively, $\cComp_n$)
	denotes the set of equivalence classes of elements of $2_0^{[n]}$
	(respectively, $\Comp_n$) under cyclic shifts, then the induced map
	$\cc: \cnes \to \cComp_n$ 
	is a bijection.
	The elements of $\cComp_n$ are called {\em cyclic compositions} of $n$.
	
	Note that $\cnes$ is the set of orbits of $2_0^{[n]}$ under the natural action of the cyclic group $\ZZ/n\ZZ$. On the other hand, $\Comp_n$ contains compositions of varying lengths $1 \le t \le n$, so that $\cComp_n$ consists of orbits of the corresponding groups $\ZZ/t\ZZ$.
	
	\begin{remark}
		Burnside's Lemma, applied to the natural action of the cyclic group $\ZZ/n\ZZ$ on $2^{[n]}$, implies that
		\[
		\#\cnes 
		= \frac{1}{n} \sum_{d | n} \varphi(d) (2^{n/d} - 1)
		= \left( \frac{1}{n} \sum_{d | n} \varphi(d) 2^{n/d} \right) - 1,
		\]
		where $\varphi$ is Euler's totient function.
	\end{remark}
	
	\begin{defn}\label{def:Mcyc}
		For a nonempty subset $J = \{j_1 < \dots < j_t\} \in 2_0^{[n]}$,
		let $\cc(J) = (m_1, \ldots, m_t)$.
		The {\em monomial cyclic quasi-symmetric function}
		corresponding to $J$ is defined by
		\[
		M^{\cyc}_{n,J} 
		:= \sum_{k = 1}^{t} M_{(m_k,m_{k+1},\ldots,m_{k+t-1})}
		= \sum_{k = 1}^{t} \,\,  \sum_{i_1 < \dots < i_t}
		x_{i_1}^{m_{k}} \cdots x_{i_t}^{m_{k+t-1}},
		\]
		where indices are added cyclically in $[t]$, meaning $m_{j+t}=m_j$.
		Define also $M^{\cyc}_{n,\varnothing} := 0$.
	\end{defn}
	
	\begin{lemma}\label{t:Mcyc_from_M}
		{\rm (From monomial to cyclic monomial)}
		For every subset $J \subseteq [n]$
		\[
		M^{\cyc}_{n,J} = \sum_{j \in J} M_{n, (J - j) \cap [n-1]},
		\]
		where $J - j := \{k - j \,:\, k \in J\} \subseteq [n]$
		with subtraction interpreted cyclically modulo $n$.
	\end{lemma}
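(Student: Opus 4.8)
The plan is to prove the identity one summand at a time: for each $j\in J$ I will identify the term $M_{n,(J-j)\cap[n-1]}$ on the right-hand side with one of the functions $M_{(m_k,\dots,m_{k+t-1})}$ appearing in Definition~\ref{def:Mcyc}, in such a way that as $j$ ranges over $J$ these terms run through all $t$ cyclic shifts of $\cc(J)$, each with the correct multiplicity. First I would clear away the degenerate cases: if $J=\varnothing$ both sides vanish by definition; if $J=\{j\}$ then $J-j=\{n\}$, so $(J-j)\cap[n-1]=\varnothing$, $\co(\varnothing)=(n)$, and both sides equal $M_{(n)}=\sum_i x_i^n$. So assume $J=\{j_1<\dots<j_t\}$ with $t\ge2$, and write $\cc(J)=(m_1,\dots,m_t)$, i.e.\ $m_i=j_{i+1}-j_i$ for $1\le i\le t-1$ and $m_t=j_1-j_t+n$.

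Fix $r\in\{1,\dots,t\}$ and take $j=j_r$. Since $j_1,\dots,j_t$ are distinct elements of $[n]$, each $j_s-j_r$ with $s\ne r$ is a nonzero residue modulo $n$, whereas $j_r-j_r\equiv0$ contributes the element $n$; hence $(J-j_r)\cap[n-1]$ consists of exactly the $t-1$ elements $j_s-j_r\bmod n$ with $s\ne r$. Sorting these, the indices $s>r$ give the increasing block $j_{r+1}-j_r<\dots<j_t-j_r$ (values already in $[n-1]$), the indices $s<r$ give the increasing block $j_1-j_r+n<\dots<j_{r-1}-j_r+n$, and because $j_t-j_1<n$ we have $j_t-j_r<j_1-j_r+n$, so the sorted $(t-1)$-element set is the in-order concatenation of these two blocks (one of which is empty when $r=1$ or $r=t$).

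Now I would compute $\co$ of this set directly from its definition. Taking $1<r<t$ for concreteness (the cases $r=1$ and $r=t$, where one block is empty, are handled identically): the smallest element is $j_{r+1}-j_r=m_r$; the successive differences within the first block are $m_{r+1},\dots,m_{t-1}$; the difference bridging the two blocks is $(j_1-j_r+n)-(j_t-j_r)=j_1-j_t+n=m_t$; the successive differences within the second block are $m_1,\dots,m_{r-2}$; and the final entry is $n-(j_{r-1}-j_r+n)=j_r-j_{r-1}=m_{r-1}$. Hence
\[
\co\bigl((J-j_r)\cap[n-1]\bigr)=(m_r,m_{r+1},\dots,m_{t-1},m_t,m_1,\dots,m_{r-1}),
\]
which is precisely the $r$-th cyclic shift $(m_r,m_{r+1},\dots,m_{r+t-1})$ of $\cc(J)$ (indices read cyclically in $[t]$). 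Therefore $M_{n,(J-j_r)\cap[n-1]}=M_{(m_r,\dots,m_{r+t-1})}$, and summing over $r=1,\dots,t$ yields $\sum_{r=1}^{t}M_{(m_r,\dots,m_{r+t-1})}=M^{\cyc}_{n,J}$, which is the claim.

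The only genuinely delicate point, and the step I would be most careful with, is the wrap-around bookkeeping above: that $(J-j_r)\cap[n-1]$ really is the \emph{in-order} concatenation of the two blocks, with nothing lost, duplicated or interleaved, and that the gap bridging them is $m_t$ rather than some other $m_i$. Conceptually this is just the picture of deleting $j_r$ from $J$, rotating the circle $\ZZ/n\ZZ$ so that $j_r$ moves to the cut between $n$ and $1$, and reading off the lengths of the arcs between consecutive surviving points, starting immediately after (the former) $j_r$; this produces the arc-length sequence $(m_r,m_{r+1},\dots)$. Once this picture is in place the computation is routine. (Alternatively one can package the argument using the already-observed fact that a cyclic shift of $J$ induces a cyclic shift of $\cc(J)$, together with the remark that $\co(K\cap[n-1])$ is a cyclic shift of $\cc(K)$ whenever $n\in K$; but the direct computation is self-contained.)
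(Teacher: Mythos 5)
Your proof is correct and follows essentially the same route as the paper's: the paper also verifies, for each $j_k\in J$, that $(J-j_k)\cap[n-1]=\{j_{k+1}-j_k,\ldots,j_t-j_k,\,j_1-j_k+n,\ldots,j_{k-1}-j_k+n\}$ and that its monomial quasi-symmetric function is $\sum_{i_1<\dots<i_t}x_{i_1}^{m_k}\cdots x_{i_t}^{m_{k+t-1}}$, then sums over $k$ and invokes Definition~\ref{def:Mcyc}. You have simply spelled out the wrap-around bookkeeping that the paper leaves implicit.
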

	
	\begin{proof}
		For $J = \varnothing$ both sides are zero.
		For $J = \{j_1 < \dots < j_t\}$ nonempty with $\cc(J) = (m_1, \ldots, m_t)$,
		the claim follows directly from Definition~\ref{def:Mcyc} since, for each $k \in [t]$,
		\[
		M_{n, (J - j_k) \cap [n-1]} 
		= M_{n, \{j_{k+1} - j_k, \ldots, j_t - j_k, j_1 - j_k +n, \ldots, j_{k-1} - j_k + n\}}  
		= \sum_{i_1 < \dots < i_t} x_{i_1}^{m_{k}} \cdots x_{i_t}^{m_{k+t-1}}.
		\qedhere
		\]
	\end{proof}

	\begin{observation}\label{t:Mcyc_cyclic_invariance}
		{\rm (Cyclic invariance)}
		If $J' \in 2^{[n]}$ is a cyclic shift of $J \in 2^{[n]}$ then 
		$M^{\cyc}_{n,J'} = M^{\cyc}_{n,J}$.
	\end{observation}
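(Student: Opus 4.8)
The plan is to reduce everything to Lemma~\ref{t:Mcyc_from_M}, which writes $M^\cyc_{n,J}$ as a sum of ordinary monomial quasi-symmetric functions indexed by the ``rotated'' sets $(J-j)\cap[n-1]$, $j\in J$. The point will be that a cyclic shift of $J$ merely relabels this index set, leaving the sum unchanged.

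First I would dispose of the trivial case $J=\varnothing$: its only cyclic shift is $\varnothing$ itself, and $M^\cyc_{n,\varnothing}=0$ by definition. So assume $J$ is nonempty and $J'=a+J$ for some $a\in[n]$, with addition taken modulo $n$ and the residue $0$ replaced by $n$, so that $J'\in 2_0^{[n]}$. The map $\phi\colon J\to J'$ sending $j$ to the representative in $[n]$ of $a+j \pmod n$ is then a bijection. I claim that for each $j\in J$, writing $j'=\phi(j)$, one has $(J'-j')\cap[n-1]=(J-j)\cap[n-1]$. Indeed, since all the arithmetic defining $J-j$ is carried out modulo $n$, the shift by $a$ cancels, giving $J'-j'=(a+J)-(a+j)=J-j$ as subsets of $[n]$, and intersecting with $[n-1]$ preserves this equality.

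Finally, apply Lemma~\ref{t:Mcyc_from_M} to both $J$ and $J'$ and re-index the sum for $J'$ along $\phi$:
\[
M^\cyc_{n,J'}=\sum_{j'\in J'}M_{n,(J'-j')\cap[n-1]}
=\sum_{j\in J}M_{n,(J-j)\cap[n-1]}=M^\cyc_{n,J}.
\]
There is no genuine obstacle here; the only care needed is the bookkeeping of cyclic subtraction and the role of the residue $n$ versus $0$, which is handled uniformly by the intersection with $[n-1]$ (for $k=j$ the difference $k-j$ equals $n$, which lies outside $[n-1]$ and is discarded on both sides). As an alternative route one can argue straight from Definition~\ref{def:Mcyc}: the sum $\sum_{k=1}^{t} M_{(m_k,\ldots,m_{k+t-1})}$ visibly depends only on the cyclic equivalence class of $\cc(J)=(m_1,\ldots,m_t)$, and a cyclic shift of $J$ produces a cyclic shift of $\cc(J)$; I would nevertheless present the Lemma-based argument as the main one, being the shortest.
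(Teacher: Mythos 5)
Your proof is correct. The paper states this as an unproved Observation; the intended justification is exactly your ``alternative route'': the sum in Definition~\ref{def:Mcyc} visibly depends only on the cyclic equivalence class of $\cc(J)$, and (as noted just before Definition~\ref{def:Mcyc}) a cyclic shift of $J$ induces a cyclic shift of $\cc(J)$. Your main argument via Lemma~\ref{t:Mcyc_from_M} --- re-indexing the sum by the bijection $j \mapsto a+j$ and observing that $(J'-j')\cap[n-1]=(J-j)\cap[n-1]$ --- is an equivalent and equally valid repackaging of the same fact, with no circularity since the Lemma's proof does not use the Observation.
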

	
	\begin{example}
		We illustrate Definition~\ref{def:Mcyc} , Lemma~\ref{t:Mcyc_from_M}, and Observation~\ref{t:Mcyc_cyclic_invariance} by computing some $M^{\cyc}_{n,J}$.
		
		\begin{center}
			\begin{tabular}{|c|c|}
				\hline
				$J \subseteq [2]$ & $M^{\cyc}_{2,J}$ \\ 
				\hline\hline
				$\varnothing$ & $0$ \\ 
				\hline
				$\{1\}$ or $\{2\}$ & $M_{2,\varnothing}=M_{(2)}$ \\ 
				\hline
				$\{1,2\}$ & $2M_{2,\{1\}}=2M_{(1,1)}$ \\ 
				\hline
			\end{tabular}
		\end{center}
		
		\bigskip
		
		\begin{center}
			\begin{tabular}{|c|c|}
				\hline 
				$J \subseteq [3]$ & $M^{\cyc}_{3,J}$ \\ 
				\hline\hline
				$\varnothing$ & $0$ \\ 
				\hline
				$\{1\}$ or $\{2\}$ or $\{3\}$ & $M_{3,\varnothing}=M_{(3)}$ \\ 
				\hline
				$\{1,2\}$ or $\{2,3\}$ or $\{1,3\}$ &   $M_{3,\{1\}}+M_{3,\{2\}}=M_{(1,2)}+M_{(2,1)}$ \\
				\hline
				$\{1,2,3\}$ & $3M_{3,\{1,2\}}=3M_{(1,1,1)}$ \\
				\hline
			\end{tabular}
		\end{center}
		
		\bigskip
		
		\begin{center}
			\begin{tabular}{|c|c|}
				\hline
				$J \subseteq [4]$ & $M^{\cyc}_{4,J}$ \\ 
				\hline\hline
				$\varnothing$ & $0$ \\ 
				\hline
				$\{1\}$ or $\{2\}$ or $\{3\}$ or $\{4\}$ & $M_{4,\varnothing}=M_{(4)}$ \\
				\hline
				$\{1,2\}$ or $\{2,3\}$ or $\{3,4\}$ or $\{1,4\}$ & $M_{4,\{1\}}+M_{4,\{3\}}=M_{(1,3)}+M_{(3,1)}$ \\
				\hline
				$\{1,3\}$ or $\{2,4\}$ & $2M_{4,\{2\}}=2M_{(2,2)}$ \\
				\hline
				$\{1,2,3\}$ & 
				$M_{4,\{1,2\}}+M_{4,\{1,3\}}+M_{4,\{2,3\}}=M_{(1,1,2)}+M_{(1,2,1)}+M_{(2,1,1)}$ \\
				\hline
				$\{1,2,3,4\}$ & $4M_{4,\{1,2,3\}}=4M_{(1,1,1,1)}$ \\
				\hline
			\end{tabular}
		\end{center}
		
	\end{example}

	\begin{example}
		In the previous example one finds that each $M^{\cyc}_{n,J}$ for $n=2,3,4$ lies not only in
		$\cQSym$, but also in the subring of symmetric functions $\Sym$.  
		This reflects the fact that the inclusion of graded rings $\Sym \subseteq \cQSym$
		is an isomorphism up through degree $5$, and only becomes a proper inclusion in degree $6$.  
		To see an example of this proper inclusion, consider the following table showing $M^{\cyc}_{6,J}$ for one
		representative of each cyclic (modulo $6$) class of subsets $J \subseteq [6]$:
		
		\begin{center}
			\begin{tabular}{|c|c|}
				\hline
				$J \subseteq [6]$ & $M^{\cyc}_{6,J}$ \\ 
				\hline\hline
				$\varnothing$ & $0$ \\ 
				\hline
				$\{1\}$  & $M_{6,\varnothing}=M_{(6)}$ \\
				\hline
				$\{1,2\}$ & $M_{6,\{1\}}+M_{6,\{5\}}=M_{(1,5)}+M_{(5,1)}$ \\
				\hline
				$\{1,3\}$ & $M_{6,\{2\}}+M_{6,\{4\}}=M_{(2,4)}+M_{(4,2)}$ \\
				\hline
				$\{1,4\}$ & $2M_{6,\{3\}}=2M_{(3,3)}$ \\
				\hline
				$\{1,2,3\}$& 
				$M_{6,\{1,2\}}+M_{6,\{1,5\}}+M_{6,\{4,5\}}=M_{(1,1,4)}+M_{(1,4,1)}+M_{(4,1,1)}$ \\
				\hline
				$\{1,2,4\}$& 
				$M_{6,\{1,3\}}+M_{6,\{2,5\}}+M_{6,\{3,4\}}=M_{(1,2,3)}+M_{(2,3,1)}+M_{(3,1,2)}$ \\
				\hline
				$\{1,2,5\}$& 
				$M_{6,\{1,4\}}+M_{6,\{3,5\}}+M_{6,\{2,3\}}=M_{(1,3,2)}+M_{(3,2,1)}+M_{(2,1,3)}$ \\
				\hline
				$\{1,3,5\}$& 
				$3M_{6,\{2,4\}}=3M_{(2,2,2)}$ \\
				\hline
				$\{1,2,3,4\}$ & $M_{6,\{1,2,3\}}+M_{6,\{1,2,5\}}+M_{6,\{1,4,5\}}+M_{6,\{3,4,5\}}$\\
				&$=M_{(1,1,1,3)}+M_{(1,1,3,1)}+M_{(1,3,1,1)}+M_{(3,1,1,1)}$ \\
				\hline
				$\{1,2,3,5\}$ & $M_{6,\{1,2,4\}}+M_{6,\{1,3,5\}}+M_{6,\{2,4,5\}}+M_{6,\{2,3,4\}}$\\
				&$=M_{(1,1,2,2)}+M_{(1,2,2,1)}+M_{(2,2,1,1)}+M_{(2,1,1,2)}$ \\
				\hline
				$\{1,2,4,5\}$ & $2M_{6,\{1,3,4\}}+2M_{6,\{2,3,5\}}$\\
				&$=2M_{(1,2,1,2)}+2M_{(2,1,2,1)}$ \\
				\hline
				$\{1,2,3,4,5\}$ & $M_{6,\{1,2,3,4\}}+M_{6,\{1,2,3,5\}}+M_{6,\{1,2,4,5\}}+M_{6,\{1,3,4,5\}}+M_{6,\{2,3,4,5\}}$\\
				&  $=M_{(1,1,1,1,2)}+M_{(1,1,1,2,1)}+M_{(1,1,2,1,1)}+M_{(1,2,1,1,1)}+M_{(2,1,1,1,1)}$ \\
				\hline
				$\{1,2,3,4,5,6\}$ & $6M_{6,\{1,2,3,4,5,6\}}=6M_{(1,1,1,1,1,1)}$ \\
				\hline
			\end{tabular}
		\end{center}
		
		\noindent
		Here, neither of $M_{6,\{1,2,4\}}$ and $M_{6,\{1,2,5\}}$ lies in $\Sym$,
		but their sum $M_{6,\{1,2,4\}}+M_{6,\{1,2,5\}}$ does.
		Similarly, neither of $M_{6,\{1,2,3,5\}}$ and $M_{6,\{1,2,4,5\}}$ lies in $\Sym$, but  $2M_{6,\{1,2,3,5\}}+M_{6,\{1,2,4,5\}}$ does.
		
	\end{example}
	

	As we shall see, a suitable set of functions $M^{\cyc}_{n,J}$ will form a basis for the vector space $\cQSym_n\! \otimes \,\QQ$.
	In order to get a $\ZZ$-basis for $\cQSym_n$, they need to be normalized; see Subsection~\ref{sec:normalization}.

	\subsection{Fundamental cyclic quasi-symmetric functions}\label{sec:FCQSym}
	
	\begin{definition}\label{def:cyclic_word}
		Let $\PP:=\{1,2,3,\ldots\}$.  
		For $n \ge 1$ and a
		subset $J \subseteq [n]$, denote by $P^{\cyc}_{n,J}$ the set of all pairs $(w,k)$ consisting of a word $w = (w_1, \ldots, w_n) \in \PP^n$ and an index $k \in [n]$ satisfying
		\begin{itemize}
			\item[(i)] 
			The word $w$ is ``cyclically weakly increasing'' from index $k$, namely
			$w_k \le w_{k+1} \le \ldots \le w_n \le w_1 \le \ldots \le w_{k-1}$.
			\item[(ii)] 
			If $j \in J \setminus \{k-1\}$ then $w_j < w_{j+1}$, where indices are computed modulo $n$. 
		\end{itemize}
	\end{definition}
	
	\begin{remark}
		The index $k$ is uniquely determined by the word $w$, unless all the letters of $w$ are equal; in which case, any index $k \in [n]$ will do by (i), but (ii) implies that either $J= \{k-1\}$ or $J = \varnothing$. 
		This holds even for $n = 1$, when either $J = \{0\} = \{1\}$ (computation modulo $1$) or $J = \varnothing$.
		Each of the ``words with repeated letters'' is therefore counted in $P^{\cyc}_{n,J}$ just once if $\# J = 1$, but $n$ times if $J = \varnothing$.
	\end{remark}
	
	\begin{example}
		Let $n=5$ and $J=\{1,3\}$. The pairs
		$(12345,1)$, $(23312,4)$ and $(23122,3)$ 
		are in $P^{\cyc}_{5,\{1,3\}}$ (see Figure~\ref{fig:1}), 
		but the pairs $(12354,1)$, $(22312,4)$ and $(23112,3)$ are not. 
		
		\begin{figure}[htb]
			\begin{center}
				\begin{tikzpicture}[scale=0.35]
				\draw[red] (0,0) circle (4);
				\draw[fill] (69:4) circle (.1); \draw[red](69:5) node
				{$1$}; \draw[fill] (34.5:4) circle (.2);
				\draw[fill] (0:4) circle (.1); \draw[blue] (0:5) node
				{$2$};
				\draw[fill] (-69:4) circle (.1); \draw[blue] (-69:5) node
				{$3$}; \draw[fill] (-106.5:4) circle (.2);
				\draw[fill] (-144:4) circle (.1); \draw[blue] (-144:5) node
				{$4$};
				\draw[fill] (144:4) circle (.1);\draw[blue] (144:5) node
				{$5$};
				\draw[red] (180:4) node {$\wedge$};
				\end{tikzpicture}
				\ \ \ \ \ \
				\begin{tikzpicture}[scale=0.35]
				\draw[red] (0,0) circle (4);
				\draw[fill] (69:4) circle (.1); \draw[blue](69:5) node
				{$2$}; \draw[fill] (34.5:4) circle (.2);
				\draw[fill] (0:4) circle (.1); \draw[blue] (0:5) node
				{$3$};
				\draw[fill] (-69:4) circle (.1); \draw[blue] (-69:5) node
				{$3$}; \draw[fill] (-106.5:4) circle (.2);
				\draw[fill] (-144:4) circle (.1); \draw[red] (-144:5) node
				{$1$};
				\draw[fill] (144:4) circle (.1);\draw[blue] (144:5) node
				{$2$};
				\draw[red] (180:4) node {$\wedge$};
				\end{tikzpicture}
				\ \ \ \ \ \
				\begin{tikzpicture}[scale=0.35]
				\draw[red] (0,0) circle (4);
				\draw[fill] (69:4) circle (.1); \draw[blue](69:5) node
				{$2$}; \draw[fill] (34.5:4) circle (.2);
				\draw[fill] (0:4) circle (.1); \draw[blue] (0:5) node
				{$3$};
				\draw[fill] (-69:4) circle (.1); \draw[red] (-69:5) node
				{$1$}; \draw[fill] (-106.5:4) circle (.2);
				\draw[fill] (-144:4) circle (.1); \draw[blue] (-144:5) node
				{$2$};
				\draw[fill] (144:4) circle (.1);\draw[blue] (144:5) node
				{$2$};
				\draw[red] (180:4) node {$\wedge$};
				\end{tikzpicture}
			\end{center}
			\caption{The pairs $(w,k)=(12345,1)$, $(23312,4)$ and $(23122,3)$ in $P^{\cyc}_{5,\{1,3\}}$.  In each case, $w_1$ is the value at the top right of the circle, while $w_k=1$ is shown red. The larger black dots indicate the positions $j$ in $J=\{1,3\}$ that require a strict ascent $w_j < w_{j+1}$ clockwise, unless $j=k-1$ as in the middle picture.}
			\label{fig:1}
		\end{figure}		 
		
	\end{example}
	
	\begin{defn}\label{def:Fcyc} 	
		The {\em fundamental cyclic quasi-symmetric function} corresponding to a subset $J \subseteq [n]$ is defined by
		\[
		F^{\cyc}_{n,J}:=\sum_{(w,k) \in P^{\cyc}_{n,J}} x_{w_1} x_{w_2}\cdots x_{w_n}.
		\]
	\end{defn}
	
	\begin{example}\label{example:Fcyc}
		To illustrate Definition~\ref{def:Fcyc}, let us explain why
		\[
		\begin{aligned}
		F^{\cyc}_{5,\{1,3,5\}} 
		&= 5 \sum_{i_1<i_2<i_3<i_4<i_5} x_{i_1} x_{i_2} x_{i_3} x_{i_4} x_{i_5} \\
		&\quad + 2 \sum_{i_1<i_2<i_3<i_4} 
		(
		x_{i_1}^2 x_{i_2} x_{i_3} x_{i_4}
		+ x_{i_1} x_{i_2}^2 x_{i_3} x_{i_4}
		+ x_{i_1} x_{i_2} x_{i_3}^2 x_{i_4}
		+ x_{i_1} x_{i_2} x_{i_3} x_{i_4}^2
		) \\
		&\quad + \sum_{i_1<i_2<i_3} 
		(
		x_{i_1}^2 x_{i_2}^2 x_{i_3}
		+ x_{i_1} x_{i_2}^2 x_{i_3}^2
		+ x_{i_1}^2 x_{i_2} x_{i_3}^2
		).
		\end{aligned}
		\]
		For example, the monomial $x_1 x_2 x_3 x_4 x_5$ has coefficient $5$ above due to the elements
		\[
		((1,2,3,4,5),1), \,\,
		((5,1,2,3,4),2), \,\,
		((4,5,1,2,3),3),\,\,
		((3,4,5,1,2),4),\,\,
		((2,3,4,5,1),5)
		\]
		of $P^{\cyc}_{5,\{1,3,5\}}$,
		while the monomial $x_1^2 x_2 x_3 x_4$ has coefficient $2$ because of the elements
		\[
		((4,1,1,2,3),2), \,\,
		((2,3,4,1,1),4)
		\]
		of $P^{\cyc}_{5,\{1,3,5\}}$.
		The monomial $x_1^2 x_2^2 x_3$ has coefficient $1$ because of the element $((3,1,1,2,2),2)$ of $P^{\cyc}_{5,\{1,3,5\}}$.
		Readers may wish to check their understanding by verifying that
		\[
		\begin{aligned}
		F^{\cyc}_{6,\{2,4,6\}} 
		&= 6 \sum_{i_1<i_2<i_3<i_4<i_5<i_6} x_{i_1} x_{i_2} x_{i_3} x_{i_4} x_{i_5} x_{i_6} \\
		&\quad + 3 \sum_{i_1<i_2<i_3<i_4<i_5} 
		x_{i_1} x_{i_2} x_{i_3} x_{i_4} x_{i_5} 
		(x_{i_1} + x_{i_2} + x_{i_3} + x_{i_4} + x_{i_5}) \\
		&\quad + 3 \sum_{i_1<i_2<i_3<i_4} 
		(
		x_{i_1}^2 x_{i_2}^2 x_{i_3} x_{i_4}
		+ x_{i_1} x_{i_2}^2 x_{i_3}^2 x_{i_4}
		+ x_{i_1} x_{i_2} x_{i_3}^2 x_{i_4}^2
		+ x_{i_1}^2 x_{i_2} x_{i_3} x_{i_4}^2 
		) \\
		&\quad + 3 \sum_{i_1<i_2<i_3} 	x_{i_1}^2x_{i_2}^2 x_{i_3}^2. 
		\end{aligned}
		\]
	\end{example}

	
	By definition, $M^{\cyc}_{n,J}$ and
	$F^{\cyc}_{n,J}$ lie in $\cQSym_n$.
	The following lemma relates these two families.
	
	\begin{lemma}\label{t:Fcyc_vs_Mcyc}
		{\rm (Fundamental vs.\ monomial)}
		For any subset $J \subseteq [n]$
		\[
		F^{\cyc}_{n,J} 
		= \sum_{K \supseteq J} M^{\cyc}_{n,K} 
		\]
		and
		\[
		M^{\cyc}_{n,J} 
		= \sum_{K \supseteq J} (-1)^{\#(K \setminus J)} F^{\cyc}_{n,K}.
		\]
	\end{lemma}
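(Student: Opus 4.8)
The plan is to prove the first identity,
\[
F^{\cyc}_{n,J} \;=\; \sum_{K \supseteq J} M^{\cyc}_{n,K},
\]
for every $J \subseteq [n]$; the second identity is then exactly its Möbius inversion in the Boolean lattice $2^{[n]}$ (if $f(J)=\sum_{K\supseteq J}g(K)$ for all $J\subseteq[n]$, then $g(J)=\sum_{K\supseteq J}(-1)^{\#(K\setminus J)}f(K)$), the vanishing $M^{\cyc}_{n,\varnothing}=0$ causing no trouble. So all of the content is in the first identity, which I would establish in two steps, matching the monomials (with multiplicity) contributed to each side.

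\emph{Step 1: cut the cycle.} I would first show
\[
F^{\cyc}_{n,J} \;=\; \sum_{k=1}^{n} F_{n,\,(J-(k-1))\cap[n-1]},
\]
with subtraction modulo $n$ as in Lemma~\ref{t:Mcyc_from_M} (so $0$ is represented by $n$ and then discarded by the intersection with $[n-1]$). The underlying bijection sends $(w,k)\in P^{\cyc}_{n,J}$ to the pair $\bigl(k,\ \sigma\bigr)$, where $\sigma := (w_k,w_{k+1},\dots,w_n,w_1,\dots,w_{k-1})$ is the word obtained by cutting the cyclic word just before position $k$ and reading it linearly. Definition~\ref{def:cyclic_word}(i) says exactly that $\sigma$ is weakly increasing, and translating the positions of $J\setminus\{k-1\}$ into positions of $\sigma$ produces precisely the set $(J-(k-1))\cap[n-1]\subseteq[n-1]$ (the excluded index $k-1$ being the one that maps to the wrap-around position); thus (ii) says exactly that $\sigma$ is one of the sequences enumerated by the ordinary fundamental quasi-symmetric function $F_{n,(J-(k-1))\cap[n-1]}$. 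The map is inverted by recovering $w$ from $(k,\sigma)$ via $w_{k+p-1}:=\sigma_p$ (indices mod $n$), and it preserves the monomial $x_{w_1}\cdots x_{w_n}=x_{\sigma_1}\cdots x_{\sigma_n}$; summing over $P^{\cyc}_{n,J}$ gives the claim. Re-indexing by $j=k-1$ rewrites the right-hand side as $\sum_{j=1}^{n}F_{n,(J-j)\cap[n-1]}$.

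\emph{Step 2: expand and re-index.} Applying the classical expansion $F_{n,L}=\sum_{L\subseteq L'\subseteq[n-1]}M_{n,L'}$ to the right-hand side of Step~1, and applying Lemma~\ref{t:Mcyc_from_M}, $M^{\cyc}_{n,K}=\sum_{j\in K}M_{n,(K-j)\cap[n-1]}$, to the target $\sum_{K\supseteq J}M^{\cyc}_{n,K}$, the identity becomes a bijection between index sets
\[
\{(j,L')\,:\, j\in[n],\ L'\subseteq[n-1],\ (J-j)\cap[n-1]\subseteq L'\}
\;\;\longleftrightarrow\;\;
\{(K,j)\,:\, J\subseteq K\subseteq[n],\ j\in K\},
\]
carrying $M_{n,L'}$ to $M_{n,(K-j)\cap[n-1]}$. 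The map is $(K,j)\mapsto\bigl(j,\,(K-j)\cap[n-1]\bigr)$, with inverse $(j,L')\mapsto\bigl(\{j\}\cup(L'+j),\,j\bigr)$ (addition mod $n$); one checks that $J\subseteq K$ is equivalent to $(J-j)\cap[n-1]\subseteq L'$, the only point to watch being that $n\equiv 0$ is the single element that can be gained or lost when intersecting a translate of $J$ (or $K$) with $[n-1]$. This finishes Step~2, hence the first identity, hence the lemma.

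The main obstacle I expect is not conceptual but organizational: keeping the modular conventions for ``$J-j$'', ``$L'+j$'' and the off-by-one ``$k-1$'' perfectly consistent across Definition~\ref{def:cyclic_word}, Lemma~\ref{t:Mcyc_from_M}, and the classical definition of $F_{n,L}$ (indexed by subsets of $[n-1]$, not $[n]$), and making sure the degenerate constant-word pairs --- counted once in $P^{\cyc}_{n,J}$ when $\#J=1$ but $n$ times when $J=\varnothing$ --- are matched correctly. Both bijections above handle this automatically once the index $k$ (resp.\ $j$) is carried along as part of the data, so no individual step should be genuinely hard.
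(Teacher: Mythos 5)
Your argument is correct, but it runs in the opposite logical direction from the paper's. The paper proves this lemma in one stroke by introducing the refined sets $Q^{\cyc}_{n,K}$ of pairs $(w,k)$ whose cyclic descent positions are \emph{exactly} $K$ (with $k-1\in K$ required), observing that $M^{\cyc}_{n,K}$ is precisely the generating function for $Q^{\cyc}_{n,K}$, and then noting the tautological decomposition $P^{\cyc}_{n,J}=\bigsqcup_{K\supseteq J}Q^{\cyc}_{n,K}$; the second identity is inclusion--exclusion, as in your write-up. Your Step 1 is instead a direct bijective proof of what the paper states later as Proposition~\ref{t:Fcyc_from_F} (the expansion $F^{\cyc}_{n,J}=\sum_{i\in[n]}F_{n,(J-i)\cap[n-1]}$), which the paper \emph{derives from} this lemma; you then combine it with the classical $F$-to-$M$ expansion and Lemma~\ref{t:Mcyc_from_M} via the index bijection $(K,j)\mapsto(j,(K-j)\cap[n-1])$. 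Since your Step 1 is proved straight from Definitions~\ref{def:cyclic_word} and~\ref{def:Fcyc} without invoking Proposition~\ref{t:Fcyc_from_F}, there is no circularity, and both your bijections check out (in particular the wrap-around position $p=n$ and the constant-word multiplicities are handled correctly). What your route buys is that Proposition~\ref{t:Fcyc_from_F} comes for free as a byproduct; what it costs is length and bookkeeping --- the paper's refinement argument is essentially immediate once $Q^{\cyc}_{n,K}$ is written down, whereas your version needs two separate bijections with modular index-chasing.
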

	
	\begin{proof}
		For each subset $J \subseteq [n]$ denote by $Q^{\cyc}_{n,J}$ the set of all pairs $(w,k)$ consisting of a word $w = (w_1, \ldots, w_n) \in \PP^n$ and an index $k \in [n]$ which satisfy
		\begin{itemize}
			\item[(i)] 
			The word $w$ is ``cyclically weakly increasing'' from index $k$, namely
			$w_k \le w_{k+1} \le \dots \le w_n \le w_1 \le \dots \le w_{k-1}$.
			\item[(ii)] 
			$k-1 \in J$ and, for $j \in [n] \setminus \{k-1\}$: $j \in J \setminus \{k-1\} \iff w_j < w_{j+1}$, where indices are computed modulo $n$. 
			(Thus $Q^{\cyc}_{n,J}= \varnothing$ for $J = \varnothing$.)
		\end{itemize}
		Definition~\ref{def:Mcyc} can now be written in the form
		\[
		M^{\cyc}_{n,J} =
		\sum_{(w,k) \in Q^{\cyc}_{n,J}} x_{w_1} x_{w_2}\cdots x_{w_n}
		\qquad (\forall J \subseteq [n]).
		\]
		The sets $P^{\cyc}_{n,J}$ from Definition~\ref{def:cyclic_word} are clearly disjoint unions
		\[
		P^{\cyc}_{n,J} = \bigsqcup_{K \supseteq J} Q^{\cyc}_{n,K}
		\qquad (\forall J \subseteq [n])
		\]
		and thus, by Definition~\ref{def:Fcyc},
		\[
		F^{\cyc}_{n,J} = \sum_{K \supseteq J} M^{\cyc}_{n,K} 
		\qquad (\forall J \subseteq [n]).
		\]
		The other claim follows by the principle of inclusion-exclusion.
	\end{proof}
	
	\begin{proposition}\label{t:Fcyc_from_F}
		{\rm (From fundamental to cyclic fundamental)}
		For any subset $J \subseteq [n]$
		\[
		F^{\cyc}_{n,J} 
		= \sum_{i \in [n]} F_{n, (J - i) \cap [n-1]},
		\]
		where $J - i := \{k - i \,:\, k \in J\} \subseteq [n]$ with subtraction interpreted cyclically modulo $n$.
	\end{proposition}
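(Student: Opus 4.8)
The plan is to prove the identity by a monomial‑by‑monomial bijection. Recall from Definition~\ref{def:Fcyc} that $F^{\cyc}_{n,J}=\sum_{(w,k)\in P^{\cyc}_{n,J}}x_{w_1}\cdots x_{w_n}$, and that $F_{n,I}=\sum x_{s_1}\cdots x_{s_n}$, the sum over weakly increasing sequences $s_1\le\cdots\le s_n$ of positive integers with $s_j<s_{j+1}$ for every $j\in I$. So it suffices to construct a bijection
\[
P^{\cyc}_{n,J}\;\longleftrightarrow\;\bigsqcup_{i\in[n]}\bigl\{\,s_1\le\cdots\le s_n\ \text{in}\ \PP\ :\ s_j<s_{j+1}\ \forall j\in (J-i)\cap[n-1]\,\bigr\}
\]
under which $(w,k)$ and its image determine the same monomial; summing $x_{w_1}\cdots x_{w_n}$ over both sides then gives the claim.

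First I would define the map: send $(w,k)\in P^{\cyc}_{n,J}$ to the pair $(i,v)$, where $i:=k-1$ taken modulo $n$ with representative in $[n]$, and $v:=(w_k,w_{k+1},\ldots,w_n,w_1,\ldots,w_{k-1})$ is the cyclic rotation of $w$ starting at position $k$. Its inverse recovers $k=i+1$ and rotates $v$ back to $w$. Since $v$ is a rearrangement of $w$, the monomial is preserved, so the substance of the proof is that these two maps are mutually inverse between the stated index sets. That splits into two checks: (a) condition (i) of Definition~\ref{def:cyclic_word} holds for $(w,k)$ if and only if $v$ is weakly increasing; (b) condition (ii) of Definition~\ref{def:cyclic_word} holds if and only if $v$ has a strict ascent at each position of $(J-i)\cap[n-1]$. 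For (b) one tracks positions under the bijection $p\mapsto (p-i)\bmod n$ of $[n]$: a strict ascent of $w$ at $j\in J\setminus\{k-1\}$ becomes a strict ascent of $v$ at $(j-i)\bmod n$, and as $j$ runs over $J\setminus\{i\}$ this position runs over exactly $(J-i)\cap[n-1]$, since the only element of $J$ sent to $n$ is $j=i$ itself — precisely the element deleted by intersecting with $[n-1]$.

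The step I expect to be the main nuisance is this bookkeeping around the ``wrap'' index $k-1$: in Definition~\ref{def:cyclic_word} the position $k-1$ is exempted from the strict‑ascent requirement, and after rotating $w$ to start at $k$ this exemption is matched exactly by the passage from cyclic subsets of $[n]$ to linear subsets of $[n-1]$ via $\cap[n-1]$, which discards the index that has become the cyclic wrap of $v$. One should also note that the degenerate case in the Remark following Definition~\ref{def:cyclic_word} — words with all letters equal, for which $k$ is not determined by $w$ alone — needs no special treatment, since everything is phrased in terms of pairs: a constant sequence $v$ is genuinely counted once for each $i\in[n]$ with $(J-i)\cap[n-1]=\varnothing$, matching the several pairs $(w,k)$ with constant $w$ on the left.

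If a purely algebraic argument is preferred, the same identity follows by expanding both sides in the monomial basis $\{M_{n,L}:L\subseteq[n-1]\}$, using $F^{\cyc}_{n,J}=\sum_{K\supseteq J}M^{\cyc}_{n,K}$ (Lemma~\ref{t:Fcyc_vs_Mcyc}), $M^{\cyc}_{n,K}=\sum_{j\in K}M_{n,(K-j)\cap[n-1]}$ (Lemma~\ref{t:Mcyc_from_M}), and the classical $F_{n,I}=\sum_{L\supseteq I}M_{n,L}$. The coefficient of $M_{n,L}$ on the left is $\#\{(K,j):j\in K\supseteq J,\ (K-j)\cap[n-1]=L\}$; since such a $K$ is forced to equal $(L\cup\{n\})+j$, the map $(K,j)\mapsto j$ identifies this set with $\{j\in[n]:(J-j)\cap[n-1]\subseteq L\}$, whose cardinality is visibly the coefficient of $M_{n,L}$ on the right. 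I would present the bijective proof as the main one, since it is the more transparent of the two.
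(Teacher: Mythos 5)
Your proposal is correct, and your primary (bijective) argument takes a genuinely different route from the paper. The paper proves Proposition~\ref{t:Fcyc_from_F} purely algebraically: it expands $F^{\cyc}_{n,J}=\sum_{K\supseteq J}M^{\cyc}_{n,K}$ via Lemma~\ref{t:Fcyc_vs_Mcyc}, applies Lemma~\ref{t:Mcyc_from_M} to each $M^{\cyc}_{n,K}$, interchanges the order of summation to $\sum_{i\in[n]}\sum_{J\cup\{i\}\subseteq K\subseteq[n]}M_{n,(K-i)\cap[n-1]}$, and then reindexes by $K''=(K-i)\cap[n-1]$ to recognize the inner sum as $F_{n,(J-i)\cap[n-1]}$ --- essentially the same bookkeeping as your fallback computation of the coefficient of $M_{n,L}$ on both sides, just organized as a reindexing of sums rather than a comparison of coefficients. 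Your main proof instead works directly with the defining sets: the rotation $(w,k)\mapsto(i,v)$ with $i=k-1 \bmod n$ gives a monomial-preserving bijection from $P^{\cyc}_{n,J}$ onto the disjoint union over $i\in[n]$ of the index sets for $F_{n,(J-i)\cap[n-1]}$, and your tracking of the exempted wrap position $k-1$ against the truncation $\cap\,[n-1]$, as well as of the constant-word degeneracy, is exactly the care the argument needs. What each approach buys: yours is self-contained from Definitions~\ref{def:cyclic_word} and~\ref{def:Fcyc} and makes visible \emph{why} the formula holds (each cyclic starting point contributes one linear fundamental function), whereas the paper's is shorter given that Lemmas~\ref{t:Mcyc_from_M} and~\ref{t:Fcyc_vs_Mcyc} are already in place and stays uniformly within the monomial/fundamental transition framework used throughout that section. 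Both are valid; no gaps.
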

	
	\begin{proof}
		For any $J \subseteq [n]$, by Lemma~\ref{t:Mcyc_from_M} and Lemma~\ref{t:Fcyc_vs_Mcyc},
		\[
		F^{\cyc}_{n,J} 
		= \sum_{J \subseteq K \subseteq [n]} M^{\cyc}_{n,K}
		= \sum_{J \subseteq K \subseteq [n]} \,\, \sum_{i \in K} 
		M_{n,(K-i) \cap [n-1]}
		= \sum_{i \in [n]}  \,\, \sum_{J \cup \{i\} \subseteq K \subseteq [n]}
		M_{n,(K-i) \cap [n-1]}.
		\]
		Denoting $K' := K-i$ and $K'' := K' \cap [n-1]$, it follows that
		\begin{eqnarray*}
			F^{\cyc}_{n,J} 
			&=& \sum_{i \in [n]}  \,\, 
			\sum_{(J-i) \cup \{n\} \subseteq K' \subseteq [n]} 
			M_{n, K' \cap [n-1]} \\
			&=& \sum_{i \in [n]}  \,\, 
			\sum_{(J-i) \cap [n-1] \subseteq K'' \subseteq [n-1]} 
			M_{n, K''}
			= \sum_{i \in [n]} F_{n, (J - i) \cap [n-1]}.
		\end{eqnarray*}
	\end{proof}
	
	\begin{example}
		For the functions in Example~\ref{example:Fcyc},
		\[
		F^{\cyc}_{5,\{1,3,5\}} 
		= F_{5,\{1,3\}} + F_{5,\{2,4\}} +  F_{5,\{1,3,4\}} + F_{5,\{2,3\}} + F_{5,\{1,2,4\}}
		\]
		and
		\[
		F^{\cyc}_{6,\{2,4,6\}} 
		= 3 F_{6,\{2,4\}} + 3 F_{6,\{1,3,5\}}.
		\]
	\end{example}
	
	\begin{lemma}\label{t:Fcyc_properties}\ \\
		\begin{enumerate}
			\item
			$F^{\cyc}_{n,\varnothing} = n F_{n,\varnothing} = n h_n$
			and $F^{\cyc}_{n,[n]} = n F_{n,[n-1]} = n e_n$
			are symmetric functions.
			\item
			Cyclic invariance:
			If $J' \in 2^{[n]}$ is a cyclic shift of $J \in 2^{[n]}$ then 
			$F^{\cyc}_{n,J'} = F^{\cyc}_{n,J}$.
			\item
			Linear dependence:
			$\sum_{J \subseteq [n]} (-1)^{\#J} F^{\cyc}_{n,J} = 0$.
		\end{enumerate}
	\end{lemma}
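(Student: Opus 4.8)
The plan is to derive all three parts formally from identities already established---in particular Proposition~\ref{t:Fcyc_from_F}, Lemma~\ref{t:Fcyc_vs_Mcyc}, and Observation~\ref{t:Mcyc_cyclic_invariance}---with no new combinatorial input.

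For part~(1) I would apply Proposition~\ref{t:Fcyc_from_F}. When $J=\varnothing$ we have $(\varnothing-i)\cap[n-1]=\varnothing$ for every $i\in[n]$, so the formula collapses to $F^{\cyc}_{n,\varnothing}=\sum_{i\in[n]}F_{n,\varnothing}=nF_{n,\varnothing}$, and $F_{n,\varnothing}=\sum_{i_1\le\cdots\le i_n}x_{i_1}\cdots x_{i_n}=h_n$ directly from the definition of $F_{n,J}$. When $J=[n]$, subtraction modulo $n$ shows $[n]-i=[n]$, hence $([n]-i)\cap[n-1]=[n-1]$ for every $i$, so $F^{\cyc}_{n,[n]}=\sum_{i\in[n]}F_{n,[n-1]}=nF_{n,[n-1]}$, and $F_{n,[n-1]}=\sum_{i_1<\cdots<i_n}x_{i_1}\cdots x_{i_n}=e_n$. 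Since $h_n$ and $e_n$ are symmetric, both functions lie in $\Sym$.

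Part~(2) follows from the monomial expansion. Writing $J'=k+J$ for some $k\in[n]$ and using Lemma~\ref{t:Fcyc_vs_Mcyc}, $F^{\cyc}_{n,J'}=\sum_{K\supseteq J'}M^{\cyc}_{n,K}$. The shift $K\mapsto K-k$ is a bijection from $\{K\subseteq[n]:K\supseteq J'\}$ onto $\{L\subseteq[n]:L\supseteq J\}$, and $M^{\cyc}_{n,K}=M^{\cyc}_{n,K-k}$ by Observation~\ref{t:Mcyc_cyclic_invariance}; reindexing gives $F^{\cyc}_{n,J'}=\sum_{L\supseteq J}M^{\cyc}_{n,L}=F^{\cyc}_{n,J}$. (Alternatively, one can read this off Proposition~\ref{t:Fcyc_from_F} by the substitution $i\mapsto i-k$ in the sum over $i\in[n]$.)

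For part~(3) I would substitute the first identity of Lemma~\ref{t:Fcyc_vs_Mcyc} and interchange the order of summation:
\[
\sum_{J\subseteq[n]}(-1)^{\#J}\,F^{\cyc}_{n,J}
=\sum_{J\subseteq[n]}(-1)^{\#J}\sum_{K\supseteq J}M^{\cyc}_{n,K}
=\sum_{K\subseteq[n]}M^{\cyc}_{n,K}\sum_{J\subseteq K}(-1)^{\#J}.
\]
The inner sum is $(1-1)^{\#K}$, which is $0$ unless $K=\varnothing$, in which case it is $1$; hence the total equals $M^{\cyc}_{n,\varnothing}=0$ by Definition~\ref{def:Mcyc}. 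There is no genuine obstacle here, since each part is a short formal consequence of earlier results; the only points to watch are the cyclic/modular conventions in part~(1) (that $[n]-i$ wraps back to all of $[n]$, not to a proper subset) and, in part~(2), the verification that the shift genuinely carries the up-set of $J'$ bijectively onto the up-set of $J$.
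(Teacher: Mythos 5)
Your proposal is correct and follows essentially the same route as the paper: part (1) is read off from Proposition~\ref{t:Fcyc_from_F} exactly as in the paper, part (2) is the same cyclic-shift reindexing (your primary version runs it through the $M^{\cyc}$ expansion and Observation~\ref{t:Mcyc_cyclic_invariance}, but your parenthetical alternative via Proposition~\ref{t:Fcyc_from_F} is precisely the paper's one-line argument), and part (3) is just the paper's appeal to the second formula of Lemma~\ref{t:Fcyc_vs_Mcyc} at $J=\varnothing$ with the inclusion--exclusion unwound by hand. All steps check out; no gaps.
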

	
	\begin{proof}
		(1) and (2) are immediate from Proposition~\ref{t:Fcyc_from_F}.
		(3) follows from the second formula in Lemma~\ref{t:Fcyc_vs_Mcyc}, for $J = \varnothing$, since $M^{\cyc}_{n,\varnothing} = 0$.
	\end{proof}

	\subsection{Normalization}\label{sec:normalization}
	
	In order to get a basis for the vector space $\cQSym_n \otimes \,\QQ$, we need to take a suitable set of representatives of the functions defined in the previous subsections.
	In order to get an actual $\ZZ$-basis for $\cQSym_n$, we also need to normalize them.  
	We first recall and augment some of our previously used notations, as follows:  
	$2^{[n]}$ denotes the set of all subsets of $[n]=\{1,2,\ldots,n\}$, 
	and $2^{[n]}_0$ denotes the set of nonempty subsets of $[n]$, while $c2^{[n]}$ and $c2^{[n]}_0$,
	respectively, denote the corresponding collections of equivalence classes under cyclic shifts.
	
	\begin{defn}\label{def:Mcyc_normalized}
		For any $J \subseteq [n]$ let 
		$D_J := \{i \in \ZZ/n\ZZ \,:\, J+i \equiv J \pmod{n} \}$
		be the {\em stabilizer} of $J$ under the action of $\ZZ/n\ZZ$ by cyclic shifts, and let $d_J := \#D_J$.
		Define the {\em normalized monomial cyclic quasi-symmetric function} $\hM^{\cyc}_{n,J}$ by
		\[
		\hM^{\cyc}_{n,J} := \frac{1}{d_J} M^{\cyc}_{n,J}
		\qquad (\forall J \subseteq [n]).
		\]
		For any orbit $A \in c2^{[n]}$ define
		\[
		M^{\cyc}_{n,A} := M^{\cyc}_{n,J}, \quad
		D_A := D_J, \quad
		d_A := d_J, \quad \text{and} \quad
		\hM^{\cyc}_{n,A} := \hM^{\cyc}_{n,J},
		\]
		where $J$ is any element of the orbit $A$. By Observation~\ref{t:Mcyc_cyclic_invariance}, these are all well-defined (i.e., independent of the choice of $J$ in $A$).
		Note also that, for each $A \in c2^{[n]}$, $d_A \cdot \#A = n$, and therefore
		\[
		\hM^{\cyc}_{n,A} = \frac{1}{n} \sum_{J \in A} M^{\cyc}_{n,J}
		\qquad (\forall A \in c2^{[n]}).
		\]
	\end{defn}
	
	\begin{example}
		For $n = 6$ and $A = \{\{1,4\}, \{2,5\},\{3,6\}\} \in c2^{[6]}$, $\cc(A) = (3,3)$ and
		\[
		M^{\cyc}_{n,A} = 2 \sum_{i_1 < i_2} x_{i_1}^3 x_{i_2}^3.
		\]
		Since $d_A = 2$,
		\[
		\hM^{\cyc}_{n,A} = \sum_{i_1 < i_2} x_{i_1}^3 x_{i_2}^3.
		\]
	\end{example}
	
	\begin{lemma}\label{t:Mcyc_basis}
		$\left\{ \hM^{\cyc}_{n,A} \,:\, A \in \cnes \right\}$ is a $\ZZ$-basis for $\cQSym_n$. 
	\end{lemma}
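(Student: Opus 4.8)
The plan is to exhibit $\cQSym_n$ directly as a sublattice of the free $\ZZ$-module $\QSym_n=\bigoplus_{\alpha\in\Comp_n}\ZZ\,M_\alpha$, and then to recognize the normalized functions $\hM^{\cyc}_{n,A}$ as its natural basis. \emph{Step 1 (describing $\cQSym_n$).} For $f=\sum_{\alpha\in\Comp_n}c_\alpha M_\alpha\in\QSym_n$ with $c_\alpha\in\ZZ$, the coefficient of $x_{i_1}^{m_1}\cdots x_{i_t}^{m_t}$ in $f$, for $i_1<\dots<i_t$, equals $c_{(m_1,\dots,m_t)}$, independently of the chosen indices. Comparing this with Definition~\ref{def:cQSym} — taking the two increasing index sequences to coincide and letting $m'$ range over all cyclic shifts of $m$ — shows that $f\in\cQSym_n$ if and only if $c_\alpha=c_{\alpha'}$ whenever $\alpha'$ is a cyclic shift of $\alpha$; equivalently, $c$ is constant on each orbit $[\alpha]\in\cComp_n$. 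Hence $\cQSym_n$ is a free $\ZZ$-module with basis $\{\,N_{[\alpha]}:=\sum_{\beta\in[\alpha]}M_\beta\;:\;[\alpha]\in\cComp_n\,\}$, where $\beta$ runs over the distinct compositions in the orbit: linear independence holds because distinct orbits involve disjoint families of $M_\beta$'s, and spanning is the ``only if'' direction.

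\emph{Step 2 (rewriting $\hM^{\cyc}_{n,J}$).} Fix $J\subseteq[n]$ with $\cc(J)=(m_1,\dots,m_t)$. By Definition~\ref{def:Mcyc}, $M^{\cyc}_{n,J}=\sum_{k=1}^{t}M_{(m_k,m_{k+1},\dots,m_{k+t-1})}$ is the sum of the $t$ cyclic rotations of $\cc(J)$, counted with multiplicity; if $s$ is the order of the rotational stabilizer of $\cc(J)$ inside $\ZZ/t$, each of the $t/s$ distinct rotations occurs $s$ times, so $M^{\cyc}_{n,J}=s\cdot N_{[\cc(J)]}$. It remains to show $s=d_J$. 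On one hand, rotating $\cc(J)$ by $t/d_J$ positions corresponds to shifting $J$ by $n/d_J$, which fixes $J$ by definition of $D_J$; so the order-$d_J$ subgroup of $\ZZ/t$ generated by $t/d_J$ stabilizes $\cc(J)$, whence $s\ge d_J$. On the other hand, if $r=t/s$ is the minimal period of $\cc(J)$, then translating $J$ by $q:=m_1+\dots+m_r$ modulo $n$ fixes $J$ and permutes its $t$ elements exactly as the period-$r$ rotation of the gap sequence, an element of order $s$; hence $q\in D_J$ has order $s$, so $s\mid d_J$. Therefore $s=d_J$, and $\hM^{\cyc}_{n,J}=\tfrac1{d_J}M^{\cyc}_{n,J}=N_{[\cc(J)]}$.

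\emph{Step 3 (conclusion).} Since the induced map $\cc\colon\cnes\to\cComp_n$ is a bijection, and $\hM^{\cyc}_{n,A}=\hM^{\cyc}_{n,J}=N_{[\cc(J)]}$ for any $J\in A$ (well-definedness being part of Definition~\ref{def:Mcyc_normalized}), the family $\{\hM^{\cyc}_{n,A}:A\in\cnes\}$ is precisely $\{N_{[\alpha]}:[\alpha]\in\cComp_n\}$, the $\ZZ$-basis of $\cQSym_n$ found in Step 1. This proves the lemma.

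The one point I expect to require care is the identity $s=d_J$ in Step 2: it asserts that the order of the rotational symmetry of the cyclic gap-composition $\cc(J)$ (a $\ZZ/t$-fact) equals the order $d_J$ of the translational stabilizer of $J$ (a $\ZZ/n$-fact), and making the correspondence between these two cyclic groups fully precise is where the bookkeeping lives. An alternative route, needing the same matching, is to expand via Lemma~\ref{t:Mcyc_from_M}: the set $(J-j)\cap[n-1]$ determines $J-j$ (since $n\in J-j$ always), two indices $j,j'\in J$ give the same shift iff $j-j'\in D_J$, and $J$ is a union of $t/d_J$ cosets of $D_J$, so the $t$ summands of $\sum_{j\in J}M_{n,(J-j)\cap[n-1]}$ collapse to $t/d_J$ distinct terms each of multiplicity $d_J$; one then still checks that these exhaust the distinct members of $[\cc(J)]$.
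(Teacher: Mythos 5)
Your proof is correct and follows essentially the same route as the paper's: both identify $\cQSym_n$ with the span of orbit-sums of monomial (quasi-symmetric) functions under cyclic shifts and observe that $\hM^{\cyc}_{n,A}$ is exactly such an orbit-sum with all coefficients equal to $1$. The only difference is one of detail: the paper asserts without proof that each monomial occurs in $M^{\cyc}_{n,A}$ with coefficient $d_A$, whereas you carefully verify the underlying fact that the rotational stabilizer of $\cc(J)$ in $\ZZ/t\ZZ$ has the same order $d_J$ as the translational stabilizer of $J$ in $\ZZ/n\ZZ$.
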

	\begin{proof}
		It follows from Definition~\ref{def:Mcyc} that, for any $A \in \cnes$, the coefficient of each monomial in $M^{\cyc}_{n,A}$ is $d_A$.
		Therefore $\hM^{\cyc}_{n,A} \in \cQSym_n$ is a sum of monomials with all coefficients equal to $1$. 
		The set $\left\{ \hM^{\cyc}_{n,A} \,:\, A \in \cnes \right\}$ clearly $\ZZ$-spans $\cQSym_n$ and is linearly independent,
		since each monomial of degree $n$ appears in $\hM^{\cyc}_{n,A}$ for exactly one $A \in \cnes$.
		Note also that $\hM^{\cyc}_{n,\{\varnothing\}} = M^{\cyc}_{n,\varnothing} = 0$.
	\end{proof}
	
	\begin{defn}\label{def:Fcyc_normalized}
		For $J \subseteq [n]$ let $D_J$ and $d_J$ be as in Definition~\ref{def:Mcyc_normalized}.
		Define the {\em normalized fundamental cyclic quasi-symmetric function} $\hF^{\cyc}_{n,J}$ by
		\[
		\hF^{\cyc}_{n,J} := \frac{1}{d_J} F^{\cyc}_{n,J}
		\qquad (\forall J \subseteq [n]).
		\]
		For any orbit $A \in c2^{[n]}$ define
		\[
		F^{\cyc}_{n,A} := F^{\cyc}_{n,J} \quad \text{and} \quad
		\hF^{\cyc}_{n,A} := \hF^{\cyc}_{n,J},
		\]
		where $J$ is any element of the orbit $A$. By Lemma~\ref{t:Fcyc_properties}(2), these are all well-defined (i.e., independent of the choice of $J \in A$).
		Also, as for the monomial functions,
		\[
		\hF^{\cyc}_{n,A} = \frac{1}{n} \sum_{J \in A} F^{\cyc}_{n,J}
		\qquad (\forall A \in c2^{[n]}).
		\]
	\end{defn}
	
	\begin{lemma}\label{t:Fcyc_basis}\ \\
		\begin{enumerate}
			\item
			$\hF^{\cyc}_{n,\varnothing} = F_{n,\varnothing} = h_n$
			and $\hF^{\cyc}_{n,[n]} = F_{n,[n-1]} = e_n$
			are symmetric functions.
			\item
			Linear dependence:
			\[
			\sum_{A \in c2^{[n]}} (-1)^{r(A)} \hF^{\cyc}_{n,A} = 0,
			\]
			where the {\em rank} of an orbit $A \in c2^{[n]}$ is $r(A) := \#J$, for any element $J \in A$; the notation is meant to distinguish $r(A)$ from the size $\#A$ of the orbit $A$.
			\item
			For any $C \in c2^{[n]}$, the set 
			$\BBB_C := \left\{ \hF^{\cyc}_{n,A} \,:\, A \in c2^{[n]} \setminus \{C\} \right\}$ is a $\ZZ$-basis for $\cQSym_n$. In particular, for $C = [\varnothing]$,
			$\BBB_0 := \left\{ \hF^{\cyc}_{n,A} \,:\, A \in \cnes \right\}$ is a $\ZZ$-basis for $\cQSym_n$. 
		\end{enumerate}
	\end{lemma}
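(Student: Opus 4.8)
The plan is to derive (1) and (2) quickly from the earlier results and then to obtain (3) by expanding the normalized fundamental functions in the known $\hM^{\cyc}$-basis and checking that the transition is unimodular. For (1), note that $\varnothing$ and $[n]$ are fixed by every cyclic shift, so $d_\varnothing = d_{[n]} = n$, whence $\hF^{\cyc}_{n,\varnothing} = \tfrac1n F^{\cyc}_{n,\varnothing}$ and $\hF^{\cyc}_{n,[n]} = \tfrac1n F^{\cyc}_{n,[n]}$; the claim is immediate from Lemma~\ref{t:Fcyc_properties}(1). For (2), I would start from $\sum_{J\subseteq[n]}(-1)^{\#J}F^{\cyc}_{n,J}=0$ (Lemma~\ref{t:Fcyc_properties}(3)) and collect the summands by the cyclic orbit $A$ of $J$: since $\#J=r(A)$ and $F^{\cyc}_{n,J}=F^{\cyc}_{n,A}$ depend only on $A$, and $\#A\cdot d_A=n$, the orbit $A$ contributes $(-1)^{r(A)}\,\#A\cdot F^{\cyc}_{n,A}=n\,(-1)^{r(A)}\hF^{\cyc}_{n,A}$; dividing by $n$ gives (2).

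The substance is (3). I would first treat the case $C=[\varnothing]$, proving that $\BBB_0=\{\hF^{\cyc}_{n,A}:A\in\cnes\}$ is a $\ZZ$-basis by comparison with the $\ZZ$-basis $\{\hM^{\cyc}_{n,B}:B\in\cnes\}$ of Lemma~\ref{t:Mcyc_basis}. The crux is to expand $\hF^{\cyc}$ in the $\hM^{\cyc}$-basis with \emph{integer} coefficients, and the clean way is to average over the entire orbit rather than use a single representative: writing $\hF^{\cyc}_{n,A}=\tfrac1n\sum_{J\in A}F^{\cyc}_{n,J}$, expanding each $F^{\cyc}_{n,J}=\sum_{K\supseteq J}M^{\cyc}_{n,K}$ by Lemma~\ref{t:Fcyc_vs_Mcyc}, grouping the pairs $(J,K)$ according to the cyclic orbit $B$ of $K$, and using $M^{\cyc}_{n,K}=\tfrac{n}{\#B}\hM^{\cyc}_{n,B}$, one obtains
\[
\hF^{\cyc}_{n,A}=\sum_{B\in\cnes}a^{(A)}_{B}\,\hM^{\cyc}_{n,B},\qquad a^{(A)}_{B}:=\#\{J\in A:\,J\subseteq K\}\quad(\text{any fixed }K\in B),
\]
a nonnegative integer, well-defined since $A$ is a cyclic orbit.

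Ordering $\cnes$ by increasing rank, one has $a^{(A)}_{B}=0$ unless $r(A)\le r(B)$ (since $J\subseteq K$ forces $\#J\le\#K$), while $r(A)=r(B)$ together with $a^{(A)}_{B}\ne0$ forces $J=K$, hence $A=B$ with $a^{(A)}_{A}=1$. So the transition matrix $(a^{(A)}_{B})$ is block upper-triangular with identity diagonal blocks, hence unimodular, and $\BBB_0$ is a $\ZZ$-basis of $\cQSym_n$. For a general $C\in c2^{[n]}$, the set $\BBB_C$ has cardinality $\#c2^{[n]}-1=\#\cnes=\operatorname{rank}_\ZZ\cQSym_n$; the linear dependence of part (2), all of whose coefficients are $\pm1$, lets me solve for $\hF^{\cyc}_{n,C}$ as an integer combination of the remaining $\hF^{\cyc}_{n,A}$, so $\BBB_C$ still $\ZZ$-spans $\cQSym_n$; since a surjective endomorphism of the free $\ZZ$-module $\cQSym_n$ is an isomorphism, $\BBB_C$ is a $\ZZ$-basis too.

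The step I expect to take the most care is the integrality in the displayed expansion: expanding $\hF^{\cyc}_{n,A}=\tfrac{1}{d_A}\sum_{K\supseteq J_0}M^{\cyc}_{n,K}$ from a single representative $J_0\in A$ produces coefficients of the form $\mu_{A,B}\,d_B/d_A$, with $\mu_{A,B}=\#\{K\in B:\,K\supseteq J_0\}$, whose integrality is equivalent to the double-counting identity $d_B\,\mu_{A,B}=d_A\,a^{(A)}_{B}$ for the pairs $\{(J,K):J\in A,\ K\in B,\ J\subseteq K\}$ --- exactly what averaging over the orbit makes transparent; alternatively, integrality can be read off Proposition~\ref{t:Fcyc_from_F}, whose summands repeat $d_J$ times along cosets of the stabilizer of $J$. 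Everything else is routine orbit-and-stabilizer bookkeeping.
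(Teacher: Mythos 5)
Your proposal is correct and follows essentially the same route as the paper: parts (1) and (2) are proved identically, and for part (3) your coefficient $a^{(A)}_{B}=\#\{J\in A:\,J\subseteq K\}$ is exactly the paper's $d_{A,B}/d_A$, so your orbit-averaged expansion of $\hF^{\cyc}_{n,A}$ in the $\hM^{\cyc}$-basis, the rank-triangularity with unit diagonal, and the use of the $\pm 1$ linear dependence to pass from $\BBB_0$ to a general $\BBB_C$ all mirror the paper's argument (which phrases the integrality via the surjection $D_{J,K}\twoheadrightarrow\{J'\in A: J'\subseteq K\}$ rather than by averaging). No gaps.
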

	
	\begin{proof}
		(1) follows immediately from Lemma~\ref{t:Fcyc_properties}(1) and Definition~\ref{def:Fcyc_normalized}, since $d_{\varnothing} = d_{[n]} = n$.
		
		%
		
		(2) Write Lemma~\ref{t:Fcyc_properties}(3) in the form
		\[
		\sum_{J \subseteq [n]} (-1)^{\#J} d_J \hF^{\cyc}_{n,J} = 0,
		\]
		or equivalently
		\[
		\sum_{A \in c2^{[n]}} (-1)^{r(A)} \#A \cdot d_A \hF^{\cyc}_{n,A} = 0.
		\]
		Clearly
		\[
		\#A \cdot d_A = n
		\]
		for any orbit $A \in c2^{[n]}$, yielding the claimed formula.
		
		(3) Note that, at this point, we know that $\hF^{\cyc}_{n,A} \in \cQSym_n\! \otimes \,\QQ$ $(\forall A \in c2^{[n]})$ 
		but not necessarily $\hF^{\cyc}_{n,A} \in \cQSym_n$. This is part of the claim, and will be proved below.
		
		For $A, B \in c2^{[n]}$ write $A \le B$ if 
		there exist $J \in A$ and $K \in B$ such that $J \subseteq K$
		(equivalently,
		for any $J \in A$ there exists $K \in B$ such that $J \subseteq K$;
		equivalently,
		for any $K \in B$ there exists $J \in A$ such that $J \subseteq K$).
		It is easy to see that $\le$ is a partial order on $c2^{[n]}$.
		For $J, K \subseteq [n]$, let $d_{J,K}:=\#D_{J,K}$ where:
		\begin{equation}
			\label{set-defining-DJK}
			D_{J,K}:=
			\{i \in \ZZ/n\ZZ \,:\, J \subseteq K+i\}
			=\{i \in \ZZ/n\ZZ \,:\, J-i \subseteq K\}.
		\end{equation}
		The number $d_{J,K}$ is invariant under cyclic shifts of either $J$ or $K$, and can therefore be used to define $d_{A,B}$ for any $A, B \in c2^{[n]}$. Clearly, $d_{A,B} > 0 \iff A \le B$; note that $d_{A,A} = d_A$.  We claim that $d_{A,B}$ is divisible by both $d_A$ and $d_B$: 
		if one fixes representatives $J$ and $K$ for $A$ and $B$, respectively, then one has
		\begin{itemize}
			\item[(i)] $\frac{d_{A,B}}{d_B}=\#\{K' \in B: J \subseteq K'\}$, as
			$i  \mapsto  K+i$ gives a $d_B$-to-$1$ surjection $D_{J,K}
			\twoheadrightarrow \{K' \in B: J \subseteq K'\}$,
			\item[(ii)] $\frac{d_{A,B}}{d_A}=\#\{J' \in A: J' \subseteq K\}$, as
			$i  \mapsto  J-i$ gives a $d_A$-to-$1$ surjection $D_{J,K}
			\twoheadrightarrow \{J' \in A: J' \subseteq K\}$.
		\end{itemize}
		Then the first formula in Lemma~\ref{t:Fcyc_vs_Mcyc} provides the second equality in the following rewriting:
		\begin{equation}
			\label{long-rewriting}
			F^{\cyc}_{n,A}=F^{\cyc}_{n,J}
			= \sum_{\substack{K' \subseteq [n]: \\ J \subseteq K'}} M^{\cyc}_{n,K'}
			= \sum_{B \ge A} M^{\cyc}_{n,B} \cdot \#\{K' \in B:\\J \subseteq K'\} 
			= \sum_{B \ge A} \frac{d_{A,B}}{d_B} M^{\cyc}_{n,B} 
		\end{equation}
		where the last equality uses (i) above.  
		Dividing the far left and right sides of \eqref{long-rewriting} by $d_A$ gives: 
		\begin{equation}
			\label{eq:111}
			\hF^{\cyc}_{n,A} 
			= \sum_{B \ge A} \frac{d_{A,B}}{d_A} \hM^{\cyc}_{n,B}.
		\end{equation}
		An analogous rewriting using the second formula in Lemma~\ref{t:Fcyc_vs_Mcyc} leads to the following:
		\begin{equation}\label{eq:112}
			\hM^{\cyc}_{n,A} 
			= \sum_{B \ge A} (-1)^{r(B) - r(A)}\frac{d_{A,B}}{d_A} \hF^{\cyc}_{n,B}.
		\end{equation}
		
		It now follows from Equation~\eqref{eq:111} together with Lemma~\ref{t:Mcyc_basis} that $\hF^{\cyc}_{n,A} \in \cQSym_n$ for each $A \in c2^{[n]}$.
		Furthermore, Equation~\eqref{eq:112} and Lemma~\ref{t:Mcyc_basis} imply
		that $\left\{ \hF^{\cyc}_{n,A} \,:\, A \in c2^{[n]} \right\}$ spans $\cQSym_n$ over $\ZZ$. 
		Since $\cQSym_n$ is a torsion-free abelian group of rank $\#\cnes$,
		by Lemma~\ref{t:Mcyc_basis}, the linear dependence (2) above completes the proof.
	\end{proof}
	
	
	\begin{example}
		The matrix whose columns expand $\{F^{\cyc}_{n,J}\}$
		in terms of $\{M^{\cyc}_{n,K}\}$ is given below for $n=2$, $3$ and $4$, indexing rows and columns by representatives of the equivalence classes in $\cnes$. 
		\[
		\bordermatrix{~ 
			~ & F^{\cyc}_{2,\{1,2\}} 
			&  F^{\cyc}_{2,\{1\}} \cr 
			M^{\cyc}_{2,\{1,2\}} &1 &1 \cr
			M^{\cyc}_{2,\{1\}} &0 &1
		}
		\]
		\vskip.2in
		\[
		\bordermatrix{~ 
			~ & F^{\cyc}_{3,\{1,2,3\}} 
			&  F^{\cyc}_{3,\{1,2\}}
			&  F^{\cyc}_{3,\{1\}} \cr 
			M^{\cyc}_{3,\{1,2,3\}} &1 &1 &1 \cr
			M^{\cyc}_{3,\{1,2\}} &0 &1 &2 \cr
			M^{\cyc}_{3,\{1\}} &0 &0 &1  \cr
		}
		\]
		\vskip.2in
		\[
		\bordermatrix{~ 
			~ & F^{\cyc}_{4,\{1,2,3,4\}} 
			&  F^{\cyc}_{4,\{1,2,3\}}
			&  F^{\cyc}_{4,\{1,3\}}
			&  F^{\cyc}_{4,\{1,2\}}
			& F^{\cyc}_{4,\{1\}} \cr 
			M^{\cyc}_{4,\{1,2,3,4\}} &1 &1 &1 &1 &1 \cr
			M^{\cyc}_{4,\{1,2,3\}} &0 &1 &2 &2 &3 \cr
			M^{\cyc}_{4,\{1,3\}} &0 &0 &1 &0 &1 \cr
			M^{\cyc}_{4,\{1,2\}} &0 &0 &0 &1 &2 \cr
			M^{\cyc}_{4,\{1\}} &0 &0 &0 &0 &1 \cr
		}
		\]
		\vskip.2in
		Here are the same matrices for 
		$\{\hF^{\cyc}_{n,J}\}$ and $\{\hM^{\cyc}_{n,K}\}$:
		\[
		\bordermatrix{~ 
			~ & \hF^{\cyc}_{2,\{1,2\}}  
			&  \hF^{\cyc}_{2,\{1\}} \cr 
			\hM^{\cyc}_{2,\{1,2\}} &1 &2 \cr
			\hM^{\cyc}_{2,\{1\}} &0 &1
		}
		\]
		\vskip.2in
		\[
		\bordermatrix{~ 
			~ & \hF^{\cyc}_{3,\{1,2,3\}}
			&  \hF^{\cyc}_{3,\{1,2\}}
			&  \hF^{\cyc}_{3,\{1\}} \cr 
			\hM^{\cyc}_{3,\{1,2,3\}} &1 &3 &3 \cr
			\hM^{\cyc}_{3,\{1,2\}} &0 &1 &2 \cr
			\hM^{\cyc}_{3,\{1\}} &0 &0 &1  \cr
		}
		\]
		\vskip.2in
		\[
		\bordermatrix{~ 
			~ & \hF^{\cyc}_{4,\{1,2,3,4\}}
			&  \hF^{\cyc}_{4,\{1,2,3\}}
			&  \hF^{\cyc}_{4,\{1,3\}} 
			&  \hF^{\cyc}_{4,\{1,2\}}
			& \hF^{\cyc}_{4,\{1\}} \cr 
			\hM^{\cyc}_{4,\{1,2,3,4\}}  &1 &4 &2 &4 &4 \cr
			\hM^{\cyc}_{4,\{1,2,3\}} &0 &1 &1 &2 &3 \cr
			\hM^{\cyc}_{4,\{1,3\}}  &0 &0 &1 &0 &2 \cr
			\hM^{\cyc}_{4,\{1,2\}} &0 &0 &0 &1 &2 \cr
			\hM^{\cyc}_{4,\{1\}} &0 &0 &0 &0 &1 \cr
		}
		\]
	\end{example}
	
	\begin{remark}\label{rem:cQSym_0}
		The degree $0$ homogeneous component $\cQSym_0$ is, of course, isomorphic to $\ZZ$.
		We formally define $F^{\cyc}_{0,\varnothing} = M^{\cyc}_{0,\varnothing} = 1$ to get monomial and fundamental bases for it.
	\end{remark}

	\subsection{The non-Escher subring $\cQSym^-$}
	
	For many combinatorial applications, in which the non-Escher property of cyclic descent sets (see Definition~\ref{def:cDes}) is manifest, 
	it is natural to consider a certain subgroup $\cQSym_n^-$ of $\cQSym_n$. 
	
	\begin{defn}\label{def:cQSym-}
		For any positive integer $n>1$, let $2_{0,n}^{[n]}$ be the set of all subsets of $[n]$ other than the empty set and $[n]$ itself.
		Let $c2_{0,n}^{[n]}$ be the set of equivalence classes of elements of $2_{0,n}^{[n]}$ under cyclic shifts.
		Define, for each $n > 1$,
		\[
		\cQSym_n^- 
		= \spn_{\ZZ} \left\{ \hF^{\cyc}_{n,J} \,:\, J \in 2_{0,n}^{[n]} \right\}.
		\]
	\end{defn}
	
	By Lemma~\ref{t:Fcyc_basis}, $\cQSym_n^-$ is a free abelian subgroup of corank one in $\cQSym_n$.
	Define also
	$\cQSym_1^- 
	:= \spn_{\ZZ} \left\{\hF^{\cyc}_{1,\varnothing}\right\}$ where  $\hF^{\cyc}_{1,\varnothing} = e_1 = h_1$, 
	$\cQSym_0^- := \ZZ$, and
	\[
	\cQSym^- := \bigoplus_{n \ge 0} \cQSym_n^-.
	\]
	
	
	\begin{obs}\label{t:proper_basis}
		For each $n > 1$, 
		the set $\left\{ \hF^{\cyc}_{n,A} \,:\, A \in c2_{0,n}^{[n]} \right\}$ is a $\ZZ$-basis for $\cQSym_n^-$.
	\end{obs}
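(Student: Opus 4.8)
The plan is to read the statement off Lemma~\ref{t:Fcyc_basis}(3), which already produces a $\ZZ$-basis $\BBB_C$ of $\cQSym_n$ for \emph{every} choice of omitted cyclic class $C\in c2^{[n]}$; the useful choice here is $C=[[n]]$, the cyclic class of the full set $[n]$. This is legitimate: since $n>1$ we have $[n]\neq\varnothing$, and both $[\varnothing]$ and $[[n]]$ are genuine (distinct) one-element orbits in $c2^{[n]}$, each being fixed by every cyclic shift. The point of this choice is that deleting the full-set class from a basis of $\cQSym_n$ leaves a basis indexed by $\{[\varnothing]\}\sqcup c2^{[n]}_{0,n}$, from which the $\cQSym_n^-$ part can be cut out directly.

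First I would record the spanning half. By the cyclic invariance of Lemma~\ref{t:Fcyc_properties}(2), the normalized function $\hF^{\cyc}_{n,J}$ depends only on the cyclic class of $J$, so $\hF^{\cyc}_{n,J}=\hF^{\cyc}_{n,A}$ whenever $J\in A$. Hence Definition~\ref{def:cQSym-} can be rewritten as $\cQSym_n^-=\spn_{\ZZ}\bigl\{\hF^{\cyc}_{n,A}:A\in c2^{[n]}_{0,n}\bigr\}$; that is, the set in question $\ZZ$-spans $\cQSym_n^-$ by definition, and only its linear independence remains in doubt.

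Next I would apply Lemma~\ref{t:Fcyc_basis}(3) with $C=[[n]]$: the set $\BBB_{[[n]]}=\bigl\{\hF^{\cyc}_{n,A}:A\in c2^{[n]}\setminus\{[[n]]\}\bigr\}$ is a $\ZZ$-basis of $\cQSym_n$. Deleting from $c2^{[n]}$ the two distinguished singleton orbits $[\varnothing]$ and $[[n]]$ leaves exactly $c2^{[n]}_{0,n}$, so $c2^{[n]}\setminus\{[[n]]\}=\{[\varnothing]\}\sqcup c2^{[n]}_{0,n}$. Therefore $\bigl\{\hF^{\cyc}_{n,A}:A\in c2^{[n]}_{0,n}\bigr\}$ is a subset of a $\ZZ$-basis of $\cQSym_n$, hence $\ZZ$-linearly independent; combined with the spanning statement of the previous step, it is a $\ZZ$-basis of $\cQSym_n^-$.

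There is no real obstacle here: the substantive work is entirely contained in Lemma~\ref{t:Fcyc_basis}. The only thing to be careful about is the purely set-theoretic identification $c2^{[n]}\setminus\{[[n]]\}=\{[\varnothing]\}\sqcup c2^{[n]}_{0,n}$, which is immediate once one observes that $\varnothing$ and $[n]$ each form their own orbit. (As a sanity check, this is consistent with the earlier observation that $\cQSym_n^-$ has corank one in $\cQSym_n$: the index set shrinks from $\#c2^{[n]}-1=\#\cnes$ to $\#c2^{[n]}-2=\#\cnes-1$.)
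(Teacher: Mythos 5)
Your proposal is correct and is essentially the argument the paper intends: the statement is presented as an Observation following directly from Definition~\ref{def:cQSym-} (which gives spanning, via the cyclic invariance of Lemma~\ref{t:Fcyc_properties}(2)) and Lemma~\ref{t:Fcyc_basis}(3) (which gives linear independence, since the set in question is contained in a basis $\BBB_C$ of $\cQSym_n$ --- your choice $C=[[n]]$ works, as would $C=[\varnothing]$). The set-theoretic bookkeeping $c2^{[n]}=\{[\varnothing]\}\sqcup\{[[n]]\}\sqcup c2^{[n]}_{0,n}$ and the corank-one consistency check are exactly right.
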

	
	In fact, $\cQSym$ is a ring and $\cQSym^-$ a subring; see Proposition~\ref{t:cQSym_is_a_ring}.
	We call $\cQSym^-$ the {\em non-Escher subring} of $\cQSym$.

	\section{Toric posets and cyclic $P$-partitions}
	\label{sec:toric_posets}
	
	In this section we take a little excursion into cyclic analogues of posets that were called {\em toric posets} \cite{DMR}, and develop a theory of cyclic
	$P$-partitions for them; in particular, 
	a cyclic analogue of Stanley's fundamental decomposition lemma for $P$-partitions  
	is provided.
	Just as the fundamental quasi-symmetric functions $F_{n,J}$ are $P$-partition enumerators for certain (labeled) total orders, 
	the fundamental cyclic quasi-symmetric functions
	$F^{\cyc}_{n,J}$ will be shown to be cyclic $P$-partition enumerators for certain (labeled) total cyclic orders. 
	This will be used to prove that $\cQSym^-$ is a ring and to study its structure constants.

	\subsection{Ordinary $P$-partitions for directed acyclic graphs}
	
	We first review here $P$-partitions for a labeled poset, rephrased very slightly in terms of a {\em directed acyclic graph}.  Although this rephrasing is trivial, it is better for thinking later about the cyclic/toric version.
	
	\begin{definition}
		A {\it directed acyclic graph} (DAG) on $\{1,2,\ldots,n\}$ is a subset $\DAG{D}$ of the cartesian product $\{1,2,\ldots,n\} \times \{1,2,\ldots,n\}$, or a binary relation (written either as $i \rightarrow j$ in $\DAG{D}$, or $i \overset{\DAG{D}}{\rightarrow} j$) containing no directed cycles $i_1 \rightarrow i_2 \rightarrow \cdots \rightarrow i_{k-1} \rightarrow i_k=i_1$ with $k >1$.  In particular, it is
		\begin{itemize}
			\item {\it antisymmetric}, that is, one cannot have
			both $i \rightarrow j$ and $j \rightarrow i$ in $\DAG{D}$, and
			\item {\it irreflexive}, that is, one cannot have $i \rightarrow i$ in $\DAG{D}$.
		\end{itemize}
		
		When one has two DAGs $\DAG{D}_1, \DAG{D}_2$ (on the same set of vertices) with $\DAG{D}_1 \subseteq \DAG{D}_2$, then one says that $\DAG{D}_2$ {\it extends} $\DAG{D}_1$.
		
		A DAG $\DAG{P}$ is {\it transitive} if $i \rightarrow j$ and  $j \rightarrow k$ in $\DAG{P}$ implies $i \rightarrow k$ in $\DAG{P}$.
		Transitive DAGs are called {\it posets}.
		
		The {\it transitive closure} $\DAG{P}$ of a DAG $\DAG{D}$ is the poset extending $\DAG{D}$ obtained from $\DAG{D}$ by adding in $i_1 \rightarrow i_k$
		whenever one has a chain $i_1 \rightarrow  i_2 \rightarrow \cdots \rightarrow i_{k-1} \rightarrow i_k$ in $\DAG{D}$.
		
		A poset $\DAG{P}$ is called a {\em total} (or {\em linear}) {\em order} if for every pair $(i,j)$ with $1 \leq i < j \leq n$, 
		it contains either $i \rightarrow j$ or $j \rightarrow i$.  
		In other words, $\DAG{P}$ is a {\it transitive tournament}.  
		It is easily seen that this means that $\DAG{P}=\DAG{w}$ is the transitive closure of the DAG having $w_1 \rightarrow w_2 \rightarrow \cdots \rightarrow w_n$ for some (unique) permutation $w=(w_1,w_2,\ldots,w_n)$ in the symmetric group $\symm_n$.
		
		Denote by $\LLL(\DAG{D})$ the set of all permutations $w$ in $\symm_n$ for which $\DAG{w}$ extends $\DAG{D}$.
	\end{definition}
	
	\begin{definition}
		A {\it $\DAG{D}$-partition} is a function $f: \{1,2,\ldots,n\} \rightarrow \{1,2,\ldots\}$ for which
		\begin{itemize}
			\item $f(i) \leq f(j)$ whenever $i \rightarrow j$ in $\DAG{D}$, and
			\item $f(i) < f(j)$ whenever $i \rightarrow j$ in $\DAG{D}$, but $i >_{\ZZ} j$.
		\end{itemize}
		
		Denote by $\AAA(\DAG{D})$ the set of all $\DAG{D}$-partitions $f$.
	\end{definition}

	\begin{lemma}(Fundamental lemma of $\DAG{D}$-partitions \cite[Lemma 3.15.3]{EC1})
		\label{fundamental-lemma-of-P-partitions}
		For any DAG $\DAG{D}$, one has a decomposition of $\AAA(\DAG{D})$ as the following disjoint union:
		\[
		\AAA(\DAG{D}) = \bigsqcup_{ w \in \LLL(\DAG{D}) } \AAA(\DAG{w}).
		\]
	\end{lemma}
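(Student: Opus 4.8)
The plan is as follows. To each $\DAG{D}$-partition $f \in \AAA(\DAG{D})$ I associate a canonical permutation $w = \sigma(f) \in \symm_n$, obtained by listing $1,2,\ldots,n$ as $w_1,\ldots,w_n$ so that $f(w_1) \le f(w_2) \le \cdots \le f(w_n)$ and, whenever $f(w_i) = f(w_{i+1})$, one has $w_i <_{\ZZ} w_{i+1}$; equivalently, one sorts $[n]$ lexicographically by the key $i \mapsto (f(i),i)$, so $w$ exists and is unique. I then verify three things: (a) $\DAG{w}$ extends $\DAG{D}$, so that $w \in \LLL(\DAG{D})$; (b) $f \in \AAA(\DAG{w})$; and (c) $w$ is the \emph{only} permutation $v$ with $f \in \AAA(\DAG{v})$. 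Together with the easy reverse inclusion $\AAA(\DAG{w}) \subseteq \AAA(\DAG{D})$ for every $w \in \LLL(\DAG{D})$, these statements give the desired disjoint-union decomposition: (a) and (b) show each $f$ lies in some term, the reverse inclusion shows every term lies in $\AAA(\DAG{D})$, and (c) gives disjointness.

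The first step is a convenient description of $\AAA(\DAG{w})$ for a total order $\DAG{w}$. Since $\DAG{w}$ is the transitive closure of the chain $w_1 \to w_2 \to \cdots \to w_n$, a function $g$ lies in $\AAA(\DAG{w})$ if and only if $g(w_1) \le g(w_2) \le \cdots \le g(w_n)$, with strict inequality $g(w_i) < g(w_{i+1})$ whenever $w_i >_{\ZZ} w_{i+1}$. The ``only if'' direction is immediate from the definition of a $\DAG{w}$-partition applied to the edges $w_i \to w_{i+1}$. For ``if'', note that any edge $w_a \to w_b$ of $\DAG{w}$ with $a < b$ and $w_a >_{\ZZ} w_b$ forces at least one descent $w_i >_{\ZZ} w_{i+1}$ with $a \le i < b$, and the strict inequality there upgrades the chain $g(w_a) \le \cdots \le g(w_i) < g(w_{i+1}) \le \cdots \le g(w_b)$ to $g(w_a) < g(w_b)$.

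Now take $f \in \AAA(\DAG{D})$ and $w = \sigma(f)$. By construction $f(w_1) \le \cdots \le f(w_n)$, and at any position with $w_i >_{\ZZ} w_{i+1}$ the tie-breaking rule forbids $f(w_i) = f(w_{i+1})$, hence $f(w_i) < f(w_{i+1})$; by the description above, $f \in \AAA(\DAG{w})$, proving (b). For (a), let $i \to j$ be an edge of $\DAG{D}$. Then $f(i) \le f(j)$, and if $f(i) = f(j)$ the defining strictness condition for $\DAG{D}$-partitions rules out $i >_{\ZZ} j$, so $i <_{\ZZ} j$; thus in either case $i$ precedes $j$ in the list $\sigma(f)$, i.e.\ $i \to j$ in $\DAG{w}$. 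So $\DAG{w}$ extends $\DAG{D}$ and $w \in \LLL(\DAG{D})$. For uniqueness (c), if $f \in \AAA(\DAG{v})$ then by the description $f(v_1) \le \cdots \le f(v_n)$, so $v$ sorts the values of $f$; moreover on any maximal block of consecutive indices where $f \circ v$ is constant there is no descent of $v$, so $v$ is increasing there, and these two properties characterize $\sigma(f)$, forcing $v = w$. Finally, for $w \in \LLL(\DAG{D})$ and $g \in \AAA(\DAG{w})$, every edge $i \to j$ of $\DAG{D}$ is an edge of $\DAG{w}$, so $g(i) \le g(j)$, with $g(i) < g(j)$ when moreover $i >_{\ZZ} j$; hence $g \in \AAA(\DAG{D})$, establishing the reverse inclusion and completing the argument.

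The only genuine subtlety — the main thing to get right — is making the tie-breaking convention for $\sigma(f)$ interact correctly with the two strictness requirements: the one built into the definition of a $\DAG{D}$-partition (for edges $i \to j$ with $i >_{\ZZ} j$) and the one appearing in the description of $\AAA(\DAG{w})$ (at descents of $w$). Once that description for total orders is in place, the rest is routine bookkeeping, and the same scheme will be the template for the toric version in Lemma~\ref{cyclic-fundamental-lemma}.
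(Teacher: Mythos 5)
Your proof is correct and is essentially the standard argument for Stanley's fundamental lemma of $P$-partitions, which the paper itself does not reprove but simply cites as \cite[Lemma 3.15.3]{EC1}: one associates to each $f$ the unique linear extension $w$ obtained by sorting $[n]$ on the key $(f(i),i)$, checks that $w\in\LLL(\DAG{D})$ and $f\in\AAA(\DAG{w})$, and verifies uniqueness via the descent characterization of $\AAA(\DAG{w})$. All steps, including the reduction of the conditions on $\DAG{w}$ to consecutive pairs and the tie-breaking analysis, are handled correctly.
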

	
	\noindent
	The following proposition explains why the theory of DAGs is the same as that of posets.
	\begin{proposition}
		If $\DAG{D}_2$ extends $\DAG{D}_1$ then one has inclusions
		\[
		\begin{aligned}
		\LLL(\DAG{D}_2) &\subseteq \LLL(\DAG{D}_1),\\
		\AAA(\DAG{D}_2) & \subseteq \AAA(\DAG{D}_1),
		\end{aligned}
		\]
		with equality in both cases
		if $\DAG{D}_2$ is the transitive closure of $\DAG{D}_1$. 
	\end{proposition}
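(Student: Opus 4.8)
The plan is to obtain the two inclusions by directly unwinding the definitions, and then to upgrade them to equalities in the transitive-closure case using the transitivity of each $\DAG{w}$.

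For the inclusions: if $w \in \LLL(\DAG{D}_2)$ then by definition $\DAG{D}_2 \subseteq \DAG{w}$, and since $\DAG{D}_1 \subseteq \DAG{D}_2$ we get $\DAG{D}_1 \subseteq \DAG{w}$, i.e.\ $w \in \LLL(\DAG{D}_1)$; this gives $\LLL(\DAG{D}_2) \subseteq \LLL(\DAG{D}_1)$. Likewise, a function $f$ satisfying the two defining inequalities of a $\DAG{D}_2$-partition along every arrow $i \rightarrow j$ of $\DAG{D}_2$ satisfies them in particular along every arrow of the smaller relation $\DAG{D}_1$, so $\AAA(\DAG{D}_2) \subseteq \AAA(\DAG{D}_1)$.

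Now suppose $\DAG{D}_2 = \DAG{P}$ is the transitive closure of $\DAG{D}_1$; it remains to prove the reverse inclusions $\LLL(\DAG{D}_1) \subseteq \LLL(\DAG{P})$ and $\AAA(\DAG{D}_1) \subseteq \AAA(\DAG{P})$. For the first: if $w \in \LLL(\DAG{D}_1)$, so $\DAG{D}_1 \subseteq \DAG{w}$, then every arrow $i \rightarrow k$ of $\DAG{P}$ arises from a chain $i = i_1 \rightarrow \cdots \rightarrow i_m = k$ in $\DAG{D}_1 \subseteq \DAG{w}$, and transitivity of $\DAG{w}$ forces $i \rightarrow k$ in $\DAG{w}$; hence $\DAG{P} \subseteq \DAG{w}$, i.e.\ $w \in \LLL(\DAG{P})$. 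For the second: let $f \in \AAA(\DAG{D}_1)$ and let $i \rightarrow k$ be an arrow of $\DAG{P}$, coming from a chain $i = i_1 \rightarrow i_2 \rightarrow \cdots \rightarrow i_m = k$ in $\DAG{D}_1$. Applying the weak inequality of a $\DAG{D}_1$-partition along each edge gives $f(i_1) \le f(i_2) \le \cdots \le f(i_m)$, so $f(i) \le f(k)$. If moreover $i >_{\ZZ} k$, I must upgrade this to $f(i) < f(k)$: arguing by contradiction, if $f$ were constant along the chain then for each $\ell$ the edge $i_\ell \rightarrow i_{\ell+1}$ of $\DAG{D}_1$ would have $f(i_\ell) = f(i_{\ell+1})$, which forces $i_\ell <_{\ZZ} i_{\ell+1}$ (the case $i_\ell >_{\ZZ} i_{\ell+1}$ would force a strict jump, and $i_\ell = i_{\ell+1}$ is excluded by irreflexivity of $\DAG{D}_1$); chaining these inequalities yields $i_1 <_{\ZZ} i_m$, contradicting $i >_{\ZZ} k$. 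Hence $f$ is non-constant along the chain and $f(i) < f(k)$, so $f \in \AAA(\DAG{P})$.

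The only step that is not a purely formal manipulation of the containment $\DAG{D}_1 \subseteq \DAG{D}_2$ is this last strict-inequality argument, and that is where I would be most careful. I would also note in passing, so that the statement makes sense, that $\DAG{P}$ is itself acyclic: a directed cycle in $\DAG{P}$ would be produced by a chain in $\DAG{D}_1$ returning to its starting vertex, i.e.\ a directed cycle in $\DAG{D}_1$, which is forbidden.
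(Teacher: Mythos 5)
Your proof is correct; the paper states this proposition without proof, and your argument is exactly the routine verification it leaves implicit. The one genuinely non-formal step — upgrading $f(i)\le f(k)$ to $f(i)<f(k)$ along a chain $i=i_1\rightarrow\cdots\rightarrow i_m=k$ when $i>_{\ZZ}k$, by observing that equality everywhere would force $i_1<_{\ZZ}\cdots<_{\ZZ}i_m$ — is handled correctly, and your aside that the transitive closure of a DAG is again a DAG is a legitimate (if tacit in the paper) prerequisite for the statement to parse.
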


	\subsection{Toric DAGs, toric posets, and  toric $P$-partitions}\label{sec:toric}
	
	The toric case is mildly trickier, because one must consider the following equivalence relation on DAGs.
	
	\begin{definition}
		Say that a DAG $\DAG{D}$ has $i_0$ in $\{1,2,\ldots,n\}$ as a {\it source} (respectively, {\it sink}) if $\DAG{D}$ contains
		no arrows/relations of the form $j \rightarrow i_0$ (respectively, of the form $i_0 \rightarrow j$). 
		
		Say that $\DAG{D'}$
		is obtained from $\DAG{D}$ by an {\it elementary equivalence} or {\it flip at $i_0$} if $i_0$ is either a source or sink of $\DAG{D}$ and one obtains $\DAG{D}'$ by reversing all the arrows in $\DAG{D}$ incident with $i_0$. 
		Define the equivalence relation $\equiv$ on DAGs to be
		the reflexive-transitive closure of the elementary equivalences, that is, $\DAG{D} \equiv \DAG{D'}$ if and only if there exists a (possibly empty) sequence of flips one can apply starting with $\DAG{D}$ to obtain $\DAG{D'}$.
	\end{definition}
	
	\begin{definition}
		A {\it toric DAG} is the $\equiv$-equivalence class $[\DAG{D}]$ of a DAG $\DAG{D}$.
	\end{definition}
	
	\begin{example}
		Here is an example of a toric DAG $[\DAG{D}_1]$:
		\[
		\xymatrix@R=12pt{
			4                & \\
			& 3 \ar[ul] \\
			& 2 \ar[u] \\
			1\ar[ur] \ar[uuu]
		} 
		\xymatrix@R=12pt{
			1                & \\
			& 4 \ar[ul] \\
			& 3 \ar[u] \\
			2\ar[ur] \ar[uuu]
		}
		\xymatrix@R=12pt{
			2                & \\
			& 1 \ar[ul] \\
			& 4 \ar[u] \\
			3\ar[ur] \ar[uuu]
		} 
		\xymatrix@R=12pt{
			3                & \\
			& 2 \ar[ul] \\
			& 1 \ar[u] \\
			4\ar[ur] \ar[uuu]
		}
		\]
		
		Here is another toric DAG $[\DAG{D}_2]$:
		$$
		\xymatrix@R=12pt{
			& 1              & \\
			2\ar[ur]&                &4\ar[ul] \\
			& 3\ar[ur]\ar[ul]&
		} \quad
		\xymatrix@R=12pt{
			1              & 3 \\
			2\ar[u]\ar[ur] & 4\ar[u]\ar[ul]
		}\quad
		\xymatrix@R=12pt{
			& 2              & \\
			1\ar[ur]&                &3\ar[ul] \\
			& 4\ar[ur]\ar[ul]&
		}\quad
		\xymatrix@R=12pt{
			& 4              & \\
			1\ar[ur]&                &3\ar[ul] \\
			& 2\ar[ur]\ar[ul]&
		}
		\quad
		\xymatrix@R=12pt{
			2              & 4 \\
			1\ar[u]\ar[ur] & 3\ar[u]\ar[ul]
		}\quad
		\quad
		\xymatrix@R=12pt{
			& 3              & \\
			2\ar[ur]&                &4\ar[ul] \\
			& 1\ar[ur]\ar[ul]&
		}
		$$
	\end{example}

	\begin{definition}
		Say that $[\DAG{D}_2]$ {\it torically extends} $[\DAG{D}_1]$ if there exist $\DAG{D}_i' \in [\DAG{D}_i]$ for $i=1,2$ with 
		$\DAG{D'_1} \subseteq \DAG{D'_2}$.
	\end{definition}
	
	A certain toric extension, called the toric transitive closure, will be particularly important.
	\begin{definition}
		Say that $i \rightarrow j$ is implied from {\it toric transitivity} in a DAG $\DAG{D}$ if there exist in $\DAG{D}$ 
		both a chain $i_1 \rightarrow  i_2 \rightarrow \cdots \rightarrow i_k$ and a direct arrow $i_1 \rightarrow i_k$ 
		such that $i=i_a, j=i_b$ for some $1 \leq a < b \leq k$.
		
		The {\em toric transitive closure} of $\DAG{D}$ is the DAG $\DAG{P}$ obtained by adding in all arrows $i \rightarrow j$ implied from toric transitivity in $\DAG{D}$;
		see Figure~\ref{fig:toric-transitive-closure}, where the existence of all the solid arcs in $\DAG{D}$ implies the existence of all the dotted arcs in its toric transitive closure.
		Note that the toric transitive closure of $\DAG{D}$
		is a subset of its usual transitive closure. 
		
		A DAG $\DAG{D}$ is {\em toric transitively closed} if it equals its toric transitive closure\footnote{It is not hard to see, e.g., from \cite[Prop. 4.2]{DMR}, that the toric transitive closure of any DAG will already be toric transitively closed.  That is, one
			does not need to iterate the closure procedure to reach a DAG which is toric transitively closed.}.
		
	\end{definition}
	
	\begin{figure}
		\[
		\xymatrix@R=20pt{                                                                                                     
			i_k & & & &\\                                                                                                       
			& &i_{k-1} \ar[ull]& &\\                                                                        
			& & &i_{k-2} \ar[ul]\ar@{-->}[uulll] &\\                                                           
			& & & &\vdots \ar[ul]\ar@{-->}[uuullll] \\                                                                          
			& & &i_3 \ar[ur]\ar@{-->}[uuuulll] &\\                                                             
			& &i_2 \ar[ur] \ar@{-->}[uuuuull] \ar@{-->}[uuur]\ar@{-->}[uuuu]&  & \\                                                           
			i_1 \ar[uuuuuu] \ar[urr] \ar@{-->}[uurrr]\ar@{-->}[uuuurrr]\ar@{-->}[uuuuurr]\ar@{-->}[uuurrrr]&  &  &  &    }    
		\]
		\caption{Toric transitive closure}
		\label{fig:toric-transitive-closure}
	\end{figure}
	
	\begin{proposition}
		\label{toric-closure-independent-of-representative}
		If $\DAG{D_1} \equiv \DAG{D_2}$, then $\DAG{D}_1$ is toric transitively closed if and only if the
		same is true for $\DAG{D}_2$.
	\end{proposition}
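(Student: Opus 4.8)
The plan is to reduce the statement to a single elementary equivalence and then analyze that case directly. Since $\equiv$ is the reflexive--transitive closure of flips, and since a flip read backwards is again a flip (now at a vertex which is a sink rather than a source, or vice versa), it suffices to prove the one--step claim: \emph{if $\DAG{D}'$ is obtained from a toric transitively closed DAG $\DAG{D}$ by a single flip at a source or sink $i_0$, then $\DAG{D}'$ is again toric transitively closed.} Granting this, ``toric transitively closed'' propagates in both directions along any sequence of flips from $\DAG{D}_1$ to $\DAG{D}_2$. I would further observe that reversing all arrows of a DAG is an involution that interchanges sources and sinks, commutes with flips, and carries toric chains to toric chains (read backwards, with indices relabelled $i'_j := i_{k+1-j}$), hence carries the toric transitive closure to the toric transitive closure of the reversed DAG; thus ``toric transitively closed'' is invariant under global reversal, and it is enough to treat a flip at a \emph{source} $i_0$.

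For that case, $\DAG{D}$ and $\DAG{D}'$ agree on all arrows not incident to $i_0$, each arrow $i_0 \to j$ of $\DAG{D}$ becomes $j \to i_0$ in $\DAG{D}'$, and $i_0$ is now a sink of $\DAG{D}'$. Given a chain $i_1 \to i_2 \to \cdots \to i_k$ and a direct arrow $i_1 \to i_k$ in $\DAG{D}'$, with $1 \le a < b \le k$, I must produce the arrow $i_a \to i_b$ in $\DAG{D}'$. Because a directed walk in a DAG is a simple path, $i_0$ occurs among $i_1, \ldots, i_k$ at most once; and since $i_0$ has no outgoing arrows in $\DAG{D}'$, it can only occur as the terminal vertex $i_k$ (so $k \ge 2$, as $a < b$). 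If $i_0$ does not occur at all, then every arrow involved avoids $i_0$, so all of them---and in particular $i_a \to i_b$---lie in $\DAG{D}$ as well; toric transitive closedness of $\DAG{D}$ then gives $i_a \to i_b \in \DAG{D}$, hence $i_a \to i_b \in \DAG{D}'$.

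The substantive case is $i_0 = i_k$. The chain-arrows $i_1 \to i_2, \ldots, i_{k-2} \to i_{k-1}$ avoid $i_0$, hence lie in $\DAG{D}$, while the two arrows $i_{k-1} \to i_0$ and $i_1 \to i_0$ of $\DAG{D}'$ are incident to the sink $i_0$ and therefore came from arrows $i_0 \to i_{k-1}$ and $i_0 \to i_1$ of the source $i_0$ in $\DAG{D}$. Thus $\DAG{D}$ contains the chain $i_0 \to i_1 \to i_2 \to \cdots \to i_{k-1}$ \emph{together with} the direct arrow $i_0 \to i_{k-1}$; in effect the toric chain witnessed in $\DAG{D}'$ has been ``rotated'' into a toric chain of $\DAG{D}$ based at $i_0$. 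Toric transitive closedness of $\DAG{D}$ then supplies every arrow from an earlier to a later vertex of this rotated chain. If $b \le k-1$, reading off the pair $(i_a, i_b)$ gives $i_a \to i_b \in \DAG{D}$, an arrow avoiding $i_0$, hence in $\DAG{D}'$; if $b = k$, so that $i_b = i_0$, reading off $(i_0, i_a)$ gives $i_0 \to i_a \in \DAG{D}$, which the flip reverses to $i_a \to i_0 = i_a \to i_b$ in $\DAG{D}'$. In all cases $i_a \to i_b \in \DAG{D}'$, which establishes the one--step claim and hence the Proposition.

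I expect the main obstacle to be exactly this last case: one must see that the arrows reversed by the flip at the source are precisely what is needed to convert the toric chain exhibited in $\DAG{D}'$ into one exhibited in $\DAG{D}$ (and, for the terminal vertex, to convert back), so that toric transitivity transfers across the flip. The reduction to a single flip, the reduction to a source flip via global arrow-reversal, and the ``disjoint from $i_0$'' case are all routine.
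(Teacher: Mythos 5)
Your proof is correct and follows essentially the same route as the paper: reduce to a single flip and check that toric-chain configurations correspond across the flip. The paper dismisses the case where the flipped vertex lies on the chain as ``straightforward to check''; your rotation argument (the chain through the flipped source/sink in $\DAG{D}'$ becomes a toric chain based at that vertex in $\DAG{D}$) is precisely the verification being alluded to, carried out correctly.
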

	\begin{proof}
		It suffices to check this when $\DAG{D}_1,\DAG{D}_2$ differ by a flip at some node $j$.  
		Note that for every set of vertices $\{i_1,\ldots,i_k\}$ 
		for which $\DAG{D}_1$ contains all the solid arcs shown in Figure~\ref{fig:toric-transitive-closure}, it will contain all of the dotted
		arcs before the flip at node $j$ if and only if $\DAG{D}_2$ contains the corresponding arcs after the flip; 
		this is obvious if $j \not\in \{i_1,\ldots,i_k\}$,
		and straightforward to check when $j \in \{i_1,\ldots,i_k\}$.
	\end{proof}
	
	\begin{definition}
		A toric DAG $[\DAG{D}]$ is a {\em toric poset}\footnote{This isn't quite the way that it was defined in \cite{DMR}, but essentially equivalent, via \cite[Theorem 1.4]{DMR}.} if $\DAG{D}$ is toric transitively closed for one of its 
		$\equiv$-class representatives  $\DAG{D}$, or equivalently, by Proposition~\ref{toric-closure-independent-of-representative}, for {\it all} such representatives  $\DAG{D}$. 
	\end{definition}
	
	\begin{definition}
		A {\em total cyclic order} is a toric poset $[\DAG{w}]$ for some $w=(w_1,\ldots,w_n)$ in $\symm_n$, that is,
		\[
		\begin{aligned}[]
		[\DAG{w}] =
		&\{ (w_1,w_2,\ldots,w_{n-1},w_n)^\rightharpoonup, \\
		&\,\,\, (w_2,w_3,\ldots,w_{n},w_1)^\rightharpoonup, \\
		&\,\,\, \qquad \qquad \vdots \\
		&\,\,\, (w_{n},w_1,\ldots,w_{n-2},w_{n-1})^\rightharpoonup\}.
		\end{aligned}
		\]
		In other words, a total cyclic order is a toric poset with
		at least one (equivalently,  all) of its $\equiv$-class
		representatives being a total (linear) order.
		
		Denote by $\LLL^\tor([\DAG{D}])$ the set of all total cyclic orders $[\DAG{w}]$ which torically extend $[\DAG{D}]$.
	\end{definition}
	
	\begin{remark}\label{rem:cyclic_perm}
		\begin{itemize}
			\item[1.]
			A total cyclic order $[\DAG{w}]$ may be viewed as a coset $w\ZZ_n \in \symm_n/\ZZ_n$, where $\ZZ_n$
			is the subgroup of $\symm_n$ generated by the $n$-cycle $(2,3,\dots,n,1)$.
			\item[2.]
			Total cyclic orders may be identified with $n$-cycles in $\symm_n$.
			\item[3.]
			Total cyclic orders 
			may be 
			geometrically visualized as
			$n$ dots in a directed cycle labeled 
			by $1,\dots,n$ with no repeats. These configurations are called {\em cyclic permutations},
			and will be used in the study of cyclic shuffles, see Figure~\ref{fig:cyc_shuffle}.
			
		\end{itemize}
	\end{remark}
	
	\begin{definition}
		A {\em toric $[\DAG{D}]$-partition} is a function $f: \{1,2,\ldots,n\} \rightarrow \{1,2,\ldots\}$ which
		is a $\DAG{D}'$-partition for at least one DAG $\DAG{D}'$ in $[\DAG{D}]$.
		Let  $\AAA^\tor([\DAG{D}])$ denote the set of all toric $[\DAG{D}]$-partitions. \end{definition}

	\begin{lemma}
		\label{cyclic-fundamental-lemma}
		For any DAG $\DAG{D}$, one has a decomposition of $\AAA^\tor([\DAG{D}])$ as the following disjoint union:
		\[
		\AAA^\tor([\DAG{D}]) 
		= \bigsqcup_{ [\DAG{w}] \in \LLL^\tor([\DAG{D}]) } \AAA^\tor([\DAG{w}]).
		\]
	\end{lemma}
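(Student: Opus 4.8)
The plan is to reduce the cyclic statement to the ordinary Fundamental Lemma of $\DAG{D}$-partitions (Lemma~\ref{fundamental-lemma-of-P-partitions}) by stratifying toric $[\DAG{D}]$-partitions according to which node attains the minimum value. First I would observe that both sides are subsets of the set of all functions $f\colon\{1,\ldots,n\}\to\{1,2,\ldots\}$, so it suffices to prove three things: (a) every toric $[\DAG{D}]$-partition lies in $\AAA^\tor([\DAG{w}])$ for some $[\DAG{w}]\in\LLL^\tor([\DAG{D}])$; (b) each $\AAA^\tor([\DAG{w}])$ with $[\DAG{w}]\in\LLL^\tor([\DAG{D}])$ is contained in $\AAA^\tor([\DAG{D}])$; and (c) the sets $\AAA^\tor([\DAG{w}])$ for distinct total cyclic orders $[\DAG{w}]$ torically extending $[\DAG{D}]$ are pairwise disjoint. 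Part (b) is immediate from the definitions: if $[\DAG{w}]$ torically extends $[\DAG{D}]$, pick representatives $\DAG{D}'\subseteq\DAG{w}'$ with $\DAG{D}'\in[\DAG{D}]$, $\DAG{w}'\in[\DAG{w}]$; any $\DAG{w}'$-partition is a $\DAG{D}'$-partition, hence is a toric $[\DAG{D}]$-partition.

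For parts (a) and (c) the key device is: given a toric $[\DAG{D}]$-partition $f$, it is by definition a $\DAG{D}'$-partition for some $\DAG{D}'\in[\DAG{D}]$; I would flip $\DAG{D}'$ at its sources repeatedly to arrange that a node $i_0$ achieving $\min_i f(i)$ becomes a source of the resulting representative $\DAG{D}''$. (One must check that flipping a source $j$ of $\DAG{D}'$ preserves the property of being an $f$-partition exactly when $j$ is \emph{not} the unique minimizer causing a forced strict inequality — the right statement is that we can always maneuver so that a \emph{global} minimizer is a source; this uses that a source has no incoming arrows, so the strict-inequality constraints on its outgoing arrows $i_0\to j$ are consistent with $f(i_0)$ being smallest, possibly after choosing $i_0$ among the minimizers to avoid the labeling obstruction, mimicking the standard reduction in Stanley's proof.) Once $i_0$ is a source of $\DAG{D}''$, apply the ordinary Fundamental Lemma (Lemma~\ref{fundamental-lemma-of-P-partitions}) to $\DAG{D}''$: $f\in\AAA(\DAG{D}'')=\bigsqcup_{u\in\LLL(\DAG{D}'')}\AAA(\DAG{u})$, so $f\in\AAA(\DAG{u})$ for a unique linear extension $u$ of $\DAG{D}''$, and then $f\in\AAA^\tor([\DAG{u}])$ with $[\DAG{u}]\in\LLL^\tor([\DAG{D}])$, giving (a).

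For the disjointness in (c), I would argue by showing that the total cyclic order $[\DAG{w}]$ with $f\in\AAA^\tor([\DAG{w}])$ is \emph{uniquely reconstructible} from $f$. Given $f\in\AAA^\tor([\DAG{w}])$, it is a $\DAG{w}'$-partition for a representative $\DAG{w}'$ which is a genuine linear order $w'_1\to\cdots\to w'_n$; the standardization of the word $(f(w'_1),\ldots,f(w'_n))$ — breaking ties in favor of the earlier position, exactly as in the ordinary theory — recovers $w'$, and hence $[\DAG{w}]=[\DAG{w}']$, unambiguously. The point is that two distinct total cyclic orders give, for any common toric partition $f$, the same standardized data only if they are cyclically equal, and this forces the classes to coincide. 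I expect the main obstacle to be the bookkeeping in part (a): verifying that the flips at sources can indeed be performed to expose a minimizer as a source while keeping $f$ a valid partition for the new representative, i.e.\ correctly handling the interaction between the flip operation, the strict-versus-weak inequality constraints, and the integer labels; this is where one must be careful that the "non-Escher" nature of the cyclic setting (no all-equal cyclic words) does not cause a degenerate case, though in fact constant $f$ with $\DAG{D}$ empty is harmlessly handled since then $\LLL^\tor([\DAG{D}])$ is all total cyclic orders and $f$ lies in each $\AAA^\tor([\DAG{w}])$ — wait, that would violate disjointness, so one sees that the correct reading is that a constant $f$ is only a $\DAG{w}$-partition when $\DAG{w}$ has no forced strict step at the wraparound, pinning down a unique class after all. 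Sorting out precisely this boundary behavior is the delicate part of the argument.
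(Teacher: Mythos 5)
Your three-part skeleton (cover, containment, disjointness) matches the shape of the paper's argument, and your parts (a) and (c) are essentially right: once $f$ is a $\DAG{D}'$-partition for some representative $\DAG{D}'\in[\DAG{D}]$, the ordinary Lemma~\ref{fundamental-lemma-of-P-partitions} applied to $\DAG{D}'$ places $f$ in a unique $\AAA(\DAG{u})$ with $\DAG{u}\supseteq\DAG{D}'$, and disjointness follows because the sets $\AAA(\DAG{w}')$, as $w'$ ranges over \emph{all} of $\symm_n$, are already pairwise disjoint (your standardization argument is a rephrasing of this). However, your part (b) is not ``immediate from the definitions,'' and this is exactly where the paper does its real work. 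You fix one pair of representatives $\DAG{D}'\subseteq\DAG{w}'$ and conclude $\AAA(\DAG{w}')\subseteq\AAA^\tor([\DAG{D}])$; but $\AAA^\tor([\DAG{w}])$ is the union of $\AAA(\DAG{w}'')$ over \emph{every} representative $\DAG{w}''$ of the class $[\DAG{w}]$, and for an arbitrary such $\DAG{w}''$ you have not produced any $\DAG{D}''\in[\DAG{D}]$ with $\DAG{D}''\subseteq\DAG{w}''$. The paper closes this by a flip-transport argument: take the sequence of source/sink flips carrying $\DAG{w}'$ to $\DAG{w}''$ and observe that each flip node, being a source or sink of the total order, is also a source or sink of the sub-DAG, so the same sequence carries $\DAG{D}'$ to some $\DAG{D}''\subseteq\DAG{w}''$. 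Without this step the right-hand side of the lemma could a priori be strictly larger than the left-hand side, so the omission is a genuine gap rather than a cosmetic one.

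Separately, the minimizer-exposing detour in your part (a) is both unnecessary and unsound as stated: the ordinary fundamental lemma applies to any DAG, with no requirement that a minimizer of $f$ be a source, and flipping at a source does \emph{not} in general preserve the property of $f$ being a partition (flip $1\rightarrow 2$ at the source $1$ with $f(1)=1$, $f(2)=2$: the reversed relation $2\rightarrow 1$ demands $f(2)<f(1)$, which fails). Deleting that paragraph and applying Lemma~\ref{fundamental-lemma-of-P-partitions} directly to $\DAG{D}'$ repairs (a); the substantive missing ingredient remains the flip-transport argument needed for (b).
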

	
	\begin{proof}
		The assertion about disjointness already follows,
		since by definition, one has 
		\begin{equation}
			\label{eq:toric-P-partitions-for-total-cyclic-orders}
			\AAA^\tor([\DAG{w}])  = \bigcup_{ w' \in [\DAG{w}] } \AAA(\DAG{w}'),
		\end{equation}
		and this union is disjoint by Lemma~\ref{fundamental-lemma-of-P-partitions}.
		For the union assertion, one has these equalities, justified below:
		$$
		\begin{aligned}
		\AAA^\tor([\DAG{D}]) 
		\overset{(i)}{=} \bigcup_{ \DAG{D}' \in [\DAG{D}]  } \AAA(\DAG{D}') 
		&\overset{(ii)}{=}\bigcup_{ \DAG{D}' \in [\DAG{D}]  } \quad  \bigcup_{ w' \in \LLL(\DAG{D}') } \AAA(\DAG{w}')\\
		&\overset{(iii)}{=}\bigcup_{ [\DAG{w}] \in \LLL^\tor([\DAG{D}])  } \quad  \bigcup_{ w' \in [\DAG{w}] } \AAA(\DAG{w}')\\
		&\overset{(iv)}{=}\bigcup_{ [\DAG{w}] \in \LLL^\tor([\DAG{D}])  } \quad \AAA^\tor([\DAG{w}])
		\end{aligned}
		$$
		Equality (i) is the definition of $\AAA^\tor([\DAG{D}])$,
		equality (ii) is Lemma~\ref{fundamental-lemma-of-P-partitions}, and equality (iv) uses \eqref{eq:toric-P-partitions-for-total-cyclic-orders} again.
		
		For equality (iii), one needs to show that $w' \in \LLL(\DAG{D}')$ for some
		$\DAG{D}' \in [\DAG{D}]$ if and only if one has $\DAG{w}' \in [\DAG{w}]$ for some $[\DAG{w}] \in \LLL^\tor([\DAG{D}])$.   The forward implication is straightforward: if $w' \in \LLL(\DAG{D}')$ for some
		$\DAG{D}' \in [\DAG{D}]$, then $\DAG{w}' \supseteq \DAG{D}'$, so that $[\DAG{w}']$ torically extends $[\DAG{D}']=[\DAG{D}]$.
		
		For the reverse implication, given $\DAG{w}' \in [\DAG{w}] \in \LLL^\tor([\DAG{D}])$, pick $\DAG{D}'' \in [\DAG{D}]$ and $\DAG{w}'' \in [\DAG{w}]$ with $\DAG{D}'' \subseteq \DAG{w}''$;  these exist because $[\DAG{w}] \in \LLL^\tor([\DAG{D}])$. Thus $[\DAG{w}'']=[\DAG{w}]=[\DAG{w}']$.  Now perform
		a sequence of flips that takes $\DAG{w}''$ to $\DAG{w}'$, and the same sequence of flips will
		take $\DAG{D}''$ to some $\DAG{D}' \subseteq \DAG{w}'$.  One then has 
		$\DAG{D}' \in [\DAG{D}'']=[\DAG{D}]$ and $w' \in \LLL(\DAG{D}')$, as desired.
	\end{proof}

	\subsection{Cyclic $P$-partition enumerators}
	
	\begin{definition}
		Given a toric poset $[\DAG{D}]$ on $\{1,2,\ldots,n\}$, 
		define its cyclic $P$-partition enumerator
		\[
		F^\cyc_{[\DAG{D}]}:=
		\sum_{f \in \AAA^\tor([\DAG{D}])} 
		x_{f(1)} x_{f(2)} \cdots x_{f(n)}.
		\]
	\end{definition}
	
	A special case of these enumerators are the fundamental cyclic quasi-symmetric functions, defined in Subsection~\ref{sec:FCQSym}.
	Recall the notations $P^{\cyc}_{n,J}$ and $F^\cyc_{n,J}$.
	
	\begin{proposition}
		If $w \in \symm_n$ has $\cDes(w)=J$, then the bijection 
		which sends $f:\{1,2,\ldots,n\} \rightarrow \PP$ to $v=(f(w_1),\ldots,f(w_n))$ in $\PP^n$
		restricts to a bijection
		\[
		\AAA^\tor([\DAG{w}])  \rightarrow 
		\{v\in \PP^n:\ \exists i \text{ such that } (v,i)\in P^{\cyc}_{n,J} \}.
		\]
		Consequently, 
		\[
		F^\cyc_{[\DAG{w}]} = F^\cyc_{n,J}.
		\]
		In particular, 
		this shows that $F^\cyc_{[\DAG{w}]}$ lies in $\cQSym^-$.
	\end{proposition}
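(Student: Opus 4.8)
The plan is to unwind all the definitions; the only real content is matching up the cyclic index conventions. First I would pin down the class $[\DAG{w}]$. Since $\DAG{w}$ is the transitive closure of the linear order $w_1 \to w_2 \to \cdots \to w_n$, its unique source is $w_1$ and its unique sink is $w_n$; a flip at the source carries $\DAG{w}$ to the transitive closure of the rotation $(w_2,\ldots,w_n,w_1)$, and iterating shows that $[\DAG{w}]$ consists exactly of the transitive closures $\DAG{w}^{(i)}$ of the $n$ cyclic rotations $w^{(i)} := (w_i,w_{i+1},\ldots,w_{i+n-1})$, with indices read modulo $n$ and representatives taken in $[n]$. Hence $\AAA^\tor([\DAG{w}]) = \bigcup_{i=1}^{n} \AAA(\DAG{w}^{(i)})$ by \eqref{eq:toric-P-partitions-for-total-cyclic-orders} (in fact a disjoint union by Lemma~\ref{fundamental-lemma-of-P-partitions}, though disjointness is not needed here). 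I would also record the descent bookkeeping: as $\ell$ runs over $[n-1]$ the position $i+\ell-1 \pmod n$ runs over $[n]\setminus\{i-1\}$, and $\ell \in \Des(w^{(i)})$ iff $w_{i+\ell-1} > w_{i+\ell}$ iff $(i+\ell-1 \bmod n) \in \cDes(w) = J$; thus $\ell \mapsto i+\ell-1$ identifies $\Des(w^{(i)})$ with $J \setminus \{i-1\}$.

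Now for the bijection. Given $f \in \AAA^\tor([\DAG{w}])$, pick $i$ with $f \in \AAA(\DAG{w}^{(i)})$ and set $v := (f(w_1),\ldots,f(w_n))$. The inequalities defining a $\DAG{w}^{(i)}$-partition read $v_i \le v_{i+1} \le \cdots \le v_n \le v_1 \le \cdots \le v_{i-1}$ with a strict step at each $\ell \in \Des(w^{(i)})$; via the identification above this is precisely condition (i) of $P^{\cyc}_{n,J}$ with $k=i$ together with $v_j < v_{j+1}$ for all $j \in J \setminus \{i-1\}$, i.e.\ condition (ii). So $(v,i) \in P^{\cyc}_{n,J}$ and the map lands in the asserted set. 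Conversely, given $v$ with $(v,k)\in P^{\cyc}_{n,J}$, define $f$ by $f(w_\ell) := v_\ell$ (legitimate since $w$ is a bijection) and run the correspondence backwards to get $f \in \AAA(\DAG{w}^{(k)}) \subseteq \AAA^\tor([\DAG{w}])$ with image $v$. Injectivity of $f \mapsto (f(w_1),\ldots,f(w_n))$ is immediate since $w$ is a bijection, so this is a bijection onto the target.

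For the consequence, since $w$ permutes $[n]$ we have $x_{f(1)}\cdots x_{f(n)} = x_{f(w_1)}\cdots x_{f(w_n)} = x_{v_1}\cdots x_{v_n}$; summing over $\AAA^\tor([\DAG{w}])$ and applying the bijection gives $F^\cyc_{[\DAG{w}]} = \sum_v x_{v_1}\cdots x_{v_n}$, summed over all $v$ admitting some valid index. To match this with $F^\cyc_{n,J} = \sum_{(v,k)\in P^{\cyc}_{n,J}} x_{v_1}\cdots x_{v_n}$, I would invoke the remark following Definition~\ref{def:cyclic_word}: when $J \ne \varnothing$ the index in a pair $(v,k)\in P^{\cyc}_{n,J}$ is determined by $v$, so summing over pairs equals summing over words; and $\cDes(w) \ne \varnothing$ for every $w \in \symm_n$ with $n \ge 2$ (a permutation cannot be cyclically weakly increasing), while for $n=1$ the only index $k \in [1]$ is forced. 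Hence $F^\cyc_{[\DAG{w}]} = F^\cyc_{n,J}$. Finally, for $n \ge 2$ the set $\cDes(w)$ is a nonempty proper subset of $[n]$, so $F^\cyc_{n,\cDes(w)} = d_{\cDes(w)}\,\hF^\cyc_{n,\cDes(w)}$ is an integer multiple of a basis element of $\cQSym_n^-$ (Observation~\ref{t:proper_basis}), hence lies in $\cQSym^-$; the cases $n \le 1$ are covered by the separate definitions of $\cQSym_0^-$ and $\cQSym_1^-$.

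The only obstacle is bookkeeping: keeping the cyclic index conventions straight, in particular seeing that the excluded index $k-1$ in condition (ii) of $P^{\cyc}_{n,J}$ corresponds to the wrap-around step $w_n \to w_1$ that a linear rotation ``forgets'', and checking that the definition of $F^\cyc_{n,J}$ as a sum over pairs does not overcount any monomial. I expect no step to be genuinely hard.
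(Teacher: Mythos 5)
Your proof is correct and follows essentially the same route as the paper: decompose $\AAA^\tor([\DAG{w}])$ as the disjoint union of $\AAA(\DAG{w}^{(i)})$ over the $n$ rotations and match each piece with $\{v : (v,i) \in P^{\cyc}_{n,J}\}$ via the index shift $\ell \mapsto i+\ell-1$. The paper's proof is just a terser version of this; your extra care about the pair-versus-word count in $F^{\cyc}_{n,J}$ (using the remark after Definition~\ref{def:cyclic_word} and the fact that $\varnothing \subsetneq \cDes(w) \subsetneq [n]$) correctly fills in a detail the paper leaves implicit.
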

	
	
	\begin{proof}
		By definition, $\AAA^\tor([\DAG{w}])=\bigsqcup_{w' \in [\DAG{w}]} \AAA(\DAG{w}')$.
		If  $w=(w_1,\ldots,w_n)$, then for each element
		$$
		w'=(w_i,w_{i+1},\ldots,w_n,w_1,w_2,\ldots,w_{i-1})
		$$
		of $[\DAG{w}]$, the bijection in the proposition sends $\AAA(\DAG{w}')$
		to 
		$
		\{v\in \PP^n:\ (v,i)\in P^{\cyc}_{n,J} \}.
		$
	\end{proof}

	An immediate consequence of 
	Lemma~\ref{cyclic-fundamental-lemma} is then the following.
	
	\begin{proposition}
		\label{cyclic-F-expansion}
		For any toric poset $[\DAG{D}]$,
		one has the following expansion
		\[
		F^\cyc_{[\DAG{D}]}
		=\sum_{[\DAG{w}] \in \LLL^\tor([\DAG{D}])}
		F^\cyc_{n,\cDes(w)}.
		\]
		Note that the sum is over classes $[\DAG{w}]$,
		not over permutations $w$, but $\cDes(w)$ depends
		only upon the class $[\DAG{w}]$.
		In particular, this shows that $F^\cyc_{[\DAG{D}]}$ lies in $\cQSym^-$.
	\end{proposition}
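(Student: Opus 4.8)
The plan is to read this off directly from the cyclic fundamental lemma (Lemma~\ref{cyclic-fundamental-lemma}) together with the preceding proposition, which identifies $F^\cyc_{[\DAG{w}]}$ with $F^\cyc_{n,\cDes(w)}$ for every total cyclic order $[\DAG{w}]$ on $\{1,\ldots,n\}$. No new combinatorial input is needed: all the substance has already been packaged into those two statements, so the argument is purely a matter of substituting one into the other.

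Concretely, I would first unwind the definition
\[
F^\cyc_{[\DAG{D}]}=\sum_{f\in\AAA^\tor([\DAG{D}])} x_{f(1)}\cdots x_{f(n)},
\]
and then substitute the disjoint decomposition
\[
\AAA^\tor([\DAG{D}])=\bigsqcup_{[\DAG{w}]\in\LLL^\tor([\DAG{D}])}\AAA^\tor([\DAG{w}])
\]
supplied by Lemma~\ref{cyclic-fundamental-lemma}. Because the union is disjoint, the sum over monomials splits index set by index set, yielding
\[
F^\cyc_{[\DAG{D}]}=\sum_{[\DAG{w}]\in\LLL^\tor([\DAG{D}])}\ \sum_{f\in\AAA^\tor([\DAG{w}])} x_{f(1)}\cdots x_{f(n)}=\sum_{[\DAG{w}]\in\LLL^\tor([\DAG{D}])} F^\cyc_{[\DAG{w}]},
\]
where each summand is the cyclic $P$-partition enumerator of a total cyclic order. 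Applying the preceding proposition term by term replaces $F^\cyc_{[\DAG{w}]}$ by $F^\cyc_{n,\cDes(w)}$, which is exactly the asserted expansion.

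Two loose ends remain, neither a genuine obstacle. First, I would note that $\cDes(w)$ depends only on the class $[\DAG{w}]$ and not on the chosen representative $w$: a cyclic rotation of $w$ merely permutes the adjacent pairs $(w_i,w_{i+1})$ (with the convention $w_{n+1}:=w_1$) without altering which of them are strict descents, so summing over classes $[\DAG{w}]$ rather than over permutations is legitimate. Second, membership $F^\cyc_{[\DAG{D}]}\in\cQSym^-$ is then immediate from the displayed expansion, since every term $F^\cyc_{n,\cDes(w)}=F^\cyc_{[\DAG{w}]}$ already lies in $\cQSym_n^-$ by the preceding proposition and $\cQSym_n^-$ is an additive subgroup. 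The only part that required real work — the disjointness and exhaustiveness of the decomposition of $\AAA^\tor([\DAG{D}])$ by the total cyclic orders torically extending $[\DAG{D}]$ — was carried out already in the proof of Lemma~\ref{cyclic-fundamental-lemma}, so here there is nothing further to prove.
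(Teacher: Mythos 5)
Your proposal is correct and follows exactly the paper's route: the paper states this proposition as an immediate consequence of Lemma~\ref{cyclic-fundamental-lemma} together with the preceding proposition identifying $F^\cyc_{[\DAG{w}]}$ with $F^\cyc_{n,\cDes(w)}$, which is precisely the substitution you carry out. One small imprecision: a cyclic rotation of $w$ does change the subset $\cDes(w)\subseteq[n]$ (it shifts it cyclically), so the correct justification for summing over classes is not that the descent positions are literally unchanged, but that $F^\cyc_{n,J}$ is invariant under cyclic shifts of $J$ (Lemma~\ref{t:Fcyc_properties}(2)), so the summand $F^\cyc_{n,\cDes(w)}$ depends only on $[\DAG{w}]$.
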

	
	We now use this fact to expand products
	of basis elements $\{F^\cyc_{n,J}\}$ back in the same
	basis.  The key notion is that of a cyclic shuffle
	of two total cyclic orders.
	
	\medskip
	
	First recall 
	the notion of shuffles of permutations.  From now on, we no longer require
	our DAGs, posets, and toric posets to have label set $\{1,2,\ldots,n\}$,
	but allow more general finite subsets of $\ZZ$ as labels.
	For a finite set $A$ of size $a$, let $\symm_A$ be 
	the set of all bijections $w : [a] \to A$,
	viewed as words $w = (w_1, \ldots, w_a)$.
	If $A = [a]$ then $\symm_A$ is of course the symmetric group $\symm_a$. 
	Elements of $\symm_A$ will be called {\em bijective words}, a formal extension of permutations.
	If $A$ is a set of integers, or any totally ordered set, define a {\em descent set} for each $w \in \symm_A$ by
	\[
	\Des(w) := \{i \in [a-1] \,:\, w(i) > w(i+1)\}.
	\]
	Given two bijective words $u = (u_1,\ldots,u_a)\in \symm_A$ and $v = (v_1,\ldots,v_b)\in \symm_B$, 
	where $A$ and $B$ are disjoint finite sets of integers, 
	a bijective word $w \in \symm_{A \sqcup B}$ is a {\em shuffle} of $u$ and $v$ 
	if $u$ and $v$ are subwords of $w$.
	Denote the set of all shuffles of $u$ and $v$ by 
	$u \shuffle v$.
	The following theorem is a straightforward generalization of~\cite[Ex.\ 7.93]{EC2}.
	
	\begin{theorem}
		\label{non-cyclic-product-of-F}
		Let $C = A \sqcup B$ be a disjoint union of finite sets of integers.
		For each $u \in \symm_A$ and $v \in \symm_B$, one has 
		the following expansion:
		\[
		F_{|A|,\Des(u)} \cdot F_{|B|,\Des(v)}
		= \sum_{w \in u\shuffle v } F_{|C|,\Des(w)}.
		\]
	\end{theorem}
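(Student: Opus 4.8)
The plan is to reduce this to the fundamental lemma of $\DAG{D}$-partitions (Lemma~\ref{fundamental-lemma-of-P-partitions}), applied to a disjoint union of two labeled chains; this is the classical $P$-partition proof of~\cite[Ex.\ 7.93]{EC2}, and the only point needing a word is that all of the $P$-partition machinery of the previous subsection carries over verbatim when the vertex set is an arbitrary finite subset of $\ZZ$ rather than $\{1,2,\ldots,n\}$, since the definitions of $\AAA(\DAG{D})$ and $\LLL(\DAG{D})$ use only the usual order on $\ZZ$. Write $a=|A|$, $b=|B|$, and $c=|C|$. The first step is to record the dictionary between labeled chains and fundamental quasi-symmetric functions: for $u=(u_1,\ldots,u_a)\in\symm_A$, let $\DAG{u}$ be the transitive closure of the chain $u_1\to u_2\to\cdots\to u_a$; then a function $f\colon A\to\PP$ lies in $\AAA(\DAG{u})$ precisely when, setting $g_i:=f(u_i)$, one has $g_1\le g_2\le\cdots\le g_a$ with $g_i<g_{i+1}$ whenever $i\in\Des(u)$ (the strictness conditions for non-adjacent arrows of $\DAG{u}$ being automatically implied by those for adjacent ones). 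Summing the weight $\prod_{x\in A}x_{f(x)}=\prod_{i=1}^{a}x_{g_i}$ over all such $f$ gives exactly $F_{a,\Des(u)}$.

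Next, since $A$ and $B$ are disjoint, form the DAG $\DAG{D}:=\DAG{u}\sqcup\DAG{v}$ on $C=A\sqcup B$, the disjoint union of the two chains. A $\DAG{D}$-partition $f\colon C\to\PP$ is the same thing as a pair $(f|_A,f|_B)$ with $f|_A\in\AAA(\DAG{u})$ and $f|_B\in\AAA(\DAG{v})$, and the weight $\prod_{x\in C}x_{f(x)}$ factors as the product of the two weights, so
\[
F_{a,\Des(u)}\cdot F_{b,\Des(v)}=\sum_{f\in\AAA(\DAG{D})}\;\prod_{x\in C}x_{f(x)}.
\]
Applying Lemma~\ref{fundamental-lemma-of-P-partitions} to $\DAG{D}$ gives $\AAA(\DAG{D})=\bigsqcup_{w\in\LLL(\DAG{D})}\AAA(\DAG{w})$, and the dictionary of the first step identifies $\sum_{f\in\AAA(\DAG{w})}\prod_{x\in C}x_{f(x)}=F_{c,\Des(w)}$ for each $w\in\LLL(\DAG{D})\subseteq\symm_C$.

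It remains to identify $\LLL(\DAG{D})$ with $u\shuffle v$. A permutation $w\in\symm_C$ satisfies $\DAG{w}\supseteq\DAG{D}$ exactly when the elements of $A$ appear in $w$ in the order $u_1,\ldots,u_a$ and the elements of $B$ appear in the order $v_1,\ldots,v_b$, which is precisely the condition that $w$ be a shuffle of $u$ and $v$; thus $\LLL(\DAG{D})=u\shuffle v$, and substituting into the previous display finishes the proof. The only steps that require any care are the bookkeeping ones in the first two paragraphs: checking that transporting the weight through the relabeling $x\mapsto f(x)$ turns the chain enumerator into $F_{a,\Des(u)}$ (so that, in particular, it depends on $u$ only through $\Des(u)$), and that the enumerator of the disjoint union $\DAG{u}\sqcup\DAG{v}$ is the product of the individual enumerators; once these are in hand, the statement is an immediate consequence of the fundamental lemma together with the tautological identification of linear extensions of a disjoint union of chains with shuffles.
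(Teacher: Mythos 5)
Your proof is correct and is exactly the argument the paper relies on: the paper states Theorem~\ref{non-cyclic-product-of-F} without proof, citing it as a straightforward generalization of \cite[Ex.\ 7.93]{EC2}, and your reduction via Lemma~\ref{fundamental-lemma-of-P-partitions} applied to the disjoint union $\DAG{u}\sqcup\DAG{v}$, with $\LLL(\DAG{u}\sqcup\DAG{v})$ identified with $u\shuffle v$, is the standard proof of that exercise. It is also precisely the non-cyclic template for the paper's own proof of the cyclic analogue, Theorem~\ref{product-of-F-corollary}, so nothing further is needed.
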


	\begin{definition}
		Let $C = A \sqcup B$ be a 
		disjoint union of finite sets.
		Fix two total cyclic orders $[\DAG{u}]$ and $[\DAG{v}]$,
		with representatives $u = (u_1,\ldots,u_a)\in \symm_A$
		and $v = (v_1,\ldots,v_b)\in \symm_B$. 
		A total cyclic order $[\DAG{w}]$, with $w \in \symm_C$, is a {\em cyclic shuffle of $[\DAG{u}]$ and $[\DAG{v}]$}
		if there exists a representative $w' \in \symm_C$ of $[\DAG{w}]$
		which is 
		(equivalently, every representative of $[\DAG{w}]$ is) a shuffle of cyclic shifts of $u$ and $v$, namely,
		\[
		w' \in u' \shuffle v'
		\]
		for some cyclic shift $u'$ of $u$ and cyclic shift $v'$ of $v$.
		Here we have used the simple fact that if $u$ is a subword of $w$, then each cyclic
		shift of $w$ has some cyclic shift of $u$ as a subword.
		
		Denote the set of all cyclic shuffles of $[\DAG u]$ and $[\DAG v]$ by 
		$[\DAG u]\shuffle_\cyc [\DAG v]$.
	\end{definition}
	
	
	
	\begin{example}
		\label{cyclic-shuffle-example}
		Let $A=\{1,3,5,7,8\}$ and $B=\{2,4,6,9\}$ (so that $C=[9]$), and fix
		$u = (3,7,8,5,1) \in \symm_A$ and $v = (6,9,4,2) \in \symm_B$.
		An example of
		$[\DAG{w}] \in [\DAG{u}] \shuffle_{\cyc} [\DAG{v}]$
		is 
		$[(8,4,5,1,2,3,6,7,9)]$, since $w' = (1,2,3,6,7,9,8,4,5)$ is a shuffle of $(1,3,7,8,5)\in [\DAG{u}]$ and $(2,6,9,4)\in [\DAG{v}]$. See Figure~\ref{fig:cyc_shuffle}.
	\end{example}
	
	\begin{figure}
		\begin{center}
			
			\begin{tikzpicture}[scale=0.25]
			\draw[red] (0,0) circle (5);
			\draw[fill] (90:5) circle (.15); \draw[blue](90:6) node
			{$8$}; 
			\draw[fill] (50:5) circle (.15); \draw[orange](50:6) node
			{$4$}; 
			\draw[fill] (10:5) circle (.15); \draw[blue](10:6) node
			{$5$}; 
			\draw[fill] (330:5) circle (.15); \draw[blue](330:6) node
			{$1$}; 
			\draw[fill] (290:5) circle (.15);\draw[orange] (290:6) node
			{$2$};
			\draw[fill] (250:5) circle (.15);\draw[blue] (250:6) node
			{$3$};
			\draw[fill] (210:5) circle (.15);\draw[orange] (210:6) node
			{$6$};
			\draw[fill] (170:5) circle (.15);\draw[blue] (170:6) node
			{$7$};
			\draw[fill] (130:5) circle (.15);\draw[orange] (130:6) node
			{$9$};
			\draw[red] (180:5) node {$\wedge$};
			\draw[black] (0:9.5) node {$\in$};
			\end{tikzpicture}
			\ \ \ 
			\begin{tikzpicture}[scale=0.25]
			\draw[red] (0,0) circle (5);
			\draw[fill] (90:5) circle (.15); \draw[blue](90:6) node
			{$3$}; 
			\draw[fill] (18:5) circle (.15); \draw[blue](18:6) node
			{$7$}; 
			\draw[fill] (306:5) circle (.15);\draw[blue] (306:6) node
			{$8$};
			\draw[fill] (234:5) circle (.15);\draw[blue] (234:6) node
			{$5$};
			\draw[fill] (162:5) circle (.15);\draw[blue] (162:6) node
			{$1$};
			
			\draw[red] (180:5) node {$\wedge$};
			\end{tikzpicture}
			\ \ \
			\begin{tikzpicture}[scale=0.25]
			\draw[red] (0,0) circle (5);
			\draw[fill] (45:5) circle (.15); \draw[orange](54:6) node {$6$}; 
			
			\draw[fill] (315:5) circle (.15); \draw[orange](306:6) node {$9$}; 
			
			\draw[fill] (225:5) circle (.15);\draw[orange] (234:6) node {$4$};
			
			\draw[fill] (135:5) circle (.15);\draw[orange] (126:6) node {$2$};
			
			\draw[red] (180:5) node {$\wedge$};
			
			\draw[black] (180:10) node {$\shuffle_{\cyc}$}; 
			\end{tikzpicture}
		\end{center}
		\caption{$[(8,4,5,1,2,3,6,7,9)]\in [(3,7,8,5,1)]\shuffle_\cyc [(6,9,4,2)]$.}
		\label{fig:cyc_shuffle}
	\end{figure}
	
	We deduce the following cyclic analogue of Theorem~\ref{non-cyclic-product-of-F}.
	For completeness, define $\cDes(u) := \varnothing$ if $u$ is the unique element of $\symm_A$ when $|A|=1$ or $|A|=0$.
	Recall that, 
	by Remark~\ref{rem:cQSym_0}, $F^{\cyc}_{0,\varnothing} = 1$.
	
	\begin{theorem}
		\label{product-of-F-corollary}
		Let $C = A \sqcup B$ be a disjoint union of finite 
		sets of integers.
		For each $u \in \symm_A$ and $v \in \symm_B$, one has 
		the following expansion:
		\[
		F^\cyc_{|A|,\cDes(u)} \cdot F^\cyc_{|B|,\cDes(v)}
		= \sum_{[\DAG{w}] \in [\DAG{u}] \shuffle_{\cyc} [\DAG{v}] } F^\cyc_{|C|,\cDes(w)}.
		\]
	\end{theorem}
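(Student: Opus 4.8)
The plan is to realize both sides of the asserted identity as cyclic $P$-partition enumerators and then reduce to the toric fundamental lemma (Lemma~\ref{cyclic-fundamental-lemma}), in close parallel with the non-cyclic argument behind Theorem~\ref{non-cyclic-product-of-F}. By the identification $F^\cyc_{[\DAG w]} = F^\cyc_{n,\cDes(w)}$ proved above (the proposition preceding Proposition~\ref{cyclic-F-expansion}), which extends in the obvious way to bijective words of arbitrary integer support, the left-hand side equals $F^\cyc_{[\DAG u]} \cdot F^\cyc_{[\DAG v]}$, the product of the cyclic $P$-partition enumerators of the total cyclic orders $[\DAG u]$ on $A$ and $[\DAG v]$ on $B$. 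Let $\DAG{u} \sqcup \DAG{v}$ denote the DAG on $C = A \sqcup B$ which is the disjoint union of the transitive tournaments $\DAG{u}$ and $\DAG{v}$, with no arrows between $A$ and $B$. The crux of the proof will be the structural identity
\[
F^\cyc_{[\DAG u]} \cdot F^\cyc_{[\DAG v]} = F^\cyc_{[\DAG{u} \sqcup \DAG{v}]}.
\]

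To establish it, I would first check that $\equiv$-equivalence is compatible with the disjoint-union decomposition. Since $\DAG{u} \sqcup \DAG{v}$ has no arrow crossing between $A$ and $B$, a vertex of $A$ (respectively of $B$) is a source or sink of some $\DAG{D}' \in [\DAG{u} \sqcup \DAG{v}]$ exactly when it is a source or sink of the restriction of $\DAG{D}'$ to $A$ (respectively to $B$); hence flips at $A$-vertices and flips at $B$-vertices act independently, and
\[
[\DAG{u} \sqcup \DAG{v}] = \{\, \DAG{u}' \sqcup \DAG{v}' \ :\ \DAG{u}' \in [\DAG u],\ \DAG{v}' \in [\DAG v] \,\}.
\]
Moreover a function $h : C \to \PP$ is a $(\DAG{u}' \sqcup \DAG{v}')$-partition precisely when $h|_A$ is a $\DAG{u}'$-partition and $h|_B$ is a $\DAG{v}'$-partition, because the defining weak and strict inequalities involve only arrows inside $A$ or inside $B$; taking the union over all $\DAG{u}', \DAG{v}'$ then yields
\[
\AAA^\tor([\DAG{u} \sqcup \DAG{v}]) = \{\, h : C \to \PP \ :\ h|_A \in \AAA^\tor([\DAG u]),\ h|_B \in \AAA^\tor([\DAG v]) \,\}.
\]
Since the monomial attached to $h$ factors as $\prod_{a \in A} x_{h(a)} \cdot \prod_{b \in B} x_{h(b)}$, summing over the right-hand set is exactly the product $F^\cyc_{[\DAG u]} \cdot F^\cyc_{[\DAG v]}$, which is the displayed structural identity.

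With that in hand, I would apply Lemma~\ref{cyclic-fundamental-lemma} to the DAG $\DAG{u} \sqcup \DAG{v}$ (exactly as in the derivation of Proposition~\ref{cyclic-F-expansion}) to obtain
\[
F^\cyc_{[\DAG{u} \sqcup \DAG{v}]} = \sum_{[\DAG{w}] \in \LLL^\tor([\DAG{u} \sqcup \DAG{v}])} F^\cyc_{|C|,\cDes(w)},
\]
so that the only remaining task is the identification $\LLL^\tor([\DAG{u} \sqcup \DAG{v}]) = [\DAG u] \shuffle_\cyc [\DAG v]$. For this I would use that the flip-class of a transitive tournament consists of nothing but its cyclic shifts: flipping at the unique source or at the unique sink of a linear order produces the adjacent cyclic shift (again a transitive tournament), and no other DAG is reachable. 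Thus the DAG representatives of $[\DAG u]$, $[\DAG v]$ and $[\DAG w]$ are precisely the transitive closures of the cyclic shifts of $u$, $v$ and $w$, so $[\DAG w]$ torically extends $[\DAG{u} \sqcup \DAG{v}]$ iff there are cyclic shifts $u'$, $v'$, $w'$ of $u$, $v$, $w$ with $\DAG{u}' \sqcup \DAG{v}' \subseteq \DAG{w}'$; and since $\DAG{w}'$ is a transitive tournament, this containment says exactly that $u'$ and $v'$ are subwords of the word $w'$, i.e.\ $w' \in u' \shuffle v'$. By the definition of a cyclic shuffle this is precisely the assertion $[\DAG w] \in [\DAG u] \shuffle_\cyc [\DAG v]$, which completes the proof. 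I expect the only genuine obstacle to be the disjoint-union bookkeeping of the second paragraph — that disjoint union of DAGs commutes both with passage to $\equiv$-classes and with the formation of toric $P$-partitions; once that is carefully verified, the expansion step and the identification of the index set are direct unwindings of the definitions together with Lemma~\ref{cyclic-fundamental-lemma}.
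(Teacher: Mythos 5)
Your proof is correct and follows essentially the same route as the paper's: both realize the left side as the cyclic $P$-partition enumerator $F^\cyc_{[\DAG{u}\sqcup\DAG{v}]}$ of the disjoint-union toric poset, expand it via Lemma~\ref{cyclic-fundamental-lemma} (through Proposition~\ref{cyclic-F-expansion}), and identify $\LLL^\tor([\DAG{u}\sqcup\DAG{v}])$ with $[\DAG{u}]\shuffle_\cyc[\DAG{v}]$. You in fact supply more detail than the paper does on the two steps it asserts ``from the definition'' (the compatibility of flips with the disjoint union, and the index-set identification); the only point you omit is the trivial disposal of the degenerate case $A=\varnothing$ or $B=\varnothing$.
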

	
	
	\begin{proof}
		The claim clearly holds if either $A = \varnothing$ or $B = \varnothing$. We can therefore assume that $A$ and $B$ are nonempty.
		
		Consider the toric poset $[\DAG{D}]$ where $\DAG{D}$ is the disjoint union of the posets $\DAG{u}$ and $\DAG{v}$, and its associated
		cyclic quasi-symmetric function $F^\cyc_{[\DAG{D}]}$.
		E.g., if $u,v$ are as in Example~\ref{cyclic-shuffle-example}, then $[\DAG{D}]$ is represented by this $\DAG{D}$:
		\[
		\xymatrix@R=12pt{
			1&                                   &\\
			& 5 \ar[ul]                         &2&\\
			& 8 \ar[u] \ar[uul]                 & &4\ar[ul]\\
			& 7 \ar[u]  \ar[uuul] \ar@/_/[uu]   & &9\ar[u]\ar[uul]\\
			3\ar[ur] \ar[uur] \ar[uuur]\ar[uuuu]&&6\ar[ur]\ar[uuu] \ar[uur]&
		}
		\]
		On one hand, the definition of $F^\cyc_{[\DAG{D}]}$ lets one
		express it as a product as follows:
		\begin{equation}
			\label{disjoint-union-is-product}
			F^\cyc_{[\DAG{D}]} = 
			F^\cyc_{[\DAG{u}]}
			\cdot F^\cyc_{[\DAG{v}]}
			= F^\cyc_{|A|,\cDes(u)}
			\cdot F^\cyc_{|B|,\cDes(v)}.
		\end{equation}
		On the other hand, one can see from the definition that
		\[
		\LLL^\tor([\DAG{D}]) = [\DAG{u}] \shuffle_\cyc [\DAG{v}],
		\]
		and hence Proposition~\ref{cyclic-F-expansion}
		shows that 
		\begin{equation}
			\label{cyclic-shuffles-are-toric-extensions}
			F^\cyc_{[\DAG{D}]} 
			= \sum_{[\DAG{w}] \in \LLL^\tor([\DAG{D}])} F^\cyc_{[\DAG{w}]}
			=\sum_{[\DAG{w}] \in [\DAG{u}] \shuffle_\cyc [\DAG{v}]}
			F^\cyc_{|C|,\cDes(w)}.
		\end{equation}
		Equating the right hand sides of \eqref{disjoint-union-is-product} and \eqref{cyclic-shuffles-are-toric-extensions} gives the proposition.
	\end{proof}
	
	\begin{proposition}
		\label{number-of-cyclic-shuffles}
		Let $A$ and $B$ be disjoint nonempty sets of integers, of sizes $a$ and $b$ respectively.
		For each $u=(u_1,u_2,\ldots,u_a) \in \symm_A$ and $v=(v_1,v_2,\ldots,v_b) \in \symm_B$ there are
		$\frac{(a+b-1)!}{(a-1)!(b-1)!}$ cyclic shuffles 
		in $[\DAG{u}] \shuffle_{\cyc} [\DAG{v}]$.
	\end{proposition}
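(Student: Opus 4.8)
The plan is to count via an orbit (double-counting) argument. Write $n := a+b$ and $C := A \sqcup B$, and let
\[
X := \{\, w \in \symm_C \;:\; w \in u' \shuffle v' \text{ for some cyclic shift } u' \text{ of } u \text{ and cyclic shift } v' \text{ of } v \,\}.
\]
Equivalently, $w \in X$ if and only if the subword of $w$ formed by the letters lying in $A$ is one of the $a$ cyclic shifts of $u$, and the subword formed by the letters lying in $B$ is one of the $b$ cyclic shifts of $v$. By the simple fact already invoked in the definition of $\shuffle_\cyc$ (if $u$ is a subword of $w$, then every cyclic shift of $w$ has some cyclic shift of $u$ as a subword), the set $X$ is a union of complete cyclic equivalence classes, and $[\DAG{u}] \shuffle_\cyc [\DAG{v}]$ is by definition exactly the set of these classes. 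Since every $w \in \symm_C$ consists of $n$ distinct letters, each cyclic equivalence class has exactly $n$ elements; hence it suffices to compute $|X|$ and conclude $\#([\DAG{u}] \shuffle_\cyc [\DAG{v}]) = |X|/n$.

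Next I would count $|X|$ directly, by recovering each $w \in X$ from the following data: (i) the $a$-element subset $S \subseteq [n]$ of positions of $w$ occupied by the letters of $A$; (ii) the cyclic shift of $u$ spelled out by the positions in $S$; and (iii) the cyclic shift of $v$ spelled out by the complementary positions. The assignment $w \mapsto (S, u', v')$ is a bijection from $X$ onto the set of all triples consisting of an $a$-subset $S$ of $[n]$, a cyclic shift of $u$, and a cyclic shift of $v$: it is well defined and injective because $A$ and $B$ are disjoint (so $S$ is determined by $w$) and because $u$ and $v$ are bijective words, so that the $a$ cyclic shifts of $u$ (respectively the $b$ cyclic shifts of $v$) are pairwise distinct as linear words (so $u'$ and $v'$ are determined by $w$); and it is surjective since any such triple can be assembled into a unique shuffle. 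Therefore $|X| = \binom{n}{a}\cdot a \cdot b$.

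Combining the two steps,
\[
\#([\DAG{u}] \shuffle_\cyc [\DAG{v}]) \;=\; \frac{1}{a+b}\binom{a+b}{a}\,a\,b \;=\; \frac{(a+b-1)!}{(a-1)!\,(b-1)!},
\]
as claimed.

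The only genuinely load-bearing points — and the reason one cannot simply quote the count of ordinary shuffles — are that $X$ is a union of full cyclic equivalence classes (so that the number of cyclic shuffles really equals $|X|/n$) and that the decomposition $w \mapsto (S,u',v')$ is unique (so that $|X|$ is exactly $\binom{n}{a}\,a\,b$ rather than an over-count). Both rest on the injectivity of the words $u$ and $v$ together with the disjointness of $A$ and $B$; I do not anticipate any deeper obstacle.
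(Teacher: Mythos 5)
Your proof is correct, but it takes a different route from the paper's. The paper counts the cyclic classes directly: it fixes the cyclic arrangement of the $u$-letters and counts insertions of the $v$-letters into the $a$ cyclic gaps, getting $b\cdot\binom{a+b-1}{b}$ by a stars-and-bars argument (the factor $b$ accounting for the choice of linearization of $[\DAG{v}]$). You instead count the set $X$ of \emph{linear} words that are shuffles of cyclic shifts of $u$ and $v$, via the bijection $w \mapsto (S,u',v')$ giving $|X| = \binom{a+b}{a}\,a\,b$, and then divide by the orbit size $a+b$. Both arguments are elementary and both hinge on the same two facts (distinctness of letters forces full-size orbits and unique recovery of $u'$ and $v'$), but your orbit-counting version makes explicit something the paper's one-line count leaves implicit — namely that the collection of shuffles of cyclic shifts is closed under cyclic rotation and decomposes into full classes of size $a+b$ — at the cost of being slightly longer; the paper's gap-insertion count works directly with the cyclic classes and is more immediate, though the provenance of its factor $b$ takes a moment to see. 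Your verification of injectivity and surjectivity of $w\mapsto(S,u',v')$ is sound, and the arithmetic checks out.
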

	\begin{proof}
		The number of such cyclic shuffles $[\DAG{w}]$ is
		$b$ times the number of ways to place the letters $v_1,v_2,\ldots,v_b$ in this order
		in the $a$ different slots between the following $u$-letters
		(zero, one, or more $v$-letters in each slot):
		\[
		u_1 \underline{\quad} u_2 \underline{\quad} u_3 \underline{\quad} \cdots \underline{\quad} u_a \underline{\quad} u_1
		\]
		This number is $b \cdot \binom{a+b-1}{b} = \frac{(a+b-1)!}{(a-1)!(b-1)!}$.
	\end{proof}
	
	\begin{example}
		Let's compute $F^\cyc_{3,\{1\}} \cdot F^\cyc_{2,\{1\}}$ using 
		Theorem~\ref{product-of-F-corollary}.  
		First pick $u \in \symm_{[3]}$ and $v \in \symm_{[5]\setminus [3]}$ such that $\cDes(u)=\{1\}$ and $\cDes(v)=\{1\}$,
		say $u=(3,1,2)$ and $v=(5,4)$.  
		Since $|A|=3$ and $|B|=2$, 
		Proposition~\ref{number-of-cyclic-shuffles} implies that there
		are $\frac{4!}{2! 1!}=12$ cyclic shuffles $[\DAG{w}]$ in
		$[\DAG{u}] \shuffle_\cyc [\DAG{v}]$. These cyclic shuffles are listed below:
		\[
		\begin{tabular}{|c|c|} \hline
		$w \in [\DAG{w}]$ with $w_1 = 3$ & $\cDes(w)$ \\ \hline\hline
		$(3,1,2,5,4)$ & $\{1,4,5\}$ \\ \hline
		$(3,1,2,4,5)$ & $\{1,5\}$ \\ \hline
		$(3,1,5,2,4)$ & $\{1,3,5\}$ \\ \hline
		$(3,1,4,2,5)$ & $\{1,3,5\}$ \\ \hline
		$(3,5,1,2,4)$ & $\{2,5\}$ \\ \hline
		$(3,4,1,2,5)$ & $\{2,5\}$ \\ \hline
		$(3,1,5,4,2)$ & $\{1,3,4\}$ \\ \hline
		$(3,1,4,5,2)$ & $\{1,4\}$ \\ \hline
		$(3,5,1,4,2)$ & $\{2,4\}$ \\ \hline
		$(3,4,1,5,2)$ & $\{2,4\}$ \\ \hline
		$(3,5,4,1,2)$ & $\{2,3\}$ \\ \hline
		$(3,4,5,1,2)$ & $\{3\}$ \\ \hline
		\end{tabular}
		\]
		Consequently,
		\[
		\begin{aligned}
		F^\cyc_{3,\{1\}} \cdot F^\cyc_{2,\{1\}}
		&= F^\cyc_{5,\{1,4,5\}}
		+ 2F^\cyc_{5,\{1,3,5\}}
		+ F^\cyc_{5,\{1,3,4\}}
		+ F^\cyc_{5,\{1,5\}}
		+ 2F^\cyc_{5,\{2,5\}}
		+ F^\cyc_{5,\{1,4\}}
		+ 2F^\cyc_{5,\{2,4\}}
		+ F^\cyc_{5,\{2,3\}}
		+ F^\cyc_{5,\{3\}} \\
		&= F^\cyc_{5,\{1,2,3\}}
		+ 3F^\cyc_{5,\{1,2,4\}}
		+ 2F^\cyc_{5,\{1,2\}}
		+ 5F^\cyc_{5,\{1,3\}}
		+ F^\cyc_{5,\{1\}}.
		\end{aligned}
		\]
	\end{example}
	
	
	
	\begin{proposition}
		\label{t:cQSym_is_a_ring}
		$\cQSym$ and $\cQSym^-$ are graded rings.
	\end{proposition}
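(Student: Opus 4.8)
The plan is to reduce to the graded pieces and then argue for $\cQSym$ and $\cQSym^-$ by somewhat different means. Multiplication of power series is bilinear and respects the grading, and by Observation~\ref{t:QSym_subring} all three of $\Sym,\cQSym,\cQSym^-$ are graded abelian subgroups of $\ZZ[[x_1,x_2,\dots]]$; so it suffices to show $\cQSym_m\cdot\cQSym_n\subseteq\cQSym_{m+n}$, and likewise with $\cQSym$ replaced by $\cQSym^-$, for all $m,n\ge 0$. The cases $m=0$ or $n=0$ are trivial since the degree-$0$ component is $\ZZ$.

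For $\cQSym$ itself I would verify the defining condition of Definition~\ref{def:cQSym} directly. Let $f\in\cQSym_m$, $g\in\cQSym_n$, fix increasing sequences $\mathbf i=(i_1<\dots<i_t)$ and $\mathbf i'=(i'_1<\dots<i'_t)$, an exponent vector $\mathbf p=(p_1,\dots,p_t)$ of positive integers, and a cyclic shift $\mathbf p'=\sigma\mathbf p$ given by the cyclic rotation $\sigma$ of coordinates. Writing $[x^{\mathbf a}_{\mathbf i}]f$ for the coefficient of $x_{i_1}^{a_1}\cdots x_{i_t}^{a_t}$ in $f$, one expands $[x^{\mathbf p}_{\mathbf i}](fg)=\sum_{\mathbf a+\mathbf b=\mathbf p}[x^{\mathbf a}_{\mathbf i}]f\cdot[x^{\mathbf b}_{\mathbf i}]g$, the sum over all coordinatewise decompositions of $\mathbf p$ into nonnegative (possibly partly zero) exponent vectors, and similarly for $\mathbf p'$ with $\mathbf i'$ in place of $\mathbf i$. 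The bijection $(\mathbf a,\mathbf b)\mapsto(\sigma\mathbf a,\sigma\mathbf b)$ matches the two index sets of summands, so the required equality $[x^{\mathbf p}_{\mathbf i}](fg)=[x^{\mathbf p'}_{\mathbf i'}](fg)$ reduces to $[x^{\mathbf a}_{\mathbf i}]f=[x^{\sigma\mathbf a}_{\mathbf i'}]f$ and the analogous identity for $g$. Each follows from cyclic quasi-symmetry of the relevant factor together with the elementary fact that passing from an exponent vector to the positive composition obtained by deleting its zero entries (the composition to which Definition~\ref{def:cQSym} literally applies) is equivariant under cyclic shifts: the nonzero entries of a vector, read cyclically, form a cyclic sequence, $\sigma\mathbf a$ reads them in a rotated order, and the indices carrying nonzero exponents remain an increasing subsequence of $\mathbf i$ (resp.\ $\mathbf i'$). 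This shows $fg\in\cQSym_{m+n}$. The same computation, taking $f$ symmetric, also gives $\Sym\cdot\cQSym\subseteq\cQSym$; this is worth noting, because the toric machinery used below for $\cQSym^-$ outputs only elements of $\cQSym^-$ (all the $F^\cyc_{n,J}$ it produces have $J$ proper, since $J=\cDes(w)$), and hence cannot by itself reach $h_n\in\cQSym\setminus\cQSym^-$ for $n\ge 2$.

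For $\cQSym^-$ I would deduce closure from the product formula of Theorem~\ref{product-of-F-corollary}. For $n\ge 2$ every proper subset $\varnothing\subsetneq J\subsetneq[n]$ arises as $\cDes(u)$ for some bijective word $u$ on $n$ letters (arrange the values around a directed cycle so that the clockwise descents fall exactly at the positions in $J$), and $\hF^\cyc_{n,J}=\tfrac1{d_J}F^\cyc_{n,J}$; together with the convention $\cDes(u)=\varnothing$ in size $1$, it follows from Observation~\ref{t:proper_basis} that $\cQSym^-_n\otimes\QQ$ is $\QQ$-spanned by functions of the form $F^\cyc_{n,\cDes(u)}$. Applying Theorem~\ref{product-of-F-corollary} to words realizing the relevant cyclic descent sets, any product of two such spanning elements expands as a nonnegative integer combination of functions $F^\cyc_{m+n,\cDes(w)}$, and each $\cDes(w)$ is again proper (a bijective word on $m+n\ge 2$ distinct letters arranged cyclically has at least one ascent and at least one descent); so such a product again lies in $\cQSym^-_{m+n}\otimes\QQ$, and $\cQSym^-\otimes\QQ$ is closed under multiplication. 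To pass back to $\ZZ$: by Lemma~\ref{t:Fcyc_basis} and Observation~\ref{t:proper_basis}, $\cQSym^-_n$ is a direct summand (of corank one for $n>1$) in the free $\ZZ$-module $\cQSym_n$ (Lemma~\ref{t:Mcyc_basis}), whence $\cQSym^-_n=(\cQSym^-_n\otimes\QQ)\cap\cQSym_n$. Combining this with the first part (that $\cQSym$ is a ring), any product $fg$ with $f,g\in\cQSym^-$ lies in $\cQSym\cap(\cQSym^-\otimes\QQ)=\cQSym^-$, so $\cQSym^-$ is a subring of $\cQSym$; gradedness of both rings is inherited from $\QSym$.

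The step I expect to take the most care is the passage from $\QQ$- to $\ZZ$-coefficients for $\cQSym^-$. Theorem~\ref{product-of-F-corollary} is stated for the \emph{un}normalized functions $F^\cyc_{n,J}=d_J\hF^\cyc_{n,J}$, which moreover satisfy the linear relation of Lemma~\ref{t:Fcyc_properties}(3), so a naive rewrite of a product in the lattice basis $\{\hF^\cyc_{n,J}\}$ only visibly has rational coefficients; proving integrality by hand would amount to a divisibility statement about counts of cyclic shuffles weighted by the stabilizer sizes $d_J$. The clean way around this is the ``rational subalgebra intersected with the integral lattice'' argument above, whose only nontrivial input is that $\cQSym^-_n$ is a direct summand of the free abelian group $\cQSym_n$ (Lemma~\ref{t:Fcyc_basis}, Observation~\ref{t:proper_basis}); that, plus handling the full ring $\cQSym$ by the direct verification rather than by the toric machinery, is the structure I would follow.
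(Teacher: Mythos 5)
Your proof is correct. The $\cQSym^-$ half is essentially the paper's own argument: realize every proper nonempty $J$ as $\cDes(u)$, invoke Theorem~\ref{product-of-F-corollary} to see that products of the $F^\cyc_{n,\cDes(u)}$ stay supported on proper cyclic descent sets, and then recover integrality by intersecting the rational span with an integral lattice (you intersect with $\cQSym$, the paper with $\ZZ[[x_1,x_2,\ldots]]$; both work because $\cQSym^-_n$ is a saturated direct summand). Where you genuinely diverge is the full ring $\cQSym$: you verify closure directly from Definition~\ref{def:cQSym} by the convolution formula for coefficients of $fg$ and the observation that deleting zero exponents intertwines cyclic shifts of the length-$t$ exponent vector with cyclic shifts of the resulting positive composition. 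The paper instead first reduces, via the basis of Lemma~\ref{t:Fcyc_basis}(3), to products of normalized fundamental functions, disposes of almost all of them by the shuffle theorem, and only performs a direct coefficient computation for the single remaining generator $\hF^\cyc_{a,[a]}=e_a$ against an arbitrary $f\in\cQSym$ --- a special case of your general computation. Your route makes the statement ``$\cQSym$ is a ring'' self-contained (independent of the toric machinery and of the basis results), which is arguably cleaner; the paper's route is organized so that the ring-closure proof simultaneously produces the combinatorial structure constants of $\cQSym^-$ in the fundamental basis, which is the payoff it is after. Your closing remark about why the toric machinery alone cannot reach $h_n$ correctly identifies the gap the paper's $e_a\cdot f$ computation is designed to fill.
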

	\begin{proof}
		%
		By Lemma~\ref{t:Fcyc_basis}(3), in order to show that $\cQSym$ is closed under multiplication it suffices to show that, 
		for any two subsets $J \subseteq [a]$ and $K \subseteq [b]$, the product $\hF^\cyc_{a,J} \cdot \hF^\cyc_{b,K}$ lies in $\cQSym_{a+b}$.
		Similarly, by 
		Definition~\ref{def:cQSym-},
		in order to show that $\cQSym^-$ is closed under multiplication it suffices to show that this product lies in $\cQSym^-_{a+b}$ whenever 
		either $\varnothing \subsetneq J \subsetneq [a]$ 
		or $(a,J) = (1, \varnothing)$,
		and similarly for $b$ and $K$.
		
		Let us start from the latter claim. 
		Indeed, any $\varnothing \subsetneq J \subsetneq [a]$ 
		(or $J = \varnothing$ for $a = 1$) 
		is the cyclic descent set of some $u \in \symm_a$, and similarly for $K$ and $v \in \symm_b$.
		By Theorem~\ref{product-of-F-corollary},
		$F^\cyc_{a,J} \cdot F^\cyc_{b,K}$ is a sum of terms of the form $F^{\cyc}_{a+b,L}$ with $\varnothing \subsetneq L \subsetneq [a+b]$, and therefore belongs to $\cQSym_{a+b}^-$.
		Normalizing, it follows that 
		\[
		\hF^\cyc_{a,J} \cdot \hF^\cyc_{b,K} \in \cQSym_{a+b}^- \otimes \,\QQ.
		\]
		Since the two factors lie in $\ZZ[[X]]:=\ZZ[[x_1,x_2,\ldots]]$, their product lies in
		$\ZZ[[X]] \cap \left( \cQSym^- \otimes \,\QQ \right) = \cQSym^-$.
		
		As for the somewhat larger $\cQSym$, it remains 
		to show that $\hF^\cyc_{a,[a]} \cdot f \in \cQSym$ for every $a \ge 1$ and $f \in \cQSym$;
		note that, because of the linear dependence in Lemma~\ref{t:Fcyc_basis}(2), there is no need to check $\hF^\cyc_{a,\varnothing} \cdot f$.
		Indeed, $\hF^\cyc_{a,[a]} = e_a$ and, for any sequence $i_1 < \ldots < i_t$ of indices and any sequence $m = (m_1, \ldots, m_t)$ of positive exponents,
		\[
		\left(
		\text{coefficient of }
		x_{i_1}^{m_1} \cdots x_{i_t}^{m_t}\text{ in }
		e_a \cdot f
		\right)
		=
		\sum_{\substack{A \subseteq \{i_1,\ldots,i_t\} \\ |A| = a}}
		\left( 
		\text{coefficient of } 
		\frac{x_{i_1}^{m_1} \cdots x_{i_t}^{m_t}}{\prod_{j \in A}{x_j}} 
		\text{ in } f 
		\right).
		\]
		Since $f$ is cyclic quasi-symmetric,
		this sum will be unchanged when we replace $m = (m_1,\ldots,m_t)$ by any cyclic shift $m' = (m'_1,\ldots,m'_t)$ and replace $i_1 < \cdots < i_t$ by any
		$i'_1 < \cdots < i'_t$.
	\end{proof}
	
	\begin{corollary}\label{t:nonnegative_structure_constants_minus}
		The structure constants of $\cQSym^-$, with respect to the normalized fundamental basis, are nonnegative integers.
	\end{corollary}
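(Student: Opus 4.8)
The plan is to get integrality essentially for free from the ring structure already in hand, and to get nonnegativity from the cyclic shuffle product formula. First I would recall that, by Proposition~\ref{t:cQSym_is_a_ring}, $\cQSym^-$ is a graded ring, and that, by Observation~\ref{t:proper_basis} together with the conventions in degrees $0$ and $1$, the normalized fundamental cyclic quasi-symmetric functions (the $\hF^\cyc_{n,\CCC}$ with $\CCC\in c2_{0,n}^{[n]}$ for $n>1$, plus $\hF^\cyc_{1,\varnothing}$ and $1$) form a $\ZZ$-basis of $\cQSym^-$. Hence the product of any two basis elements lies in $\cQSym^-$ and expands uniquely as a $\ZZ$-linear combination of basis elements, so the structure constants are automatically integers; it remains only to show they are $\ge 0$.

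For nonnegativity, I would fix two basis elements $\hF^\cyc_{a,\AAA}$ and $\hF^\cyc_{b,\BBB}$ (the cases $a=0$ or $b=0$ being immediate). Exactly as in the proof of Proposition~\ref{t:cQSym_is_a_ring}, I would choose representatives $J\in\AAA$ and $K\in\BBB$ and bijective words $u,v$ on disjoint integer supports with $\cDes(u)=J$ and $\cDes(v)=K$, using there the fact that every proper nonempty subset ---and, in degree $1$, also $\varnothing$--- arises as a cyclic descent set. Since $\hF^\cyc_{a,\AAA}=\tfrac{1}{d_\AAA}F^\cyc_{a,J}$, $\hF^\cyc_{b,\BBB}=\tfrac{1}{d_\BBB}F^\cyc_{b,K}$, and $F^\cyc_{a+b,\cDes(w)}=d_{\cDes(w)}\,\hF^\cyc_{a+b,[\cDes(w)]}$ for any $w$, Theorem~\ref{product-of-F-corollary} gives
\[
\hF^\cyc_{a,\AAA}\cdot\hF^\cyc_{b,\BBB}
=\frac{1}{d_\AAA d_\BBB}\sum_{[\DAG w]\in[\DAG u]\shuffle_{\cyc}[\DAG v]}F^\cyc_{a+b,\cDes(w)}
=\frac{1}{d_\AAA d_\BBB}\sum_{[\DAG w]\in[\DAG u]\shuffle_{\cyc}[\DAG v]}d_{\cDes(w)}\,\hF^\cyc_{a+b,[\cDes(w)]},
\]
where each $\cDes(w)$ is a proper nonempty subset of $[a+b]$, so each $[\cDes(w)]$ indexes a genuine basis element of $\cQSym^-_{a+b}$. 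Grouping terms by cyclic class $\CCC$, the coefficient of $\hF^\cyc_{a+b,\CCC}$ comes out to $\tfrac{d_\CCC}{d_\AAA d_\BBB}\cdot\#\{[\DAG w]\in[\DAG u]\shuffle_{\cyc}[\DAG v]:[\cDes(w)]=\CCC\}$, a nonnegative rational number.

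To finish, I would observe that the product has now been written both as a $\ZZ$-linear combination and as a $\QQ_{\ge 0}$-linear combination of the basis $\{\hF^\cyc_{a+b,\CCC}\}$; by uniqueness of coordinates relative to a basis the two coefficient vectors coincide, and each structure constant therefore lies in $\ZZ\cap\QQ_{\ge 0}=\ZZ_{\ge 0}$. The only point needing care ---the nearest thing to an obstacle here--- is the bookkeeping that keeps every $\cDes(u),\cDes(v),\cDes(w)$ a proper nonempty subset, so that all normalized fundamentals appearing are basis vectors of the relevant $\cQSym^-$ rather than merely of the larger $\cQSym$; this is routine, since a bijective word on a totally ordered set of size $n\ge 2$ always has between $1$ and $n-1$ cyclic descents, while the degenerate cases with a degree-$0$ factor are handled directly. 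As a byproduct the argument also records the combinatorial value $\tfrac{d_\CCC}{d_\AAA d_\BBB}\cdot\#\{[\DAG w]\in[\DAG u]\shuffle_{\cyc}[\DAG v]:[\cDes(w)]=\CCC\}$ of the structure constant.
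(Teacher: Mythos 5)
Your proof is correct and follows the same route as the paper: the paper's own proof is the one-line observation that nonnegativity follows from Theorem~\ref{product-of-F-corollary} and integrality from Proposition~\ref{t:cQSym_is_a_ring}, which is precisely the two-pronged argument you carry out in detail. The extra bookkeeping you supply (the explicit value $\tfrac{d_\CCC}{d_\AAA d_\BBB}\cdot\#\{[\DAG w]\in[\DAG u]\shuffle_{\cyc}[\DAG v]:[\cDes(w)]=\CCC\}$ and the non-Escher check on $\cDes(w)$) is sound and consistent with what the paper leaves implicit.
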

	\begin{proof}
		Nonnegativity follows from Theorem~\ref{product-of-F-corollary},
		and integrality follows from Proposition~\ref{t:cQSym_is_a_ring}.
	\end{proof}
	An analogous statement holds for $\cQSym$;
	see Proposition~\ref{t:nonnegative_structure_constants} below.

	\subsection{The involution $\omega$}
	
	Let $\pi_0$ be the longest permutation in the symmetric group $\symm_n$, defined by $\pi_0(i) := n+1-i$ $(1 \le i \le n)$. 
	Recall that the involution $\omega$ on  $\symm_n$ defined by $\omega(\pi) := \pi_0 \pi$ ($\forall \pi \in \symm_n)$ induces, via the mapping $\pi \mapsto \Des(\pi)$, the involutions (also denoted $\omega$) $J \mapsto [n-1] \setminus J$ on $2^{[n-1]}$, 
	$F_{n,J} \mapsto F_{n,[n-1] \setminus J}$ on $\QSym_n$. 
	In particular, the latter involution restricts to the involution $s_{\lambda} \mapsto s_{\lambda'}$ on $\Sym_n$
	\cite[\S7.6 and Theorem 7.14.5]{EC2}, where $\lambda'$ is the {\it conjugate} partition to $\lambda$.
	In fact, $\omega$ is a ring automorphism of $\QSym$; 
	see~\cite[Corollary 2.4]{MalvenutoReutenauer} and~\cite[Ex.~7.94a]{EC2}.
	
	\begin{lemma}\label{t:omega}
		$\cQSym_n$ and $\cQSym_n^-$ are invariant under the involution $\omega$. Explicitly,
		\[
		\omega \left( \hF^{\cyc}_{n,J} \right) = \hF^{\cyc}_{n,[n] \setminus J}
		\qquad (\forall J \subseteq [n]).
		\]
		The corresponding restrictions of $\omega$ are ring automorphisms of both $\cQSym$ and $\cQSym^-$.
	\end{lemma}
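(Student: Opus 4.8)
The plan is to reduce everything to the already-known behaviour of $\omega$ on $\QSym$, using the expansion of $F^{\cyc}_{n,J}$ into ordinary fundamental quasi-symmetric functions from Proposition~\ref{t:Fcyc_from_F}. First I would apply $\omega$ to that expansion: since $\omega$ is a ring automorphism of $\QSym$ with $\omega(F_{n,I}) = F_{n,[n-1]\setminus I}$ for every $I \subseteq [n-1]$, one obtains
\[
\omega\bigl(F^{\cyc}_{n,J}\bigr) = \sum_{i \in [n]} F_{n,\,[n-1]\setminus((J-i)\cap[n-1])},
\]
where, as in Proposition~\ref{t:Fcyc_from_F}, the shift $J-i$ is taken cyclically modulo $n$. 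The one identity to check is that, for each $i \in [n]$,
\[
[n-1]\setminus\bigl((J-i)\cap[n-1]\bigr) = \bigl(([n]\setminus J)-i\bigr)\cap[n-1].
\]
This is elementary: writing $K := J-i \subseteq [n]$, cyclic shifting is a bijection of $[n]$, so $([n]\setminus J)-i = [n]\setminus K$; and intersecting either $[n-1]\setminus(K\cap[n-1])$ or $([n]\setminus K)\cap[n-1]$ with $[n-1]$ merely deletes the possible element $n$, so both sides equal $[n-1]\setminus(K\cap[n-1])$. Substituting this back and invoking Proposition~\ref{t:Fcyc_from_F} once more gives $\omega(F^{\cyc}_{n,J}) = F^{\cyc}_{n,[n]\setminus J}$ for all $J \subseteq [n]$.

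Next I would pass to the normalized functions. The cyclic stabilizer satisfies $D_J = D_{[n]\setminus J}$ (if $J+i \equiv J$ then $([n]\setminus J)+i \equiv [n]\setminus J$), hence $d_J = d_{[n]\setminus J}$; dividing the previous identity by $d_J$ yields the asserted formula $\omega(\hF^{\cyc}_{n,J}) = \hF^{\cyc}_{n,[n]\setminus J}$. Since complementation $J \mapsto [n]\setminus J$ commutes with the cyclic action, it induces an involution of $c2^{[n]}$ that preserves the subset $c2_{0,n}^{[n]}$; therefore $\omega$ permutes the set $\{\hF^{\cyc}_{n,A} : A \in c2^{[n]}\}$ and also permutes $\{\hF^{\cyc}_{n,A} : A \in c2_{0,n}^{[n]}\}$. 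Combined with the basis statements of Lemma~\ref{t:Fcyc_basis}(3) and Observation~\ref{t:proper_basis}, and the fact that $\omega$ is an involution, this shows $\omega$ restricts to a bijection of $\cQSym_n$ onto itself and of $\cQSym_n^-$ onto itself.

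Finally, since $\cQSym$ and $\cQSym^-$ are subrings of $\QSym$ by Proposition~\ref{t:cQSym_is_a_ring}, and $\omega$ is a ring homomorphism on $\QSym$, its restrictions to these subrings are ring homomorphisms; being bijective (indeed involutive) by the previous step, they are ring automorphisms. I do not expect a genuine obstacle here: the only mildly delicate point is the modular bookkeeping in the set identity above---in particular keeping track of $n$ versus $0$ when intersecting a cyclic shift with $[n-1]$---together with checking that complementation respects both the cyclic action and the normalizing constants $d_J$.
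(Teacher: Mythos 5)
Your proof is correct and follows essentially the same route as the paper: expand $F^{\cyc}_{n,J}$ via Proposition~\ref{t:Fcyc_from_F}, apply $\omega(F_{n,I})=F_{n,[n-1]\setminus I}$ termwise, verify the set identity $[n-1]\setminus\bigl((J-i)\cap[n-1]\bigr)=\bigl(([n]\setminus J)-i\bigr)\cap[n-1]$, and pass to the normalized functions using $d_J=d_{[n]\setminus J}$. You spell out the modular bookkeeping and the basis-permutation argument slightly more explicitly than the paper does, but there is no substantive difference.
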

	\begin{proof}
		By Proposition~\ref{t:Fcyc_from_F}, for any $J \subseteq [n]$
		\[
		\omega \left( F^{\cyc}_{n,J} \right) 
		= \omega \left( \sum_{i \in [n]} F_{n, (J - i) \cap [n-1]} \right)
		= \sum_{i \in [n]} F_{n, [n-1] \setminus (J - i)} 
		= \sum_{i \in [n]} F_{n, (([n] \setminus J) - i) \cap [n-1]} = F^{\cyc}_{n,[n] \setminus J} \ .
		\]
		Since $d_J = d_{[n] \setminus J}$, the same claim holds also for the normalized functions $\hF^{\cyc}_{n,J}$.
		
		Finally, since $\omega$ is an automorphism of $\QSym$, it is also an automorphism of each of its invariant subrings $\cQSym$ and $\cQSym^-$.
	\end{proof}

	\section{Expansion of Schur functions in terms of fundamental cyclic quasi-symmetric functions}\label{sec:expansion}
	
	Recall that
	$\QSym$, $\cQSym$ and the set $\Sym$ of symmetric functions 
	(sometimes denoted $\Lambda$) are graded rings satisfying
	\[
	\Sym \subseteq \cQSym \subseteq \QSym.
	\]
	
	In this section we prove the following theorem.
	
	\begin{theorem}\label{conj:QS11}
		For every skew shape $\lambda/\mu$ which is not a connected ribbon, 
		all the coefficients in the expansion of the skew Schur function $s_{\lambda/\mu}$
		in terms of normalized fundamental cyclic quasi-symmetric functions are nonnegative integers. 
	\end{theorem}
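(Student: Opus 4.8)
The plan is to reduce the assertion to two ingredients: the classical expansion of a skew Schur function in ordinary fundamental quasi-symmetric functions, and the existence of a cyclic descent map on $\SYT(\lambda/\mu)$.

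Write $n := |\lambda/\mu|$. Gessel's formula gives $s_{\lambda/\mu} = \sum_{T \in \SYT(\lambda/\mu)} F_{n,\Des(T)}$, where $\Des(T)\subseteq[n-1]$ is the usual descent set. Since $\lambda/\mu$ is not a connected ribbon, the main result of~\cite{ARR_CDes} supplies a \emph{cyclic extension of the descent map}: a function $\cDes\colon \SYT(\lambda/\mu)\to 2^{[n]}$ and a bijection $p\colon\SYT(\lambda/\mu)\to\SYT(\lambda/\mu)$ with $\cDes(T)\cap[n-1]=\Des(T)$ and $\cDes(p(T))=\cDes(T)+1$ (addition modulo $n$) for every $T$. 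I would take this as a black box; it is the one genuinely nontrivial external input, and so the "hard part" of the theorem lies entirely outside the argument I am sketching.

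First I would record the elementary consequence $\cDes(p^{-i}(T))=\cDes(T)-i \pmod n$ for all $i\in\ZZ$, obtained by iterating (and inverting) the shift axiom. Intersecting with $[n-1]$ gives
\[
\bigl(\cDes(T)-i\bigr)\cap[n-1]=\Des\bigl(p^{-i}(T)\bigr)\qquad(1\le i\le n).
\]
Substituting this into Proposition~\ref{t:Fcyc_from_F} yields $F^{\cyc}_{n,\cDes(T)}=\sum_{i=1}^{n}F_{n,\Des(p^{-i}(T))}$. Summing over all $T$ and interchanging the two sums, each inner sum $\sum_{T}F_{n,\Des(p^{-i}(T))}$ equals $\sum_{T}F_{n,\Des(T)}=s_{\lambda/\mu}$ because $p^{-i}$ is a bijection of $\SYT(\lambda/\mu)$; hence
\[
\tfrac{1}{n}\sum_{T\in\SYT(\lambda/\mu)}F^{\cyc}_{n,\cDes(T)}=s_{\lambda/\mu}.
\]
(This holds verbatim for any set carrying a cyclic descent extension, so I would state it as a separate lemma.)

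It remains to collect terms so that the coefficients become manifestly nonnegative integers, and this is the only place where a little care is needed: a $p$-orbit may be strictly larger than the cyclic class of its $\cDes$-value, because $\cDes$ need not be injective on tableaux, so one cannot simply index the sum by $p$-orbits. Instead I would group the tableaux by the class $A:=[\cDes(T)]\in c2^{[n]}$. Since $p$ maps $\{T:\cDes(T)=J\}$ bijectively onto $\{T:\cDes(T)=J+1\}$, all fibers $\{T:\cDes(T)=J'\}$ with $J'\in A$ have the same size, and there are $\#A=n/d_A$ of them, so
\[
\#\{T:[\cDes(T)]=A\}=\frac{n}{d_A}\,\#\{T:\cDes(T)=J_A\}
\]
for any fixed representative $J_A\in A$. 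Combining this with $F^{\cyc}_{n,A}=d_A\,\hF^{\cyc}_{n,A}$ (Definition~\ref{def:Fcyc_normalized}) and the cyclic invariance of $F^{\cyc}$ (Lemma~\ref{t:Fcyc_properties}(2)) converts the displayed identity into
\[
s_{\lambda/\mu}=\sum_{A\in c2^{[n]}}\#\{T\in\SYT(\lambda/\mu):\cDes(T)=J_A\}\;\hF^{\cyc}_{n,A},
\]
which is the asserted nonnegative-integer expansion, and also yields the combinatorial interpretation promised in Corollary~\ref{t:Schur_in_hFcyc}. In summary: granting the cyclic descent extension of~\cite{ARR_CDes}, the proof is bookkeeping, the sole subtlety being the passage from $\tfrac1n\sum_T F^{\cyc}$ to an integral combination of the normalized functions just described.
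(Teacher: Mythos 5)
Your proof is correct and follows essentially the same route as the paper: Gessel's expansion plus the cyclic descent extension of \cite{ARR_CDes}, fed through Proposition~\ref{t:Fcyc_from_F} to get $n\,s_{\lambda/\mu}=\sum_{T}F^{\cyc}_{n,\cDes(T)}$ (Theorem~\ref{conj:QS1}), and then the same $\#A\cdot d_A=n$ bookkeeping to pass to the normalized basis (Corollary~\ref{t:Schur_in_hFcyc}). The only cosmetic difference is that you phrase the middle step bijectively via the powers of $p$ acting on tableaux, where the paper phrases it via the induced identities $m^{\cyc}(J-i)=m^{\cyc}(J)$ and $m(J)=m^{\cyc}(J)+m^{\cyc}(J\sqcup\{n\})$ on fiber multiplicities; these are the same facts.
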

	
	Theorem~\ref{conj:QS11} follows from 
	Corollary~\ref{t:Schur_in_hFcyc} 
	below.
	The {\em cyclic descent map} on SYT of a given shape plays a key role in the proof; 
	let us recall the relevant definition and main result from \cite{ARR_CDes}.
	
	\begin{definition}[{\cite[Definition 2.1]{ARR_CDes}}]\label{def:cDes}
		Let $\TTT$ be a finite set, equipped with an arbitrary map (called {\em descent map})
		$\Des: \TTT \longrightarrow 2^{[n-1]}$, where $n > 1$. 
		A {\em cyclic extension} of $\Des$ is
		a pair $(\cDes,p)$, where 
		$\cDes: \TTT \longrightarrow 2^{[n]}$ is a map 
		and $p: \TTT \longrightarrow \TTT$ is a bijection,
		satisfying the following axioms:  for all $T$ in $\TTT$:
		\[
		\begin{array}{rl}
		\text{(extension)}   & \cDes(T) \cap [n-1] = \Des(T),\\
		\text{(equivariance)}& \cDes(p(T))  = 1 + \cDes(T) \,\, \bmod{n},\\
		\text{(non-Escher)}  & \varnothing \subsetneq \cDes(T) \subsetneq [n].\\
		\end{array}
		\]
	\end{definition}
	
	\begin{example}\label{ex:Cellini}
		Let $\TTT$ be $\symm_n$, the symmetric group on $n$ letters.
		The classical {\em descent set} of a permutation $\pi = (\pi_1, \ldots, \pi_n)\in \symm_n$ is
		\[
		\Des(\pi) := \{1 \le i \le n-1 \,:\, \pi_i > \pi_{i+1} \}
		\quad \subseteq [n-1],
		\]
		where $[m]:=\{1,2,\ldots,m\}$.  Its
		{\em cyclic descent set}, as defined by Cellini~\cite{Cellini}, is
		\begin{equation}\label{def:Cellini-dDes}
			\cDes(\pi) := \{1 \leq i \leq n \,:\, \pi_i > \pi_{i+1} \}
			\quad \subseteq [n],
		\end{equation}
		with the convention $\pi_{n+1}:=\pi_1$.
		The pair $(\cDes,p)$ satisfies the axioms of Definition~\ref{def:cDes} for $p: \symm_n \longrightarrow \symm_n$ defined by 
		$p(\pi) := \pi \circ c$, where $c$ is the cyclic shift mapping $i$ to $i-1 \pmod n$.
	\end{example}
	
	\begin{remark}\label{rem:non-Escher}
		The term {\em non-Escher} in Definition~\ref{def:cDes} refers to M.\ C.\ Escher's drawing ``Ascending and Descending", which paradoxically
		depicts the impossible cases $\cDes(\pi) = \varnothing$ and $\cDes(\pi) = [n]$
		in Example~\ref{ex:Cellini}.
	\end{remark}
	
	
	\medskip
	There is also an established notion (see, e.g., \cite[p.~361]{EC2}) 
	of a descent set for a {\em standard (Young) tableau} $T$ of skew shape $\lambda/\mu$ (and size $n$):
	\[
	\Des(T) := \{1 \le i \le n-1 \,:\,
	i+1 \text{ appears in a lower row of }T\text{ than }i\}
	\quad \subseteq [n-1].
	\]
	For example, the following standard Young tableau $T$ of shape $\lambda/\mu=(4,3,2)/(1,1)$ 
	has $\Des(T)=\{2,3,5\}$:
	\[
	\large\young(:127,:35,46)
	\]
	For the special case of {\em rectangular} shapes,
	Rhoades~\cite{Rhoades} 
	constructed a cyclic extension $(\cDes,p)$ of $\Des$ satisfying the axioms of Definition~\ref{def:cDes},
	where $p$ acts on SYT via Sch\"utzenberger's {\em jeu-de-taquin promotion} operator. 
	For almost all skew shapes there is a general existence result, as follows.
	
	\begin{theorem}[{\cite[Theorem 1.1]{ARR_CDes}}]\label{t:cyclic_extension}
		Let $\lambda/\mu$ be a skew shape with $n$ cells.
		The descent map $\Des$ on $\SYT(\lambda/\mu)$ has a cyclic extension $(\cDes,p)$ if and only if $\lambda/\mu$ is not a connected ribbon.
		Furthermore, for all $J \subseteq [n]$, all such cyclic extensions share the same cardinalities $\#{\cDes^{-1}(J)}$.
	\end{theorem}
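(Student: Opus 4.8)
The plan is to recast the existence of a cyclic extension as a feasibility problem inside $\cQSym^{-}_n$, read off the ``Furthermore'' clause from the basis results of Section~2, and then concentrate on the positivity. Given a cyclic extension $(\cDes,p)$ of $\Des$ on $\SYT(\lambda/\mu)$, put $g_J:=\#\cDes^{-1}(J)$. Since $p$ is a bijection, equivariance gives $g_{J+1}=g_J$ (indices mod $n$), so $g$ descends to a function on the set $c2_{0,n}^{[n]}$ of orbits; the extension axiom gives the marginal relations $g_J+g_{J\cup\{n\}}=\#\Des^{-1}(J)$ for all $J\subseteq[n-1]$; and non-Escher gives $g_\varnothing=g_{[n]}=0$. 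Combining Proposition~\ref{t:Fcyc_from_F} with these marginal relations one computes $s_{\lambda/\mu}=\sum_{T}F_{n,\Des(T)}=\sum_{A\in c2_{0,n}^{[n]}}g_A\,\hF^{\cyc}_{n,A}$; since $\{\hF^{\cyc}_{n,A}:A\in c2_{0,n}^{[n]}\}$ is a $\ZZ$-basis of $\cQSym^{-}_n$ (Observation~\ref{t:proper_basis}), the coefficients $g_A$ — hence the cardinalities $\#\cDes^{-1}(J)$ — depend on $\lambda/\mu$ alone, which is the last assertion of the theorem. Conversely, any nonnegative integer solution $g$ of the system (constant on orbits, marginal relations, $g_\varnothing=g_{[n]}=0$) yields a cyclic extension: split the $\#\Des^{-1}(J)$ tableaux with $\Des=J$ into $g_J$ having $\cDes:=J$ and $g_{J\cup\{n\}}$ having $\cDes:=J\cup\{n\}$, and take for $p$ any system of bijections $\cDes^{-1}(J)\to\cDes^{-1}(J+1)$. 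Thus the theorem is equivalent to: $s_{\lambda/\mu}$ lies in $\cQSym^{-}_n$ with nonnegative $\hF^{\cyc}$-coordinates if and only if $\lambda/\mu$ is not a connected ribbon.

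For the ``only if'' direction I would exhibit a forced negative coordinate for each connected ribbon. For a single row, $\#\Des^{-1}(\varnothing)=1$ and $\#\Des^{-1}(\{j\})=0$, so the relations force $g_{\{n\}}=1$ and then $g_{\{j,n\}}=-1$; the single-column case follows by applying the automorphism $\omega$ of Lemma~\ref{t:omega} ($\hF^{\cyc}_{n,J}\mapsto\hF^{\cyc}_{n,[n]\setminus J}$). For a general connected ribbon $\mathrm{rib}(\alpha)$ with $\alpha\ne(n),(1^n)$, I would locate a single subset $J$ with $g_J<0$ using the known descent distribution of ribbon standard tableaux (encoded in the expansion of ribbon Schur functions into fundamental quasi-symmetric functions) together with the transition between $\{F_{n,S}\}$ and $\{\hF^{\cyc}_{n,A}\}$ coming from Lemma~\ref{t:Fcyc_vs_Mcyc} and Proposition~\ref{t:Fcyc_from_F}; since this is only an obstruction, one bad $J$ suffices and the underlying count is elementary.

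The heart of the theorem is the ``if'' direction: for $\lambda/\mu$ not a connected ribbon, $s_{\lambda/\mu}$ has nonnegative $\hF^{\cyc}$-coordinates. I would split into cases. If $\lambda/\mu$ is disconnected with a connected component $P$ that is not a ribbon, then $s_{\lambda/\mu}=s_{P}\cdot(\text{product over the other components})$, and applying the (dual) Pieri rule repeatedly expands this as a sum of skew Schur functions each still containing the $2\times 2$ box of $P$, hence not ribbons, so one concludes by induction on the number of cells. If $\lambda/\mu$ is disconnected with all components single rows or columns (so $s_{\lambda/\mu}$ is a product of $h$'s and $e$'s), one argues directly — e.g.\ by the forced-fiber-size computation, or, in the simplest subcase of a union of two rows, by the explicit cyclic extension $\cDes(T_S)=\{i\in[n]:i\notin S,\ i+1\in S\}$ (indices mod $n$) and $p(T_S)=T_{S+1}$, where $S$ is the set of values placed in the shorter row. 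If $\lambda/\mu$ is connected and contains a $2\times2$ square, I would reduce to ``building blocks'': rectangles, for which Rhoades's construction with $p$ the jeu-de-taquin promotion operator~\cite{Rhoades} already supplies a cyclic extension, together with a short list of further families (e.g.\ two-row shapes and near-rectangles) handled by explicit descent-compatible bijections, and then combine blocks by a gluing operation on cyclic-descent data.

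The main obstacle is precisely this last step — the positivity for connected $2\times2$-containing shapes in full generality. I do not expect a purely formal argument: one either produces an explicit promotion-type construction adapted to these shapes, or imports a geometric positivity statement, namely the nonnegativity of the quantum Littlewood--Richardson (Gromov--Witten) coefficients of the Grassmannian accessed through Postnikov's toric Schur polynomials, which identify the relevant $\hF^{\cyc}$-coordinates of a non-hook Schur function with such invariants. Since in the present paper this implication is used in the reverse direction, a self-contained proof of the theorem must supply this positivity on its own.
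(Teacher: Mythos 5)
First, a structural remark: this paper does not prove Theorem~\ref{t:cyclic_extension} at all --- it is quoted from \cite[Theorem 1.1]{ARR_CDes}, and Section~\ref{sec:open_problems} records that the proof there is ``indirect, and involves Postnikov's toric Schur functions.'' So there is no internal proof to compare against. Your opening reduction is nevertheless correct and worth recording: the dictionary between cyclic extensions of $\Des$ on $\SYT(\lambda/\mu)$ and nonnegative-integer solutions $g$ of the orbit/marginal/non-Escher system, and hence between existence of a cyclic extension and nonnegativity of the $\hF^{\cyc}$-coordinates of $s_{\lambda/\mu}$, is exactly the computation the paper performs (in the forward direction) in Theorem~\ref{conj:QS1} and Corollary~\ref{t:Schur_in_hFcyc}; the ``Furthermore'' clause does follow from Observation~\ref{t:proper_basis} as you say, and the ``only if'' direction via a forced negative coordinate is the right idea.

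The genuine gap is the one you flag yourself: the ``if'' direction for connected shapes containing a $2\times 2$ square is not proved. ``Reduce to building blocks \dots\ and then combine blocks by a gluing operation on cyclic-descent data'' is not an argument --- a connected non-ribbon skew shape does not decompose into rectangles, two-row shapes and near-rectangles in any way compatible with descent data, and Rhoades's promotion construction is known not to extend naively beyond rectangles. Your disconnected case analysis is also incomplete: it omits shapes all of whose components are connected ribbons but not all single rows or columns (e.g.\ $(2,1)\oplus(1)$), where no component individually admits a cyclic extension and the structure-constant positivity of Corollary~\ref{t:nonnegative_structure_constants_minus} cannot be invoked because $e_a=\hF^{\cyc}_{a,\{[a]\}}\notin\cQSym^-_a$. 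The fallback you name --- identifying the $\hF^{\cyc}$-coordinates of a non-hook $s_{\lambda}$ with coefficients of Postnikov's toric Schur functions and invoking nonnegativity of the corresponding Gromov--Witten invariants --- is in fact the route taken in \cite{ARR_CDes} (cf.\ Remark~\ref{rem_Postnikov}), but your proposal neither establishes that identification nor the positivity, so as written the hard half of the theorem remains unproven.
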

	
	
	We shall now provide, in Theorem~\ref{conj:QS1} and Corollary~\ref{t:Schur_in_hFcyc},
	cyclic analogues of the classical result 
	\cite[Theorem 7.19.7]{EC2} (first proved in~\cite[Theorem 7]{Gessel}).
	
	\begin{theorem}\label{conj:QS1}
		For every skew shape $\lambda/\mu$ of size $n$ that is not a connected ribbon, and for any 
		cyclic extension $(\cDes,p)$ of $\Des$ 
		on $\SYT(\lambda/\mu)$,
		\[
		n s_{\lambda/\mu}
		= \sum_{T \in \SYT(\lambda/\mu)} F^{\cyc}_{n,\cDes(T)}
		\]
		where $s_{\lambda/\mu}$ is the Schur function indexed by $\lambda/\mu$.
	\end{theorem}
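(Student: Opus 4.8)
The plan is to deduce this directly from the classical Gessel--Stanley expansion $s_{\lambda/\mu} = \sum_{T \in \SYT(\lambda/\mu)} F_{n,\Des(T)}$ \cite[Theorem 7.19.7]{EC2} by unwinding $F^{\cyc}$ via Proposition~\ref{t:Fcyc_from_F}. Starting from the right-hand side and applying that proposition to each summand,
\[
\sum_{T \in \SYT(\lambda/\mu)} F^{\cyc}_{n,\cDes(T)}
= \sum_{T \in \SYT(\lambda/\mu)} \ \sum_{i \in [n]} F_{n,\, (\cDes(T) - i) \cap [n-1]},
\]
where $\cDes(T) - i$ is interpreted cyclically modulo $n$. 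The next step is to recognize each inner index set as an ordinary descent set of a standard tableau obtained by iterating the bijection $p$.

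By the equivariance axiom, $\cDes(p^{-i}(T)) \equiv \cDes(T) - i \pmod n$ for every $i$, and then the extension axiom $\cDes(S) \cap [n-1] = \Des(S)$ gives
\[
(\cDes(T) - i) \cap [n-1] = \cDes(p^{-i}(T)) \cap [n-1] = \Des(p^{-i}(T)).
\]
Here one must match the convention of Proposition~\ref{t:Fcyc_from_F}, in which a residue class is represented inside $\{1,\dots,n\}$ and intersecting with $[n-1]$ simply discards the residue $n \equiv 0$; this is exactly the passage from $\cDes$ to $\Des$. Substituting and interchanging the order of summation yields
\[
\sum_{T \in \SYT(\lambda/\mu)} F^{\cyc}_{n,\cDes(T)}
= \sum_{i \in [n]} \ \sum_{T \in \SYT(\lambda/\mu)} F_{n,\, \Des(p^{-i}(T))}.
\]

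For each fixed $i$ the map $T \mapsto p^{-i}(T)$ is a bijection of $\SYT(\lambda/\mu)$ (since $p$ is a bijection), so the inner sum equals $\sum_{T' \in \SYT(\lambda/\mu)} F_{n,\Des(T')} = s_{\lambda/\mu}$ by the classical expansion. Summing over the $n$ choices of $i$ produces $n\, s_{\lambda/\mu}$, which is the claim. The hypothesis that $\lambda/\mu$ is not a connected ribbon enters only through Theorem~\ref{t:cyclic_extension}, which guarantees that a cyclic extension $(\cDes,p)$ exists at all; the computation above also shows \emph{a posteriori} that the left-hand side does not depend on which cyclic extension is chosen, consistent with the cardinality statement in Theorem~\ref{t:cyclic_extension}.

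I do not expect a genuine obstacle: once Proposition~\ref{t:Fcyc_from_F} and the cyclic-extension axioms are available, the argument is just a reindexing. The only delicate point is bookkeeping the modular arithmetic --- confirming that the ``reduce mod $n$, then intersect with $[n-1]$'' operation in Proposition~\ref{t:Fcyc_from_F} corresponds precisely to the equivariance-plus-extension passage from $\cDes(T)$ to $\Des(p^{-i}(T))$, including the boundary behavior when the residue $n$ (equivalently $0$) enters or leaves the set under the cyclic shift by $i$.
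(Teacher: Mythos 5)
Your proof is correct and rests on exactly the same ingredients as the paper's own argument --- Proposition~\ref{t:Fcyc_from_F}, the equivariance and extension axioms of Definition~\ref{def:cDes}, and Gessel's classical expansion --- so it is essentially the same proof. The only difference is organizational: the paper aggregates tableaux into fiber counts $m^{\cyc}(J)$, reads equivariance as cyclic invariance of these counts, and then splits on whether $n \in J$ to apply the extension axiom, whereas you reindex the double sum directly via the bijection $T \mapsto p^{-i}(T)$, which cleanly absorbs that case split.
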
	
	
	
	\begin{proof}
		Denoting
		\[
		m^{\cyc}(J) := \# \{T \in \SYT(\lambda/\mu) \,:\, \cDes(T) = J\}
		\qquad (\forall J \subseteq [n]),
		\]
		it follows from Lemma~\ref{t:Fcyc_vs_Mcyc} that
		\[
		\sum_{T \in \SYT(\lambda/\mu)} F^{\cyc}_{n, \cDes(T)}
		= \sum_{J \subseteq [n]} m^{\cyc}(J) F^{\cyc}_{n, J}
		= \sum_{J \subseteq [n]} m^{\cyc}(J) 
		\sum_{i \in [n]} F_{n, (J - i) \cap [n-1]}.
		\]
		The equivariance axiom from Definition~\ref{def:cDes} translates to
		cyclic invariance of the multiplicities $m^{\cyc}$:
		\[
		m^{\cyc}(J - i) = m^{\cyc}(J) 
		\qquad (\forall J \subseteq [n],\, i \in [n]).
		\]
		This, in turn, implies that
		\[
		\sum_{T \in \SYT(\lambda/\mu)} F^{\cyc}_{n, \cDes(T)}
		= \sum_{J \subseteq [n]} \sum_{i \in [n]} 
		m^{\cyc}(J - i) F_{n, (J - i) \cap [n-1]}
		= n \sum_{J \subseteq [n]} 
		m^{\cyc}(J) F_{n, J \cap [n-1]}.
		\]
		Distinguishing the cases $n \not\in J$ and $n \in J$, 
		and changing notation accordingly, we can write
		\[
		\sum_{T \in \SYT(\lambda/\mu)} F^{\cyc}_{n, \cDes(T)}
		= n \sum_{J \subseteq [n-1]} 
		\left( m^{\cyc}(J) + m^{\cyc}(J \sqcup \{n\}) \right) F_{n, J}.
		\]
		Denoting
		\[
		m(J) := \# \{T \in \SYT(\lambda/\mu) \,:\, \Des(T) = J\}
		\qquad (\forall J \subseteq [n-1]),
		\]
		the extension axiom from Definition~\ref{def:cDes} is equivalent to
		\[
		m(J) = m^{\cyc}(J) + m^{\cyc}(J \sqcup \{n\}) 
		\qquad (\forall J \subseteq [n-1]),
		\]
		which now gives
		\[
		\sum_{T \in \SYT(\lambda/\mu)} F^{\cyc}_{n, \cDes(T)}
		= n \sum_{J \subseteq [n-1]} m(J) F_{n, J}.
		\]
		The classical formula of Gessel~\cite[Theorem 7]{Gessel}
		\[
		s_{\lambda/\mu}
		= \sum_{T \in \SYT(\lambda/\mu)} F_{n, \Des(T)}
		= \sum_{J \subseteq [n-1]} m(J) F_{n, J}
		\]
		now completes the proof.
	\end{proof}
	
	The following corollary shows that the coefficients in the expression of a skew Schur function in terms of the basis of {\em normalized} fundamental cyclic quasi-symmetric functions are equal to the corresponding fiber sizes of (any) cyclic descent map.
	
	\begin{corollary}\label{t:Schur_in_hFcyc}
		Under the assumptions of Theorem~\ref{conj:QS1},
		\[
		s_{\lambda/\mu}
		= \sum_{A \in c2_{0,n}^{[n]}} m^{\cyc}(A) \,\hF^{\cyc}_{n,A}
		\]
		where
		\[
		m^{\cyc}(A) := m^{\cyc}(J)
		= \# \{T \in \SYT(\lambda/\mu) \,:\, \cDes(T) =J\}
		\qquad \left( \forall J \in A \in c2_{0,n}^{[n]} \right).
		\]
		This number is independent of the choice of $J$ in $A$. In fact,
		\[
		m^{\cyc}(A) 
		= \frac{1}{\# A} \sum_{J \in A} m^{\cyc}(J)
		= \frac{1}{\# A} \cdot \# \{T \in \SYT(\lambda/\mu) \,:\, \cDes(T) \in A\}.
		\]
	\end{corollary}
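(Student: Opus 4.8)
The plan is to obtain this directly from Theorem~\ref{conj:QS1} by collecting the summands into cyclic orbits and then normalizing. Starting from the identity $n s_{\lambda/\mu} = \sum_{T \in \SYT(\lambda/\mu)} F^{\cyc}_{n,\cDes(T)}$ established there, I would first group the terms according to the value $J = \cDes(T)$, writing $n s_{\lambda/\mu} = \sum_{J \subseteq [n]} m^{\cyc}(J)\, F^{\cyc}_{n,J}$ with $m^{\cyc}(J) = \#\{T : \cDes(T) = J\}$. The non-Escher axiom of Definition~\ref{def:cDes} forces $m^{\cyc}(J) = 0$ unless $\varnothing \subsetneq J \subsetneq [n]$, so the sum effectively ranges over $J \in 2_{0,n}^{[n]}$, hence over the cyclic orbits $A \in c2_{0,n}^{[n]}$.

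The next step reuses the equivariance argument from the proof of Theorem~\ref{conj:QS1}: since $p$ is a bijection of $\SYT(\lambda/\mu)$ with $\cDes(p(T)) \equiv 1 + \cDes(T) \pmod n$, the fiber sizes satisfy $m^{\cyc}(J - i) = m^{\cyc}(J)$ for every $i \in [n]$, so $m^{\cyc}$ is constant on each orbit $A$. This both legitimizes the notation $m^{\cyc}(A)$ (independence of the chosen $J \in A$) and lets me factor it out, giving $n s_{\lambda/\mu} = \sum_{A \in c2_{0,n}^{[n]}} m^{\cyc}(A) \sum_{J \in A} F^{\cyc}_{n,J}$. Applying the defining relation $\sum_{J \in A} F^{\cyc}_{n,J} = n\, \hF^{\cyc}_{n,A}$ of Definition~\ref{def:Fcyc_normalized} and dividing by $n$ yields the claimed expansion $s_{\lambda/\mu} = \sum_{A} m^{\cyc}(A)\, \hF^{\cyc}_{n,A}$, which is genuinely an expansion in the $\ZZ$-basis of $\cQSym_n^-$ from Observation~\ref{t:proper_basis}.

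The two alternative formulas for $m^{\cyc}(A)$ are then bookkeeping: because $m^{\cyc}(J) = m^{\cyc}(A)$ for each of the $\#A$ sets $J$ in the orbit, one has $m^{\cyc}(A) = \tfrac{1}{\#A}\sum_{J \in A} m^{\cyc}(J)$; and since the conditions $\cDes(T) = J$ for distinct $J \in A$ partition $\{T : \cDes(T) \in A\}$, the right-hand sum equals $\#\{T \in \SYT(\lambda/\mu) : \cDes(T) \in A\}$.

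I do not expect a genuine obstacle: the substantive work is already done in Theorem~\ref{conj:QS1} and its proof, and what remains is reorganizing the sum over $\SYT(\lambda/\mu)$ by cyclic orbits and dividing by $n$. The only point that deserves explicit mention is that the non-Escher axiom is exactly what confines the orbits to $c2_{0,n}^{[n]}$, ensuring the output lands in $\cQSym_n^-$ rather than requiring the larger index set $c2^{[n]}$.
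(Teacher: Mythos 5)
Your proposal is correct and follows essentially the same route as the paper: both start from the identity $n\,s_{\lambda/\mu}=\sum_{T}F^{\cyc}_{n,\cDes(T)}$ of Theorem~\ref{conj:QS1}, invoke the non-Escher axiom to confine the sum to $\varnothing\subsetneq J\subsetneq[n]$, use constancy of $m^{\cyc}$ on cyclic orbits, and normalize via $\#A\cdot d_A=n$ (your use of $\hF^{\cyc}_{n,A}=\tfrac{1}{n}\sum_{J\in A}F^{\cyc}_{n,J}$ is the same computation in one step). No gaps.
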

	\begin{proof}
		By Theorem~\ref{conj:QS1} and the above notations,
		using the non-Escher axiom from Definition~\ref{def:cDes},
		\[
		s_{\lambda/\mu}
		= \frac{1}{n} \sum_{T \in \SYT(\lambda/\mu)} F^{\cyc}_{n,\cDes(T)} 
		= \frac{1}{n} \sum_{\varnothing \subsetneq J \subsetneq [n]} m^{\cyc}(J) \,F^{\cyc}_{n,J}
		= \frac{1}{n} \sum_{A \in c2_{0,n}^{[n]}} \#A \cdot m^{\cyc}(A) \,F^{\cyc}_{n,A}.
		\]
		By Definition~\ref{def:Fcyc_normalized}
		and the equality $\#A \cdot d_A = n$,
		\[
		s_{\lambda/\mu}
		= \frac{1}{n} \sum_{A \in c2_{0,n}^{[n]}} \#A \cdot d_A \cdot m^{\cyc}(A) \,\hF^{\cyc}_{n,A}
		= \sum_{A \in c2_{0,n}^{[n]}} m^{\cyc}(A) \,\hF^{\cyc}_{n,A},
		\]
		as claimed.
	\end{proof}
	
	
	
	\begin{remark}\label{rem:versus}
		For every finite set $\TTT$ with a descent map
		$\Des: \TTT \longrightarrow 2^{[n-1]}$ 
		which has a cyclic extension $(\cDes,p)$, the following holds:
		\begin{equation}\label{eq:versus}
			\sum\limits_{T\in \TTT} \hF^{\cyc}_{n,\cDes(T)}=
			n \sum\limits_{T\in \TTT} F^{\cyc}_{n,\cDes(T)}
			=\sum\limits_{T\in \TTT} F_{n,\Des(T)}.
		\end{equation}
		The first equality follows from the proof of Corollary~\ref{t:Schur_in_hFcyc}.
		The second equality follows from the proof of Theorem~\ref{conj:QS1}.
	\end{remark}
	
	\medskip
	
	If $\lambda/\mu$ is a connected ribbon then a corresponding cyclic descent map $\cDes$ does not exist, by Theorem~\ref{t:cyclic_extension}.
	Nevertheless, one can combine the dual Jacobi-Trudi identity \cite[Corollary 7.16.2]{EC2} with 
	Proposition~\ref{t:cQSym_is_a_ring}
	to get an expansion of $s_{\lambda/\mu}$ as a linear combination of $\hF^{\cyc}_{n,A}$ $(A \in \cnes)$ with (not necessarily positive) integer coefficients. 
	In the special case that $\lambda/\mu$ is a hook, this result can be stated very explicitly using a simpler approach.
	
	\begin{proposition}\label{prop:QS_hook}
		For every $0 \le k \le n-1$,
		\[
		n s_{(n-k,1^k)} 
		= \sum_{\substack{J\subseteq [n] \\ |J|>k}} (-1)^{|J|-k-1} F^{\cyc}_{n,J}
		\]
		and
		\[
		s_{(n-k,1^k)} 
		= \sum_{\substack{A \in \cnes \\ r(A)>k}} (-1)^{r(A)-k-1} \hF^{\cyc}_{n,A}.
		\]
		Note that one of the summands is always
		$\hF^{\cyc}_{n,\{[n]\}} \not\in \cQSym^-$.
	\end{proposition}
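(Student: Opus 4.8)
The plan is to reduce everything to the classical expansion of a hook Schur function in the fundamental basis,
\[
s_{(n-k,1^k)} = \sum_{\substack{J \subseteq [n-1]\\ |J| = k}} F_{n,J},
\]
which is immediate from Gessel's formula \cite[Theorem~7]{Gessel}: a standard Young tableau of hook shape $(n-k,1^k)$ is determined by its set $L$ of $k$ ``leg'' entries, which can be any $k$-subset of $\{2,\dots,n\}$, and one checks directly that $\Des(T) = L-1$, so $\Des(T)$ ranges bijectively over the $k$-subsets of $[n-1]$.

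First I would prove the unnormalized identity. By Proposition~\ref{t:Fcyc_from_F}, $F^{\cyc}_{n,J} = \sum_{i\in[n]} F_{n,(J-i)\cap[n-1]}$, and for each fixed $i$ the map $J\mapsto J-i$ is a size-preserving bijection of the subsets of $[n]$; hence the sum $\sum_{|J|>k}(-1)^{|J|-k-1}F^{\cyc}_{n,J}$ equals $n\sum_{|J|>k}(-1)^{|J|-k-1}F_{n,J\cap[n-1]}$. I would then split this last sum according to whether $n\in J$, writing $J = J''\sqcup\{n\}$ with $J''\subseteq[n-1]$ in the second case; the contributions whose underlying $[n-1]$-set has size strictly greater than $k$ cancel in pairs, since $(-1)^{m-k-1}+(-1)^{m-k}=0$, and what survives is exactly $\sum_{\substack{J\subseteq[n-1]\\ |J|=k}}F_{n,J} = s_{(n-k,1^k)}$. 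This yields $n\, s_{(n-k,1^k)} = \sum_{|J|>k}(-1)^{|J|-k-1}F^{\cyc}_{n,J}$; the boundary cases $k=0$ (where one may alternatively invoke Lemma~\ref{t:Fcyc_properties}(3)) and $k=n-1$ are covered by the same computation.

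For the normalized identity I would regroup the preceding sum over cyclic orbits: each $A\in\cnes$ contains $\#A$ subsets $J$, all of the same size $r(A)$, and $F^{\cyc}_{n,J}=F^{\cyc}_{n,A}=d_A\,\hF^{\cyc}_{n,A}$ by Lemma~\ref{t:Fcyc_properties}(2) and Definition~\ref{def:Fcyc_normalized}. Therefore $n\, s_{(n-k,1^k)} = \sum_{\substack{A\in\cnes\\ r(A)>k}}(-1)^{r(A)-k-1}\,(\#A)\,d_A\,\hF^{\cyc}_{n,A}$, and dividing by $n = (\#A)\,d_A$ gives the stated expansion. The closing remark is then immediate: the orbit of $[n]$ has rank $n>k$ for every admissible $k$, so $\hF^{\cyc}_{n,\{[n]\}}=e_n$, which lies outside $\cQSym^-$, always appears.

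I do not anticipate a genuine obstacle here; the only points requiring care are the sign bookkeeping in the pairwise cancellation and the reindexing that produces the factor $n$, both routine once the classical hook formula is available.
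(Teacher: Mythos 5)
Your argument is correct, but it proves the proposition by a genuinely different route from the paper. The paper first establishes the intermediate identity $n\, s_{(1^k)\oplus(n-k)} = \sum_{|J|=k} F^{\cyc}_{n,J}$ (Equation~\eqref{eq:QS}), using the fact from \cite[Theorem 3.11(a)]{AER} that the cyclic descent sets of $\SYT((1^k)\oplus(n-k))$ run exactly once over all $k$-subsets of $[n]$, together with Theorem~\ref{conj:QS1}; it then telescopes the Pieri-type relation $s_{(1^k)\oplus(n-k)} = s_{(n-k,1^k)}+s_{(n-k+1,1^{k-1})}$ to isolate the hook. You instead bypass cyclic descent extensions entirely: starting from the classical hook expansion $s_{(n-k,1^k)}=\sum_{J\subseteq[n-1],\,|J|=k}F_{n,J}$, you expand each $F^{\cyc}_{n,J}$ via Proposition~\ref{t:Fcyc_from_F}, use the size-preserving bijection $J\mapsto J-i$ to extract the factor $n$, and let the signs cancel in pairs according to whether $n\in J$. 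Your computation checks out (including the coefficient $1$ surviving only at $|J''|=k$, and the uniform treatment of $k=0$ and $k=n-1$), and the passage to the normalized form via $\#A\cdot d_A=n$ matches the paper. What your approach buys is self-containedness: it does not invoke \cite{AER} or the existence of cyclic descent maps, which in turn rest on the nontrivial Theorem~\ref{t:cyclic_extension}. What the paper's approach buys is the identity~\eqref{eq:QS} itself (equivalently~\eqref{eq:oplus}), which carries combinatorial meaning for each alternating summand and is reused later, e.g.\ in Proposition~\ref{prop:near-hooks} and Proposition~\ref{t:nonnegative_structure_constants}.
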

	
	
	\begin{proof}
		First recall from~\cite[Theorem 3.11(a)]{AER} 
		that for every 
		$1 \le k \le n-1$, the multiset of cyclic descent sets of SYT of shape $(n-k+1,1^k)/(1) = (1^k) \oplus (n-k)$ is the set of all subsets of $[n]$ of size $k$, each of them appearing exactly once; namely,
		\begin{equation}\label{eq:oplus}
			\{\{\cDes(T) \,:\, T\in \SYT((1^k) \oplus (n-k)) \}\}
			= \{J\subseteq [n] \,:\, |J|=k\}.
		\end{equation}
		Equivalently, in the notation of the proof of Theorem~\ref{conj:QS1}, for this shape:
		\[
		m^{\cyc}(J) =
		\begin{cases}
		1,& \text{if } |J| = k; \\
		0,& \text{otherwise}
		\end{cases}
		\qquad (\forall J \subseteq [n]).
		\]
		Thus, by Theorem~\ref{conj:QS1},
		\begin{equation}
			\label{eq:QS}
			n s_{(1^k) \oplus (n-k)} 
			= \sum_{\substack{J\subseteq [n] \\ |J|=k}} F^{\cyc}_{n,J}.
		\end{equation}
		The above reasoning is valid for $1 \le k \le n-1$, but Formula~\eqref{eq:QS} actually holds also for $k = 0$  and $k = n$, by Lemma~\ref{t:Fcyc_properties}(1). (Theorem~\ref{conj:QS1} cannot be used in these cases, since the corresponding shapes $(n)$ and $(1^n)$ are connected ribbons.)
		
		On the other hand, by 
		the definition of (skew) Schur functions in terms of semistandard tableaux,
		\[
		s_{(1^k) \oplus (n-k)} = s_{(n-k,1^k)}+s_{(n-k+1,1^{k-1})}
		\qquad (1 \le k \le n-1).
		\]
		Thus
		\[
		s_{(n-k,1^k)} 
		= \sum_{j=k+1}^{n} (-1)^{j-k-1} s_{(1^j) \oplus (n-j)}
		\qquad (0 \le k \le n-1).
		\]
		Applying Equation~\eqref{eq:QS} to the RHS completes the proof.
	\end{proof}	
	
	
	
	
	
	
	
	
	
	We are now in a position to extend Corollary~\ref{t:nonnegative_structure_constants_minus} to $\cQSym$.
	\begin{proposition}\label{t:nonnegative_structure_constants}
		The structure constants of $\cQSym$, with respect to the normalized fundamental basis, are nonnegative integers.
	\end{proposition}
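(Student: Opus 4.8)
The plan is to exploit the splitting $\cQSym_n = \cQSym_n^- \oplus \ZZ e_n$. Integrality of all structure constants is immediate from Proposition~\ref{t:cQSym_is_a_ring}, so only nonnegativity is at issue. By Lemma~\ref{t:Fcyc_basis}(3) the normalized fundamental basis is $\BBB_0 = \{\hF^{\cyc}_{n,A} : A \in c2_0^{[n]}\}$, and it is the union of the basis $\{\hF^{\cyc}_{n,A} : A \text{ non-Escher}\}$ of $\cQSym_n^-$ together with the single extra element $e_n = \hF^{\cyc}_{n,[n]}$ (Lemma~\ref{t:Fcyc_basis}(1)). A product of two non-Escher basis elements lies in $\cQSym^-$, hence by Corollary~\ref{t:nonnegative_structure_constants_minus} expands with nonnegative integer coefficients on non-Escher elements, all of which lie in $\BBB_0$. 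Since multiplication is commutative, it remains to expand $e_a \cdot \hF^{\cyc}_{b,B}$ nonnegatively in $\BBB_0$ for all $a \ge 1$ and $B \in c2_0^{[b]}$. The subcase $B = [b]$ is the symmetric product $e_a e_b$; the subcase $a = 1$ with $B$ non-Escher is again a product inside $\cQSym^-$ (since $\cQSym_1^- = \ZZ e_1$), covered by Corollary~\ref{t:nonnegative_structure_constants_minus}; what is left is $e_a \cdot \hF^{\cyc}_{b,B}$ with $a \ge 2$ and $B$ non-Escher.

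First I would dispose of $e_a e_b$, which is pure symmetric-function bookkeeping. By the Pieri rule, $e_a e_b = \sum_{p=0}^{\min(a,b)} s_{(2^p,\,1^{a+b-2p})}$, each Schur function occurring with coefficient $1$. For $p \ge 2$ the straight shape $(2^p,1^{a+b-2p})$ contains a $2\times 2$ square, hence is not a connected ribbon, so Corollary~\ref{t:Schur_in_hFcyc} expands $s_{(2^p,1^{a+b-2p})}$ nonnegatively in non-Escher $\hF^{\cyc}$'s. The two remaining summands are hooks: $s_{(1^{a+b})} = e_{a+b}$ and $s_{(2,\,1^{a+b-2})}$. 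Applying Proposition~\ref{prop:QS_hook} to the latter (with $n = a+b$, $k = a+b-2$) gives $s_{(2,1^{a+b-2})} = \sum_{r(A) = a+b-1} \hF^{\cyc}_{a+b,A} - e_{a+b}$, so that $s_{(1^{a+b})} + s_{(2,1^{a+b-2})} = \sum_{r(A)=a+b-1} \hF^{\cyc}_{a+b,A}$ --- the unwanted $-e_{a+b}$ cancels against $s_{(1^{a+b})}$. Hence $e_a e_b$ expands nonnegatively in $\BBB_0$; in fact it lies in $\cQSym^-_{a+b}$.

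The crux is the remaining case $e_a \cdot \hF^{\cyc}_{b,B}$ with $a \ge 2$ and $B$ a non-Escher orbit. The obstruction is precisely that $e_a = \hF^{\cyc}_{a,[a]}$ is itself Escher, hence is not a cyclic $P$-partition enumerator of a total cyclic order, so Theorem~\ref{product-of-F-corollary} does not apply directly. I would proceed by expanding both factors into ordinary fundamentals --- $e_a = F_{a,[a-1]}$ and $\hF^{\cyc}_{b,B} = \sum_{K \in B} F_{b,\,K \cap [b-1]}$ (Proposition~\ref{t:Fcyc_from_F}, summed over the orbit $B$) --- multiplying via the ordinary shuffle product (Theorem~\ref{non-cyclic-product-of-F}), which is manifestly a nonnegative sum of fundamentals $F_{a+b,L}$, and then collecting the result, which we already know to be cyclic quasi-symmetric, back into the basis $\BBB_0$. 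Here lies the subtle point: a nonnegative expansion in ordinary fundamentals does \emph{not} in general force a nonnegative expansion in cyclic fundamentals (witness $h_n = F_{n,\varnothing}$), so one must use the particular structure of the shuffle set. Concretely, writing $\hF^{\cyc}_{b,B}$ as a positive multiple of a cyclic $P$-partition enumerator $F^{\cyc}_{[\DAG{v}]}$, the relevant words $w$ are the shuffles of a fixed $a$-letter decreasing word into the various linearizations of the total cyclic order $[\DAG{v}]$; one should show that these assemble, after the same cancellation of ``Escher'' contributions observed in the $e_a e_b$ case, into a nonnegative combination of the $\hF^{\cyc}_{a+b,\cDes(w)}$ (all of whose indices are non-Escher). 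An appealing cleaner route, if it can be made to work, would be to exhibit a single toric poset on $A \sqcup B$ augmented by one auxiliary vertex whose cyclic $P$-partition enumerator is a known positive multiple of $e_a \cdot \hF^{\cyc}_{b,B}$, so that Proposition~\ref{cyclic-F-expansion} finishes; but the naive such construction produces $e_1 e_a \cdot F^{\cyc}_{[\DAG{v}]}$ rather than $e_a \cdot F^{\cyc}_{[\DAG{v}]}$, and dividing by $e_1$ does not preserve positivity, so some care is needed there. I expect this regrouping-and-cancellation step to be the technical heart of the proof.

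Finally, one should note (as a consistency check rather than a shortcut) that the ring automorphism $\omega$ of Lemma~\ref{t:omega} carries the assertion ``$e_a \cdot \hF^{\cyc}_{b,B}$ is nonnegative in $\BBB_0$'' to ``$h_a \cdot \hF^{\cyc}_{b,[b]\setminus B}$ is nonnegative in $\omega(\BBB_0)$'', where $\omega(\BBB_0)$ is the basis obtained from $\BBB_0$ by replacing each $e_n$ by $h_n$; thus $\omega$ does not remove the Escher difficulty, but it does reconcile the present Proposition with Corollary~\ref{t:nonnegative_structure_constants_minus}.
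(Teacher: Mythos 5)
Your reduction is exactly the paper's: integrality from Proposition~\ref{t:cQSym_is_a_ring}, the non-Escher times non-Escher case from Corollary~\ref{t:nonnegative_structure_constants_minus}, and a case analysis isolating $e_a\cdot e_b$ and $e_a\cdot\hF^{\cyc}_{b,B}$ with $B$ non-Escher. Your treatment of $e_a e_b$ is correct and is a legitimate variant: the paper instead writes $e_ae_b=s_{(1^a)\oplus(1^b)}$ and invokes Corollary~\ref{t:Schur_in_hFcyc} for the disconnected skew shape, whereas you expand by Pieri into straight shapes and cancel the $-e_{a+b}$ from Proposition~\ref{prop:QS_hook} against $s_{(1^{a+b})}$; both are fine, the paper's being a one-liner. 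You are also right that a nonnegative $F$-expansion does not imply a nonnegative $\hF^{\cyc}$-expansion, which is why the last case cannot be waved through.

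However, that last case --- $e_a\cdot\hF^{\cyc}_{b,B}$ with $a\ge 2$ and $B$ non-Escher --- is left as a genuine gap: you set up the same object the paper does, namely the shuffle set $W=\bigsqcup_{i\in[b]}(a,a-1,\ldots,1)\shuffle w_0c^i$ with $\cDes(w_0)\in B$, but you stop at ``one should show that these assemble \ldots into a nonnegative combination,'' and your proposed alternative (an augmented toric poset) you yourself concede does not work. The missing idea in the paper is to equip $W$ with an explicit cyclic extension $(\cDes^*,p)$ of $\Des$ in the sense of Definition~\ref{def:cDes}: one sets $\cDes^*(w):=\Des(w)\sqcup\{a+b\}$ if $w(a+b)>w(1)$ or $w(1)=a$, and $\cDes^*(w):=\Des(w)$ otherwise, and defines $p(w)$ by cyclically shifting the letters exceeding $a$ one position to the right and refilling the vacated positions with $a,a-1,\ldots,1$ in decreasing order; one then checks that $p$ is a bijection of $W$ with $\cDes^*(p(w))=1+\cDes^*(w)\bmod(a+b)$ and that $\cDes^*$ is non-Escher on $W$. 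Equivariance then gives
\[
\sum_{w\in W}F_{a+b,\Des(w)}=\frac{1}{a+b}\sum_{w\in W}F^{\cyc}_{a+b,\cDes^*(w)},
\]
a manifestly nonnegative combination of non-Escher cyclic fundamentals, which is precisely the ``regrouping-and-cancellation'' you identify as the technical heart but do not carry out. Without some such construction (or an equivalent device) the proof is incomplete.
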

	\begin{proof}
		We already know from Proposition~\ref{t:cQSym_is_a_ring} that the structure constants are integers.
		Corollary~\ref{t:nonnegative_structure_constants_minus} shows that $\hF^{\cyc}_{a,A} \cdot \hF^{\cyc}_{b,B}$ is a nonnegative linear combination of basis elements $\hF^{\cyc}_{a+b,C}$ $(C \in c2_{0,a+b}^{[a+b]})$ whenever $A \in c2_{0,a}^{[a]}$ and $B \in c2_{0,b}^{[b]}$. 
		If $A = \{[a]\}$ and $B = \{[b]\}$ then, by Corollary~\ref{t:Schur_in_hFcyc},
		\[
		\hF^{\cyc}_{a,\{[a]\}} \cdot \hF^{\cyc}_{b,\{[b]\}} 
		= e_a \cdot e_b 
		= s_{(1^a) \oplus (1^b)}
		= \sum_{C \in c2_{0,a+b}^{[a+b]}} m^{\cyc}(C) \,\hF^{\cyc}_{a+b,C}
		\]
		where the skew shape $(1^a) \oplus (1^b)$,
		consisting of two disconnected columns $(1^a)$ and $(1^b)$, 
		is not a connected ribbon and thus the coefficients
		\[
		m^{\cyc}(C) = \frac{1}{\# C} \cdot \# \{T \in \SYT((1^a) \oplus (1^b)) \,:\, \cDes(T) \in C\}
		\]
		are nonnegative.
		It remains to show nonnegativity when $A = \{[a]\}$ but $B \in c2_{0,b}^{[b]}$. In that case, choose a bijective word $w_0 \in \symm_{\{a+1, \ldots, a+b\}}$ such that $\cDes(w_0) \in B$. Then, by Proposition~\ref{t:Fcyc_from_F},
		\[
		\hF^{\cyc}_{a,\{[a]\}} \cdot \hF^{\cyc}_{b,B} 
		= F_{a,[a-1]} \cdot \frac{1}{d_B} \sum_{i \in [b]} F_{b, (B-i) \cap [b-1]} 
		= F_{a,[a-1]} \cdot \frac{1}{d_B} \sum_{i \in [b]} F_{b,\Des(w_0 c^i)} 
		\]
		where $c \in \symm_b$ maps each $i$ to $i+1 \pmod b$.
		Thus, according to the classical Theorem~\ref{non-cyclic-product-of-F},
		\[
		\hF^{\cyc}_{a,\{[a]\}} \cdot \hF^{\cyc}_{b,B} 
		= \frac{1}{d_B} \sum_{i \in [b]} \sum_{w \in (a,a-1,\ldots,1) \shuffle w_0 c^i} F_{a+b,\Des(w)}.
		\]
		Denoting
		\[
		W := \bigsqcup_{i \in [b]} (a,a-1,\ldots,1) \shuffle w_0 c^i,
		\]
		it suffices to show that the set of bijective words $W$, with the usual descent map $\Des : W \to 2^{[a+b-1]}$, has a cyclic extension in the sense of Definition~\ref{def:cDes}.
		
		Indeed, define a map $\cDes^*: W \to 2^{[a+b]}$ as a slight deformation of Cellini's $\cDes$, defined in~\eqref{def:Cellini-dDes}:
		\[
		\cDes^*(w) := 
		\begin{cases}
		\Des(w) \sqcup \{a+b\},& \text{if } w(a+b) > w(1) \text{ or } w(1) = a; \\
		\Des(w),& \text{otherwise}  
		\end{cases}
		\qquad (\forall w \in W).
		\]
		The map $\cDes^*$ is non-Escher:
		$\cDes^*(w) = \varnothing$ implies that $\Des(w) = \varnothing$, namely that $w$ is the identity permutation, and in particular has the prefix $(1, \ldots, a-1, a)$ implying $w \not\in W$; unless $a = 1$, in which case $w(1) = a$ and thus $a+b \in \cDes^*(w)$.
		It is also easy to see that $\cDes^*(w) \ne [a+b]$ for any $w \in W$.
		
		Define $p(w)$ as the word obtained from $w$ by cyclically shifting the letters which are bigger than $a$ one position to the right modulo $a+b$, and inserting 
		the letters $a,a-1,\dots,1$ in the remaining positions in decreasing order.
		One can verify that
		$p: W \to W$ is a bijection which shifts $\cDes^*$ cyclically, namely 
		$\cDes^*(p(w)) = 1 + \cDes^*(w) \pmod {a+b}$ for any $w \in W$.
		For example, if $a=4$, $b=3$, and $w=43 75 2 6 1$ then 
		$p(w)= 432 75 1 6$ and $p^2(w)= 6 432 751 $,
		yielding $\cDes^*(w) = \{1,3,4,6,7\}$, $\cDes^*(p(w)) = \{1,2,4,5,7\}$, and $\cDes^*(p^2(w)) = \{1,2,3,5,6\}$.
		
		One concludes that
		\[
		\sum_{w \in W} F_{a+b,\Des(w)}
		= \frac{1}{a+b} \sum_{i \in [a+b]} \sum_{w \in W} F_{a+b,\Des(p^i(w))}
		= \frac{1}{a+b} \sum_{w \in W} F^\cyc_{a+b,\cDes^*(w)},
		\]
		completing the proof.
	\end{proof}
	
	Recall from~\cite{ARR_CDes} the {\em cyclic ribbon Schur function}, defined as
	\[
	\cs_{\cc(J)} := \sum_{\varnothing \neq I \subseteq J}
	(-1)^{\#(J \setminus I)} h_{\cc(I)} \in \Sym_n
	\qquad (\forall \varnothing \ne J \subseteq [n]).
	\]
	The following cyclic analogue of~\cite[Theorem 3]{Gessel} holds.
	
	\begin{lemma}\label{lem:f}
		For every (homogeneous of degree $n$) symmetric function $f \in \Sym_n$, the unique expansion
		\[
		f = \sum_{A \in \cnes} c_A \hF^{\cyc}_{n,A}
		\] 
		satisfies, for each representative $J$ of each orbit $A \in \cnes$:
		\[
		c_A = \langle f, \tilde s_{\cc(J)} \rangle,
		\]
		where $\langle \cdot, \cdot \rangle$ is the usual inner product of symmetric functions.
	\end{lemma}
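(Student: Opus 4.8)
The plan is to derive the formula from a single algebraic identity relating the cyclic ribbon Schur functions $\cs_{\cc(J)}$ to the ordinary ones, and then to transport Gessel's classical expansion \cite[Theorem 3]{Gessel} through Proposition~\ref{t:Fcyc_from_F}. First I would record that $\cs_{\cc(J)}$ depends only on the cyclic orbit $A = [J]$ (so that the asserted formula even makes sense): replacing $I$ by a cyclic shift $I + i$ rotates the composition $\cc(I)$ without changing its multiset of parts, hence fixes $h_{\cc(I)}$, while the shift $I \mapsto I + i$ carries $\{I : \varnothing \neq I \subseteq J\}$ bijectively onto $\{I' : \varnothing \neq I' \subseteq J + i\}$ and preserves $\#(J \setminus I)$; so the sum defining $\cs_{\cc(J)}$ is unchanged. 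Write $\cs_A := \cs_{\cc(J)}$ for any $J \in A$, and set $\cs_{[\varnothing]} := 0$.

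The heart of the argument is the identity
\[
\cs_{\cc(K)} + \cs_{\cc(K \cup \{n\})} = r_{\co(K)} \qquad (\forall\, K \subseteq [n-1]),
\]
where $r_{\co(K)} := \sum_{L \subseteq K} (-1)^{\#(K \setminus L)} h_{\co(L)}$ is the ordinary ribbon Schur function. To prove it I would expand $\cs_{\cc(K \cup \{n\})} = \sum_{\varnothing \neq I \subseteq K \cup \{n\}} (-1)^{\#((K\cup\{n\}) \setminus I)} h_{\cc(I)}$ and split according to whether $n \in I$. The terms with $I = I_0 \sqcup \{n\}$, $I_0 \subseteq K$, satisfy $h_{\cc(I_0 \cup \{n\})} = h_{\co(I_0)}$ (because $\cc(I_0 \cup \{n\})$ is a cyclic rotation of the composition $\co(I_0)$), and they sum to $\sum_{I_0 \subseteq K} (-1)^{\#(K \setminus I_0)} h_{\co(I_0)} = r_{\co(K)}$; the terms with $n \notin I$ carry one extra minus sign and sum to $-\cs_{\cc(K)}$. (For $K = \varnothing$ this reads $\cs_{\cc(\{n\})} = h_n = r_{(n)}$, consistent with $\cs_{\cc(\varnothing)} = 0$.) This bookkeeping between the maps $\cc$ and $\co$, the cyclic sums, and the signs is the only genuine computation in the proof, and is the step I expect to demand the most care, though it is wholly elementary.

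To conclude, fix $f \in \Sym_n$ and let $f = \sum_{A \in \cnes} c_A \hF^{\cyc}_{n,A}$ be its expansion in the $\ZZ$-basis $\BBB_0$ of Lemma~\ref{t:Fcyc_basis}(3), with the convention $c_{[\varnothing]} := 0$. Since $\hF^{\cyc}_{n,J} = \tfrac{1}{d_J} \sum_{i \in [n]} F_{n, (J - i) \cap [n-1]}$ by Proposition~\ref{t:Fcyc_from_F}, and since for $J \in A$ the map $i \mapsto J - i$ is a $d_J$-to-one surjection $[n] \twoheadrightarrow A$ while $(J - i) \cap [n-1] = K$ precisely when $J - i \in \{K, K \cup \{n\}\}$, one gets, for arbitrary scalars $(x_A)_{A \in \cnes}$,
\[
\sum_{A \in \cnes} x_A \hF^{\cyc}_{n,A} = \sum_{K \subseteq [n-1]} \bigl( x_{[K]} + x_{[K \cup \{n\}]} \bigr) F_{n,K}.
\]
Taking $x_A = c_A$ and comparing with Gessel's formula $f = \sum_{K \subseteq [n-1]} \langle f, r_{\co(K)} \rangle F_{n,K}$, the linear independence of $\{F_{n,K}\}$ gives $c_{[K]} + c_{[K \cup \{n\}]} = \langle f, r_{\co(K)} \rangle$ for every $K$. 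By the key identity, the numbers $\langle f, \cs_A \rangle$ satisfy the very same relations; hence $x_A := c_A - \langle f, \cs_A \rangle$ (with $x_{[\varnothing]} = 0$) makes the left-hand side of the displayed identity vanish, and linear independence of $\BBB_0$ forces all $x_A = 0$. Therefore $c_A = \langle f, \cs_A \rangle = \langle f, \cs_{\cc(J)} \rangle$ for each $J \in A$, as claimed (integrality being automatic since the $c_A$ are integers).
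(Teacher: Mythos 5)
Your proof is correct, but it takes a genuinely different route from the paper's. The paper works entirely on the monomial side: it writes $f=\sum_\lambda \langle f,h_\lambda\rangle m_\lambda$, expands each $m_\lambda$ as a sum of $M^{\cyc}_{n,J}$'s, converts to the $F^{\cyc}$ basis via the inclusion--exclusion of Lemma~\ref{t:Fcyc_vs_Mcyc}, and then recognizes the alternating sum $\sum_{\varnothing\ne I\subseteq K}(-1)^{\#(K\setminus I)}\langle f,h_{\cc(I)}\rangle$ as $\langle f,\cs_{\cc(K)}\rangle$ straight from the definition of the cyclic ribbon Schur function. You instead work on the fundamental side: you reduce to Gessel's classical expansion $f=\sum_{K\subseteq[n-1]}\langle f,r_{\co(K)}\rangle F_{n,K}$ via the regrouping $\sum_{A}x_A\hF^{\cyc}_{n,A}=\sum_{K}(x_{[K]}+x_{[K\cup\{n\}]})F_{n,K}$ (which is the correct consequence of Proposition~\ref{t:Fcyc_from_F}), and you supply the new identity $\cs_{\cc(K)}+\cs_{\cc(K\cup\{n\})}=r_{\co(K)}$ to match the two systems of linear relations; your verification of that identity, and of the cyclic invariance of $\cs_{\cc(J)}$, both check out. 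The uniqueness step is also sound, since after setting $x_{[\varnothing]}=0$ the vanishing combination involves only the basis $\BBB_0$ of Lemma~\ref{t:Fcyc_basis}(3). What your approach buys is an explicit structural identity between cyclic and ordinary ribbon Schur functions that mirrors the multiplicity relation $m(J)=m^{\cyc}(J)+m^{\cyc}(J\sqcup\{n\})$ used in the proof of Theorem~\ref{conj:QS1}; what the paper's approach buys is a shorter, self-contained computation that never invokes Gessel's Theorem~3 and produces the coefficient $\langle f,\cs_{\cc(K)}\rangle$ directly rather than by comparing two solutions of the same linear system.
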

	
	\begin{proof} 
		
		Write $f$ in the monomial basis of $\Sym_n$:
		\[
		f = \sum_{\lambda \vdash n} \langle f, h_{\lambda} \rangle m_{\lambda}.
		\]
		The monomial symmetric function
		has monomial quasisymmetric function expansion
		$$
		m_\lambda=\sum_{(m_1,\ldots,m_t)} M_{(m_1,\ldots,m_t)}
		$$
		in which the sum
		is over all compositions $(m_1,\ldots,m_t)$ that reorder to the partition $\lambda$.  Therefore one has
		\[
		m_{\lambda} 
		= \sum_{A\,:\, par(A) = \lambda} \hM^{\cyc}_{n,A} 
		= n^{-1} \sum_{J \,:\, par(J) = \lambda} M^{\cyc}_{n,J}
		\]
		where the second sum is over all the subsets $\varnothing \ne J \subseteq [n]$ such that the corresponding cyclic composition $\cc(J)$ can be reordered to the partition $\lambda$
		(and the first sum is over all cyclic orbits $A$ of such subsets). 
		Thus
		\[
		\begin{aligned}
		f 
		&= n^{-1} \sum_{\lambda \vdash n} \langle f, h_{\lambda} \rangle
		\sum_{J \,:\, par(J) = \lambda} M^{\cyc}_{n,J} \\
		&= n^{-1} \sum_{\varnothing \ne J \subseteq [n]} \langle f, h_{\cc(J)} \rangle M^{\cyc}_{n,J} \\
		&= n^{-1} \sum_{\varnothing \ne J \subseteq [n]} \langle f, h_{\cc(J)} \rangle 
		\sum_{K \supseteq J} (-1)^{\#(K \setminus J)} F^{\cyc}_{n,K} \\
		&= n^{-1} \sum_{\varnothing \ne K \subseteq [n]} F^{\cyc}_{n,K} 
		\sum_{\varnothing \ne J \subseteq K} (-1)^{\#(K \setminus J)} \langle f, h_{\cc(J)} \rangle \\
		&= n^{-1} \sum_{\varnothing \ne K \subseteq [n]} F^{\cyc}_{n,K} 
		\langle f, \cs_{\cc(K)} \rangle \\
		&= \sum_{A \in \cnes} \hF^{\cyc}_{n,A} 
		\langle f, \cs_{\cc(A)} \rangle.
		\end{aligned}
		\]
		In the last expression, $\cc(A)$ is shorthand for $\cc(K)$ for an arbitrary $K \in A$.
	\end{proof}	
	
	\begin{corollary}\label{cor:integrality}
		Let $f$ be a symmetric function, homogeneous of degree $n$.
		The coefficients in the expansion of $f$ as a linear combination of 
		$\left\{ \hF^{\cyc}_{n,A} \,:\, A \in \cnes \right\}$ 
		are integral
		if and only if the coefficients in its expansion as a linear combination of Schur functions are integral.
	\end{corollary}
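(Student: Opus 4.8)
The plan is to show that, for a rational symmetric function $f$ homogeneous of degree $n$ (the only interesting case — if $f$ already lies in $\Sym_n$ the statement is vacuous), all the coefficients $c_A$ of the expansion in Lemma~\ref{lem:f} are integers precisely when $f$ lies in the integral ring $\Sym_n$; the result then follows from the classical fact that $\Sym_n$ has both the monomial symmetric functions and the Schur functions as $\ZZ$-bases. This isolates the one substantive point — a ``saturation'' statement for $\Sym_n$ inside $\QSym_n$ — and turns the rest into bookkeeping with known $\ZZ$-bases.

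First I would note that, by Lemma~\ref{t:Fcyc_basis}(3), the set $\{\hF^{\cyc}_{n,A}:A\in\cnes\}$ is a $\ZZ$-basis of $\cQSym_n$, so an element of $\cQSym_n\otimes\QQ$ has all its coordinates $c_A$ in $\ZZ$ if and only if it lies in $\cQSym_n$; applied to $f$, this reduces the corollary to the assertion that, for $f\in\Sym_n\otimes\QQ$, one has $f\in\cQSym_n$ iff the Schur coefficients of $f$ are integral. Next I would prove the saturation identity $\cQSym_n\cap(\Sym_n\otimes\QQ)=\Sym_n$. Using $\Sym_n\subseteq\cQSym_n\subseteq\QSym_n$ from Observation~\ref{t:QSym_subring}, it suffices to establish $\QSym_n\cap(\Sym_n\otimes\QQ)=\Sym_n$: writing $f=\sum_{\lambda\vdash n}a_\lambda m_\lambda$ with $a_\lambda\in\QQ$, expanding each $m_\lambda$ as the sum of the monomial quasi-symmetric functions $M_{n,J}$ for which $\co(J)$ sorts to $\lambda$, and observing that distinct partitions correspond to disjoint sets of such $J$, one finds that the coefficient of $M_{n,J}$ in $f$ is $a_{\lambda(J)}$, where $\lambda(J)$ is the partition obtained by sorting $\co(J)$; since $\{M_{n,J}\}$ is a $\ZZ$-basis of $\QSym_n$ and every partition arises as some $\lambda(J)$, membership $f\in\QSym_n$ forces all $a_\lambda\in\ZZ$, i.e.\ $f\in\Sym_n$. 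Finally I would recall that $s_\lambda=\sum_\mu K_{\lambda\mu}m_\mu$ with $K_{\lambda\lambda}=1$ and $K_{\lambda\mu}=0$ unless $\mu$ precedes $\lambda$ in dominance order, so the Kostka matrix is triangular with $1$'s on the diagonal and hence invertible over $\ZZ$; thus $\Sym_n$ is also the $\ZZ$-span of $\{s_\lambda:\lambda\vdash n\}$, and $f\in\Sym_n$ iff all Schur coefficients of $f$ are integers. Chaining the three equivalences gives the corollary.

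The one place requiring genuine care is the saturation identity $\QSym_n\cap(\Sym_n\otimes\QQ)=\Sym_n$, i.e.\ ruling out a truly fractional symmetric function all of whose monomial-quasi-symmetric coefficients happen to be integral; the disjointness of the composition classes attached to distinct partitions is exactly what prevents this, and once it is in place the rest is routine. As an alternative, one could prove the two implications separately straight from Lemma~\ref{lem:f}: the ``only if'' direction is immediate, since each $\cs_{\cc(A)}$ is an integral combination of complete homogeneous symmetric functions and hence has integral Schur coefficients, so $c_A=\langle f,\cs_{\cc(A)}\rangle\in\ZZ$ whenever $f$ has integral Schur coefficients; for the ``if'' direction one checks that the $\ZZ$-span of $\{\cs_{\cc(A)}:A\in\cnes\}$ is all of $\Sym_n$, using M\"obius inversion on the Boolean lattices of subsets of $[n]$ to pass from the $\cs_{\cc(J)}$ back to the $h_{\cc(J)}$, which (as $J$ runs over nonempty subsets of $[n]$) run through all $h_\mu$ with $\mu\vdash n$.
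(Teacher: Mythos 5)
Your proposal is correct, and your primary route is genuinely different from the paper's. The paper proves the two implications separately: for the direction ``Schur-integral $\Rightarrow$ cyclic-fundamental-integral'' it writes $c_A(f)=\sum_{\lambda\vdash n}\langle f,s_\lambda\rangle\,c_A(s_\lambda)$ and invokes $c_A(s_\lambda)\in\ZZ$, which rests on the cyclic-descent machinery of Corollary~\ref{t:Schur_in_hFcyc} for non-hook shapes together with the explicit signed formula of Proposition~\ref{prop:QS_hook} for hooks; for the converse it uses Lemma~\ref{lem:f}, recovers $h_{\cc(J)}$ from the $\cs_{\cc(I)}$ by inclusion--exclusion, and finishes with Jacobi--Trudi. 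Your ``alternative'' paragraph is essentially this proof, except that your forward direction (noting that each $\cs_{\cc(A)}$ is an integral combination of the $h_\mu$ and hence has integral Schur expansion, so $c_A=\langle f,\cs_{\cc(A)}\rangle\in\ZZ$) is actually lighter than the paper's, since it bypasses the hook/non-hook case split entirely. Your main route is different again: it reduces the corollary to the saturation statement $\cQSym_n\cap(\Sym_n\otimes\QQ)=\Sym_n$, which you correctly deduce from $\QSym_n\cap(\Sym_n\otimes\QQ)=\Sym_n$ via the sandwich $\Sym_n\subseteq\cQSym_n\subseteq\QSym_n$ and the disjointness of the composition classes attached to distinct partitions; combined with Lemma~\ref{t:Fcyc_basis}(3) (so that integrality of the $c_A$ is equivalent to membership in the lattice $\cQSym_n$) and the unitriangularity of the Kostka matrix, this yields the corollary with no tableau combinatorics at all. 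What the paper's argument buys is the explicit formula $c_A=\langle f,\cs_{\cc(A)}\rangle$ and its link to the positivity results of Section~\ref{sec:expansion}; what yours buys is economy plus a structural fact of independent interest, namely that $\Sym_n$ is a saturated $\ZZ$-submodule of $\cQSym_n$. Both arguments are sound; the only cosmetic quibble is that for $f$ with integer coefficients the statement is trivially true rather than ``vacuous,'' and the intended generality (which both you and the paper implicitly adopt) is $f\in\Sym_n\otimes\QQ$.
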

	
	\begin{proof}
		Let $c_A(f)$ denote the coefficient of $\hF^{\cyc}_{n,A}$ in the expansion of $f$ in the above normalized fundamental basis.
		Clearly,
		\[
		c_A(f) = \sum_{\lambda \vdash n} \langle f, s_\lambda \rangle\, c_A(s_\lambda)
		\qquad (\forall A \in \cnes).
		\]
		By 
		Corollary~\ref{t:Schur_in_hFcyc} and Proposition~\ref{prop:QS_hook}, for every partition $\lambda \vdash n$ and $A \in \cnes$
		\[
		c_A(s_\lambda) \in \ZZ.
		\]  
		We conclude that if  the coefficients in the expansion of $f$ as a linear combination of Schur functions are integral then so are the coefficients in its expansion in the normalized fundamental basis.
		
		For the opposite direction,
		assume that all the coefficients $c_A(f)$ are integral. 
		Then, by Lemma~\ref{lem:f},
		\[
		\langle f, \cs_{\cc(J)} \rangle \in \ZZ 
		\qquad (\forall\ \varnothing \ne J \subseteq [n]).
		\]
		The definition of $\cs_{\cc(J)}$ as an alternating sum of complete homogeneous symmetric functions $h_{\cc(I)}$ is equivalent, by the Principle of Inclusion and Exclusion, to
		\[
		h_{\cc(J)} = \sum_{\varnothing \neq I \subseteq J} \cs_{\cc(I)}
		\qquad (\forall\ \varnothing \ne J \subseteq [n]),
		\]
		and therefore
		\[
		\langle f, h_{\cc(J)} \rangle \in \ZZ 
		\qquad (\forall\ \varnothing \ne J \subseteq [n]).
		\]
		By the Jacobi-Trudi identity~\cite[Theorem 7.16.1]{EC2}, any Schur function $s_\lambda$ is an alternating sum of complete homogeneous symmetric functions, so that
		\[
		\langle f, s_\lambda \rangle
		\in \ZZ 
		\qquad (\forall \lambda\vdash n),\]
		as claimed.
	\end{proof}
	
	Another application is the following.
	
	\begin{corollary}
		Let  $\TTT$ be a finite set with a descent map
		$\Des: \TTT \longrightarrow 2^{[n-1]}$ 
		which has a cyclic extension $(\cDes, p)$. If
		\[
		\Q(\TTT):=\sum\limits_{T\in \TTT} F_{n,\Des(T)}
		\]
		is symmetric then the fiber sizes of the cyclic descent map $\cDes$ satisfy
		\[
		\# \{T \in \TTT \,:\, \cDes(T) =J\}=\langle \Q(\TTT), \tilde s_{\cc(J)} \rangle
		\qquad \left( \forall J \in 2_{0,n}^{[n]} \right).
		\]
	\end{corollary}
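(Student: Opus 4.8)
The plan is to run the proof of Corollary~\ref{t:Schur_in_hFcyc} with $\Q(\TTT)$ in place of a skew Schur function, and then feed the resulting expansion into Lemma~\ref{lem:f}. First I would note that the proof of Theorem~\ref{conj:QS1} uses the hypothesis ``$\TTT=\SYT(\lambda/\mu)$'' only in its final line, where Gessel's formula $s_{\lambda/\mu}=\sum_{T}F_{n,\Des(T)}$ is quoted; every preceding step uses only that $(\cDes,p)$ is a cyclic extension of $\Des$ on the finite set $\TTT$ (the extension and equivariance axioms, together with Proposition~\ref{t:Fcyc_from_F}). Running that computation for our $\TTT$ therefore gives (cf.\ Remark~\ref{rem:versus})
\[
\sum_{T\in\TTT}F^{\cyc}_{n,\cDes(T)}=n\sum_{T\in\TTT}F_{n,\Des(T)}=n\,\Q(\TTT).
\]

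Next, I would copy the bookkeeping from the proof of Corollary~\ref{t:Schur_in_hFcyc}. Set $m^{\cyc}(A):=\#\{T\in\TTT:\cDes(T)=J\}$ for any $J\in A$; this is well defined by equivariance. The non-Escher axiom restricts the sum on the left above to subsets $\varnothing\subsetneq J\subsetneq[n]$, i.e.\ to orbits $A\in c2_{0,n}^{[n]}$; using the cyclic invariance $F^{\cyc}_{n,J}=F^{\cyc}_{n,A}$, the normalization $F^{\cyc}_{n,A}=d_A\hF^{\cyc}_{n,A}$, and the identity $\#A\cdot d_A=n$, one obtains
\[
\Q(\TTT)=\sum_{A\in c2_{0,n}^{[n]}}m^{\cyc}(A)\,\hF^{\cyc}_{n,A}.
\]
In particular $\Q(\TTT)\in\cQSym_n$ (indeed in $\cQSym_n^-$), and by hypothesis it is symmetric.

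Finally, since $\{\hF^{\cyc}_{n,A}:A\in\cnes\}$ is a $\ZZ$-basis of $\cQSym_n$ by Lemma~\ref{t:Fcyc_basis}(3), and $\cnes=c2_{0,n}^{[n]}\sqcup\{\{[n]\}\}$, the displayed equation is precisely the expansion of $\Q(\TTT)$ in this basis, with the coefficient of the extra element $\hF^{\cyc}_{n,\{[n]\}}$ equal to $0$. As $\Q(\TTT)$ is symmetric, Lemma~\ref{lem:f} identifies the coefficient of $\hF^{\cyc}_{n,A}$ in this same (unique) expansion with $\langle\Q(\TTT),\tilde s_{\cc(J)}\rangle$ for any representative $J\in A$. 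Comparing the two descriptions of the coefficient and reading off the indices $A\in c2_{0,n}^{[n]}$ yields
\[
\#\{T\in\TTT:\cDes(T)=J\}=m^{\cyc}(A)=\langle\Q(\TTT),\tilde s_{\cc(J)}\rangle
\qquad(\forall J\in 2_{0,n}^{[n]}),
\]
which is the assertion. The proof is thus a recombination of results already in hand; the one point that needs attention — and which I would single out as the ``obstacle'', though it is a mild one — is matching the index set $\cnes$ of Lemma~\ref{lem:f} against the smaller index set $c2_{0,n}^{[n]}$ forced by the non-Escher property. This is handled simply by recording that the $\hF^{\cyc}_{n,\{[n]\}}$-coefficient vanishes, equivalently that $\Q(\TTT)$ in fact lies in $\cQSym_n^-$, so that no genuine difficulty arises.
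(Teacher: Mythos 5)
Your argument is correct and is essentially the paper's own proof, which simply combines Remark~\ref{rem:versus} (itself justified by re-running the computations in Theorem~\ref{conj:QS1} and Corollary~\ref{t:Schur_in_hFcyc}, exactly as you do) with Lemma~\ref{lem:f}. Your explicit observation that the non-Escher axiom forces the $\hF^{\cyc}_{n,\{[n]\}}$-coefficient to vanish, so the two index sets match, is a worthwhile detail the paper leaves implicit.
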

	
	\begin{proof} Combine Lemma~\ref{lem:f} with 
		Remark~\ref{rem:versus}.
	\end{proof}
	
	\begin{remark}\label{rem_Postnikov} 
		By Lemma~\ref{lem:f} together with~\cite[Proposition 3.14]{ARR_CDes},  for every non-hook shape $\lambda$, the coefficient of $\hF^{cyc}_{n,[J]}$ in $s_\lambda$ is equal to the coefficient of $s_\lambda$ in the Schur expansion of Postnikov's toric Schur function $s_{\mu(J)/1/\mu(J)}$.
		It follows from Postnikov's result~\cite[Theorem 5.3]{Postnikov} 
		that these coefficients are equal to certain Gromov-Witten invariants. 
	\end{remark}

	\section{Enumerative applications}\label{sec:enumeration}
	
	\subsection{Distribution of cyclic descents over 
		cyclic shuffles}
	
	The  following classical result is due to Stanley. 
	
	\begin{proposition}[{\cite[Ex.~3.161]{EC1}}; see also {\cite[section 2.4]{GesselZhuang}} for a combinatorial proof.]
		\label{Stanley_lemma}
		Let $A$ and $B$ be two disjoint sets of integers.
		For each  $u \in \symm_A$ and $v \in \symm_B$,
		the distribution of the descent set over all shuffles of $u$ and $v$ depends only on $\Des(u)$ and $\Des(v)$.
	\end{proposition}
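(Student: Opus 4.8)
The plan is to deduce the statement directly from the product formula for fundamental quasi-symmetric functions, Theorem~\ref{non-cyclic-product-of-F}, together with the linear independence of the fundamental basis.

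First I would fix notation: let $C = A \sqcup B$ and $n = |A| + |B|$. Because $A$ and $B$ are disjoint, a shuffle $w \in u \shuffle v$ is completely determined by the set of $|A|$ positions occupied by the letters of $u$; hence $u \shuffle v$ is an honest set of $\binom{n}{|A|}$ distinct bijective words in $\symm_C$, and the quantity to be understood is the function
\[
N_{u,v}(J) := \#\{\, w \in u \shuffle v \ :\ \Des(w) = J \,\}
\qquad (J \subseteq [n-1]),
\]
which is precisely ``the distribution of the descent set over all shuffles of $u$ and $v$''.

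Next I would invoke Theorem~\ref{non-cyclic-product-of-F}, which gives
\[
F_{|A|,\Des(u)} \cdot F_{|B|,\Des(v)}
= \sum_{w \in u \shuffle v} F_{n,\Des(w)}
= \sum_{J \subseteq [n-1]} N_{u,v}(J)\, F_{n,J}.
\]
The left-hand side is manifestly a function of the pair $(\Des(u),\Des(v))$ alone (and $|A|,|B|$ are recovered from the ambient sets $[|A|-1]$, $[|B|-1]$ containing $\Des(u)$, $\Des(v)$). Since $\{F_{n,J} : J \subseteq [n-1]\}$ is a $\ZZ$-basis of $\QSym_n$ (recalled earlier in the paper), the coefficients $N_{u,v}(J)$ appearing on the right are uniquely determined by that product. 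Therefore the whole function $J \mapsto N_{u,v}(J)$ depends only on $\Des(u)$ and $\Des(v)$, which is the assertion.

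I do not expect a real obstacle here once Theorem~\ref{non-cyclic-product-of-F} is available; the only point that needs a sentence of care is the remark above that shuffles of words with disjoint supports are pairwise distinct, so that the sum in the product formula genuinely encodes the descent \emph{distribution} rather than a multiset with hidden repetitions. (One could instead argue combinatorially, exhibiting descent-preserving bijections between $u \shuffle v$ and $u' \shuffle v'$ whenever $\Des(u)=\Des(u')$ and $\Des(v)=\Des(v')$, in the spirit of \cite[section~2.4]{GesselZhuang}, but the quasi-symmetric-function argument is the shortest.)
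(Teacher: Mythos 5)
Your argument is correct and is essentially the paper's own approach: the paper cites \cite[Ex.~3.161]{EC1} rather than writing out a proof, but it proves the cyclic analogue immediately afterwards by exactly this reasoning (product formula for fundamental quasi-symmetric functions plus linear independence of the $F$-basis), and the non-cyclic case via Theorem~\ref{non-cyclic-product-of-F} is the same argument. The only cosmetic quibble is the parenthetical claim that $|A|$ and $|B|$ are ``recovered from'' the descent sets --- they are not determined by $\Des(u)$ and $\Des(v)$ as abstract sets, but they are fixed by hypothesis, so nothing is lost.
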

	
	We have the following cyclic analogue. Recall that for every 
	$u \in \symm_A$,
	$\cDes([\DAG u])$ is defined up to cyclic rotation.
	
	\begin{proposition}
		Let $A$ and $B$ be two disjoint sets of integers.
		For each  $u \in \symm_A$ and $v \in \symm_B$,
		the distribution of the cyclic descent set over all cyclic shuffles of $[\DAG u]$ and $[\DAG v]$ depends only on $\cDes([\DAG u])$ and $\cDes([\DAG v])$.
	\end{proposition}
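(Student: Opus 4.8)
The plan is to transcribe the classical proof of Stanley's Proposition~\ref{Stanley_lemma} into the cyclic setting, using the cyclic product formula of Theorem~\ref{product-of-F-corollary} in place of the ordinary one and the linear independence of the normalized fundamental cyclic basis in place of that of the $F_{n,J}$. If $A=\varnothing$ or $B=\varnothing$ the claim is immediate, so assume $a:=|A|\ge 1$, $b:=|B|\ge 1$, and set $n:=a+b$. For an orbit $C\in\cnes$ put
\[
N_{[\DAG u],[\DAG v]}(C):=\#\{\,[\DAG w]\in[\DAG u]\shuffle_\cyc[\DAG v]\ :\ [\cDes(w)]=C\,\};
\]
this is well defined because cyclically rotating a representative $w$ cyclically shifts $\cDes(w)$, so the cyclic class $[\cDes(w)]\in\cnes$ is an invariant of $[\DAG w]$ (cf.\ the remark following Proposition~\ref{cyclic-F-expansion}), and the function $C\mapsto N_{[\DAG u],[\DAG v]}(C)$ is precisely the ``distribution of the cyclic descent set over all cyclic shuffles of $[\DAG u]$ and $[\DAG v]$'' of the statement. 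I would first record the non-Escher observation: every $[\DAG w]\in[\DAG u]\shuffle_\cyc[\DAG v]$ has support $A\sqcup B$ of size $n\ge 2$, so for any representative $w$ neither $w_1<\dots<w_n<w_1$ nor $w_1>\dots>w_n>w_1$ can hold; hence $\varnothing\subsetneq\cDes(w)\subsetneq[n]$, and therefore $N_{[\DAG u],[\DAG v]}(C)=0$ unless $C\in c2_{0,n}^{[n]}\subseteq\cnes$.

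The main step is then to group the terms of Theorem~\ref{product-of-F-corollary} according to the cyclic class of $\cDes(w)$. Using the cyclic invariance $F^{\cyc}_{n,J'}=F^{\cyc}_{n,J}$ for cyclic shifts (Lemma~\ref{t:Fcyc_properties}(2)) and the normalization $F^{\cyc}_{n,C}=d_C\,\hF^{\cyc}_{n,C}$, this rewrites the product formula as
\[
F^{\cyc}_{a,\cDes(u)}\cdot F^{\cyc}_{b,\cDes(v)}
=\sum_{C\in\cnes}N_{[\DAG u],[\DAG v]}(C)\,F^{\cyc}_{n,C}
=\sum_{C\in\cnes}N_{[\DAG u],[\DAG v]}(C)\,d_C\,\hF^{\cyc}_{n,C}.
\]
The left-hand side depends only on $\cDes([\DAG u])$ and $\cDes([\DAG v])$: the sizes $a,b$ are fixed, and $F^{\cyc}$ already depends only on the cyclic classes of its index sets. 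On the other hand, by Lemma~\ref{t:Fcyc_basis}(3) the family $\{\hF^{\cyc}_{n,C}:C\in\cnes\}$ is a $\ZZ$-basis of $\cQSym_n$, hence $\QQ$-linearly independent, and every $d_C\ne 0$; so each coefficient $N_{[\DAG u],[\DAG v]}(C)$ is recovered from the product as the coordinate of $\hF^{\cyc}_{n,C}$ divided by $d_C$. Consequently the whole distribution $C\mapsto N_{[\DAG u],[\DAG v]}(C)$ is a function of $\cDes([\DAG u])$ and $\cDes([\DAG v])$ alone, which is the assertion.

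The one point that needs care — the analogue of where Stanley's argument invokes linear independence of the $F_{n,J}$ — is the passage from ``the product is determined'' to ``the multiplicities are determined''. The subtlety is that the un-normalized family $\{F^{\cyc}_{n,C}:C\in c2^{[n]}\}$ satisfies a single linear relation (Lemma~\ref{t:Fcyc_properties}(3)), whose only term indexed outside $\cnes$ is $F^{\cyc}_{n,\varnothing}$; thus one must ensure the support of the distribution lies inside a linearly independent sub-collection. That is exactly what the non-Escher observation of the first paragraph guarantees, confining the support to $c2_{0,n}^{[n]}\subseteq\cnes$, and $\cnes$ indexes the basis of Lemma~\ref{t:Fcyc_basis}(3). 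Beyond this bookkeeping the proof is a direct translation of the classical one, so I do not anticipate any genuine difficulty.
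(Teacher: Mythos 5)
Your proof is correct and follows the same route as the paper, which simply states that the proposition is an immediate consequence of Theorem~\ref{product-of-F-corollary}; you have spelled out the extraction of the multiplicities via linear independence of the normalized fundamental basis. Your observation that the non-Escher property is needed to confine the support to the linearly independent subcollection indexed by $\cnes$ (avoiding the single relation of Lemma~\ref{t:Fcyc_properties}(3)) is a legitimate detail the paper leaves implicit.
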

	
	\begin{proof} This is an immediate consequence of Theorem~\ref{product-of-F-corollary}.
	\end{proof}
	
	\medskip
	
	Consider now $\ZZ[[q]]$, the ring of formal power series in $q$, as an abelian group under its usual addition, 
	but define a new product by
	\[
	q^i \odot q^j := q^{\max(i,j)},
	\]
	extended to satisfy the distributive law, even for infinite sums.
	We obtain a (commutative and associative) ring, to be denoted $\ZZ[[q]]_{\odot}$.
	
	Consider also the multivariate power series ring $\ZZ[[\xx]] = \ZZ[[x_1, x_2, \ldots]]$, with the usual product, and its subring $\ZZ[[\xx]]_\bded$ consisting of bounded-degree power series.
	Define a map $\Psi: \ZZ[[\xx]]_\bded \to \ZZ[[q]]_{\odot}$ by
	\[
	\Psi(x_{i_1}^{m_1} \cdots x_{i_k}^{m_k}) := q^{i_k}
	\qquad (k > 0,\, i_1 < \ldots < i_k,\, m_1, \ldots, m_k > 0)
	\]
	and
	\[
	\Psi(1) := 1,
	\]
	extended linearly, even for infinite sums.
	
	\begin{obs}
		$\Psi$ is a ring ($\ZZ$-algebra) homomorphism.
	\end{obs}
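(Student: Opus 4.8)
The plan is to reduce everything to the behaviour of $\Psi$ on monomials. First I would record that $\Psi$ is well defined: since a power series $f \in \ZZ[[\xx]]_\bded$ has bounded degree, say degree $\le d$, only finitely many monomials $x_{i_1}^{m_1}\cdots x_{i_k}^{m_k}$ of degree $\le d$ have largest index $i_k$ equal to a prescribed value $m$ (all variables involved then lie among $x_1,\dots,x_m$), so the coefficient of $q^m$ in $\Psi(f)$ is a finite $\ZZ$-linear combination of coefficients of $f$, and $\Psi(f)$ is a genuine element of $\ZZ[[q]]$. By construction $\Psi$ is additive and $\ZZ$-linear, even on the infinite sums involved, and $\Psi(1)=1=q^0$, which is the identity element of $\ZZ[[q]]_{\odot}$.

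It then remains to prove multiplicativity, $\Psi(fg)=\Psi(f)\odot\Psi(g)$. Both the ordinary product on $\ZZ[[\xx]]_\bded$ and the product $\odot$ on $\ZZ[[q]]_{\odot}$ are $\ZZ$-bilinear and are compatible with the bounded-degree (hence locally finite) infinite sums occurring here, so it suffices to verify the identity when $f$ and $g$ are single monomials. Write $f = x_{i_1}^{m_1}\cdots x_{i_k}^{m_k}$ with $i_1<\dots<i_k$ and all $m_j>0$, and $g = x_{j_1}^{n_1}\cdots x_{j_l}^{n_l}$ with $j_1<\dots<j_l$ and all $n_j>0$, allowing the degenerate cases $k=0$ (so $f=1$) or $l=0$. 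The product $fg$ is again a single monomial, and the set of variables occurring in it with positive exponent is exactly the union $\{x_{i_1},\dots,x_{i_k}\}\cup\{x_{j_1},\dots,x_{j_l}\}$, since both exponents being added are nonnegative and at least one is positive. Hence the largest index of $fg$ is $\max(i_k,j_l)$, with the convention that the largest index of the empty monomial $1$ is $0$. Therefore $\Psi(fg)=q^{\max(i_k,j_l)}=q^{i_k}\odot q^{j_l}=\Psi(f)\odot\Psi(g)$, where in the degenerate cases one uses $q^0\odot q^{j_l}=q^{\max(0,j_l)}=q^{j_l}$ because $j_l\ge 1$. This establishes the claim.

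The argument is entirely routine, and there is no real obstacle; the only points deserving a moment's care are the bookkeeping needed to see that $\Psi$ and the two products are all well defined on infinite sums of bounded degree, and the treatment of the constant term $1$ (equivalently, the empty monomial), for which the convention "largest index $=0$" simultaneously makes $q^0$ the correct value of $\Psi(1)$ and the identity element of $\ZZ[[q]]_{\odot}$.
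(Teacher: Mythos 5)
Your proof is correct, and the paper in fact gives no proof at all: the statement is left as an unproved Observation, with the intended justification being exactly the reduction to monomials you carry out, namely that the largest index of a product of two monomials is the maximum of their largest indices, matching the definition $q^i \odot q^j = q^{\max(i,j)}$. Your additional care about well-definedness on bounded-degree infinite sums and the convention $\Psi(1)=q^0$ is sound and fills in the routine details the authors omitted.
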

	
	We now compute the images, under $\Psi$, of various quasi-symmetric functions. All explicit formulas will be stated in $\ZZ[[q]]$, with the standard multiplication.
	
	\begin{lemma}\label{t:Psi_M_and_F}
		For any positive integer $n$ and subset $J \subseteq [n-1]$,
		\[
		\Psi(M_{n,J}) 
		= \left( \frac{q}{1-q} \right)^{|J|+1}
		= (1-q) \sum_r \binom{r}{|J|+1} q^r
		\]
		and
		\[
		\Psi(F_{n,J}) 
		= \frac{q^{\,|J|+1}}{(1-q)^{n}}
		= (1-q) \sum_r \binom{r+n-|J|-1}{n} q^r.
		\]
	\end{lemma}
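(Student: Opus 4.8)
The plan is to evaluate $\Psi$ on $M_{n,J}$ directly from the monomial definition, and then pass to $F_{n,J}$ using the relation $F_{n,J}=\sum_{K\supseteq J}M_{n,K}$ together with the additivity of $\Psi$. No feature of the $\odot$-product is needed; only that $\Psi$ is additive on (bounded-degree) infinite sums, which has already been established.

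For the first formula, write $t:=|J|$ and $\co(J)=(m_0,\dots,m_t)$. The defining expansion $M_{n,J}=\sum_{i_0<\dots<i_t}x_{i_0}^{m_0}\cdots x_{i_t}^{m_t}$ shows that every monomial of $M_{n,J}$ has largest index $i_t$ (all exponents being positive), so $\Psi(M_{n,J})=\sum_{i_0<\dots<i_t}q^{\,i_t}$. Collecting terms according to the value $r=i_t$ and counting the $\binom{r-1}{t}$ choices of $i_0<\dots<i_{t-1}$ in $[r-1]$ gives $\Psi(M_{n,J})=\sum_{r}\binom{r-1}{t}q^r$. One then identifies this with the coefficientwise expansion of $\bigl(q/(1-q)\bigr)^{t+1}=q^{t+1}\sum_{m\ge0}\binom{m+t}{t}q^m$, and the alternative form $(1-q)\sum_r\binom{r}{t+1}q^r$ follows from Pascal's rule $\binom{r}{t+1}-\binom{r-1}{t+1}=\binom{r-1}{t}$.

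For the second formula, additivity of $\Psi$ and $F_{n,J}=\sum_{J\subseteq K\subseteq[n-1]}M_{n,K}$ give $\Psi(F_{n,J})=\sum_{J\subseteq K\subseteq[n-1]}\bigl(q/(1-q)\bigr)^{|K|+1}$. Writing $K=J\sqcup S$ with $S$ ranging over the subsets of the $(n-1-t)$-element set $[n-1]\setminus J$, grouping by $s=|S|$, and summing the resulting binomial series yields
\[
\Psi(F_{n,J})=\Bigl(\tfrac{q}{1-q}\Bigr)^{t+1}\sum_{s=0}^{n-1-t}\binom{n-1-t}{s}\Bigl(\tfrac{q}{1-q}\Bigr)^{s}=\Bigl(\tfrac{q}{1-q}\Bigr)^{t+1}\Bigl(1+\tfrac{q}{1-q}\Bigr)^{n-1-t}=\frac{q^{\,t+1}}{(1-q)^{n}},
\]
using $1+q/(1-q)=1/(1-q)$. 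Expanding $1/(1-q)^n=\sum_m\binom{m+n-1}{n-1}q^m$ and shifting the index rewrites this as $(1-q)\sum_r\binom{r+n-t-1}{n}q^r$, again by Pascal's rule.

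The argument involves no genuine difficulty: the only points deserving care are the well-definedness of $\Psi$ on the infinite sums in question (valid because, $f$ having bounded degree, only finitely many of its monomials can have a prescribed largest index) and the routine reindexing in the two binomial identities.
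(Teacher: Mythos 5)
Your proposal is correct and follows essentially the same route as the paper: evaluate $\Psi$ term by term on the monomial expansion of $M_{n,J}$ to get $\sum_r \binom{r-1}{|J|}q^r = (q/(1-q))^{|J|+1}$, then sum over $K \supseteq J$ grouped by $|K|$ and use the binomial theorem to obtain $q^{|J|+1}/(1-q)^n$. The only difference is cosmetic: you spell out the Pascal's-rule reindexing for the $(1-q)\sum_r(\cdots)q^r$ forms, which the paper dismisses as standard.
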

	\begin{proof}
		Let $(m_0, \ldots, m_j) = \co(J)$  be the composition of $n$ corresponding to $J \subseteq [n-1]$, with $j = |J|$. Then
		\[
		\Psi(M_{n,J}) 
		= \sum_{i_0 < \ldots < i_j} \Psi(x_{i_0}^{m_0} \cdots x_{i_j}^{m_j})
		= \sum_{i_0 < \ldots < i_j} q^{i_j}
		= \sum_{r \ge j+1} \binom{r-1}{j} q^r
		= \left( \frac{q}{1-q} \right)^{j+1}
		\]
		and, consequently,
		\[
		\Psi(F_{n,J}) 
		= \sum_{J \subseteq K \subseteq [n-1]} \Psi(M_{n,K})
		= \sum_{k=j}^{n-1} \binom{n-1-j}{k-j} \left( \frac{q}{1-q} \right)^{k+1}
		= \left( \frac{q}{1-q} \right)^{j+1} \left( 1 + \frac{q}{1-q} \right)^{n-1-j}
		= \frac{q^{j+1}}{(1-q)^n}.
		\]
		This proves the two explicit formulas.
		The corresponding expansions into formal power series, after division by an extra factor $1-q$ (for reasons that will become apparent later), are standard.
	\end{proof}
	
	\begin{lemma}\label{t:Psi_MM_and_FF}
		For any positive integers $m$, $n$ and subsets $J \subseteq [m-1]$ and $K \subseteq [n-1]$,
		\[
		\Psi(M_{m,J} \cdot M_{n,K})
		= (1-q) \sum_{r} \binom{r}{|J|+1} \binom{r}{|K|+1} q^r
		\]
		and
		\[
		\Psi(F_{m,J} \cdot F_{n,K})
		= (1-q) \sum_{r} \binom{r+m-|J|-1}{m} \binom{r+n-|K|-1}{n} q^r.
		\]
	\end{lemma}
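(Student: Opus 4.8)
The plan is to use that $\Psi$ is a ring homomorphism from $(\ZZ[[\xx]]_\bded,\cdot)$ to $(\ZZ[[q]]_{\odot},\odot)$ (the observation preceding Lemma~\ref{t:Psi_M_and_F}), so that
\[
\Psi(M_{m,J}\cdot M_{n,K}) = \Psi(M_{m,J})\odot\Psi(M_{n,K}),
\qquad
\Psi(F_{m,J}\cdot F_{n,K}) = \Psi(F_{m,J})\odot\Psi(F_{n,K}),
\]
and then to evaluate these $\odot$-products from the power series expansions supplied by Lemma~\ref{t:Psi_M_and_F}. The one genuinely new ingredient is a formula for the $\odot$-product of two series written in the normalized shape $(1-q)\sum_r c_r q^r$; this is exactly the form in which Lemma~\ref{t:Psi_M_and_F} records $\Psi(M_{n,J})$ and $\Psi(F_{n,J})$, and it is the reason the factor $1-q$ was extracted there.

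First I would prove the elementary identity: for any sequences $(A_r)_{r\ge 0}$ and $(B_r)_{r\ge 0}$, with the convention $A_{-1}=B_{-1}=0$,
\[
\Bigl((1-q)\sum_{r\ge 0}A_r q^r\Bigr)\odot\Bigl((1-q)\sum_{r\ge 0}B_r q^r\Bigr)
= (1-q)\sum_{r\ge 0}A_r B_r q^r.
\]
Writing the left factor as $\sum_i a_i q^i$ with $a_i=A_i-A_{i-1}$ and the right factor as $\sum_j b_j q^j$ with $b_j=B_j-B_{j-1}$, the rule $q^i\odot q^j=q^{\max(i,j)}$ shows that the coefficient of $q^r$ in the product is $a_r\sum_{j\le r}b_j+b_r\sum_{i\le r}a_i-a_rb_r = a_rB_r+b_rA_r-a_rb_r$, which telescopes to $A_rB_r-A_{r-1}B_{r-1}$, i.e.\ the coefficient of $q^r$ on the right. (Equivalently, $f\mapsto$ the coefficient sequence of $f/(1-q)$ is a ring isomorphism from $(\ZZ[[q]],\odot)$ onto the ring of sequences under pointwise product, carrying $(1-q)\sum_r c_r q^r$ to $(c_r)_{r\ge 0}$; this makes the identity, as well as the commutativity and associativity of $\odot$, transparent.) Since each coefficient of $q^r$ is a finite sum, no convergence issue arises even for infinite series.

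Then I would substitute. By Lemma~\ref{t:Psi_M_and_F}, $\Psi(M_{m,J})=(1-q)\sum_r\binom{r}{|J|+1}q^r$ and $\Psi(M_{n,K})=(1-q)\sum_r\binom{r}{|K|+1}q^r$, so the identity above with $A_r=\binom{r}{|J|+1}$ and $B_r=\binom{r}{|K|+1}$ gives the first formula. Likewise $\Psi(F_{m,J})=(1-q)\sum_r\binom{r+m-|J|-1}{m}q^r$ and $\Psi(F_{n,K})=(1-q)\sum_r\binom{r+n-|K|-1}{n}q^r$, and taking $A_r=\binom{r+m-|J|-1}{m}$, $B_r=\binom{r+n-|K|-1}{n}$ yields the second.

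I do not anticipate a real obstacle: the only computation with any content is the telescoping in the $\odot$-product identity, and everything else is a direct application of the homomorphism property together with Lemma~\ref{t:Psi_M_and_F}. The conceptual point---already foreshadowed by the remark ``for reasons that will become apparent later'' in the proof of Lemma~\ref{t:Psi_M_and_F}---is that writing the images of $M_{n,J}$ and $F_{n,J}$ as $1-q$ times a ``cumulative'' series converts $\odot$ into pointwise multiplication of the cumulative coefficients.
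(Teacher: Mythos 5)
Your proof is correct, but it takes a genuinely different route from the paper's. The paper does not pass through the $\odot$-product at all in this lemma: for the monomial case it expands $M_{m,J}\cdot M_{n,K}$ as a double sum of monomials and directly counts the tuples $(i_0<\dots<i_j,\ i'_0<\dots<i'_k)$ with $\max(i_j,i'_k)=r$, obtaining the telescoping difference $\binom{r}{|J|+1}\binom{r}{|K|+1}-\binom{r-1}{|J|+1}\binom{r-1}{|K|+1}$ by hand; for the fundamental case it then expands $F=\sum_{K'\supseteq K}M$ and evaluates a double Vandermonde convolution such as $\sum_{j'}\binom{m-1-j}{m-1-j'}\binom{r}{j'+1}=\binom{r+m-1-j}{m}$. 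You instead invoke the homomorphism property of $\Psi$ into $\ZZ[[q]]_{\odot}$ and prove once the general identity $\bigl((1-q)\sum_r A_rq^r\bigr)\odot\bigl((1-q)\sum_r B_rq^r\bigr)=(1-q)\sum_r A_rB_rq^r$, whose telescoping verification I checked and which is correct (the coefficient of $q^r$ is $a_rB_r+b_rA_r-a_rb_r=A_rB_r-A_{r-1}B_{r-1}$, and each such coefficient is a finite sum, so there is no convergence issue). What your approach buys is a single reusable lemma that handles both displayed formulas uniformly, eliminates the Vandermonde computation entirely, and makes conceptual the paper's parenthetical remark that the factor $1-q$ was extracted ``for reasons that will become apparent later'': division by $1-q$ conjugates $\odot$ into pointwise multiplication of cumulative coefficient sequences. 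What the paper's direct computation buys is independence from the (unproved, merely observed) homomorphism property of $\Psi$, since the monomial count establishes the needed multiplicativity in situ. Either argument would also transfer to the cyclic analogue in Lemma~\ref{t:Psi_cyc_MM_and_FF}, which the paper in fact derives by reduction to the present lemma rather than by either method directly.
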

	\begin{proof}
		Let $(m_0, \ldots, m_j) = \operatorname{co}_m(J)$  and $(n_0, \ldots, n_k) = \co(K)$ , where $j = |J|$ and $k = |K|$. Then
		\[
		\Psi(M_{m,J} \cdot M_{n,K}) 
		= \sum_{i_0 < \ldots < i_j}
		\sum_{i'_0 < \ldots < i'_k} \Psi(x_{i_0}^{m_0} \cdots x_{i_j}^{m_j} \cdot x_{i'_0}^{n_0} \cdots x_{i'_k}^{n_k}).
		\]
		Since
		\[
		\# \{(i_0, \ldots, i_j, i'_0, \ldots, i'_k) \,:\, i_0 < \ldots  < i_j,\, i'_0 < \ldots < i'_k,\, \max(i_j, i'_k) = r\}
		= \binom{r}{j+1} \binom{r}{k+1} - \binom{r-1}{j+1} \binom{r-1}{k+1},
		\]
		it follows that
		\[
		\Psi(M_{m,J} \cdot M_{n,K}) 
		= \sum_{r} \left[ \binom{r}{j+1} \binom{r}{k+1} - \binom{r-1}{j+1} \binom{r-1}{k+1} \right] q^r
		= (1-q) \sum_{r} \binom{r}{j+1} \binom{r}{k+1} q^r.
		\]
		Consequently,
		\[
		\begin{aligned}
		\Psi(F_{m,J} \cdot F_{n,K}) 
		&= \sum_{J \subseteq J' \subseteq [m-1]} \sum_{K \subseteq K' \subseteq [n-1]} \Psi(M_{m,J'} \cdot M_{n,K'}) \\
		&= (1-q) \sum_{j'=j}^{m-1} \sum_{k'=k}^{n-1} \binom{m-1-j}{j'-j} \binom{n-1-k}{k'-k} \sum_{r} \binom{r}{j'+1} \binom{r}{k'+1} q^r \\
		&= (1-q) \sum_{r} q^r \sum_{j'=j}^{m-1} \binom{m-1-j}{m-1-j'} \binom{r}{j'+1} \sum_{k'=k}^{n-1} \binom{n-1-k}{n-1-k'} \binom{r}{k'+1} \\
		&= (1-q) \sum_{r} \binom{r+m-1-j}{m} \binom{r+n-1-k}{n} q^r.
		\end{aligned}
		\]
	\end{proof}
	
	\begin{theorem}\label{t:des_of_shuffle}
		Let $A$ and $B$ be two disjoint sets of integers,
		with $|A|=m$ and $|B|=n$.
		For each  $u \in \symm_A$ and $v \in \symm_B$,
		the distribution of descent number over all shuffles of
		$u$ and $v$ is given by
		\[
		\sum_{w \in u \shuffle v} q^{\des(w)}
		= (1-q)^{m+n+1} \sum_{r} \binom{r+m-\des(u)}{m} \binom{r+n-\des(v)}{n} q^r.
		\]
	\end{theorem}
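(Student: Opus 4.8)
The plan is to feed the product identity for ordinary fundamental quasi-symmetric functions through the algebra homomorphism $\Psi$ introduced above. By Theorem~\ref{non-cyclic-product-of-F},
\[
F_{m,\Des(u)} \cdot F_{n,\Des(v)} = \sum_{w \in u \shuffle v} F_{m+n,\Des(w)},
\]
a finite sum since $u \shuffle v$ is finite. Applying $\Psi$ to both sides and using its linearity, the right-hand side becomes $\sum_{w \in u \shuffle v} \Psi(F_{m+n,\Des(w)})$.

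Next I would evaluate each side with the two preceding lemmas. By Lemma~\ref{t:Psi_M_and_F}, $\Psi(F_{m+n,\Des(w)}) = q^{\des(w)+1}/(1-q)^{m+n}$ (note $\Des(w) \subseteq [m+n-1]$ and $|\Des(w)| = \des(w)$), so the right-hand side equals
\[
\frac{q}{(1-q)^{m+n}} \sum_{w \in u \shuffle v} q^{\des(w)}.
\]
By Lemma~\ref{t:Psi_MM_and_FF} with $J = \Des(u)$ and $K = \Des(v)$, the left-hand side equals
\[
(1-q) \sum_r \binom{r+m-\des(u)-1}{m} \binom{r+n-\des(v)-1}{n} q^r.
\]
Equating the two expressions and multiplying through by $(1-q)^{m+n}/q$ yields a closed form for $\sum_{w} q^{\des(w)}$. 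A final reindexing $r \mapsto r+1$ in the resulting sum absorbs the factor $1/q$ and turns $\binom{r+m-\des(u)-1}{m}\binom{r+n-\des(v)-1}{n}$ into $\binom{r+m-\des(u)}{m}\binom{r+n-\des(v)}{n}$, producing exactly the claimed identity $\sum_{w \in u \shuffle v} q^{\des(w)} = (1-q)^{m+n+1}\sum_r \binom{r+m-\des(u)}{m}\binom{r+n-\des(v)}{n} q^r$.

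This is a purely formal manipulation once the two evaluation lemmas are in place, so I do not expect a genuine obstacle; the only points deserving a line of care are that $\Psi$ commutes with the finite sum over shuffles (immediate from linearity) and that the index shift is legitimate — the sums over $r$ run over all integers with the usual convention that out-of-range binomial coefficients vanish, so shifting $r$ by one is harmless.
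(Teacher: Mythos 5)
Your proposal is correct and follows exactly the paper's own argument: apply $\Psi$ to the shuffle identity of Theorem~\ref{non-cyclic-product-of-F}, evaluate both sides via Lemma~\ref{t:Psi_M_and_F} and Lemma~\ref{t:Psi_MM_and_FF}, then clear the factor $q/(1-q)^{m+n}$ and reindex $r\mapsto r+1$. No gaps; the points you flag (linearity of $\Psi$ over the finite sum, vanishing of out-of-range binomial coefficients) are exactly the right ones and are handled the same way in the paper.
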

	\begin{proof}
		By 
		Theorem~\ref{non-cyclic-product-of-F},
		\[
		F_{m,\Des(u)} \cdot F_{n,\Des(v)}
		= \sum_{w \in u \shuffle v} F_{m+n,\Des(w)}.
		\]
		Apply the mapping $\Psi$ to both sides: 
		on one hand, by Lemma~\ref{t:Psi_MM_and_FF},
		\[
		\Psi(F_{m,\Des(u)} \cdot F_{n,\Des(v)}) 
		= (1-q) \sum_{r} \binom{r+m-\des(u)-1}{m} \binom{r+n-\des(v)-1}{n} q^r.
		\]
		On the other hand, by Lemma~\ref{t:Psi_M_and_F},
		\[
		\Psi \left( \sum_{w \in u \shuffle v} F_{m+n,\Des(w)} \right)
		= \sum_{w \in u \shuffle v} 
		\frac{q^{\des(w)+1}}{(1-q)^{m+n}}.
		\]
		It follows that
		\[
		\begin{aligned}
		\sum_{w \in u \shuffle v} q^{\des(w)}
		&= (1-q)^{m+n+1} \sum_{r} \binom{r+m-\des(u)-1}{m} \binom{r+n-\des(v)-1}{n} q^{r-1} \\
		&= (1-q)^{m+n+1} \sum_{r} \binom{r+m-\des(u)}{m} \binom{r+n-\des(v)}{n} q^r.
		\end{aligned}
		\]
	\end{proof}
	
	We deduce the following result, first proved by Richard Stanley~\cite[Prop.\ 12.6]{Stanley_memoir}. For bijective proofs see~\cite{Goulden, Stadler}.
	
	\begin{corollary}\label{t:num_shuffles_given_des}
		Let $A$ and $B$ be two disjoint sets of integers,
		with $|A|=m$ and $|B|=n$.
		For each  $u \in \symm_A$ and $v \in \symm_B$,
		if $\des(u) = i$ and $\des(v) = j$ then the number of shuffles $w$ 
		in $u \shuffle v$ with $\des(w)=k$ is equal to
		\[
		\binom{m+j-i}{k-i} \binom{n+i-j}{k-j}.
		\]
	\end{corollary}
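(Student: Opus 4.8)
The plan is to read off this corollary from Theorem~\ref{t:des_of_shuffle} by a single coefficient extraction. Writing $i=\des(u)$ and $j=\des(v)$, Theorem~\ref{t:des_of_shuffle} gives
\[
\sum_{w \in u \shuffle v} q^{\des(w)} = (1-q)^{m+n+1} \sum_{r} \binom{r+m-i}{m}\binom{r+n-j}{n} q^r,
\]
so it remains only to verify that the right-hand side equals $\sum_{k}\binom{m+j-i}{k-i}\binom{n+i-j}{k-j}\,q^k$. This is a formal identity for binomial coefficients, carrying no further combinatorial content.

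First I would repackage the right-hand side into a form suitable for extracting $[q^k]$. Using $\binom{r+n-j}{n}=[t^n](1+t)^{r+n-j}$, summing the negative binomial series in $r$ via $\sum_r\binom{r+m-i}{m}x^r = x^i(1-x)^{-(m+1)}$, and writing $1-q(1+t)=(1-q)\bigl(1-\tfrac{qt}{1-q}\bigr)$, one obtains
\[
\sum_{w \in u \shuffle v} q^{\des(w)} = q^i(1-q)^{n}\,[t^n]\,(1+t)^{\,n+i-j}\Bigl(1-\tfrac{qt}{1-q}\Bigr)^{-(m+1)}.
\]
Expanding the last factor as a power series in $t$ and taking the coefficient of $t^n$ collapses the $[t^n]$ into a single sum, leaving
\[
\sum_{w \in u \shuffle v} q^{\des(w)} = q^i\sum_{s}\binom{s+m}{m}\binom{n+i-j}{n-s}\,q^{s}(1-q)^{n-s}.
\]

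Next I would extract $[q^k]$. Put $k=i+l$; since $[q^{l-s}](1-q)^{n-s}=(-1)^{l-s}\binom{n-s}{l-s}$, the coefficient in question is $\sum_s(-1)^{l-s}\binom{s+m}{m}\binom{n+i-j}{n-s}\binom{n-s}{l-s}$. Two applications of the elementary identities $\binom{A}{B}\binom{B}{C}=\binom{A}{C}\binom{A-C}{B-C}$ and $\binom{A}{B}\binom{A-B}{C}=\binom{A}{C}\binom{A-C}{B}$, together with the reindexing $s\mapsto l-s$, factor out $\binom{n+i-j}{n-l}$ and leave the alternating sum $\sum_{s}(-1)^{s}\binom{l+i-j}{s}\binom{m+l-s}{m}=[x^l](1-x)^{(l+i-j)-(m+1)}=\binom{m+j-i}{l}$. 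Since $\binom{n+i-j}{n-l}=\binom{n+i-j}{k-j}$ and $\binom{m+j-i}{l}=\binom{m+j-i}{k-i}$, this is exactly the asserted count.

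I do not expect any genuine obstacle: all the substance already lives in Theorem~\ref{t:des_of_shuffle} (and hence in the product formula of Theorem~\ref{non-cyclic-product-of-F}), and what is left is routine algebra with binomial coefficients; one could equally well just invoke \cite{Stanley_memoir} or the bijective proofs of \cite{Goulden, Stadler}. The only points requiring mild care are the bookkeeping in the chain of identities above and the degenerate ranges of the summation indices, where the binomial coefficients vanish and the derivation goes through verbatim.
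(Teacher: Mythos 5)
Your proposal is correct and follows the same overall route as the paper: both deduce the corollary from Theorem~\ref{t:des_of_shuffle} by verifying the binomial identity
$(1-q)^{m+n+1}\sum_r \binom{r+m-i}{m}\binom{r+n-j}{n}q^r=\sum_t\binom{m+j-i}{t-i}\binom{n+i-j}{t-j}q^t$.
The only difference is in how that identity is established: the paper invokes the triple-binomial (Pfaff--Saalsch\"utz) identity \cite[(5.28)]{GKP} as a black box, whereas you derive it from scratch by a generating-function computation (coefficient extraction in $t$ and $q$ plus the subset-of-subset identity $\binom{A}{B}\binom{B}{C}=\binom{A}{B-C}\binom{A-B+C}{C}$), which I have checked goes through; this buys self-containedness at the cost of slightly more bookkeeping.
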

	\begin{proof}
		By Theorem~\ref{t:des_of_shuffle},
		the number of shuffles of $u$ and $v$ with descent number $k$ is equal to the coefficient of $q^k$ in the product
		\[
		(1-q)^{m+n+1} \sum_{r} \binom{r+m-i}{m} \binom{r+n-j}{n} q^r.
		\]
		We therefore want to show that
		\[
		(1-q)^{m+n+1} \sum_{r} \binom{r+m-i}{m} \binom{r+n-j}{n} q^r
		= \sum_t \binom{m+j-i}{t-i} \binom{n+i-j}{t-j} q^t.
		\]
		
		We shall use the triple-binomial identity~\cite[(5.28)]{GKP}
		\begin{equation}\label{e.pfaff}
			\sum_{\ell} \binom{m-x+y}{\ell} \binom{n+x-y}{n-\ell} \binom{x+\ell}{m+n}
			= \binom{x}{m} \binom{y}{n},
		\end{equation}
		valid for nonnegative integers $m$ and $n$ and arbitrary commuting indeterminates $x$ and $y$; summation is over the admissible integer values of $\ell$, say $0 \le \ell \le n$. 
		It is equivalent to the Pfaff-Saalsch\"{u}tz hypergeometric identity~\cite[(5.97)]{GKP}; we have renamed some of the variables to avoid confusion with our current notation.
		
		In \eqref{e.pfaff} substitute $x := m+r-i$, $y := n+r-j$ and $\ell := n+i-t$ to get
		\begin{equation}\label{e.pfaff1}
			\sum_t \binom{n+i-j}{n+i-t} \binom{m+j-i}{t-i} \binom{m+n+r-t}{m+n}
			= \binom{m+r-i}{m} \binom{n+r-j}{n}.
		\end{equation}
		It follows that
		\begin{equation}\label{e.pfaff2}
			\sum_r \sum_{t\le r}  \binom{m+j-i}{t-i} \binom{n+i-j}{t-j} \binom{m+n+r-t}{m+n} q^r
			= \sum_r \binom{r+m-i}{m} \binom{r+n-j}{n} q^r.
		\end{equation}
		Since
		\[
		(1-q)^{-(m+n+1)} = \sum_{s \ge 0} \binom{m+n+s}{m+n} q^s,
		\]
		the LHS of \eqref{e.pfaff2} is equal to
		\[
		(1-q)^{-(m+n+1)} \sum_t \binom{m+j-i}{t-i} \binom{n+i-j}{t-j} q^t,
		\]
		completing the proof.
		
	\end{proof}
	
	Now we turn to the cyclic analogues.
	
	\begin{lemma}\label{t:Psi_cyc_M_and_F}
		For any positive integer $n$ and subset $J \subseteq [n]$,
		\[
		\Psi(M^{\cyc}_{n,J}) 
		= \frac{|J| q^{|J|}}{(1-q)^{|J|}}
		= (1-q) \sum_r |J| \binom{r}{|J|} q^r
		\]
		and
		\[
		\Psi(F^{\cyc}_{n,J}) 
		= \frac{|J| q^{|J|} + (n - |J|) q^{|J|+1}}{(1-q)^{n}}
		= (1-q) \sum_r \binom{r+n-|J|-1}{n-1} r q^r.
		\]
	\end{lemma}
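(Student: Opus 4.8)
The plan is to reduce both identities to the non-cyclic formulas already recorded in Lemma~\ref{t:Psi_M_and_F}, using the two expansions of the cyclic functions in terms of ordinary ones. Lemma~\ref{t:Mcyc_from_M} gives $M^{\cyc}_{n,J} = \sum_{j \in J} M_{n,(J-j)\cap[n-1]}$, and Proposition~\ref{t:Fcyc_from_F} gives $F^{\cyc}_{n,J} = \sum_{i\in[n]} F_{n,(J-i)\cap[n-1]}$. Since $\Psi$ is linear (even on infinite sums), it suffices to track, for each shift index, the cardinality of the shifted set after intersection with $[n-1]$, because by Lemma~\ref{t:Psi_M_and_F} both $\Psi(M_{n,K})$ and $\Psi(F_{n,K})$ depend on $K$ only through $|K|$.

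For the monomial identity, the key observation is that for $j \in J$ the element $j-j\equiv 0$ (that is, $n$) lies in $J-j$, whereas no other difference $k-j$ with $k\in J\setminus\{j\}$ is $\equiv 0 \pmod n$; hence $|(J-j)\cap[n-1]| = |J|-1$ for every $j\in J$. Applying Lemma~\ref{t:Psi_M_and_F} to each summand gives $\Psi(M_{n,(J-j)\cap[n-1]}) = \left(\frac{q}{1-q}\right)^{|J|}$, and summing the $|J|$ equal terms yields $\Psi(M^{\cyc}_{n,J}) = |J|\,q^{|J|}/(1-q)^{|J|}$. The stated power-series form then follows from the elementary identity $(1-q)\sum_r \binom{r}{k}q^r = q^k/(1-q)^k$.

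For the fundamental identity, I split the sum over $i\in[n]$ according to whether $i\in J$. As above, $n\in J-i$ exactly when $i\in J$, so $|(J-i)\cap[n-1]|$ equals $|J|-1$ for the $|J|$ indices $i\in J$ and equals $|J|$ for the remaining $n-|J|$ indices. By Lemma~\ref{t:Psi_M_and_F}, $\Psi(F_{n,K}) = q^{|K|+1}/(1-q)^n$, so the two groups of summands contribute $|J|\cdot q^{|J|}/(1-q)^n$ and $(n-|J|)\cdot q^{|J|+1}/(1-q)^n$ respectively, giving the claimed closed form $\bigl(|J|q^{|J|} + (n-|J|)q^{|J|+1}\bigr)/(1-q)^n$.

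It remains to check that this rational function equals $(1-q)\sum_r\binom{r+n-|J|-1}{n-1}r\,q^r$. Writing $j=|J|$ and expanding $1/(1-q)^n = \sum_s\binom{s+n-1}{n-1}q^s$, the coefficient of $q^r$ on the left is $j\binom{r-j+n-1}{n-1}+(n-j)\binom{r-j+n-2}{n-1}$, and on the right it is $r\binom{r+n-j-1}{n-1}-(r-1)\binom{r+n-j-2}{n-1}$. Setting $a=r-j+n-1$ and $b=n-1$, so that $r-j=a-b$ and $n-j+r-1 = a$, both sides collapse to the single relation $(a-b)\binom{a}{b} = a\binom{a-1}{b}$, which is standard (combine Pascal's rule with $a\binom{a-1}{b-1}=b\binom{a}{b}$). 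This binomial manipulation is the only genuine computation; everything else is bookkeeping about when $n$ survives the intersection with $[n-1]$, so I do not anticipate a real obstacle. The degenerate cases $J=\varnothing$ (where $M^{\cyc}_{n,\varnothing}=0$) and $J=[n]$ are seen directly to be consistent with both closed forms.
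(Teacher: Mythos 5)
Your proof is correct and follows essentially the same route as the paper: both reduce to the non-cyclic case via Lemma~\ref{t:Mcyc_from_M} and Proposition~\ref{t:Fcyc_from_F}, apply Lemma~\ref{t:Psi_M_and_F} after noting that $n$ survives the intersection with $[n-1]$ exactly when the shift index lies in $J$, and finish with a binomial identity. The only cosmetic difference is in that last step: the paper factors $j\binom{r+n-j}{n}+(n-j)\binom{r+n-j-1}{n}$ over the common factor $\binom{r+n-j-1}{n-1}$ and simplifies the bracket to $r$, whereas you compare coefficients of the rational closed form against the claimed series and reduce to the absorption identity $(a-b)\binom{a}{b}=a\binom{a-1}{b}$; these are equivalent computations.
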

	\begin{proof}
		By Lemma~\ref{t:Mcyc_from_M} and Lemma~\ref{t:Psi_M_and_F},
		\[
		\Psi(M^{\cyc}_{n,J}) = \sum_{j \in J} \Psi(M_{n, (J - j) \cap [n-1]})
		= \frac{|J| q^{|J|}}{(1-q)^{|J|}}
		= (1-q) \sum_r |J| \binom{r}{|J|} q^r.
		\]
		By Lemma~\ref{t:Fcyc_from_F} and Lemma~\ref{t:Psi_M_and_F},
		\[
		\Psi(F^{\cyc}_{n,J}) = \sum_{i \in [n]} \Psi(F_{n, (J - i) \cap [n-1]})
		= \frac{|J| q^{|J|} + (n - |J|) q^{|J|+1}}{(1-q)^{n}}
		= (1-q) \sum_r c_r(n,|J|) q^r,
		\]
		where
		\[
		\begin{aligned}
		c_r(n,j) 
		&= j \binom{r+n-j}{n} + (n-j) \binom{r+n-j-1}{n} \\
		&= \binom{r+n-j-1}{n-1} \cdot \frac{j(r+n-j) + (n-j)(r-j)}{n} \\
		&= \binom{r+n-j-1}{n-1} \cdot r.
		\end{aligned}
		\]
	\end{proof}
	
	\begin{lemma}\label{t:Psi_cyc_MM_and_FF}
		For any positive integers $m$, $n$ and subsets $J \subseteq [m]$ and $K \subseteq [n]$,
		\[
		\Psi(M^{\cyc}_{m,J} \cdot M^{\cyc}_{n,K}) 
		= (1-q) \sum_{r} |J| \binom{r}{|J|} |K| \binom{r}{|K|} q^r
		\]
		and
		\[
		\Psi(F^{\cyc}_{m,J} \cdot F^{\cyc}_{n,K}) 
		= (1-q) \sum_{r} \binom{r+m-|J|-1}{m-1} \binom{r+n-|K|-1}{n-1} r^2 q^r.
		\]
	\end{lemma}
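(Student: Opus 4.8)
The plan is to exploit that $\Psi\colon \ZZ[[\xx]]_\bded \to \ZZ[[q]]_{\odot}$ is a ring homomorphism (the preceding Observation), so that
\[
\Psi(M^{\cyc}_{m,J} \cdot M^{\cyc}_{n,K}) = \Psi(M^{\cyc}_{m,J}) \odot \Psi(M^{\cyc}_{n,K}), \qquad \Psi(F^{\cyc}_{m,J} \cdot F^{\cyc}_{n,K}) = \Psi(F^{\cyc}_{m,J}) \odot \Psi(F^{\cyc}_{n,K}),
\]
and then to evaluate these $\odot$-products using the closed forms already obtained in Lemma~\ref{t:Psi_cyc_M_and_F}. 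This is legitimate because $M^{\cyc}_{m,J} \cdot M^{\cyc}_{n,K}$ and $F^{\cyc}_{m,J} \cdot F^{\cyc}_{n,K}$ have bounded degree, hence lie in the domain of $\Psi$, and because $\Psi$ and $\odot$ are defined so as to respect infinite $\ZZ$-linear combinations.

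The one computation to isolate is the following identity in $\ZZ[[q]]_{\odot}$: if $g = \sum_{r\ge 0} A_r q^r$ and $h = \sum_{s\ge 0} B_s q^s$ in $\ZZ[[q]]$, then
\[
\bigl( (1-q)\, g \bigr) \odot \bigl( (1-q)\, h \bigr) = (1-q) \sum_{N\ge 0} A_N B_N\, q^N .
\]
To see this, write $(1-q)g = \sum_r a_r q^r$ and $(1-q)h = \sum_s b_s q^s$; by distributivity $\bigl(\sum_r a_r q^r\bigr)\odot\bigl(\sum_s b_s q^s\bigr) = \sum_{r,s} a_r b_s\, q^{\max(r,s)}$, whose coefficient of $q^N$ equals $\bigl(\sum_{r,s\le N} a_r b_s\bigr) - \bigl(\sum_{r,s\le N-1} a_r b_s\bigr)$, since $\max(r,s)=N$ precisely when both indices are $\le N$ but not both $\le N-1$. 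As $\sum_{r\le N} a_r = A_N$ (telescoping, with the convention $A_{-1}=0$) and likewise $\sum_{s\le N} b_s = B_N$, this coefficient is $A_N B_N - A_{N-1}B_{N-1}$, which is exactly the coefficient of $q^N$ in $(1-q)\sum_N A_N B_N q^N$. This is the same telescoping already used in the proof of Lemma~\ref{t:Psi_MM_and_FF}.

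With this in hand both formulas are immediate. For the monomial one, Lemma~\ref{t:Psi_cyc_M_and_F} gives $\Psi(M^{\cyc}_{m,J}) = (1-q)\sum_r |J|\binom{r}{|J|} q^r$ and $\Psi(M^{\cyc}_{n,K}) = (1-q)\sum_s |K|\binom{s}{|K|} q^s$, so taking $A_r = |J|\binom{r}{|J|}$, $B_s = |K|\binom{s}{|K|}$ the identity produces $\Psi(M^{\cyc}_{m,J}\cdot M^{\cyc}_{n,K}) = (1-q)\sum_r |J|\binom{r}{|J|}\,|K|\binom{r}{|K|}\, q^r$. For the fundamental one, Lemma~\ref{t:Psi_cyc_M_and_F} gives $\Psi(F^{\cyc}_{m,J}) = (1-q)\sum_r \binom{r+m-|J|-1}{m-1}\, r\, q^r$ and similarly for $K$; taking $A_r = \binom{r+m-|J|-1}{m-1}\, r$ and $B_s = \binom{s+n-|K|-1}{n-1}\, s$, so that $A_N B_N = \binom{N+m-|J|-1}{m-1}\binom{N+n-|K|-1}{n-1}\, N^2$, the identity yields exactly $\Psi(F^{\cyc}_{m,J}\cdot F^{\cyc}_{n,K}) = (1-q)\sum_r \binom{r+m-|J|-1}{m-1}\binom{r+n-|K|-1}{n-1}\, r^2\, q^r$. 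There is no real obstacle here; the only care needed is tracking the $(1-q)$ prefactors and the convention $A_{-1}=B_{-1}=0$. (One could alternatively route through $F^{\cyc}_{n,J}=\sum_{K\supseteq J} M^{\cyc}_{n,K}$ from Lemma~\ref{t:Fcyc_vs_Mcyc}, but that would require the auxiliary identity $\sum_{j'= |J|}^{m}\binom{m-|J|}{j'-|J|}\, j'\binom{r}{j'} = \binom{r+m-|J|-1}{m-1}\, r$, which the approach above sidesteps by invoking Lemma~\ref{t:Psi_cyc_M_and_F} directly.)
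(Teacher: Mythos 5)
Your proof is correct, but it takes a genuinely different route from the paper's. The paper first decomposes the cyclic functions into ordinary ones via Lemma~\ref{t:Mcyc_from_M} and Proposition~\ref{t:Fcyc_from_F} (writing $M^{\cyc}_{m,J}$ as $\sum_{j\in J} M_{m,(J-j)\cap[m-1]}$ and $F^{\cyc}_{m,J}$ as $\sum_{i\in[m]} F_{m,(J-i)\cap[m-1]}$), applies the non-cyclic two-factor Lemma~\ref{t:Psi_MM_and_FF} to each of the $|J|\cdot|K|$ (resp.\ $mn$) resulting pairs, and then re-collects the sums, which forces it to re-derive the simplification $c_r(m,j)=\binom{r+m-j-1}{m-1}\,r$ already carried out in the proof of Lemma~\ref{t:Psi_cyc_M_and_F}. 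You instead stay entirely at the cyclic level: you invoke the ring-homomorphism property $\Psi(fg)=\Psi(f)\odot\Psi(g)$ together with the single-factor formulas of Lemma~\ref{t:Psi_cyc_M_and_F}, and isolate the one genuinely new computation as a clean abstract identity, $\bigl((1-q)\sum_r A_r q^r\bigr)\odot\bigl((1-q)\sum_s B_s q^s\bigr)=(1-q)\sum_N A_N B_N q^N$, whose proof is the same $\max(r,s)=N$ telescoping that underlies the paper's Lemma~\ref{t:Psi_MM_and_FF}. Your identity is correctly stated and proved (the only hypotheses needed are the conventions $A_{-1}=B_{-1}=0$, which you note), and it would in fact also yield Lemma~\ref{t:Psi_MM_and_FF} directly from Lemma~\ref{t:Psi_M_and_F}, so your abstraction buys reusability and avoids repeating the binomial manipulation; the paper's version buys a more self-contained computation that never needs to name the $\odot$-multiplicativity of the normal form $(1-q)\sum_r A_r q^r$. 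Both arguments are sound.
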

	\begin{proof}
		By Lemma~\ref{t:Mcyc_from_M} and Lemma~\ref{t:Psi_MM_and_FF},
		\[
		\begin{aligned}
		\Psi(M^{\cyc}_{m,J} \cdot M^{\cyc}_{n,K}) 
		&= \sum_{j \in J} \sum_{k \in K} \Psi(M_{m, (J - j) \cap [m-1]} \cdot M_{n, (K - k) \cap [n-1]}) \\
		&= |J| \cdot |K| \cdot (1-q) \sum_{r} \binom{r}{|J|} \binom{r}{|K|} q^r.
		\end{aligned}
		\]
		By Lemma~\ref{t:Fcyc_from_F} and Lemma~\ref{t:Psi_MM_and_FF},
		\[
		\begin{aligned}
		\Psi(F^{\cyc}_{m,J} \cdot F^{\cyc}_{n,K}) 
		&= \sum_{j \in [m]} \sum_{k \in [n]} \Psi(F_{m, (J - j) \cap [m-1]} \cdot F_{n, (K - k) \cap [n-1]}) \\
		&= (1-q) \sum_{r} c_r(m,|J|) \cdot c_r(n,|K|) \cdot q^r
		\end{aligned}
		\]
		where, as in the proof of Lemma~\ref{t:Psi_cyc_M_and_F},
		\[
		\begin{aligned}
		c_r(m,j) 
		&= j \binom{r+m-j}{m} + (m-j) \binom{r+m-j-1}{m} \\
		&= \binom{r+m-j-1}{m-1} \cdot \frac{j(r+m-j) + (m-j)(r-j)}{m} \\
		&= \binom{r+m-j-1}{m-1} \cdot r.
		\end{aligned}
		\]
	\end{proof}
	
	\begin{theorem}\label{t:cdes_of_cyclic_shuffle}
		Let $A$ and $B$ be two disjoint sets of integers,
		with $|A| = m$ and $|B| = n$.
		For each  $u \in \symm_A$ and $v \in \symm_B$,
		the distribution of cyclic descent number over all cyclic shuffles of
		$[\DAG u]$ and $[\DAG v]$ is given by
		\[
		\sum_{[\DAG{w}] \in [\DAG{u}] \shuffle_{\cyc} [\DAG{v}]} q^{\cdes(w)}
		= (1-q)^{m+n} \sum_r \binom{r+m-\cdes(u)-1}{m-1} \binom{r+n-\cdes(v)-1}{n-1} r q^r.
		\]
	\end{theorem}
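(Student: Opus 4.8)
The plan is to push the product formula of Theorem~\ref{product-of-F-corollary} through the ring homomorphism $\Psi$, exactly as in the proof of Theorem~\ref{t:des_of_shuffle}, and then recover the generating function
\[
g(q):=\sum_{[\DAG{w}]\in[\DAG u]\shuffle_\cyc[\DAG v]}q^{\cdes(w)}
\]
from the resulting power-series identity. Write $i:=\cdes(u)$ and $j:=\cdes(v)$. Applying $\Psi$ to $F^\cyc_{m,\cDes(u)}\cdot F^\cyc_{n,\cDes(v)}=\sum_{[\DAG{w}]}F^\cyc_{m+n,\cDes(w)}$ and evaluating the left-hand side with Lemma~\ref{t:Psi_cyc_MM_and_FF} gives
\[
\Psi\bigl(F^\cyc_{m,\cDes(u)}\cdot F^\cyc_{n,\cDes(v)}\bigr)=(1-q)\sum_r\binom{r+m-i-1}{m-1}\binom{r+n-j-1}{n-1}r^2q^r .
\]
For the right-hand side, Lemma~\ref{t:Psi_cyc_M_and_F} shows that $\Psi(F^\cyc_{m+n,J})=\bigl(|J|\,q^{|J|}+(m+n-|J|)\,q^{|J|+1}\bigr)/(1-q)^{m+n}$ depends on $J$ only through $|J|$, so grouping the cyclic shuffles by the integer $k=\cdes(w)$ yields
\[
\Psi\Bigl(\sum_{[\DAG{w}]}F^\cyc_{m+n,\cDes(w)}\Bigr)=\frac{1}{(1-q)^{m+n}}\sum_{[\DAG{w}]}\bigl(\cdes(w)\,q^{\cdes(w)}+(m+n-\cdes(w))\,q^{\cdes(w)+1}\bigr).
\]

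Combining these two computations and clearing the factor $(1-q)^{m+n}$ produces a single identity in $\ZZ[[q]]$,
\[
\sum_{[\DAG{w}]}\bigl(\cdes(w)\,q^{\cdes(w)}+(m+n-\cdes(w))\,q^{\cdes(w)+1}\bigr)=(1-q)^{m+n+1}\sum_r\binom{r+m-i-1}{m-1}\binom{r+n-j-1}{n-1}r^2q^r .
\]
Let $a_k$ be the number of cyclic shuffles $[\DAG{w}]$ with $\cdes(w)=k$ and let $R_\ell$ be the coefficient of $q^\ell$ on the right. Reading off the coefficient of $q^\ell$ turns this identity into the recursion $\ell\,a_\ell+(m+n-\ell+1)\,a_{\ell-1}=R_\ell$ for all $\ell\ge0$ (with $a_{-1}:=0$): for $\ell=0$ it only asserts $R_0=0$, while for $\ell\ge1$ it determines $a_\ell$ from $a_{\ell-1}$. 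Hence the sequence $(a_k)_{k\ge0}$ is determined once $a_0$ is fixed; and since the $m+n\ge2$ entries of any $w\in\symm_{C}$ are distinct, $w$ has at least one cyclic descent (the non-Escher property), so $a_0=0$.

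It remains to check that the proposed closed form solves the same recursion with the same value $a_0=0$. Put $h(q):=\sum_r\binom{r+m-i-1}{m-1}\binom{r+n-j-1}{n-1}r\,q^r$ and $G(q):=(1-q)^{m+n}h(q)$, so that the claim is $g=G$; its constant term vanishes because the factor $r$ kills the $r=0$ term of $h$. For any $y(q)=\sum_k b_kq^k$ one has the elementary identity $\sum_k b_k\bigl(kq^k+(m+n-k)q^{k+1}\bigr)=q(1-q)\,y'(q)+(m+n)\,q\,y(q)$, so the displayed identity above says precisely $q(1-q)\,g'+(m+n)\,q\,g=(1-q)^{m+n+1}\sum_r\binom{r+m-i-1}{m-1}\binom{r+n-j-1}{n-1}r^2q^r$. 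Differentiating $G$ gives $q(1-q)\,G'+(m+n)\,q\,G=q(1-q)^{m+n+1}h'$, and since $q\,h'(q)=\sum_r\binom{r+m-i-1}{m-1}\binom{r+n-j-1}{n-1}r^2q^r$, the series $G$ satisfies exactly the same identity as $g$. As both have vanishing constant term, the uniqueness established above forces $g=G$, which is the theorem.

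The routine ingredients are the two invocations of Lemmas~\ref{t:Psi_cyc_M_and_F} and~\ref{t:Psi_cyc_MM_and_FF} and the bookkeeping identity for $\sum_k b_k(kq^k+(m+n-k)q^{k+1})$. The one place that needs genuine care is recognising that, because $\Psi(F^\cyc_{m+n,J})$ is a \emph{two}-term expression (unlike $\Psi(F_{n,J})$ in the non-cyclic case), one cannot simply ``invert $\Psi$'' as in Theorem~\ref{t:des_of_shuffle}; the relation extracted is first-order, and its solution must be pinned down by the triangular recursion together with the non-Escher input $a_0=0$. I would also record the harmless boundary case $|A|=1$ or $|B|=1$, where the relevant cyclic descent set is empty by convention and the formula holds with $i$ or $j$ set equal to $0$.
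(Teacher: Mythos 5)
Your proof is correct, and up to the application of Lemmas~\ref{t:Psi_cyc_M_and_F} and~\ref{t:Psi_cyc_MM_and_FF} to the two sides of Theorem~\ref{product-of-F-corollary} it coincides with the paper's; the two arguments part ways only at the final extraction step. The paper uses the power-series form $\Psi(F^{\cyc}_{m+n,J}) = (1-q)\sum_r\binom{r+m+n-|J|-1}{m+n-1}\,r\,q^r$ on the right-hand side as well: after cancelling $(1-q)$, the coefficient of $q^r$ on each side carries a common factor $r$, which is divided out for $r>0$ (the case $r=0$ being handled by non-Escher), and the resulting identity of binomial sums is resummed against $q^r$ via $q^c/(1-q)^{m+n}=\sum_r\binom{r+m+n-c-1}{m+n-1}q^r$ to yield the theorem directly. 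So, contrary to your closing remark, $\Psi$ \emph{can} be inverted here essentially as in Theorem~\ref{t:des_of_shuffle}; the two-term form of $\Psi(F^{\cyc}_{m+n,J})$ is not an obstruction once it is rewritten with the explicit factor of $r$ and that factor is cancelled. Your alternative --- reading the identity as the first-order recursion $\ell a_\ell+(m+n-\ell+1)a_{\ell-1}=R_\ell$, pinning $a_0=0$ by non-Escher, and checking that $G=(1-q)^{m+n}h$ satisfies the same relation via $q(1-q)G'+(m+n)qG=q(1-q)^{m+n+1}h'$ and $qh'=\sum_r\binom{r+m-i-1}{m-1}\binom{r+n-j-1}{n-1}r^2q^r$ --- is valid and complete, but it verifies a guessed answer rather than deriving it. Both arguments invoke the non-Escher property at exactly one point (the paper at $r=0$, you at $a_0=0$).
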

	\begin{proof}
		By 
		Theorem~\ref{product-of-F-corollary}, 
		\[
		F^\cyc_{m,\cDes(u)} \cdot F^\cyc_{n,\cDes(v)}
		= \sum_{[\DAG{w}] \in [\DAG{u}] \shuffle_{\cyc} [\DAG{v}]} F^\cyc_{m+n,\cDes(w)}.
		\]
		Apply the mapping $\Psi$ to both sides: 
		on one hand, by Lemma~\ref{t:Psi_cyc_MM_and_FF},
		\[
		\Psi(F^{\cyc}_{m,\cDes(u)} \cdot F^{\cyc}_{n,\cDes(v)}) 
		= (1-q) \sum_{r} \binom{r+m-\cdes(u)-1}{m-1} \binom{r+n-\cdes(v)-1}{n-1} r^2 q^r.
		\]
		On the other hand, by Lemma~\ref{t:Psi_cyc_M_and_F},
		\[
		\Psi \left( \sum_{[\DAG{w}] \in [\DAG{u}] \shuffle_{\cyc} [\DAG{v}]} F^\cyc_{m+n,\cDes(w)} \right)
		= (1-q) \sum_{[\DAG{w}] \in [\DAG{u}] \shuffle_{\cyc} [\DAG{v}]} \sum_r \binom{r+m+n-\cdes(w)-1}{m+n-1} r q^r.
		\]
		It follows that
		\[
		\sum_{[\DAG{w}] \in [\DAG{u}] \shuffle_{\cyc} [\DAG{v}]} \binom{r+m+n-\cdes(w)-1}{m+n-1}
		= \binom{r+m-\cdes(u)-1}{m-1} \binom{r+n-\cdes(v)-1}{n-1} r
		\qquad (\forall r > 0).
		\]
		This also holds for $r=0$, since the cyclic descent number of a permutation is strictly positive, so that both sides are equal to zero. Using
		\[
		\frac{q^c}{(1-q)^{m+n}} 
		= \sum_r \binom{r+m+n-c-1}{m+n-1} q^r
		\qquad (\forall c > 0),
		\]
		this can be written, equivalently, as
		\[
		\sum_{[\DAG{w}] \in [\DAG{u}] \shuffle_{\cyc} [\DAG{v}]} q^{\cdes(w)}
		= (1-q)^{m+n} \sum_r \binom{r+m-\cdes(u)-1}{m-1} \binom{r+n-\cdes(v)-1}{n-1} r q^r.
		\]
	\end{proof}
	
	\begin{corollary}\label{t:num_shuffles_given_cdes}
		Let $A$ and $B$ be two disjoint sets of integers,
		with $|A| = m$ and $|B| = n$.
		For each  $u \in \symm_A$ and $v \in \symm_B$,
		if $\cdes(u) = i$ and $\cdes(v) = j$ then the number of cyclic shuffles 
		$[\DAG w]$ in $[\DAG{u}] \shuffle_{\cyc} [\DAG{v}]$ with $\cdes(w)=k$ is equal to
		\[
		\begin{aligned}
		a(m,n,i,j,k) 
		&:= k \binom{m+j-i-1}{k-i} \binom{n+i-j-1}{k-j}
		+ (m+n-k) \binom{m+j-i-1}{k-i-1} \binom{n+i-j-1}{k-j-1} \\
		&\;= \frac{k(m-i)(n-j) + (m+n-k)ij}{(m+j-i)(n+i-j)} \binom{m+j-i}{k-i} \binom{n+i-j}{k-j}. 
		\end{aligned}
		\]
	\end{corollary}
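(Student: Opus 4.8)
The plan is to extract the coefficient of $q^k$ from the generating-function formula of Theorem~\ref{t:cdes_of_cyclic_shuffle}. Since the cyclic descent number of a permutation of a nonempty set is positive, and the cases $m=0$ or $n=0$ are trivial (there is then a unique cyclic shuffle, equal to the other input), we may assume $m,n\ge 1$; note also that then $m+j-i\ge 1$ and $n+i-j\ge 1$, a fact used below. By Theorem~\ref{t:cdes_of_cyclic_shuffle}, the desired count is the coefficient of $q^k$ in $(1-q)^{m+n}\sum_r \binom{r+m-i-1}{m-1}\binom{r+n-j-1}{n-1}\,r\,q^r$, so it suffices to rewrite the inner sum as $\frac{1}{(1-q)^{m+n}}\sum_k b_k\,q^k$ for suitable integers $b_k$; since $\frac{q^c}{(1-q)^{m+n}}=\sum_r\binom{r+m+n-1-c}{m+n-1}q^r$ (as used in the proof of Theorem~\ref{t:cdes_of_cyclic_shuffle}), the count is then exactly $b_k$.

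First I would absorb the factor $r$ via the identity $r\binom{r+m-i-1}{m-1}=i\binom{r+m-i}{m}+(m-i)\binom{r+m-i-1}{m}$ (the same absorption computation occurring inside the proof of Lemma~\ref{t:Psi_cyc_M_and_F}). Then I would apply the Pfaff--Saalsch\"utz identity~\eqref{e.pfaff}, in the same specialized shape as in the proof of Corollary~\ref{t:num_shuffles_given_des}, twice: once to $\binom{r+m-i}{m}\binom{r+n-j-1}{n-1}$ and once to $\binom{r+m-i-1}{m}\binom{r+n-j-1}{n-1}$ (that is, with the ``$I$''-parameter equal to $i$ and to $i+1$ respectively, and with ``$N$''-parameter $n-1$). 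Collecting the results expresses $r\binom{r+m-i-1}{m-1}\binom{r+n-j-1}{n-1}$ as $\sum_t b_t\binom{r+m+n-1-t}{m+n-1}$ with
\[
b_t=i\binom{m+j-i}{t-i}\binom{n+i-j-1}{t-j}+(m-i)\binom{m+j-i-1}{t-i-1}\binom{n+i-j}{t-j}.
\]
Multiplying by $q^r$, summing over $r$, using $\sum_r\binom{r+m+n-1-t}{m+n-1}q^r=\frac{q^t}{(1-q)^{m+n}}$, and multiplying by $(1-q)^{m+n}$ then shows that the number of cyclic shuffles $[\DAG w]$ with $\cdes(w)=k$ equals $b_k$.

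It remains to identify $b_k$ with the two displayed expressions for $a(m,n,i,j,k)$. I would do this by rewriting every binomial coefficient occurring in $b_k$, in the first form of $a$, and in the second form of $a$ as a rational multiple of the single ``unshifted'' product $\binom{m+j-i}{k-i}\binom{n+i-j}{k-j}$, using $\binom{N-1}{K}=\frac{N-K}{N}\binom{N}{K}$ and $\binom{N-1}{K-1}=\frac{K}{N}\binom{N}{K}$ (legitimate since $m+j-i,\,n+i-j\ge 1$). After cancelling this common product, the claim $b_k=a(m,n,i,j,k)$ reduces to the polynomial identities
\[
i(n+i-k)(m+j-i)+(m-i)(k-i)(n+i-j)=k(m-i)(n-j)+(m+n-k)ij
\]
for the second form, and
\[
k(m+j-k)(n+i-k)+(m+n-k)(k-i)(k-j)=k(m-i)(n-j)+(m+n-k)ij
\]
for the first form. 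In each case the cubic terms in $k$ cancel, so both sides are linear in $k$, and it is enough to check equality at $k=0$ and $k=i$, which is immediate.

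The main obstacle is purely organizational: keeping straight the index shifts when Pfaff--Saalsch\"utz is invoked twice with different ``$I$'' parameters, and then matching the resulting asymmetric expression $b_k$ to the symmetric-looking displayed forms of $a$. There is no conceptual difficulty beyond what already appears in Corollary~\ref{t:num_shuffles_given_des}; indeed, absorbing the factor $r$ onto the other tensor factor instead yields the mirror-image expression $j\binom{m+j-i-1}{k-i}\binom{n+i-j}{k-j}+(n-j)\binom{m+j-i}{k-i}\binom{n+i-j-1}{k-j-1}$, which must also equal $b_k$ and provides a convenient internal consistency check (as does summing $b_k$ over $k$ and recovering the count $\frac{(m+n-1)!}{(m-1)!(n-1)!}$ of Proposition~\ref{number-of-cyclic-shuffles}).
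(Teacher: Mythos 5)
Your proposal is correct and follows essentially the same route as the paper's proof: extract the coefficient of $q^k$ from Theorem~\ref{t:cdes_of_cyclic_shuffle} by combining the Pfaff--Saalsch\"{u}tz identity \eqref{e.pfaff1} with the binomial absorption identity $r\binom{r+m-i-1}{m-1}=i\binom{r+m-i}{m}+(m-i)\binom{r+m-i-1}{m}$ already appearing in Lemma~\ref{t:Psi_cyc_M_and_F}. The only difference is the order of the two steps: the paper applies Pfaff--Saalsch\"{u}tz once (with parameters $m-1$, $n-1$) and then absorbs the factor $r$ into the resulting binomial $\binom{r-t+m+n-2}{m+n-2}$, landing directly on the first displayed form of $a(m,n,i,j,k)$, whereas you absorb $r$ first and apply Pfaff--Saalsch\"{u}tz twice, reaching an equal but asymmetric expression $b_k$ whose reconciliation with the stated formulas is routine and correctly carried out (noting only that for the first-form identity one must also observe that the quadratic terms in $k$ cancel, not just the cubic ones).
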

	\begin{proof}
		Similar to the proof of Corollary~\ref{t:num_shuffles_given_des} above.
		By Theorem~\ref{t:cdes_of_cyclic_shuffle}, the number of cyclic shuffles of $[\DAG u]$ and $[\DAG v]$ with cyclic descent number $k$ is equal to the coefficient of $q^k$ in the product
		\[
		(1-q)^{m+n} \sum_r \binom{r+m-i-1}{m-1} \binom{r+n-j-1}{n-1} r q^r.
		\]
		We therefore want to show that
		\[
		(1-q)^{m+n} \sum_r \binom{r+m-i-1}{m-1} \binom{r+n-j-1}{n-1} r q^r
		= \sum_t a(m,n,i,j,t) q^t.
		\]
		
		Using identity~\eqref{e.pfaff1} 
		with $m-1$ and $n-1$ instead of $m$ and $n$, it follows that
		\begin{equation}\label{e.pfaff2c}
			\begin{aligned}
				&\, \sum_r \binom{r+m-i-1}{m-1} \binom{r+n-j-1}{n-1} r q^r \\
				= &\, \sum_r \sum_{t\le r}  \binom{m+j-i-1}{t-i} \binom{n+i-j-1}{t-j} \binom{r-t+m+n-2}{m+n-2} r q^r.
			\end{aligned}
		\end{equation}
		Since
		\[
		(1-q)^{-(m+n)} = \sum_{s \ge 0} \binom{s+m+n-1}{m+n-1} q^s
		\]
		and
		\[
		r \binom{r-t+m+n-2}{m+n-2} 
		=  t \binom{r-t+m+n-1}{m+n-1} + (m+n-t-1) \binom{r-t-1+m+n-1}{m+n-1},
		\]
		it follows that the RHS of~\eqref{e.pfaff2c} is equal to $(1-q)^{-(m+n)}$ times
		\[
		\begin{aligned}
		& 
		\sum_t \binom{m+j-i-1}{t-i} \binom{n+i-j-1}{t-j} t q^t
		+ \sum_t \binom{m+j-i-1}{t-i} \binom{n+i-j-1}{t-j} (m+n-t-1) q^{t+1} \\
		=&
		\sum_t \left[ \binom{m+j-i-1}{t-i} \binom{n+i-j-1}{t-j} t
		+ \binom{m+j-i-1}{t-i-1} \binom{n+i-j-1}{t-j-1} (m+n-t) \right] q^t
		\end{aligned}
		\]
		so that
		\[
		(1-q)^{m+n} \sum_r \binom{r+m-i-1}{m-1} \binom{r+n-j-1}{n-1} r q^r
		= \sum_t a(m,n,i,j,t) q^t,
		\]
		as claimed. 
	\end{proof}
	
	\subsection{Distribution of cyclic descents over SYT}
	
	
	Let us first restate Corollary~\ref{t:Schur_in_hFcyc} in an equivalent form.
	
	\begin{lemma}\label{cor:QS1}
		For any skew shape $\lambda/\mu$ of size $n$, which is not a connected ribbon, 
		for any cyclic extension $(\cDes,p)$ of $\Des$ on $\SYT(\lambda/\mu)$,
		and for any subset $\varnothing \subsetneq J \subsetneq [n]$,
		the fiber size 
		\[
		\#\{T\in \SYT(\lambda/\mu):\ \cDes(T)=J\}
		\]
		is equal to 
		the coefficient of $\hF^{\cyc}_{n,[J]}$ in the expansion of $s_{\lambda/\mu}$ in terms of normalized fundamental cyclic quasi-symmetric functions. 
	\end{lemma}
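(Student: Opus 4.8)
The plan is to read the statement off from Corollary~\ref{t:Schur_in_hFcyc}, of which it is essentially a reformulation relative to a fixed basis. First recall from Observation~\ref{t:proper_basis} (equivalently Lemma~\ref{t:Fcyc_basis}(3)) that $\BBB_0 = \{\hF^{\cyc}_{n,A} : A \in \cnes\}$ is a $\ZZ$-basis of $\cQSym_n$, and that the sub-family indexed by $c2_{0,n}^{[n]}$, which is obtained from $\cnes$ by deleting the single class of $[n]$, is a $\ZZ$-basis of $\cQSym_n^-$. Since $s_{\lambda/\mu} \in \Sym_n \subseteq \cQSym_n$, the coefficients of its expansion in $\BBB_0$ are uniquely determined; for $\varnothing \subsetneq J \subsetneq [n]$ one has $[J] \in c2_{0,n}^{[n]} \subseteq \cnes$, so the coefficient of $\hF^{\cyc}_{n,[J]}$ is well-defined, and it suffices to exhibit one expansion and identify that coefficient there.

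Because $\lambda/\mu$ is not a connected ribbon, Theorem~\ref{t:cyclic_extension} provides a cyclic extension $(\cDes,p)$ of $\Des$ on $\SYT(\lambda/\mu)$, and moreover guarantees that every such extension has the same fiber sizes $\#\cDes^{-1}(J)$; in particular $\#\{T \in \SYT(\lambda/\mu) : \cDes(T) = J\}$ is independent of the chosen extension, and by the equivariance axiom of Definition~\ref{def:cDes} it depends only on the cyclic class $[J]$. Corollary~\ref{t:Schur_in_hFcyc} then gives
\[
s_{\lambda/\mu} = \sum_{A \in c2_{0,n}^{[n]}} m^{\cyc}(A)\, \hF^{\cyc}_{n,A},
\qquad
m^{\cyc}(A) := \#\{T \in \SYT(\lambda/\mu) : \cDes(T) = J\} \quad (J \in A).
\]
Reading the right-hand side as a $\BBB_0$-expansion (with vanishing coefficient on $\hF^{\cyc}_{n,\{[n]\}}$), the coefficient of $\hF^{\cyc}_{n,[J]}$ equals $m^{\cyc}([J]) = \#\{T \in \SYT(\lambda/\mu) : \cDes(T) = J\}$; by the uniqueness noted above, this is the asserted coefficient.

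I expect no genuine obstacle: the substance is already contained in Corollary~\ref{t:Schur_in_hFcyc}, and the only point needing care is the bookkeeping that $c2_{0,n}^{[n]}$ sits inside the index set $\cnes$ of $\BBB_0$, so that the expansion in Corollary~\ref{t:Schur_in_hFcyc} is literally a basis expansion and its coefficients may be compared with an arbitrary one. If a self-contained derivation is preferred, one simply retraces the proof of Corollary~\ref{t:Schur_in_hFcyc}: start from Theorem~\ref{conj:QS1}, $n\, s_{\lambda/\mu} = \sum_{T \in \SYT(\lambda/\mu)} F^{\cyc}_{n,\cDes(T)}$; invoke the non-Escher axiom to see that the sum runs over $T$ with $\varnothing \subsetneq \cDes(T) \subsetneq [n]$; group summands by cyclic class; and divide using $n = \#A \cdot d_A$ to pass to the normalized functions $\hF^{\cyc}_{n,A}$, whereupon the coefficient of $\hF^{\cyc}_{n,[J]}$ visibly equals $m^{\cyc}(J)$.
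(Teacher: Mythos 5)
Your proposal is correct and matches the paper, which presents this lemma as a direct restatement of Corollary~\ref{t:Schur_in_hFcyc} with no separate argument; your added bookkeeping (that $c2_{0,n}^{[n]}$ sits inside the basis index set $\cnes$ from Lemma~\ref{t:Fcyc_basis}(3), so the expansion is a genuine basis expansion with uniquely determined coefficients) is exactly the point that makes the restatement legitimate.
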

	
	It follows that identities involving Schur functions may be interpreted as statements about the distribution of $\cDes$ over standard Young tableaux.
	Several examples will be given below.
	
	\bigskip
	
	
	
	Write $\lambda/\mu = \nu^1 \oplus \cdots \oplus \nu^t$ to indicate that 
	$\nu^1, \dots,\nu^t$ are the connected components of the skew shape $\lambda/\mu$, ordered from southwest to northeast. 
	It is well known that 
	\begin{equation}\label{eq:prod}
		s_{\nu^1\oplus \cdots \oplus  \nu^t}=s_{\nu^1}\cdots s_{\nu^t}.
	\end{equation}
	
	\medskip
	
	For a subset $J\subseteq [n]$ denote ${\bf x}^J:=\prod\limits_{i\in J}x_i$.
	
	\begin{proposition}\label{prop:disconnected_skew1}
		Let 
		$\lambda \vdash m$ and $\mu \vdash n$ be non-hook partitions. 
		If $A_{\lambda} \subseteq \symm_{[m]}$
		and $A_{\mu} \subseteq \symm_{[m+1,m+n]}$ are sets of permutations satisfying\footnote{Such sets $A_\lambda, A_\mu$ are known to exist both from \cite[Theorem 4.4]{ARR_CDes} and from more recent work of B. Huang \cite{Huang}.},
		\[
		\sum_{T \in \SYT(\lambda)} {\bf x}^{\cDes(T)}
		= \sum_{\pi \in A_{\lambda}} {\bf x}^{\cDes(\pi)} 
		\qquad \text{\rm and} \qquad 
		\sum_{T \in \SYT(\mu)} {\bf x}^{\cDes(T)} 
		= \sum_{\pi \in A_{\mu}} {\bf x}^{\cDes(\pi)}
		\]
		then one has 
		\[
		\sum_{T \in \SYT(\lambda \oplus \mu)} {\bf x}^{\cDes(T)} 
		= \frac{1}{mn} \sum_{\substack{\sigma \in A_\lambda \\ \tau \in A_\mu}} 
		\sum_{\substack{w \in \symm_{m+n} \\ [\DAG w] \in [\DAG \sigma] \shuffle_{\cyc} [\DAG \tau]}} {\bf x}^{\cDes(w)}.
		\]
	\end{proposition}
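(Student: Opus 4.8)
The plan is to push the factorization $s_{\lambda\oplus\mu}=s_\lambda s_\mu$ of \eqref{eq:prod} through the ``cyclic Gessel formula'' of Theorem~\ref{conj:QS1} and the cyclic shuffle formula of Theorem~\ref{product-of-F-corollary}, and then to read off the coefficients of the squarefree monomials ${\bf x}^J$ using Corollary~\ref{t:Schur_in_hFcyc}.

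First I would rewrite the hypotheses. Since the squarefree monomials ${\bf x}^J$ with $J\subseteq[m]$ are linearly independent, the assumed equality $\sum_{T\in\SYT(\lambda)}{\bf x}^{\cDes(T)}=\sum_{\pi\in A_\lambda}{\bf x}^{\cDes(\pi)}$ is equivalent to the multiset identity $\#\{T\in\SYT(\lambda):\cDes(T)=J\}=\#\{\pi\in A_\lambda:\cDes(\pi)=J\}$ for all $J\subseteq[m]$, and similarly for $\mu$; because $F^{\cyc}_{m,J}$ depends only on $J$, this upgrades to $\sum_{T\in\SYT(\lambda)}F^{\cyc}_{m,\cDes(T)}=\sum_{\sigma\in A_\lambda}F^{\cyc}_{m,\cDes(\sigma)}$ and likewise for $\mu$. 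As $\lambda$ and $\mu$ are non-hooks, hence not connected ribbons, Theorem~\ref{conj:QS1} applies and gives $m\,s_\lambda=\sum_{\sigma\in A_\lambda}F^{\cyc}_{m,\cDes(\sigma)}$ and $n\,s_\mu=\sum_{\tau\in A_\mu}F^{\cyc}_{n,\cDes(\tau)}$. Multiplying these, invoking \eqref{eq:prod}, and applying Theorem~\ref{product-of-F-corollary} to each product $F^{\cyc}_{m,\cDes(\sigma)}\cdot F^{\cyc}_{n,\cDes(\tau)}$ (legitimate since $[m]\sqcup[m+1,m+n]=[m+n]$) yields
\[
mn\,s_{\lambda\oplus\mu}=\sum_{\sigma\in A_\lambda}\ \sum_{\tau\in A_\mu}\ \sum_{[\DAG{w}]\in[\DAG{\sigma}]\shuffle_{\cyc}[\DAG{\tau}]}F^{\cyc}_{m+n,\cDes(w)}.
\]

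Next I would introduce the $\ZZ$-linear map $\Theta$ on $\cQSym^-_{m+n}$ that sends the basis element $\hF^{\cyc}_{m+n,A}$ (with $A$ ranging over $c2_{0,m+n}^{[m+n]}$, a basis by Observation~\ref{t:proper_basis}) to $\sum_{J\in A}{\bf x}^J$; equivalently $\Theta(F^{\cyc}_{m+n,J})=\sum_{i\in[m+n]}{\bf x}^{\,J-i}$, the sum over the $m+n$ cyclic shifts of $J$ taken with multiplicity. On the one hand, Corollary~\ref{t:Schur_in_hFcyc} applied to the disconnected (hence non-ribbon) shape $\lambda\oplus\mu$ gives $\Theta(s_{\lambda\oplus\mu})=\sum_{T\in\SYT(\lambda\oplus\mu)}{\bf x}^{\cDes(T)}$. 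On the other hand, applying $\Theta$ to the displayed identity and noting that, for each cyclic class $[\DAG{w}]$, the expression $\Theta(F^{\cyc}_{m+n,\cDes(w)})=\sum_i{\bf x}^{\cDes(w)-i}$ is precisely $\sum_{w'}{\bf x}^{\cDes(w')}$ summed over the $m+n$ representatives $w'$ of $[\DAG{w}]$ (rotating a bijective word rotates its cyclic descent set, and the $m+n$ rotations are distinct), one obtains $mn\sum_{T}{\bf x}^{\cDes(T)}=\sum_{\sigma,\tau}\sum_{w\in\symm_{m+n},\,[\DAG{w}]\in[\DAG{\sigma}]\shuffle_{\cyc}[\DAG{\tau}]}{\bf x}^{\cDes(w)}$. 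Dividing by $mn$ is the claim.

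The only delicate point is the bookkeeping in the last step: checking that the two descriptions of $\Theta$ agree on $F^{\cyc}_{m+n,J}$ (this is the stabilizer identity $d_J\cdot\#[J]=m+n$ together with the fact that each cyclic shift of $J$ occurs exactly $d_J$ times among $\{J-i:i\in[m+n]\}$, cf.\ Definition~\ref{def:Fcyc_normalized} and Lemma~\ref{t:Fcyc_properties}(2)), and matching this against the count of representatives on the shuffle side so that the factor $\tfrac1{mn}$ comes out exactly. This needs care precisely because ${\bf x}^{\cDes(w)}$ depends on the chosen representative $w$ whereas $F^{\cyc}_{m+n,\cDes(w)}$ does not; everything else is formal.
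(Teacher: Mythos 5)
Your proposal is correct and follows essentially the same route as the paper's proof: factor $s_{\lambda\oplus\mu}=s_\lambda s_\mu$, apply Theorem~\ref{conj:QS1} to each factor, replace tableaux by permutations via the equidistribution hypothesis, expand the product with Theorem~\ref{product-of-F-corollary}, and then extract fiber sizes. The only difference is cosmetic: where the paper invokes rotation-invariance of the fiber sizes and ``compares coefficients of $F^\cyc_{m+n,J}$,'' you make that step explicit via the linear map $\Theta$ sending $\hF^{\cyc}_{m+n,A}\mapsto\sum_{J\in A}{\bf x}^J$, which is a clean and correct formalization of the same bookkeeping.
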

	
	\begin{proof} 
		First, by Theorem~\ref{conj:QS1}, 
		\[
		s_{\lambda \oplus \mu} = \frac{1}{m+n} \sum_{T \in \SYT(\lambda \oplus \mu)} F^\cyc_{m+n, \cDes(T)}.
		\]
		On the other hand, 
		by Equation~\eqref{eq:prod} combined  with Theorem~\ref{conj:QS1}, Theorem~\ref{product-of-F-corollary}, and our assumptions regarding the equidistribution of $\cDes$ on corresponding sets of permutations and tableaux,
		\[
		\begin{aligned}
		s_{\lambda\oplus \mu} = s_\lambda s_\mu 
		&= \frac{1}{m} \sum_{T \in \SYT(\lambda)} F^\cyc_{m, \cDes(T)} \cdot \frac{1}{n} \sum_{T \in \SYT(\mu)} F^\cyc_{n, \cDes(T)} \\
		&= \frac{1}{mn} \sum_{\substack{\sigma \in A_\lambda \\ \tau \in A_\mu}} F^\cyc_{m, \cDes(\sigma)} F^\cyc_{n, \cDes(\tau)} \\
		&= \frac{1}{mn} \sum_{\substack{\sigma \in A_\lambda \\ \tau \in A_\mu}} \sum_{[\DAG w] \in [\DAG \sigma] \shuffle_{\cyc} [\DAG \tau]} F^\cyc_{m+n, \cDes(w)} \\
		&= \frac{1}{mn(m+n)} \sum_{\substack{\sigma \in A_\lambda \\ \tau \in A_\mu}} \sum_{\substack{w \in \symm_{m+n} \\ [\DAG w] \in [\DAG \sigma] \shuffle_{\cyc} [\DAG \tau]}} F^\cyc_{m+n, \cDes(w)}.
		\end{aligned}
		\]
		One concludes that
		\[
		\sum_{T \in \SYT(\lambda \oplus \mu)} F^\cyc_{m+n, \cDes(T)} 
		= \frac{1}{mn} \sum_{\substack{\sigma \in A_\lambda \\ \tau \in A_\mu}} \sum_{\substack{w \in \symm_{m+n} \\ [\DAG w]\in [\DAG \sigma] \shuffle_{\cyc} [\DAG \tau]}} F^\cyc_{m+n, \cDes(w)}.
		\]
		Now, fiber sizes of the $\cDes$ map on both 
		$\SYT(\lambda\oplus \mu)$ and $\{w\in \symm_{n+m}:\ [\DAG w]\in [\DAG \sigma] \shuffle_{\cyc} [\DAG \tau]\}$ are invariant under cyclic rotation.
		Comparing coefficients of $F^\cyc_{m+n, J}$ on both sides of the above equation thus completes the proof.
	\end{proof}

	The next application of Lemma~\ref{cor:QS1} relates the distribution of $\cDes$ on permutations  
	to its distribution on tableaux of various straight shapes $\lambda$. 
	
	
	For every positive integer $n$ define the corresponding {\em multivariate cyclic Eulerian polynomial} by
	\[
	\symm_n^\cDes(\ttt)
	:= \symm_n^{\cDes}(t_1,\ldots,t_n)
	:= \sum_{\pi \in \symm_n} \prod_{i \in \cDes(\pi)} t_i. 
	\]
	
	For every skew shape $\lambda/\mu$ which is not a connected ribbon denote
	\[
	\symm_{\lambda/\mu}^\cDes(\ttt) 
	:= \symm_{\lambda/\mu}^\cDes(t_1,\ldots,t_n)
	:= \sum_{T \in \SYT(\lambda/\mu)} \prod_{i \in \cDes(T)} t_i.
	\]
	
	Let $f^\lambda := \#\SYT(\lambda)$.
	
	\smallskip
	
	\begin{theorem}\label{conj22}
		{\rm (\cite[Theorem 1.2]{ARR_CDes})}\\
		For every positive integer $n$
		\[
		\symm_n^\cDes(\ttt)
		= \sum_{\substack{\textnormal{non-hook}\\ \lambda \vdash n}}
		f^\lambda
		\symm_{\lambda}^\cDes(\ttt) 
		+ \sum_{k=1}^{n-1} \binom{n-2}{k-1} 
		\symm_{(1^k) \oplus (n-k)}^\cDes(\ttt),
		\]
		where the last summation is over skew shapes $(1^k) \oplus (n-k)$, $1\le k\le n-1$, consisting of one column of size $k$ and one row of size $n-k$.
	\end{theorem}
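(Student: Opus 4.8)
The plan is to lift the identity to one between symmetric functions and then reduce it to a short Schur‑expansion computation. Expanding each $\symm_\bullet^\cDes(\ttt)$ in the monomials $\ttt^J := \prod_{i\in J}t_i$ $(J\subseteq[n])$, the asserted identity is equivalent (by linear independence of the squarefree monomials $\ttt^J$) to the statement that, for every $J\subseteq[n]$,
\begin{multline*}
\#\{\pi\in\symm_n:\cDes(\pi)=J\}
= \sum_{\substack{\textnormal{non-hook}\\ \lambda\vdash n}} f^\lambda\,\#\{T\in\SYT(\lambda):\cDes(T)=J\} \\
+ \sum_{k=1}^{n-1}\binom{n-2}{k-1}\,\#\{T\in\SYT((1^k)\oplus(n-k)):\cDes(T)=J\},
\end{multline*}
where $\cDes$ on $\symm_n$ is Cellini's cyclic descent map (Example~\ref{ex:Cellini}) and $\cDes$ on standard Young tableaux is an arbitrary cyclic extension, all fiber sizes being independent of that choice by Theorem~\ref{t:cyclic_extension}. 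For $J=\varnothing$ or $J=[n]$ all the fiber sizes above vanish by the non-Escher axiom, so the identity is trivial there; hence we may assume $\varnothing\subsetneq J\subsetneq[n]$ (and $n\ge 2$).

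Next I would recognize each of the three fiber sizes as the coefficient of $\hF^\cyc_{n,[J]}$, with respect to the normalized fundamental basis $\{\hF^\cyc_{n,A}:A\in c2_{0,n}^{[n]}\}$ of $\cQSym_n^-$ (Observation~\ref{t:proper_basis}), in a symmetric function that in fact lies in $\cQSym_n^-$. Running the argument in the proof of Theorem~\ref{conj:QS1} for the finite set $\symm_n$ with Cellini's cyclic extension — with Gessel's formula for SYT replaced by the classical identity $\sum_{\pi\in\symm_n}F_{n,\Des(\pi)}=h_1^n=\sum_{\lambda\vdash n}f^\lambda s_\lambda$ — shows that $\#\{\pi:\cDes(\pi)=J\}$ is the $\hF^\cyc_{n,[J]}$‑coefficient of $h_1^n$. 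For a non‑hook straight shape $\lambda$, which is not a connected ribbon, Corollary~\ref{t:Schur_in_hFcyc} (restated as Lemma~\ref{cor:QS1}) identifies $\#\{T\in\SYT(\lambda):\cDes(T)=J\}$ with the $\hF^\cyc_{n,[J]}$‑coefficient of $s_\lambda$; and since $(1^k)\oplus(n-k)$ is disconnected, hence not a connected ribbon, for $1\le k\le n-1$, the same corollary identifies $\#\{T\in\SYT((1^k)\oplus(n-k)):\cDes(T)=J\}$ with the $\hF^\cyc_{n,[J]}$‑coefficient of $s_{(1^k)\oplus(n-k)}=e_k h_{n-k}$ (using \eqref{eq:prod}). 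Since the fiber sizes are constant on the cyclic orbit of $J$ (by equivariance, and trivially for the hook shapes), verifying the displayed identity for all $J$ is, by uniqueness of expansion in the basis of Observation~\ref{t:proper_basis}, equivalent to the single symmetric‑function identity
\[
h_1^n = \sum_{\substack{\textnormal{non-hook}\\ \lambda\vdash n}} f^\lambda s_\lambda + \sum_{k=1}^{n-1}\binom{n-2}{k-1}\,e_k h_{n-k}.
\]

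To finish, subtract the classical $h_1^n=\sum_{\lambda\vdash n}f^\lambda s_\lambda$; since the hooks of $n$ are the shapes $(n-j,1^j)$ with $f^{(n-j,1^j)}=\binom{n-1}{j}$, it remains to prove $\sum_{j=0}^{n-1}\binom{n-1}{j}s_{(n-j,1^j)}=\sum_{k=1}^{n-1}\binom{n-2}{k-1}\,e_k h_{n-k}$. Now $e_k h_{n-k}=s_{(1^k)\oplus(n-k)}=s_{(n-k,1^k)}+s_{(n-k+1,1^{k-1})}$ (the Pieri rule, as in the proof of Proposition~\ref{prop:QS_hook}); substituting this, reindexing the two resulting sums over hook Schur functions, and applying Pascal's identity $\binom{n-2}{j-1}+\binom{n-2}{j}=\binom{n-1}{j}$ makes the coefficient of every $s_{(n-j,1^j)}$ agree on both sides. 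The argument is chiefly organizational, and the one genuinely new computation is this Pieri‑plus‑Pascal identity, which is routine; the step I expect to need the most care is the identification of $\#\{\pi\in\symm_n:\cDes(\pi)=J\}$ with the $\hF^\cyc_{n,[J]}$‑coefficient of $h_1^n$ — i.e.\ rerunning the machinery of Theorem~\ref{conj:QS1} and Corollary~\ref{t:Schur_in_hFcyc} for $\symm_n$ in place of $\SYT(\lambda)$ and invoking $\sum_\pi F_{n,\Des(\pi)}=h_1^n$ — together with the careful handling of the $J\in\{\varnothing,[n]\}$ and small‑$n$ boundary cases.
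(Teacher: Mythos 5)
Your proposal is correct and follows essentially the same route as the paper: both reduce the claim, via the uniqueness of the expansion in the basis $\{\hF^{\cyc}_{n,A}\}$ and the identification of fiber sizes with coefficients (Lemma~\ref{cor:QS1}), to the symmetric-function identity $s_1^n=\sum_{\textnormal{non-hook}\ \lambda\vdash n}f^\lambda s_\lambda+\sum_{k}\binom{n-2}{k-1}s_{(1^k)\oplus(n-k)}$, proved by Pieri's rule plus Pascal's identity. The only cosmetic difference is that the paper handles the permutation side by the descent-preserving bijection $\symm_n\to\SYT((1)^{\oplus n})$ and then applies Lemma~\ref{cor:QS1} to that skew shape, whereas you rerun the argument of Theorem~\ref{conj:QS1} directly on $\symm_n$ with Cellini's extension (which is exactly the content of Remark~\ref{rem:versus}).
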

	
	\begin{proof} 
		First, by 
		Pieri's rule
		\begin{equation}\label{eq:nearhook}
			s_{(1^k) \oplus (n-k)}
			= s_{(1^k)} s_{(n-k)}
			= s_{(n-k,1^{k})} + s_{(n-k+1,1^{k-1})}.
		\end{equation}
		Hence
		\[
		\sum_{k=1}^{n-1} \binom{n-2}{k-1} s_{(1^k) \oplus (n-k)}
		= \sum_{k=0}^{n-1} \binom{n-1}{k} s_{(n-k,1^{k})}
		= \sum_{k=0}^{n-1} f^{(n-k,1^{k})} s_{(n-k,1^{k})}.
		\]
		Combining this with Equation~\eqref{eq:prod} and iterations of Pieri's rule we get
		\[
		s_{(1)^{\oplus n}}
		= s_1^n
		= \sum_{\lambda \vdash n} f^\lambda s_\lambda
		= \sum_{\substack{\textnormal{non-hook}\\ \lambda \vdash n}} f^\lambda s_\lambda 
		+ \sum_{k=1}^{n-1} \binom{n-2}{k-1} s_{(1^k) \oplus (n-k)}.
		\]
		The bijection $f: \symm_n \to \SYT(1^{\oplus n})$, which sends a permutation $w$ to the standard Young tableau whose entries are
		$w^{-1}(1),\ldots, w^{-1}(n)$ (read from southwest to northeast),
		preserves descent sets and therefore also the distribution of cyclic descent sets.
		Lemma~\ref{cor:QS1} completes the proof.
	\end{proof}
	
	\medskip
	
	In a similar fashion one can simplify the proofs of Theorem 5.1 and Corollary 7.1 
	in~\cite{ARR_CDes}.
	
	
	%
	
	\bigskip
	
	Here is another, more specific, application.
	For $2 \le k \le n-2$ consider two shapes obtained by adding a cell to the hook $(n-k,1^{k-1}) \vdash n-1$:
	\begin{enumerate}
		\item
		Add a cell at the inner corner, to get $(n-k,2,1^{k-2})$.
		\item
		Add a disconnected cell at the northeast corner, to get $(n-k,1^{k-1}) \oplus (1)$.
	\end{enumerate}
	
	\begin{example} 
		For $n=6$ and $k=3$, the two shapes obtained from the hook $(3,1,1)$ are
		\[
		(3,2,1) 
		= \young(\hfill\hfill\hfill,\hfill\hfill,\hfill) 
		\qquad \text{and}\qquad
		(3,1,1) \oplus (1)
		= \young(:::\hfill,\hfill\hfill\hfill,\hfill,\hfill) \qquad .
		\]
	\end{example}
	
	The following proposition shows that the fiber sizes of cyclic descent maps on these two shapes differ by at most one.
	This explains the slight difference between Theorem 3.11(b) and Theorem 3.11(c) (see also Proposition 3.17 and Proposition 3.18) in~\cite{AER}.
	
	\begin{proposition}\label{prop:near-hooks}
		For every $2 \le k \le n-2$ and $\varnothing \subsetneq J \subsetneq [n]$, if $|J| = k$ then
		\[
		\#\{T\in \SYT((n-k,1^{k-1})\oplus (1)):\ \cDes(T)=J\}
		- \#\{T\in \SYT(n-k,2,1^{k-2}):\ \cDes(T)=J\}
		= 1.
		\]
		If $|J| \ne k$ then 
		this difference is $0$.
	\end{proposition}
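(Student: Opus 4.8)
The plan is to translate the difference of fiber sizes into a difference of coefficients of a fixed normalized fundamental cyclic quasi-symmetric function via Lemma~\ref{cor:QS1}, and then to identify that difference through a short Pieri computation. First I would record that, for $2\le k\le n-2$ (so $n-k\ge 2$), the three skew shapes $(n-k,1^{k-1})\oplus(1)$, $(n-k,2,1^{k-2})$ and $(1^k)\oplus(n-k)$ all have $n$ cells and none of them is a connected ribbon: the first and the last are disconnected, while the second contains a $2\times 2$ square (in rows and columns $1$ and $2$). Consequently Lemma~\ref{cor:QS1} applies to each of them, so for $\varnothing\subsetneq J\subsetneq[n]$ the quantity in the statement equals the coefficient of $\hF^{\cyc}_{n,[J]}$ in $s_{(n-k,1^{k-1})\oplus(1)}-s_{(n-k,2,1^{k-2})}$ (using linearity of coefficient extraction in the basis from Lemma~\ref{t:Fcyc_basis}(3)).

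Next I would compute that difference of Schur functions. By \eqref{eq:prod}, $s_{(n-k,1^{k-1})\oplus(1)}=s_{(n-k,1^{k-1})}\cdot s_{(1)}$, and because $2\le k\le n-2$ the hook $(n-k,1^{k-1})$ has exactly three addable cells (at the end of its arm, at the position $(2,2)$, and at the end of its leg), so Pieri's rule gives
\[
s_{(n-k,1^{k-1})\oplus(1)}
= s_{(n-k+1,1^{k-1})}+s_{(n-k,2,1^{k-2})}+s_{(n-k,1^{k})} .
\]
Subtracting $s_{(n-k,2,1^{k-2})}$ and then invoking \eqref{eq:nearhook}, which reads $s_{(n-k,1^{k})}+s_{(n-k+1,1^{k-1})}=s_{(1^k)\oplus(n-k)}$, yields
\[
s_{(n-k,1^{k-1})\oplus(1)}-s_{(n-k,2,1^{k-2})}=s_{(1^k)\oplus(n-k)} .
\]

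Finally I would read off the coefficient of $\hF^{\cyc}_{n,[J]}$ from $s_{(1^k)\oplus(n-k)}$. Applying Lemma~\ref{cor:QS1} once more, this coefficient equals $\#\{T\in\SYT((1^k)\oplus(n-k)):\cDes(T)=J\}$, and by \eqref{eq:oplus} (valid for $1\le k\le n-1$, in particular on our range) the multiset of cyclic descent sets of $\SYT((1^k)\oplus(n-k))$ is precisely $\{J\subseteq[n]:|J|=k\}$ with each such $J$ occurring exactly once; hence this coefficient is $1$ when $|J|=k$ and $0$ otherwise. Combining the three displays gives the claimed value of the difference of fiber sizes.

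The argument is essentially bookkeeping built on Lemma~\ref{cor:QS1} and Pieri's rule, so I do not anticipate a genuine obstacle; the only points demanding care are verifying that all three shapes are size-$n$ skew shapes that are not connected ribbons (so that Lemma~\ref{cor:QS1} is legitimately applicable throughout $2\le k\le n-2$), and checking that the hook $(n-k,1^{k-1})$ really has the three pairwise distinct addable cells used in the Pieri step.
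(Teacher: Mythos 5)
Your proposal is correct and follows essentially the same route as the paper: the same Pieri computation for $s_{(n-k,1^{k-1})}\cdot s_{(1)}$, the same use of \eqref{eq:nearhook} to rewrite the difference as $s_{(1^k)\oplus(n-k)}$, and the same appeal to Lemma~\ref{cor:QS1} and \eqref{eq:oplus} to read off the fiber sizes. The extra care you take in checking that all three shapes are non-connected-ribbon skew shapes of size $n$ is implicit in the paper but worth making explicit.
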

	
	\begin{proof}
		By 
		Pieri's rule,
		\[
		s_{(n-k,1^{k-1}) \oplus (1)} 
		= s_{(n-k,1^{k-1})} s_{(1)}
		= s_{(n-k+1,1^{k-1})} + s_{(n-k,2,1^{k-2})} + s_{(n-k,1^{k})}.
		\]
		Combining this with Equation~\eqref{eq:nearhook} one obtains
		\[
		s_{(n-k,1^{k-1}) \oplus (1)} 
		= s_{(n-k,2,1^{k-2})} + s_{(1^{k}) \oplus (n-k)}.
		\]
		By Lemma~\ref{cor:QS1} this identity is equivalent to
		\[
		\sum_{T \in \SYT((n-k,1^{k-1}) \oplus (1))} {\bf x}^{\cDes(T)}
		= \sum_{T \in \SYT((n-k,2,1^{k-2}))} {\bf x}^{\cDes(T)}
		+ \sum_{T \in \SYT((1^{k}) \oplus (n-k))} {\bf x}^{\cDes(T)}.
		\]
		Equation~\eqref{eq:oplus}, describing the second summand on the RHS, completes the proof.
	\end{proof}

	\section{The internal coproduct and the cyclic descent module}\label{sec:co}
	
	The ring of quasi-symmetric functions admits two natural coproducts:
	the {\em inner} (or {\em internal}) coproduct, whose dual is anti-isomorphic to the product in Solomon's descent algebra~\cite{Gessel}, 
	and the {\em outer} (or {\em graded}) coproduct, whose dual is the product in the Hopf algebra of non-commutative symmetric functions~\cite{Gelfand, MalvenutoReutenauer}.
	The current section studies a cyclic analogue of the inner coproduct; 
	we leave the search for an analogue of the outer coproduct to the interested reader.
	
	
	Let us start with a short review of the internal coproduct on $\QSym$, following~\cite{Gessel}.
	Consider two countable sets of variables,
	$X = \{x_1 < x_2 < \ldots\}$ and $Y = \{y_1 < y_2 < \ldots\}$,
	labeled by positive integers and (totally) ordered accordingly. 
	Let $XY := \{x_i y_j \,:\, i, j \ge 1\}$, ordered lexicographically:
	\[
	x_{i_1} y_{j_1} < x_{i_2} y_{j_2} \iff
	i_1 < i_2 \text{ or } (i_1 = i_2 \text{ and } j_1 < j_2).
	\]
	It turns out \cite[Theorem 11]{Gessel} that fundamental quasi-symmetric functions in the variables $XY$ can be expressed in terms of corresponding functions of $X$ and $Y$ separately:
	\begin{equation}\label{eq:F_to_F_times_F}
		F_{n,\Des(\pi)}(XY) 
		= \sum_{\substack{\sigma_1, \sigma_2 \in \symm_n \\ \sigma_2 \sigma_1 = \pi}} F_{n,\Des(\sigma_1)}(X) \cdot F_{n,\Des(\sigma_2)}(Y)
		\qquad (\forall \pi \in \symm_n).
	\end{equation}
	It follows that, for any three subsets $I,J,K \subseteq [n-1]$, the nonnegative integer
	\[
	a_{K}^{IJ} := \#\{(\sigma_1, \sigma_2) \in \symm_n \times \symm_n \,:\, \Des(\sigma_1) = I,\, \Des(\sigma_2) = J,\, \sigma_2 \sigma_1 = \pi\}
	\]
	indeed depends only on ($I$, $J$ and) $K$ but not on the choice of $\pi$, as long as $\Des(\pi) = K$.
	We can therefore write
	\[
	F_{n,K}(XY) 
	= \sum_{I, J \subseteq [n-1]}
	a_{K}^{IJ} F_{n,I}(X) \cdot F_{n,J}(Y),
	\]
	and define a coproduct $\Delta_n: \QSym_n \to \QSym_n \otimes \QSym_n$ by
	\[
	\Delta_n(F_{n,K})
	:= \sum_{I, J \subseteq [n-1]}
	a_{K}^{IJ} F_{n,I} \otimes F_{n,J}
	\qquad (\forall K \subseteq [n-1]).
	\]
	This turns $\QSym_n$ into a coalgebra~\cite{Gessel}. 
	
	There is also a dual structure.
	Consider the following elements of the group ring $\ZZ[\symm_n]$:
	\[
	D_I 
	:= \sum_{\substack{\pi \in \symm_n \\ \Des(\pi) = I}} \pi
	\qquad (I \subseteq [n-1]).
	\]
	The discussion above implies that
	\[
	D_J \cdot D_I = \sum_{K \subseteq [n-1]} a_{K}^{IJ} D_K
	\qquad (\forall \,I, J \subseteq [n-1]),
	\]
	where multiplication is that of the group ring.
	It follows that the additive free abelian group
	\[
	\mathfrak{D}_n := \spn_{\,\ZZ} \{D_I \,:\, I \subseteq [n-1]\}
	\]
	is actually a subring of $\ZZ[\symm_n]$, known as {\em Solomon's descent algebra}
	\cite{Solomon}. 
	
	\bigskip
	
	The ring $\cQSym$ of cyclic quasi-symmetric functions is a subring of $\QSym$. What is its status with respect to the internal coproduct? Is there a dual structure?
	
	
	\begin{theorem}\label{t:cQSym_right_comodule}
		$\cQSym_n$ and $\cQSym_n^-$ are right comodules of $\QSym_n$ with respect to the internal coproduct:
		\[
		\Delta_n(\cQSym_n) \subseteq \cQSym_n \otimes \QSym_n \qquad (\forall n \ge 0)
		\]
		and
		\[
		\Delta_n(\cQSym_n^-) \subseteq \cQSym_n^- \otimes \QSym_n \qquad (\forall n \ge 0).
		\]
		Explicitly, in $\cQSym_n^-$ (for $n > 1$):
		\[
		\Delta_n(\hF^{\cyc}_{n,B})
		= \sum_{\substack{A \in c2_{0,n}^{[n]} \\ J \subseteq [n-1]}}
		\frac{d_A}{d_B} \tilde{a}_{B}^{AJ} \hF^{\cyc}_{n,A} \otimes F_{n,J}
		\qquad (\forall \,B \in c2_{0,n}^{[n]}),
		\]
		where the number 
		\[
		\tilde{a}_{B}^{AJ} := \#\{(\sigma_1, \sigma_2) \in \symm_n \times \symm_n \,:\, \cDes(\sigma_1) \in A,\, \Des(\sigma_2) = J,\, \sigma_2 \sigma_1 = \pi\}
		\]
		depends on $A, B \in c2_{0,n}^{[n]}$ and $J \subseteq [n-1]$ but not on $\pi$, as long as $\cDes(\pi) \in B$.
		The structure constants $\frac{d_A}{d_B} \tilde{a}_{B}^{AJ}$ are nonnegative integers.
		In addition, for $n \ge 1$:
		\[
		\Delta_n(\hF^{\cyc}_{n,\varnothing}) 
		= \Delta_n(h_n) 
		= \sum_{\lambda \vdash n} s_{\lambda} \otimes s_{\lambda}
		\]
		and
		\[
		\Delta_n(\hF^{\cyc}_{n,[n]}) 
		= \Delta_n(e_n) 
		= \sum_{\lambda \vdash n} s_{\lambda} \otimes s_{\lambda'}
		\]
		belong to $\Sym_n \otimes \Sym_n \subseteq \cQSym_n \otimes \QSym_n$.
	\end{theorem}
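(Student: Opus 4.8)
The plan is to establish a cyclic analogue of Gessel's product formula \eqref{eq:F_to_F_times_F} and deduce all the assertions from it. The key input is the identity
\[
F^{\cyc}_{n,\cDes(\pi)} = \sum_{i=0}^{n-1} F_{n,\Des(\pi c^i)} \qquad (\pi\in\symm_n,\ n\ge 2),
\]
which is Proposition~\ref{t:Fcyc_from_F} rewritten, using that as $i$ varies $\Des(\pi c^i)$ runs over the cyclic shifts of $\cDes(\pi)$, each intersected with $[n-1]$. Applying $\Delta_n$ termwise and invoking \eqref{eq:F_to_F_times_F} gives $\Delta_n(F^{\cyc}_{n,\cDes(\pi)}) = \sum_{i=0}^{n-1}\sum_{\sigma_2\sigma_1=\pi c^i} F_{n,\Des(\sigma_1)}\otimes F_{n,\Des(\sigma_2)}$. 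For each $i$ I substitute $\sigma_1 = \tau_1 c^i$, a bijection of $\symm_n$ that turns the constraint $\sigma_2\sigma_1=\pi c^i$ into $\sigma_2\tau_1=\pi$ and replaces $\Des(\sigma_1)$ by $\Des(\tau_1 c^i)$; summing over $i$ and applying the displayed identity once more (now for $\tau_1$) collapses the inner sum to $F^{\cyc}_{n,\cDes(\tau_1)}$, yielding
\[
\Delta_n\!\left(F^{\cyc}_{n,\cDes(\pi)}\right) = \sum_{\substack{\sigma_1,\sigma_2\in\symm_n\\ \sigma_2\sigma_1=\pi}} F^{\cyc}_{n,\cDes(\sigma_1)}\otimes F_{n,\Des(\sigma_2)}.
\]

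Since every permutation $\sigma_1$ satisfies $\varnothing\subsetneq\cDes(\sigma_1)\subsetneq[n]$, each first tensor factor lies in $\cQSym_n^-$; as the $F^{\cyc}_{n,\cDes(\pi)}$ (equivalently, the normalized $\hF^{\cyc}_{n,A}$ with $A\in c2_{0,n}^{[n]}$, by Observation~\ref{t:proper_basis}) span $\cQSym_n^-$, this already proves $\Delta_n(\cQSym_n^-)\subseteq\cQSym_n^-\otimes\QSym_n$. For $\cQSym_n$ I use $\cQSym_n = \cQSym_n^- + \ZZ e_n$ (Lemma~\ref{t:Fcyc_basis}), reducing to $\Delta_n(e_n)\in\cQSym_n\otimes\QSym_n$, which will follow from the explicit formula for $\Delta_n(e_n)$ below; the comodule axioms are then inherited from coassociativity of $\Delta_n$ on $\QSym_n$. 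The cases $n\le 1$ are trivial and treated separately.

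Next, to get the explicit description, I group the double sum above by the orbit $A:=[\cDes(\sigma_1)]\in c2_{0,n}^{[n]}$ and by $J:=\Des(\sigma_2)\subseteq[n-1]$: the number of contributing pairs is exactly $\tilde a_B^{AJ}$, and $F^{\cyc}_{n,\cDes(\sigma_1)} = F^{\cyc}_{n,A} = d_A\hF^{\cyc}_{n,A}$. Because $F^{\cyc}_{n,\cDes(\pi)}$ depends only on $B=[\cDes(\pi)]$ (Lemma~\ref{t:Fcyc_properties}(2)) while $\{\hF^{\cyc}_{n,A}\otimes F_{n,J}\}_{A\in c2_{0,n}^{[n]},\,J\subseteq[n-1]}$ is linearly independent (Observation~\ref{t:proper_basis}), the product $d_A\tilde a_B^{AJ}$—hence $\tilde a_B^{AJ}$ itself—is independent of the choice of $\pi$ with $\cDes(\pi)\in B$; this is precisely the well-definedness assertion. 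Dividing by $d_B$ gives the stated formula $\Delta_n(\hF^{\cyc}_{n,B}) = \sum_{A,J}\frac{d_A}{d_B}\tilde a_B^{AJ}\,\hF^{\cyc}_{n,A}\otimes F_{n,J}$. Nonnegativity of the coefficients is clear; for integrality I observe that $\cQSym_n^-$ is a $\ZZ$-module direct summand of $\QSym_n$ (it is cut out by saturated linear conditions, so $\QSym_n/\cQSym_n^-$ is free), whence $\cQSym_n^-\otimes\QSym_n$ is a direct summand of $\QSym_n\otimes\QSym_n$, and $\Delta_n(\hF^{\cyc}_{n,B})$—which has integral coordinates in the $F_{n,I}\otimes F_{n,J}$ basis—must have integral coordinates in the $\hF^{\cyc}_{n,A}\otimes F_{n,J}$ basis.

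Finally, the formulas for $h_n$ and $e_n$ come from the Cauchy identity $\prod_{i,j}(1-x_iy_j)^{-1} = \sum_\lambda s_\lambda(X)s_\lambda(Y)$ and the dual Cauchy identity $\prod_{i,j}(1+x_iy_j) = \sum_\lambda s_\lambda(X)s_{\lambda'}(Y)$: extracting the homogeneous degree-$n$ part of each gives $h_n(XY) = \sum_{\lambda\vdash n}s_\lambda(X)s_\lambda(Y)$ and $e_n(XY) = \sum_{\lambda\vdash n}s_\lambda(X)s_{\lambda'}(Y)$, which by the defining property of $\Delta_n$ translate into $\Delta_n(h_n) = \sum_\lambda s_\lambda\otimes s_\lambda$ and $\Delta_n(e_n) = \sum_\lambda s_\lambda\otimes s_{\lambda'}$; both lie in $\Sym_n\otimes\Sym_n\subseteq\cQSym_n\otimes\QSym_n$, which in particular supplies the $\Delta_n(e_n)$ containment used in the second paragraph. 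The one point requiring genuine care—and the closest thing to an obstacle—is the reindexing step producing the cyclic product formula: one must check that $\sigma_1\mapsto\sigma_1 c^i$ is compatible with the constraint $\sigma_2\sigma_1=\pi c^i$ and that the descent statistics recombine correctly into $F^{\cyc}_{n,\cDes(\tau_1)}$. Everything afterward is bookkeeping with the normalizations $d_A,d_B$.
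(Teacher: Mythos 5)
Your proposal is correct and follows essentially the same route as the paper: the heart of both arguments is writing $F^{\cyc}_{n,\cDes(\pi)}=\sum_i F_{n,\Des(\pi c^i)}$, applying Gessel's identity \eqref{eq:F_to_F_times_F} to each term, and reindexing $\sigma_1\mapsto\sigma_1 c^i$ to collapse the first tensor factor into $F^{\cyc}_{n,\cDes(\sigma_1)}$, after which the grouping by orbits and normalization are identical. The only (harmless) divergences are peripheral: you prove integrality via the fact that $\cQSym_n^-\otimes\QSym_n$ is a direct summand of $\QSym_n\otimes\QSym_n$ (which does follow from Lemma~\ref{t:Fcyc_basis}(3)) where the paper compares monomial coefficients, and you obtain the $h_n$, $e_n$ formulas from the Cauchy identities rather than from \eqref{eq:F_to_F_times_F} evaluated at $\mathrm{id}$ and $w_0$.
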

	
	\begin{proof}
		First, consider a permutation $\pi \in \symm_n$. By Proposition~\ref{t:Fcyc_from_F},
		\[
		F^{\cyc}_{n,\cDes(\pi)} 
		= \sum_{i \in [n]} F_{n, (\cDes(\pi) - i) \cap [n-1]}.
		\]
		Denoting by $c \in \symm_n$ the permutation mapping
		$i \mapsto i+1$ ($1 \le i \le n-1$) and $n \mapsto 1$, clearly
		\[
		\cDes(\pi) - i = \cDes(\pi c^i) \qquad (\forall i)
		\]
		and therefore
		\[
		F^{\cyc}_{n,\cDes(\pi)} 
		= \sum_{i \in [n]} F_{n, \cDes(\pi c^i) \cap [n-1]}
		= \sum_{i \in [n]} F_{n, \Des(\pi c^i)}.
		\]
		Using \eqref{eq:F_to_F_times_F}, it follows that
		\[
		\begin{aligned}
		F^{\cyc}_{n,\cDes(\pi)}(XY)
		&= \sum_{i \in [n]} F_{n, \Des(\pi c^i)}(XY) \\
		&= \sum_{i \in [n]} \sum_{\substack{\sigma_1, \sigma_2 \in \symm_n \\ \sigma_2 \sigma_1 = \pi}} F_{n,\Des(\sigma_1 c^i)}(X) \cdot F_{n,\Des(\sigma_2)}(Y) \\
		&= \sum_{\substack{\sigma_1, \sigma_2 \in \symm_n \\ \sigma_2 \sigma_1 = \pi}} F^{\cyc}_{n,\cDes(\sigma_1)}(X) \cdot F_{n,\Des(\sigma_2)}(Y),
		\end{aligned}
		\]
		and therefore
		\[
		\Delta_n(F^{\cyc}_{n,\cDes(\pi)})
		= \sum_{\substack{\sigma_1, \sigma_2 \in \symm_n \\ \sigma_2 \sigma_1 = \pi}}
		F^{\cyc}_{n,\cDes(\sigma_1)} \otimes F_{n,\Des(\sigma_2)} \in \cQSym_n \otimes \QSym_n
		\qquad (\forall \pi \in \symm_n).
		\]
		Denoting $A := [\cDes(\sigma_1)]$, $J := \Des(\sigma_2)$ and $B := [\cDes(\pi)]$, this can be written as
		\[
		\Delta_n(F^{\cyc}_{n,B})
		= \sum_{\substack{A \in c2_{0,n}^{[n]} \\ J \subseteq [n-1]}}
		\tilde{a}_{B}^{AJ} F^{\cyc}_{n,A} \otimes F_{n,J}
		\qquad (\forall \,B \in c2_{0,n}^{[n]})
		\]
		and the normalized version follows.
		The structure constants $\frac{d_A}{d_B} \tilde{a}_{B}^{AJ}$ are clearly nonnegative rational numbers. We can prove their integrality by a general indirect argument, as follows:
		$\hF^{\cyc}_{n,B} \in \cQSym_n \subseteq \QSym_n$, and therefore $\Delta_n(\hF^{\cyc}_{n,B}) \in \QSym_n \otimes \QSym_n$ is a linear combination, with integer coefficients, of the basis elements $F_{n,I} \otimes F_{n,J}$.
		Expanded in $\ZZ[[X]]_n \otimes \ZZ[[Y]]_n$, it is a linear combination of tensor products of monomials, again with integer coefficients.
		On the other hand, the above computation shows that it belongs to $\QQ \otimes \cQSym_n \otimes \QSym_n$, and thus is a linear combination of basis elements $\hF^{\cyc}_{n,A} \otimes F_{n,J}$, or equivalently basis elements $\hM^{\cyc}_{n,A} \otimes M_{n,J}$, with rational coefficients. 
		Each monomial in $\ZZ[[X]]_n \otimes \ZZ[[Y]]_n$ appears in exactly one of the latter basis elements, with coefficient $1$, and thus the coefficients in this basis (as well as in the other basis) are actually integers. 
		
		Any subset of $[n]$, other than $\varnothing$ and $[n]$, has the form $\cDes(\pi)$ for some $\pi \in \symm_n$;
		it remains to check $\Delta_n(\hF^{\cyc}_{n,\varnothing})$ and $\Delta_n(\hF^{\cyc}_{n,[n]})$. 
		Indeed,
		\[
		\begin{aligned}
		\Delta_n(\hF^{\cyc}_{n,\varnothing}) 
		&= \Delta_n(F_{n,\varnothing})
		= \Delta_n(F_{n,\Des(id)})
		= \sum_{\substack{\sigma_1, \sigma_2 \in \symm_n \\ \sigma_2 \sigma_1 = id}}
		F_{n,\Des(\sigma_1)} \otimes F_{n,\Des(\sigma_2)} \\
		&= \sum_{\sigma \in \symm_n}
		F_{n,\Des(\sigma)} \otimes F_{n,\Des(\sigma^{-1})}
		\end{aligned}
		\]
		and, denoting by $w_0$ the involution (longest element) in $\symm_n$ mapping $i \mapsto n+1-i$ $(\forall i \in [n])$,
		\[
		\begin{aligned}
		\Delta_n(\hF^{\cyc}_{n,[n]}) 
		&= \Delta_n(F_{n,[n-1]})
		= \Delta_n(F_{n,\Des(w_0)})
		= \sum_{\substack{\sigma_1, \sigma_2 \in \symm_n \\ \sigma_2 \sigma_1 = w_0}}
		F_{n,\Des(\sigma_1)} \otimes F_{n,\Des(\sigma_2)} \\
		&= \sum_{\sigma \in \symm_n}
		F_{n,\Des(\sigma w_0)} \otimes F_{n,\Des(\sigma^{-1})}.
		\end{aligned}
		\]
		Recall now \cite[Theorem 7.23.2]{EC2}, which can be stated as the pair of identities
		\[
		\sum_{\lambda \vdash n} s_{\lambda} \otimes s_{\lambda}
		= \sum_{\sigma \in \symm_n} F_{n,\Des(\sigma)} \otimes F_{n,\Des(\sigma^{-1})}
		\]
		and
		\[
		\sum_{\lambda \vdash n} s_{\lambda} \otimes s_{\lambda'}
		= \sum_{\sigma \in \symm_n} F_{n,\Des(\sigma w_0)} \otimes F_{n,\Des(\sigma^{-1})}.
		\]
		Using them leads to the claimed expressions
		\[
		\Delta_n(\hF^{\cyc}_{n,\varnothing}) 
		= \sum_{\lambda \vdash n} s_{\lambda} \otimes s_{\lambda}
		\in \Sym_n \otimes \Sym_n
		\]
		and
		\[
		\Delta_n(\hF^{\cyc}_{n,[n]}) 
		= \sum_{\lambda \vdash n} s_{\lambda} \otimes s_{\lambda'}
		\in \Sym_n \otimes \Sym_n.
		\]
		
		
	\end{proof}
	
	
	The invariance of the coefficients $\tilde{a}_{B}^{AJ}$ under the choice of $\pi$ (as long as $\cDes(\pi) \in B$) also yields a dual structure.
	Defining, in addition to 
	\[
	D_J
	:= \sum_{\substack{\pi \in \symm_n \\ \Des(\pi) = J}} \pi
	\qquad (J \subseteq [n-1]),
	\]
	also
	\[
	cD_A
	:= \sum_{\substack{\pi \in \symm_n \\ \cDes(\pi) \in A}} \pi
	\qquad (A \in c2_{0,n}^{[n]}),
	\]
	the discussion above implies that
	\[
	D_J \cdot cD_A = \sum_{B \in c2_{0,n}^{[n]}} \tilde{a}_{B}^{AJ} cD_B
	\qquad (\forall \,J \subseteq [n-1],\, A \in c2_{0,n}^{[n]}),
	\]
	where multiplication is in the group ring $\ZZ[\symm_n]$.
	This reproves the following result, obtained by Moszkowski~\cite{Mo} and by Aguiar and Petersen~\cite{AP} using different methods.
	
	\begin{corollary}\label{t:left_module}
		The additive free abelian group
		\[
		\mathfrak{cD}_n := \spn_{\,\ZZ} \{cD_A \,:\, A \in c2_{0,n}^{[n]}\}
		\]
		is a left module for Solomon's descent algebra $\mathfrak{D}_n$.
	\end{corollary}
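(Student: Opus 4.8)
The plan is to deduce the module structure directly from the coproduct computation of Theorem~\ref{t:cQSym_right_comodule}, using the classical duality between the internal coproduct on $\QSym_n$ and multiplication in the group ring $\ZZ[\symm_n]$. The only substantive ingredient is already in hand: the proof of Theorem~\ref{t:cQSym_right_comodule} shows that, for all $A,B \in c2_{0,n}^{[n]}$ and $J \subseteq [n-1]$, the count
\[
\tilde{a}_{B}^{AJ} = \#\{(\sigma_1,\sigma_2) \in \symm_n \times \symm_n \,:\, \cDes(\sigma_1) \in A,\ \Des(\sigma_2) = J,\ \sigma_2\sigma_1 = \pi\}
\]
is independent of the representative $\pi$, provided only that $\cDes(\pi) \in B$ (this was obtained by pushing the identity~\eqref{eq:F_to_F_times_F} through the assignment $\pi \mapsto \sum_{i \in [n]} F_{n,\Des(\pi c^i)}$).

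Granting this, I would carry out the group-ring bookkeeping. Expanding the product of the two defining sums,
\[
D_J \cdot cD_A
= \sum_{\pi \in \symm_n} \Bigl( \#\{(\sigma_1,\sigma_2) : \cDes(\sigma_1) \in A,\ \Des(\sigma_2) = J,\ \sigma_2\sigma_1 = \pi\} \Bigr)\, \pi,
\]
and then I would partition the permutations $\pi$ according to the cyclic class $[\cDes(\pi)] \in c2_{0,n}^{[n]}$ --- this is legitimate since $\cDes(\pi)$ is never $\varnothing$ or $[n]$. By the invariance statement the inner multiplicity equals $\tilde{a}_B^{AJ}$ whenever $[\cDes(\pi)] = B$, so the sum collapses to
\[
D_J \cdot cD_A = \sum_{B \in c2_{0,n}^{[n]}} \tilde{a}_{B}^{AJ}\, cD_B.
\]
Since $\{D_J : J \subseteq [n-1]\}$ spans $\mathfrak{D}_n$ over $\ZZ$ and $\{cD_A : A \in c2_{0,n}^{[n]}\}$ spans $\mathfrak{cD}_n$ over $\ZZ$, this proves $\mathfrak{D}_n \cdot \mathfrak{cD}_n \subseteq \mathfrak{cD}_n$; the left-module axioms then follow for free from associativity in $\ZZ[\symm_n]$.

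I do not anticipate a real obstacle: the corollary is a formal consequence of Theorem~\ref{t:cQSym_right_comodule}, and the genuine work (well-definedness of $\tilde{a}_B^{AJ}$) has already been done there. The only point requiring a word of caution is that one should claim the \emph{left}-module structure only: $\mathfrak{cD}_n$ is in general not stable under right multiplication by $\mathfrak{D}_n$, nor under its own multiplication, consistent with the earlier remark that $\mathfrak{cD}_n$ is not a subalgebra of $\ZZ[\symm_n]$.
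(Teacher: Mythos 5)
Your proposal is correct and follows essentially the same route as the paper: both deduce the identity $D_J \cdot cD_A = \sum_{B} \tilde{a}_{B}^{AJ}\, cD_B$ in $\ZZ[\symm_n]$ directly from the $\pi$-invariance of $\tilde{a}_{B}^{AJ}$ established in Theorem~\ref{t:cQSym_right_comodule}, and conclude the left-module structure from there. Your extra bookkeeping (partitioning $\pi$ by the cyclic class of $\cDes(\pi)$) is exactly what the paper's phrase ``the discussion above implies'' leaves implicit.
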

	
	In general, $\mathfrak{cD}_n$ is not a right module of $\mathfrak{D}_n$ and not an algebra.

	
	\section{Open problems and final remarks}\label{sec:open_problems}
	
	
	We close with several remarks and questions.
	
	\medskip
	Recall that one has a tower of ring extensions
	\begin{equation}
		\label{tower-of-rings}
		\Sym_{\QQ}[x_1,\ldots,x_n] \subseteq \cQSym_{\QQ}[x_1,\ldots,x_n] \subseteq \QSym_{\QQ}[x_1,\ldots,x_n] \subseteq \QQ[x_1,\ldots,x_n].
	\end{equation}
	For each ring extension $R \subset S$ within this tower, 
	it is natural to ask about the structure of $S$ as a graded $R$-module.  For example, one might ask whether $S$ is a free graded $R$-module, or one might ask for a minimal generating set of $S$ as a graded $R$-module.
	In this setting of graded $\QQ$-algebras, the
	latter question is equivalent to asking for elements of $S$ whose images in the quotient ring $S/(R_+)$ form a graded $\QQ$-vector space basis, where $(R_+)$ denotes the ideal of $S$ generated by elements of strictly positive degree within $R$.
	
	Garsia and Wallach~\cite{GW} proved that
	the ring $\QSym_\QQ[x_1,\dots,x_n]$ of quasi-symmetric functions in $n$ variables (over $\QQ$) is free over the ring $\Sym_\QQ[x_1,\dots,x_n]$ of symmetric functions in these variables.
	This suggests asking the same question for 
	the leftmost inclusion in \eqref{tower-of-rings}.
	However, one can check that $\cQSym_{\QQ}[x_1,\ldots,x_n]$ is {\it not} a free graded module over $\Sym_{\QQ}[x_1,\ldots,x_n]$ starting at $n=4$, via a Hilbert series calculation:  the quotient of their Hilbert series in $q$ has negative coefficients as a power series, beginning
	$1+2q^6+q^7+2q^8+q^9-q^{10}+\cdots$ .
	
	Aval, Bergeron and Bergeron~\cite{ABB} studied $\QQ[x_1,\ldots,x_n]$ as a module over $\QSym_{\QQ}[x_1,\ldots,x_n]$, which is known to {\it not} be a free module. One might still ask whether $\QQ[x_1,\ldots,x_n]$ happens to be free as a 
	graded module over $\cQSym_{\QQ}[x_1,\ldots,x_n]$, but not surprisingly, this {\it fails} starting at $n=3$; the relevant quotient
	of two Hilbert series has power series expansion that begins $1+2q+2q^2+q^3-q^6-2q^7\cdots$. On
	the other hand, in \cite{ABB} the authors provide a minimal generating set for
	$\QQ[x_1,\ldots,x_n]$ as a $\QSym_{\QQ}[x_1,\ldots,x_n]$-module, with cardinality given by a Catalan number,
	and compute a simple expression for the Hilbert series of the quotient
	$\QQ[x_1,\ldots,x_n] / ( \QSym_{\QQ}[x_1,\ldots,x_n]_+)$.  One might ask analogous questions about the quotient
	$\QQ[x_1,\ldots,x_n]/(\cQSym_{\QQ}[x_1,\ldots,x_n]_+)$, which we have not explored.
	
	Regarding the middle inclusion in \eqref{tower-of-rings}, one can check
	that $\QSym[x_1,\ldots,x_n]$ is {\it not} a free graded module over 
	$\cQSym_{\QQ}[x_1,\ldots,x_n]$ starting at $n=3$; the relevant quotient
	of two Hilbert series has power series expansion that begins $1+q^3+2q^4+2q^5-x^6-x^9-\cdots$.
	We have not explored the dimension and Hilbert series 
	of the quotient $\QSym[x_1,\ldots,x_n]/(\cQSym_{\QQ}[x_1,\ldots,x_n]_+)$.
	
	
	
	\medskip
	
	\newcommand{\alt}{{\operatorname{alt}}}
	
	The same questions can be asked in infinitely many variables. The
	answers, again, are negative. The graded $\QQ$-algebra $\cQSym_\QQ$ is
	not a polynomial ring in graded generators (unlike $\Sym_\QQ$ and
	$\QSym_\QQ$), nor is it a free $\Sym_\QQ$-module; nor is $\QSym_\QQ$ a
	free $\cQSym_\QQ$-module. Unlike in the cases of finitely many
	variables, this does not seem to follow from the Hilbert series alone;
	but it has been checked using {\tt SageMath}~\cite{SageMath}
	by Darij Grinberg, whose arguments we record here. 
	The failure of $\cQSym_\QQ$ to be a free $\Sym_\QQ$-module can be seen as follows: 
	For any nonnegative integer $n$ and any composition $\alpha = \operatorname{cc}_n(J)$, set
	$M^\cyc_{\alpha} := M^\cyc_{n, J}$; furthermore, set $M^\alt_{(a, b, c)} := M^\cyc_{(a, b, c)} - M^\cyc_{(c, b, a)}$ for any composition $(a, b, c)$ of length $3$. 
	Then, it is a pleasant exercise to check that any four positive integers $a, b, c, d$ satisfy
	\[
	p_a M^\alt_{(b, c, d)} - p_b M^\alt_{(a, c, d)} + p_c M^\alt_{(a, b,
		d)} - p_d M^\alt_{(a, b, c)} = 0 ,
	\]
	where $p_1, p_2, p_3, \ldots$ are the power-sum symmetric functions, as usual. 
	Setting $(a, b, c, d) = (1, 2, 3, 4)$, we obtain a
	$\Sym_\QQ$-linear dependence between the four elements
	$M^\alt_{(2, 3, 4)}, M^\alt_{(1, 3, 4)}, M^\alt_{(1, 2, 4)}, M^\alt_{(1, 2, 3)}$ of $\cQSym_\QQ$. 
	Thus, these four elements cannot together belong to a basis of the $\Sym_\QQ$-module $\cQSym_\QQ$. 
	But if $\cQSym_\QQ$ were a free $\Sym_\QQ$-module, then it would have a homogeneous basis (i.e., a basis consisting of homogeneous elements), 
	and moreover such a basis could be found by lifting any homogeneous basis of the quotient vector space 
	$\cQSym_\QQ / \Sym_\QQ^+ \cQSym_\QQ$ (see, e.g., \cite[Prop. 4.11]{RWY},  \cite{Reyes15},\cite[Lemma 1]{Stump}).
	In particular, $\cQSym_\QQ$ would have a basis containing the four elements
	$M^\alt_{(2, 3, 4)}, M^\alt_{(1, 3, 4)}, M^\alt_{(1, 2, 4)}, M^\alt_{(1, 2, 3)}$, because (here is where {\tt SageMath} has been used)
	these elements do not belong to $\Sym_\QQ^+ \cQSym_\QQ$ and thus (due to their distinct degrees) can be extended to a basis of $\cQSym_\QQ /\Sym_\QQ^+ \cQSym_\QQ$. 
	Thus, $\cQSym_\QQ$ is not a free $\Sym_\QQ$-module. 
	
	We claim that this also implies that $\cQSym_\QQ$ is {\it not} a polynomial ring over $\QQ$ with graded
	generators.  
	To see this claim, note that if $\cQSym_\QQ$ were such a polynomial ring, its generators could be chosen as any homogeneous lifts of a $\QQ$-basis for $\cQSym_\QQ/(\cQSym^+_\QQ)^2$.  
	However, this means that the generators for $\cQSym_\QQ$ could be chosen to include all of the power sum symmetric functions $\{p_1,p_2,\ldots\}$:
	the power sums form a subset of a graded polynomial generating set for $\QSym_\QQ$ 
	(see, e.g., \cite[Prop. 6.1.14]{GR}), 
	so their images in $\QSym_\QQ/(\QSym^+_\QQ)^2$ are linearly independent, and hence, {\it a fortiori}, linearly independent in $\cQSym_\QQ/(\cQSym^+_\QQ)^2$.  
	But since $\Sym_\QQ=\QQ[p_1,p_2,\ldots]$, this would make $\cQSym_\QQ$ a free $\Sym_\QQ$-module. 
	
	Knowing that $\cQSym_\QQ$ is not a polynomial ring over $\QQ$ with graded generators can be rephrased as saying that
	it is not isomorphic to the symmetric algebra of any graded $\QQ$-vector space. 
	This, in turn, yields that the graded algebra $\cQSym_\QQ$ does not have the structure of 
	a connected graded $\QQ$-bialgebra\footnote{If $A$ is a connected graded bialgebra over a field, then it is automatically a Hopf algebra.  
		If furthermore $A$ is of finite type over a field of characteristic zero, and has commutative multiplication, then its cocommutative dual $A^*$, will be isomorphic to a universal enveloping algebra by the Cartier--Milnor--Moore Theorem, which then forces $A$ to be a symmetric algebra.}. 
	
	The failure of $\QSym_\QQ$ to be a free
	$\cQSym_\QQ$-module is witnessed by a linear dependence in degree
	$6$.


	
	\medskip
	
	A natural goal is to study specific specializations of cyclic quasi-symmetric functions, e.g., the principal specialization defined by $x_i:=q^{i-1}$ for all $i$.  
	There are possible applications to various statistics of cyclic permutations.
	For example, we are looking for a notion of cyclic major index, which will provide a bivariate analogue of Theorem~\ref{t:cdes_of_cyclic_shuffle}, and cyclic analogues of~\cite[Proposition 7.19.12]{EC2}
	and related identities.
	
	
	
	\medskip
	
	
	A positivity phenomenon, involving cyclic quasi-symmetric functions, was presented in 
	Corollary~\ref{t:Schur_in_hFcyc}.
	It is desired to find more results of this type; here is a conjectured one.
	It was proved in~\cite[Cor. 7.7]{ER3} that, 
	for every integer $0< k< n$,
	the quasi-symmetric function
	\[
	\sum_{\substack{\pi \in \symm_n \\ \cdes(\pi^{-1}) = k}} F_{n,\Des(\pi)}
	\]
	is symmetric and Schur-positive.
	Computational experiments suggest the following (stronger) cyclic version.
	
	\begin{conjecture}
		For every $\varnothing \subsetneq J \subsetneq [n]$ the cyclic quasi-symmetric function
		\[
		\sum_{\substack{\pi \in \symm_n \\ [\cDes(\pi^{-1})] = [J]}} F^\cyc_{n,\cDes(\pi)}
		= \sum_{\substack{\pi\in \symm_n \\ (\exists i)\ \cDes(\pi^{-1}) = J+i}} F^\cyc_{n,\cDes(\pi)}
		\]
		is symmetric and Schur-positive.\footnote{Added in proof: for a most recent affirmative resolution of this conjecture see~\cite{BER}.} 
	\end{conjecture}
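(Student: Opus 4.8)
The plan is the following. Write
\[
Q_{[J]} \;:=\; \sum_{\substack{\pi\in\symm_n \\ [\cDes(\pi^{-1})] = [J]}} F^{\cyc}_{n,\cDes(\pi)}.
\]
Since $\cDes(\pi^{-1})$ is a proper nonempty subset of $[n]$ for every $\pi\in\symm_n$, and $F^{\cyc}_{n,\cDes(\pi)}$ depends only on the cyclic class $[\cDes(\pi)]$, the sum $Q_{[J]}$ is automatically a nonnegative integer combination of fundamental cyclic quasi-symmetric functions, hence lies in $\cQSym_n$; the conjecture asks to upgrade this to $Q_{[J]}\in\Sym_n$ together with Schur-positivity. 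First I would refine the bookkeeping: for $\varnothing\subsetneq K\subsetneq[n]$ set
\[
R_K \;:=\; \sum_{\substack{\pi\in\symm_n \\ \cDes(\pi^{-1}) = K}} F^{\cyc}_{n,\cDes(\pi)},
\]
so that $Q_{[J]} = \sum_{K\in[J]} R_K$, and also put $\widetilde{Q}_L := \sum_{\pi:\ \Des(\pi^{-1}) = L} F^{\cyc}_{n,\cDes(\pi)}$ for $L\subseteq[n-1]$, which equals $R_L + R_{L\cup\{n\}}$ when $L\subsetneq[n-1]$, while $\widetilde{Q}_{[n-1]} = R_{[n-1]}$ and $\widetilde{Q}_{\varnothing} = R_{\{n\}}$.

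The first step is to prove that each $\widetilde{Q}_L$ is symmetric and Schur-positive. For this I would use the internal comodule identity established in the proof of Theorem~\ref{t:cQSym_right_comodule},
\[
\Delta_n\bigl(F^{\cyc}_{n,\cDes(\pi)}\bigr) \;=\; \sum_{\sigma_2\sigma_1 = \pi} F^{\cyc}_{n,\cDes(\sigma_1)}\otimes F_{n,\Des(\sigma_2)},
\]
at $\pi = \mathrm{id}$. Since $\cDes(\mathrm{id}) = \{n\}$, Proposition~\ref{t:Fcyc_from_F} gives
\[
F^{\cyc}_{n,\{n\}} \;=\; h_n + \sum_{s=1}^{n-1} F_{n,\{s\}} \;=\; h_n + s_{(n-1,1)} \;=\; \ch(\CC^n),
\]
the Frobenius characteristic of the natural permutation representation $\CC^n$ of $\symm_n$. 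Because $\sigma_2\sigma_1 = \mathrm{id}$ forces $\sigma_2 = \sigma_1^{-1}$, the left-hand side of the identity is $\Delta_n(\ch\,\CC^n) = \sum_{\mu\vdash n} \ch(\CC^n\otimes V_\mu)\otimes s_\mu$ (the Kronecker coproduct of a character), while the right-hand side is $\sum_{\pi\in\symm_n} F^{\cyc}_{n,\cDes(\pi)}\otimes F_{n,\Des(\pi^{-1})}$. Expanding $s_\mu = \sum_{Q\in\SYT(\mu)} F_{n,\Des(Q)}$ and comparing the coefficients of $F_{n,L}$ in the second tensor factor yields
\[
\widetilde{Q}_L \;=\; \sum_{\mu\vdash n} \#\{Q\in\SYT(\mu) : \Des(Q) = L\}\;\ch(\CC^n\otimes V_\mu),
\]
which is manifestly symmetric and Schur-positive, as $\CC^n\otimes V_\mu$ is an honest $\symm_n$-module.

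The second, and harder, step is to descend from this ``coarse'' grading of permutations by the ordinary inverse descent set $\Des(\pi^{-1})$ to the ``cyclic'' grading by the class $[\cDes(\pi^{-1})]$ that the conjecture demands. The only obstruction is the wrap-around bit $n\in\cDes(\pi^{-1})$ --- equivalently, whether the letter $n$ appears to the right of the letter $1$ in the one-line word of $\pi$ --- which the internal coproduct cannot see, because its second tensor factor lies in $\QSym_n$ and records descents only in $[n-1]$: concretely, the relations $\widetilde{Q}_L = R_L + R_{L\cup\{n\}}$ under-determine the $R_K$, and a cyclic class $[J]$ is in general not a union of pairs $\{L,L\cup\{n\}\}$, so $Q_{[J]}$ is not a $\ZZ$-combination of $\widetilde{Q}_L$'s. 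I would therefore aim at the stronger assertion that every $R_K$ is symmetric and Schur-positive; equivalently (after grouping the permutations $\pi$ with $[\cDes(\pi)]$ fixed by cyclic rotation of the one-line word), that the ordinary descent enumerator $\sum_{\pi\in\langle c\rangle\,\rho\,\langle c\rangle} F_{n,\Des(\pi)}$ of every double coset of the cyclic subgroup $\langle c\rangle\le\symm_n$ generated by the long cycle $c=(1,2,\dots,n)$ is symmetric and Schur-positive. I expect this requires either (i) a representation-theoretic refinement of the formula above, writing $R_K$ as a nonnegative combination of characters $\ch(V_\mu\otimes U)$ for suitable ``cyclic'' $\symm_n$-modules $U$ refining $\CC^n$, or (ii) an RSK / cyclic-sieving argument built on the existence and fiber-size invariance of cyclic descent maps on standard Young tableaux from \cite{ARR_CDes,AER}, tracking how those fibers detect the class $[J]$. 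This double-coset (equivalently, $R_K$) statement is the genuine obstacle, and is precisely what is supplied in \cite{BER}; the connected-ribbon restriction of Theorem~\ref{conj:QS1} plays no role here, since the partitions $\mu$ in the displayed expansion of $\widetilde{Q}_L$ range over all of $\mu\vdash n$.
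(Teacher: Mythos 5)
This statement is posed in the paper as an open \emph{conjecture} (Section 7): the paper itself contains no proof, and the footnote defers its resolution to the later work~\cite{BER}. So there is no in-paper argument to compare against, and your proposal has to stand on its own. Its first step is correct and worth keeping: specializing the comodule identity $\Delta_n(F^{\cyc}_{n,\cDes(\pi)})=\sum_{\sigma_2\sigma_1=\pi}F^{\cyc}_{n,\cDes(\sigma_1)}\otimes F_{n,\Des(\sigma_2)}$ at $\pi=\mathrm{id}$, together with $F^{\cyc}_{n,\{n\}}=h_n+s_{(n-1,1)}=\ch(\CC^n)$ and extraction of the coefficient of $F_{n,L}$ in the second tensor factor, does give
\[
\sum_{\substack{\pi\in\symm_n\\ \Des(\pi^{-1})=L}}F^{\cyc}_{n,\cDes(\pi)}
=\sum_{\mu\vdash n}\#\{Q\in\SYT(\mu):\Des(Q)=L\}\,\ch(\CC^n\otimes V_\mu),
\]
a genuinely cyclic strengthening of Gessel's symmetry result for inverse-descent classes. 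But, as you yourself note, this only controls the sums graded by the \emph{ordinary} inverse descent set, and the conjecture requires the finer grading by the cyclic class $[\cDes(\pi^{-1})]$; the quantities $Q_{[J]}$ are not recoverable from the $\widetilde{Q}_L$.

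The second step is where the proof is missing, and you do not supply it: you reduce the conjecture to the stronger assertion that each $R_K$ is symmetric and Schur-positive and then state that this ``is precisely what is supplied in~\cite{BER}'' --- i.e., you cite the external resolution rather than proving anything. Moreover, neither the truth of that strengthened assertion nor your ``equivalent'' double-coset reformulation is verified, and the latter is dubious: the grouping $\pi\mapsto\pi c^i$ sends $\pi^{-1}$ to $c^{-i}\pi^{-1}$, and left multiplication by $c$ does \emph{not} preserve the cyclic class of $\cDes$ (for $\tau=(2,4,1,3)$ one has $\cDes(\tau)=\{2,4\}$ while $\cDes(c\tau)=\cDes((3,1,2,4))=\{1,4\}\notin[\{2,4\}]$), so the sets $\{\pi:\cDes(\pi^{-1})=K\}$ and even $\{\pi:[\cDes(\pi^{-1})]=[J]\}$ are not unions of cosets $\pi\langle c\rangle$, let alone double cosets. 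In short, the proposal establishes a correct but strictly weaker statement and leaves the essential refinement --- the actual content of the conjecture --- unproven.
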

	
	
	\medskip
	
	A related problem is the following.
	The proof of Theorem~\ref{t:cyclic_extension} in~\cite{ARR_CDes} is indirect, and involves Postnikov's toric Schur functions. 
	A constructive proof, providing an explicit combinatorial definition of the cyclic descent map, was found very recently by Brice Huang~\cite{Huang}.
	Finding a bijective proof for Theorem~\ref{conj22} is now desired. Such a bijection would provide an effective way to construct subsets $A_\lambda$ and $A_\mu$  as in Proposition~\ref{prop:disconnected_skew1}.
	A generalization of Proposition~\ref{prop:disconnected_skew1} to hook 
	components is further desired.
	A solution 
	would provide a far-reaching generalization of~\cite[Theorem 1]{ER1}.
	
	
	
	
	\medskip
	
	As mentioned in Section~\ref{sec:co},
	it is interesting to define a cyclic analogue of the outer (graded) coproduct on $\QSym$, and study its duality with a suitable version of the product on non-commutative symmetric functions.
	
	\medskip
	
	Finally, cyclic descents were introduced by Cellini~\cite{Cellini} in the search for subalgebras of Solomon's descent algebra. An important subalgebra of the descent algebra is the peak algebra~\cite{peak_paper}.
	
	\begin{problem}
		Define and study cyclic peaks and a cyclic peak algebra.
	\end{problem}
	
	For cyclic peaks in Dyck paths see~\cite[\S 5.1]{AER} .
	
	\section*{Acknowledgements}
	The authors thank Darij Grinberg for many helpful corrections, and for suggesting the
	Hilbert series calculations and further arguments in Section~\ref{sec:open_problems} that answered (negatively) several questions from a previous version of this paper. We also thank
	Yan Zhuang for helpful comments and references.
	

\end{document}